\numberwithin{equation}{section}
\newtheorem{maintheorem}{Theorem}
\newtheorem{theorem}{Theorem}[section]
\newtheorem*{theorem*}{Theorem}
\newtheorem{conjecture}[theorem]{Conjecture}
\newtheorem{lemma}[theorem]{Lemma}
\newtheorem{proposition}[theorem]{Proposition}
\newtheorem{corollary}[theorem]{Corollary}
\theoremstyle{definition}{

\newtheorem*{definition*}{Definition}

}
\theoremstyle{remark}{

\newtheorem*{remark*}{Remark}

}
\newenvironment{enumeratei}{\begin{enumerate}[\upshape (i)]}
                           {\end{enumerate}}
\newcommand{\R}{\mathbb R}
\newcommand{\E}{\mathbb{E}}
\renewcommand{\P}{\mathbb{P}}
\DeclareMathOperator{\var}{Var}
\DeclareMathOperator{\sign}{sign}
\renewcommand{\epsilon}{\varepsilon}
\newcommand{\tS}{\tilde{S}}
\newcommand{\dist}{{\rm dist}}
\newcommand{\given}{\, \big| \,}
\newcommand{\tX}{\tilde{X}}
\newcommand{\taumag}{\tau_{{\rm mag}}}
\newcommand{\one}{\boldsymbol{1}}
\newcommand{\deq}{\stackrel{\scriptscriptstyle\triangle}{=}}
\newcommand{\tmix}{t_{{\rm mix}}}
\newcommand{\F}{\mathcal{F}}
\newcommand{\temp}{\delta}
\newcommand{\cS}{\mathcal{S}}
\newcommand{\cZ}{\mathcal{Z}}
\newcommand{\cX}{\mathcal{X}}
\newcommand{\cP}{\mathcal{P}}
\newcommand{\gap}{\text{\tt{gap}}}
\newcommand{\magspace}{\Psi}
\DeclareMathOperator{\lip}{lip}
\begin{document}
\title[Censored dynamics for the mean field Ising Model]{Censored Glauber Dynamics for \\ the mean field Ising Model}
\date{}

\author{Jian Ding, \thinspace Eyal Lubetzky and Yuval Peres}

\address{Jian Ding\hfill\break
Department of Statistics\\
UC Berkeley\\
Berkeley, CA 94720, USA.}
\email{jding@stat.berkeley.edu}
\urladdr{}

\address{Eyal Lubetzky\hfill\break
Microsoft Research\\
One Microsoft Way\\
Redmond, WA 98052-6399, USA.}
\email{eyal@microsoft.com}
\urladdr{}

\address{Yuval Peres\hfill\break
Microsoft Research\\
One Microsoft Way\\
Redmond, WA 98052-6399, USA.} \email{peres@microsoft.com}
\urladdr{}

\begin{abstract}
We study Glauber dynamics for the Ising model on the complete graph on $n$ vertices,
known as the Curie-Weiss Model. It is well known that at high
temperature ($\beta < 1$) the mixing time is $\Theta(n\log n)$,
whereas at low temperature ($\beta > 1$) it is $\exp(\Theta(n))$. Recently, Levin,
Luczak and Peres considered a \emph{censored} version of this dynamics, which is restricted to
non-negative magnetization. They proved that for fixed $\beta > 1$, the mixing-time of this model is
$\Theta(n\log n)$, analogous to the high-temperature regime
of the original dynamics. Furthermore, they showed \emph{cutoff} for the original dynamics
for fixed $\beta<1$. The question whether the censored dynamics also exhibits cutoff remained
unsettled.

In a companion paper, we extended the results of Levin et al. into a complete characterization of the mixing-time for the Currie-Weiss model. Namely, we found a scaling window of order $1/\sqrt{n}$ around the critical temperature $\beta_c=1$, beyond which there is cutoff at high temperature. However, determining the behavior of the censored dynamics outside this critical window seemed significantly more challenging.

In this work we answer the above question in the affirmative, and establish the cutoff point and its
window for the censored dynamics beyond the critical window, thus completing its analogy to the original
dynamics at high temperature. Namely, if $\beta = 1 + \temp$ for some $\temp > 0$ with $\temp^2 n \to \infty$, then the mixing-time has order $(n/\temp)\log(\temp^2 n)$. The cutoff constant is $\left(1/2+[2(\zeta^2 \beta/\temp - 1)]^{-1}\right)$, where $\zeta$ is the unique positive root of $g(x)=\tanh(\beta x)-x$, and the cutoff window
has order $n/\temp$.
\end{abstract}

\maketitle

\section{Introduction}\label{sec:intro}

The \emph{Ising Model} on a finite graph $G=(V,E)$ with parameter
$\beta \geq 0$ and no external magnetic field is defined as
follows. Its set of possible \emph{configurations} is $\Omega =
\{1,-1\}^V$, where each configuration $\sigma\in\Omega$ assigns
positive or negatives \emph{spins} to the vertices of the graph.
The probability that the system is at the configuration $\sigma$ is
given by the \emph{Gibbs distribution}
$$\mu_G(\sigma) = \frac{1}{Z(\beta)} \exp\Big(\beta \sum_{xy\in E}\sigma(x)\sigma(y)\Big)~,$$
where $Z(\beta)$ (the partition function) serves as a normalizing
constant. The parameter $\beta$ represents the inverse
temperature: the higher $\beta$ is (the lower the temperature is),
the more $\mu_G$ favors configurations where neighboring spins are
aligned. At the extreme case $\beta = 0$ (infinite temperature),
the spins are completely independent and $\mu_G$ is uniform over
$\Omega$.

The \emph{Curie-Weiss} model corresponds to the case where the
underlying geometry is the complete graph on $n$ vertices. The
study of this model (see, e.g.,
\cite{Ellis},\cite{EN},\cite{ENR},\cite{LLP}) is motivated by the
fact that its behavior approximates that of the Ising model on
high-dimensional tori. It is convenient in this case to rescale
the parameter $\beta$, so that the stationary measure $\mu_n$
satisfies
\begin{equation}\label{eq-mu(sigma)}
\mu_n(\sigma) \propto \exp\Big(\frac{\beta}{n} \sum_{x < y}
\sigma(x)\sigma(y)\Big)~.\end{equation}

The \emph{heat-bath Glauber dynamics} for the distribution $\mu_n$
is the following Markov Chain, denoted by $(X_t)$. Its state space
is $\Omega$, and at each step, a vertex $x \in V$ is chosen
uniformly at random, and its spin is updated as follows. The new
spin of $x$ is randomly chosen according to $\mu_n$ conditioned on
the spins of all the other vertices. It can  easily be shown that
$(X_t)$ is an aperiodic irreducible chain, which is reversible
with respect to the stationary distribution $\mu_n$.

We require several definitions in order to describe the
mixing-time of the chain $(X_t)$. For any two distributions
$\phi,\psi$ on $\Omega$, the \emph{total-variation distance} of
$\phi$ and $\psi$ is defined to be
$$\|\phi-\psi\|_\mathrm{TV} = \sup_{A \subset\Omega} \left|\phi(A) - \psi(A)\right| = \frac{1}{2}\sum_{\sigma\in\Omega} |\phi(\sigma)-\psi(\sigma)|~.$$
The (worst-case) total-variation distance of $(X_t)$ to
stationarity at time $t$ is
$$ d_n(t) = \max_{\sigma \in \Omega} \| \P_\sigma(X_t \in \cdot)- \mu_n\|_\mathrm{TV}~,$$
where $\P_\sigma$ denotes the probability given that $X_0=\sigma$.
 The total-variation \emph{mixing-time} of $(X_t)$, denoted by $\tmix(\epsilon)$ for $0 < \epsilon < 1$, is defined to be
$$ \tmix(\epsilon) = \min\left\{t : d_n(t) \leq \epsilon \right\}~.$$
A related notion is the spectral-gap of the chain, $\gap =
1-\lambda$, where $\lambda$ is the largest absolute-value of all
nontrivial eigenvalues of the transition kernel.

Consider an infinite family of chains $(X_t^{(n)})$, each with its
corresponding worst-distance from stationarity $d_n(t)$, its
mixing-times $\tmix^{(n)}$, etc. We say that $(X_t^{(n)})$
exhibits \emph{cutoff} iff for some sequence $w_n =
o\big(\tmix^{(n)}(\frac{1}{4})\big)$ we have the following: for
any $0 < \epsilon < 1$ there exists some $c_\epsilon > 0$, such
that
\begin{equation}\label{eq-cutoff-def}\tmix^{(n)}(\epsilon) - \tmix^{(n)}(1-\epsilon) \leq c_\epsilon w_n \quad\mbox{ for all $n$}~.\end{equation}
That is, there is a sharp transition in the convergence of the
given chains to equilibrium at time
$(1+o(1))\tmix^{(n)}(\frac{1}{4})$. In this case, the sequence
$w_n$ is called a \emph{cutoff window}, and the sequence
$\tmix^{(n)}(\frac{1}{4})$ is called a \emph{cutoff point}.

\begin{figure}[t]
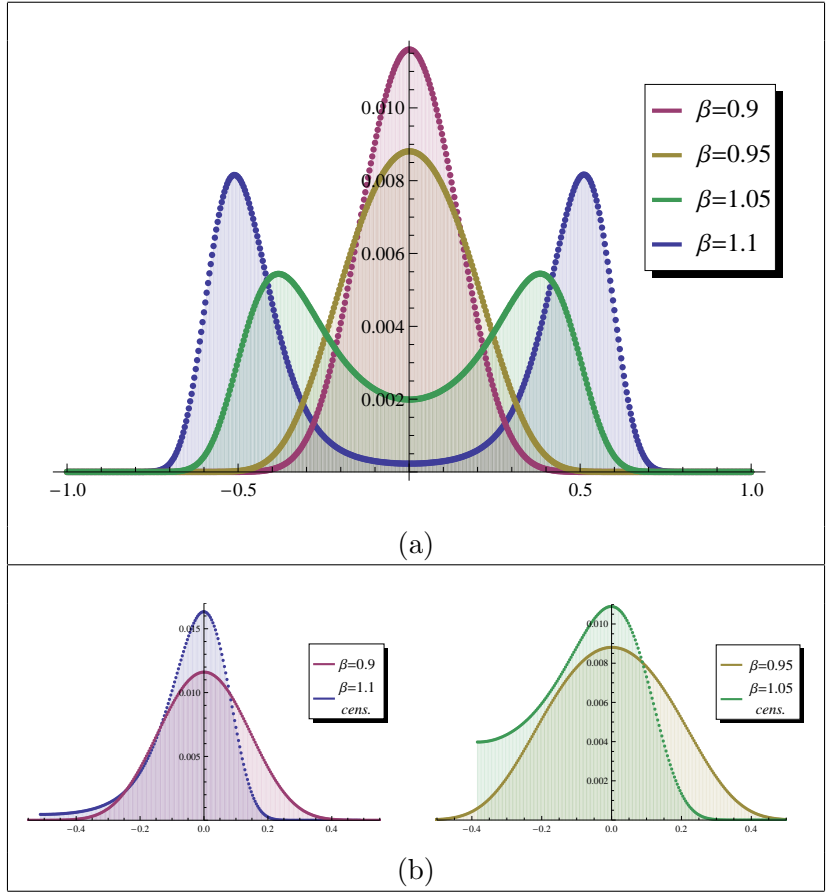

\centering
\begin{tabular}{|cc|}
\hline  &\\
\multicolumn{2}{|c|}
{\includegraphics[width=4in]{iscensmag1}}\\
\multicolumn{2}{|c|}{(a)}\\
\hline &\\
\includegraphics[width=2in]{iscensmag2}&
\includegraphics[width=2in]{iscensmag3}\\
\multicolumn{2}{|c|}{(b)}\\
\hline
\end{tabular}
\caption{The analogy between the original dynamics at high temperature ($\beta=1-\temp$) and the censored dynamics at low temperature ($\beta=1+\temp$).
(a) The stationary distribution of the normalized magnetization chain (average of all spins) for the original dynamics on $n=500$ vertices.
(b) The above distribution for $\beta=1-\temp$ vs. the
corresponding distribution for $\beta=1+\temp$ in the censored
dynamics, shifted by $\zeta$ (the unique
positive solution of $\tanh(\beta x) = x$).}
\label{fig:mag-stat}
\end{figure}

It is well known that for any fixed $\beta>1$, the mixing-time of the Glauber
dynamics $(X_t)$ is exponential in $n$ (cf., e.g.,
\cite{GWL}), whereas for any fixed $\beta < 1$ (high temperature)
this mixing-time has order $n\log n$ (see \cite{AH} and also
\cite{BD}). In 2007, Levin, Luczak and Peres \cite{LLP}
established that the mixing-time at the critical point $\beta_c=1$
has order $n^{3/2}$, and that for fixed $0<\beta<1$ there is
cutoff at time $\frac{1}{2(1-\beta)}n\log n$ with window $n$.
In a companion paper \cite{DLP}, we extended these results into a
complete characterization of the mixing time of the dynamics as a
function of the temperature, as it approaches its critical
point. In particular, we found a scaling window of order
$1/\sqrt{n}$ around the critical temperature. In the high
temperature regime, $\beta = 1 - \temp$ for some $0 < \temp < 1$
so that $\temp^2 n \to\infty$ with $n$, the mixing-time has order
$(n/\temp)\log(\temp^2 n)$, and exhibits cutoff with constant
$\frac{1}{2}$ and window size $n/\temp$.
 In the critical window, $\beta = 1\pm \temp$ where $\temp^2 n$ is $O(1)$, there is no cutoff, and the mixing-time has order $n^{3/2}$.
 At low temperature, there is no cutoff, and the mixing time has order
$\frac{n}{\temp}\exp\left(\frac{n}{2}\int_{0}^{\zeta} \log
\left(\frac{1+g(x)}{1-g(x)}\right)dx \right)$,
  where $g(x)=\frac{\tanh(\beta x)-x}{1-x\tanh(\beta
  x)}$ and $\zeta$ is the unique positive root of $g(x)$.

The key element in the proofs of the above results is
understanding the behavior of the sum of all spins (known as the
\emph{magnetization}) at different temperatures. This
function of the dynamics turns out to be an ergodic Markov chain
as well, namely a \emph{birth-and-death} chain (a
1-dimensional chain that only permits moves between neighboring
positions). In fact, the exponential
mixing at low-temperature is essentially due to this
chain having two centers of mass, $\pm \zeta n$, with an exponential
commute time between them.

Interestingly, this bottleneck between the two centers of mass at $\pm \zeta n$ is essentially
the \emph{only} reason for the exponential mixing-time at low
temperatures. Indeed, as shown in \cite{LLP} for the strictly
supercritical regime (the case of $\beta
>1$ fixed), if one restricts the Glauber dynamics to non-negative
magnetization (known as the \emph{censored} dynamics), the
mixing time becomes $\Theta(n\log n)$ just like in the subcritical
regime. Formally, the censored dynamics is defined as follows: at
each step, a new state $\sigma$ is generated according to the
original rule of the Glauber dynamics, and if a negative
magnetization is reached ($\sum_i \sigma(i) < 0$) then $\sigma$ is
replaced by $-\sigma$. It turns out that this simple modification
suffices to boost the mixing-time back to order $n\log n$, just as
in the high temperature case. It is thus natural to ask
whether the analogy between the original dynamics at high temperatures and the
censored one at low temperatures carries on to the existence of
cutoff.

In this work, we strengthen the above result of \cite{LLP} by showing
that the censored dynamics exhibits cutoff at low temperature beyond the
critical window, with the same order as its high temperature counterpart.

\begin{maintheorem}\label{thm-low-temp}
  Let $\temp > 0$ be such that $\temp^2 n\to\infty$ arbitrarily slowly with $n$.
  The Glauber dynamics for the mean field Ising model
   with parameter $\beta=1+\temp$,
   restricted to non-negative magnetization,
    has a cutoff at $$t_n = \left(\frac{1}{2}+\frac{1}{2(\zeta^2 \beta/\temp - 1)}\right)\frac{n}{\temp}\log(\temp^2 n)$$
  with a window of order $n/\temp$. In the special case of the dynamics started from the all-plus configuration,
  the cutoff constant is $[2(\zeta^2 \beta/\temp - 1)]^{-1}$ (the order of the cutoff point and the window size remain the same).
\end{maintheorem}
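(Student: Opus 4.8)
The plan is to reduce the theorem to a one-dimensional statement about the magnetization $\M_t=\sum_i X_t(i)$, which for the censored dynamics is a birth-and-death chain on $\{0,1,\dots,n\}$, reflected at the origin. Following the coupling arguments of \cite{LLP} and \cite{DLP}, the full dynamics inherits the cutoff of $(\M_t)$ with the same window --- the cost of re-randomizing the remaining configuration once the magnetization has equilibrated being absorbed into the window --- so it suffices to treat $(\M_t)$. This chain has one-step drift $\E[\M_{t+1}-\M_t\mid\M_t=sn]=h(s):=\tanh(\beta s)-s$ and one-step variance $2-2s\tanh(\beta s)+O(1/n)$. The analysis rests on three facts: (i) since $\beta-1=\temp$, near $s=0$ one has $h(s)=\temp s+O(s^3)$, so the origin is an \emph{unstable} fixed point; (ii) near $s=\zeta$, $h(s)=-\rho\,(s-\zeta)+O((s-\zeta)^2)$ with $\rho:=\beta\zeta^2-\temp=1-\beta(1-\zeta^2)>0$, so $\M_t-\zeta n$ contracts in expectation at rate $\rho/n$ per step, and expanding $\tanh(\beta\zeta)=\zeta$ gives $\zeta^2=3\temp(1+o(1))$, $\rho=2\temp(1+o(1))$, and the identity $\rho/\temp=\zeta^2\beta/\temp-1$; (iii) the stationary law $\bar\mu$ of $\M$ concentrates within a window of order $\sqrt{n/\temp}$ about $\zeta n$, with $\var_{\bar\mu}(\M)=n/(\cosh^2(\beta\zeta)-\beta)\cdot(1+o(1))=\tfrac{n}{2\temp}(1+o(1))$.

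For the upper bound the extremal start is magnetization $\approx0$, and I would follow $\M_t$ through three phases. \emph{(a) Escape from the origin:} from any $|\M_0|=O(\sqrt{n/\temp})$, near $0$ the chain is well approximated by $\M_{t+1}=(1+\temp/n)\M_t+\xi_t$ with $\var(\xi_t)\approx2$, so $\M_t$ behaves like $e^{\temp t/n}$ times an effective random initial value $W$ of order $\sqrt{n/\temp}$ (the accumulated noise); hence $|\M_t|$ first reaches $\Theta(\zeta n)$ at time $\tfrac{n}{\temp}\log(\zeta n/|W|)$, and since $\zeta n/\sqrt{n/\temp}=(1+o(1))\sqrt{3\temp^2 n}$ while $\P\big(|W|\notin[\epsilon,\epsilon^{-1}]\sqrt{n/\temp}\big)=O(\epsilon)$, this equals $\tfrac{n}{2\temp}\log(\temp^2 n)+O(n/\temp)$ (with the error depending on $\epsilon$) except with probability $O(\epsilon)$ --- exactly the form needed for cutoff. \emph{(b) Descent to equilibrium scale:} from $\Theta(\zeta n)$, the cubic term in $h$ dominates the drift while $s$ is a constant factor above $\zeta$, so that $\M_t$ reaches within $O(\zeta n)$ of $\zeta n$ in $O(n/\temp)$ further steps (the integral $\int ds/|h(s)|$ converging over this range); the linearized contraction at rate $\rho/n$ then takes it to within $O(\sqrt{n/\temp})$ of $\zeta n$ in $\tfrac{n}{2\rho}\log\big((\zeta n)^2/\var_{\bar\mu}(\M)\big)+O(n/\temp)=\tfrac{n}{2\rho}\log(\temp^2 n)+O(n/\temp)$ steps, which by $\rho/\temp=\zeta^2\beta/\temp-1$ equals $[2(\zeta^2\beta/\temp-1)]^{-1}\,\tfrac{n}{\temp}\log(\temp^2 n)+O(n/\temp)$. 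For the all-plus start there is no escape phase and no overshoot past $\zeta n$, leaving only this second term --- the stated special-case constant. \emph{(c) Coalescence:} once $\M_t$ lies within $O(\sqrt{n/\temp})$ of $\zeta n$, a monotone coupling with a stationary copy --- which is within $O(\sqrt{n/\temp})$ of $\zeta n$ with probability $\ge1-\epsilon$ --- coalesces in $O(n/\temp)$ further steps. Summing the three phases gives $d_n(t_n+C_\epsilon n/\temp)\le\epsilon$; each phase is made rigorous by a one-sided drift bound together with a Freedman/Azuma concentration estimate confining $\M_t$ to the claimed band.

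For the matching lower bound, consider the magnetization chain from $\M_0=0$. A second-moment computation shows that at $t=t_n-C_\epsilon n/\temp$ the overshoot of $\M_t$ past $\zeta n$ --- which is of order $\zeta n$ just after the escape --- has been contracted only by the factor $e^{-\rho(t-t_{\mathrm{esc}})/n}$, leaving $\M_t$ at distance $\gg\sqrt{n/\temp}$ from $\zeta n$ (the separation growing with $C_\epsilon$) outside an event of probability $o(1)$; the only realizations that have already reached equilibrium scale correspond to an exponentially unlikely large value of $|W|$. Since $\bar\mu$ sits within $O(\sqrt{n/\temp})$ of $\zeta n$, this gives $\|\P_0(\M_t\in\cdot)-\bar\mu\|_{\mathrm{TV}}\ge1-\epsilon$, and together with the upper bound it establishes cutoff at $t_n$ with window $n/\temp$.

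The step I expect to be the main obstacle is the \emph{sharp} control of the escape phase (a): one needs not only that the escape time is $\tfrac{n}{2\temp}\log(\temp^2 n)$ to leading order, but that it fluctuates only on the scale $n/\temp$, which forces one to pin down the effective initial value $W$ --- both its order $\sqrt{n/\temp}$ and the tightness of $\log|W|$ --- and to control the error in replacing the true chain near $0$ by its linearization; relatedly, one must show that on leaving the linear regime $\M_t$ overshoots $\zeta n$ by exactly $\Theta(\zeta n)$. These two inputs are precisely what produce the two cutoff constants --- the escape contributing the $\tfrac12$, and the subsequent contraction from scale $\zeta n$ down to scale $\sqrt{n/\temp}$ contributing the $[2(\zeta^2\beta/\temp-1)]^{-1}$. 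A secondary technical point is to verify that the reduction to the magnetization chain costs only $o(n/\temp)$ throughout the range $\temp^2 n\to\infty$.
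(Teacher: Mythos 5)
Your magnetization-level plan is essentially the paper's: escape from $0$ at rate $\temp/n$ contributing $\tfrac12\tfrac{n}{\temp}\log(\temp^2 n)$, contraction toward $\zeta$ at rate $(\beta\zeta^2-\temp)/n$ contributing $[2(\zeta^2\beta/\temp-1)]^{-1}\tfrac{n}{\temp}\log(\temp^2 n)$, coalescence near $\zeta$ by a monotone/supermartingale coupling, and concentration of the stationary magnetization at scale $\sqrt{n/\temp}$; you also correctly flag the concentration of the escape time on scale $n/\temp$ as the hard analytic step (the paper does it by splitting $[0,\zeta]$ at $n^{-1/4}$ and $\sqrt{\temp}$, running recursions for the first and second moments with a variance bound growing like $(\temp n)^{-1}(1+2\temp/n)^t$, and a separate lemma comparing $\E S_t^3$ with $(\E S_t)^3$ to control the cubic correction). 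The genuine gap is the reduction from the magnetization to the full dynamics. Your claim that the full censored dynamics inherits the cutoff because ``the remaining configuration re-randomizes once the magnetization has equilibrated'' is only true for symmetric starts: conditioned on the magnetization, the configuration is uniform on the level set only when $\cX_0$ is all-plus (this is exactly Lemma \ref{lem-all-plus-mag-full}), and for a general $\sigma_0$ the configuration stays biased toward the initially-positive sites long after $\cS_t$ has mixed. Moreover, no monotone coupling exists for the full censored dynamics (Section \ref{sec:monotone coupling}), so a naive configuration-level coupling cannot be run. The paper needs all of Section \ref{sec:fullmixing} for this: a two-coordinate-chain theorem whose hypotheses require a burn-in into $\Omega_0$, the bound $|U(\cX_t)-U(\tilde{\cX}_t)|=O(\sqrt{n/\temp})$ at the cutoff time --- proved by sandwiching the censored chain between non-censored chains started from all-plus and all-minus and showing concentration of $\sum_{i\in F}(\cX_t(i)-\zeta)$ for a fixed set $F$ --- and persistence of both chains in a good set $\Xi$ for a further $O(n/\temp)$ stretch. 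Without an argument of this type your upper bound covers only the all-plus ``special case'' constant, not the worst-case constant $\tfrac12+[2(\zeta^2\beta/\temp-1)]^{-1}$ asserted in the theorem; calling this a ``secondary technical point'' understates roughly a third of the proof.

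A smaller but real error: you posit that on leaving the linear regime the chain ``overshoots $\zeta n$ by $\Theta(\zeta n)$'' and your lower bound contracts that overshoot. Started from $0$ the chain approaches $\zeta n$ from below; the drift vanishes at $\zeta$ and is restoring beyond it, so excursions above $\zeta n$ are only $O(\sqrt{n/\temp})$, never $\Theta(\zeta n)$. What is actually needed (and what the paper proves) is that at time $T^-(\gamma)$ the \emph{deficit} $\zeta-\cS_t$ is still $\gg 1/\sqrt{\temp n}$: by concavity of $\tanh$, $\E[\zeta-S_t]$ contracts by at most a factor $1-(\beta\zeta^2-\temp)/n$ per step, and this must be combined with a variance bound of order $1/(\temp n)$ at that time, which in turn requires the FKG argument together with the a priori confinement of $S_t$ above $\tfrac76\sqrt{\temp}$ (Lemmas \ref{lem-tau-3-bound} and \ref{lem-var-bound-cens}). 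With ``overshoot'' read as ``deficit'' your constant accounting is correct, but as stated the sub-goal is false and the concentration input for the lower bound is missing.
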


As pointed out in \cite{DLP}, the censored dynamics has a mixing-time of order $n^{3/2}$ within the critical window $\beta=1\pm\temp$ where $\temp=O(1/\sqrt{n})$. Thus, the above theorem demonstrates the smooth transition of this mixing-time from $\Theta(n^{3/2})$ to $\Theta(n\log n)$ as $\beta$ increases. Furthermore, combining this theorem with the above mentioned results of \cite{DLP} shows that the cutoff for the censored dynamics at $\beta=1+\temp$ has precisely the same order as its high temperature counterpart $1-\temp$ in the original dynamics, yet with a different constant. This analogy is illustrated in Figure \ref{fig:mag-stat}, which compares
the stationary distribution of the two corresponding magnetization chains.

In addition, we determine the spectral gap for the censored dynamics at low temperatures, which again
proves to have the same order as in the high temperature regime of the original dynamics.

\begin{maintheorem}\label{thm-low-temp-spectral}
  Let $\temp>0$ be such that $\temp^2 n\to\infty$ arbitrarily slowly with $n$.
  Then the censored Glauber dynamics for the mean field Ising model
   with parameter $\beta=1+\temp$
    has a spectral gap of order $\temp /n$.
\end{maintheorem}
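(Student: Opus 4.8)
The plan is to obtain matching upper and lower bounds on the spectral gap. For the upper bound $\gap = O(\temp/n)$, I would exhibit a test function witnessing slow relaxation: the natural candidate is (a centered version of) the magnetization $S_t = \sum_i X_t(i)$, or more precisely the coordinate $S_t - \zeta n$ of the projected birth-and-death chain. Since the censored dynamics projects to an ergodic birth-and-death chain on the magnetization (as recalled in the excerpt), and this chain is concentrated near $\zeta n$ with fluctuations governed by the curvature of the rate function there, a one-step variance computation shows $\E_{\mu_n}[(S_1 - S_0)^2]$ is of order $1$ while $\var_{\mu_n}(S_0)$ is of order $n/\temp$ (the equilibrium magnetization fluctuations in the supercritical regime have this order, as can be read off from the Gaussian approximation to $\mu_n$ near $\zeta$ established in \cite{DLP}). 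The Dirichlet form inequality $\gap \le \mathcal{E}(f,f)/\var_{\mu_n}(f)$ with $f = S$ then yields $\gap \lesssim (1)/(n/\temp) = \temp/n$.

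For the lower bound $\gap = \Omega(\temp/n)$, I would use the standard fact (see e.g. Levin--Peres--Wilmer) that the spectral gap controls, and is controlled by, the mixing time: $\gap^{-1} - 1 \le \tmix(1/4) \le \gap^{-1}\log(4/\mu_{\min})$. Theorem \ref{thm-low-temp} gives $\tmix(1/4)$ of order $(n/\temp)\log(\temp^2 n)$, which only yields $\gap = \Omega\big(\temp/(n\log(\temp^2 n))\big)$ — off by the logarithmic factor. To remove it, the cleaner route is a direct comparison / decomposition argument: the censored chain mixes ``quickly once it is near $\zeta n$'', and the slow mode is purely the one-dimensional magnetization chain. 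Concretely, I would (i) compare the full censored dynamics to its magnetization projection, showing the gap of the full chain is of the same order as the gap of the birth-and-death chain (the conditional chains on a fixed magnetization level mix in $O(n\log n) = O(n/\temp \cdot \temp\log n)$ steps — here one must be careful, but the relevant conditional gap is $\Omega(1/n)$, dominating $\temp/n$); and (ii) bound the gap of the magnetization birth-and-death chain from below by $\Omega(\temp/n)$ using the explicit Cheeger-type / canonical-paths estimates for birth-and-death chains in terms of the stationary weights $\mu_n(S = k)$, which are exactly the quantities analyzed in \cite{DLP,LLP}. Since the magnetization chain near $\zeta n$ behaves like an Ornstein--Uhlenbeck chain with drift coefficient of order $\temp/n$ per step, its relaxation time is $O(n/\temp)$, giving the matching lower bound.

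The main obstacle I anticipate is step (i): rigorously reducing the spectral gap of the $2^n$-state censored chain to that of the one-dimensional magnetization chain, without losing a logarithmic factor. The magnetization is a lumping of the chain (the dynamics is symmetric under permutations of vertices), so the projected chain is a genuine Markov chain and its eigenvalues are a subset of those of the full chain; the issue is showing that the \emph{second-largest} eigenvalue of the full chain comes from this one-dimensional factor rather than from ``internal'' rearrangements at fixed magnetization. For this I would invoke a decomposition-of-Markov-chains bound (à la Jerrum--Son--Tetali--Vigoda or Martin--Randall): $\gap$ of the full chain is at least a constant times $\min\{\gap_{\mathrm{proj}},\ \min_k \gap_k\}$, where $\gap_{\mathrm{proj}} \asymp \temp/n$ is the projected gap and $\gap_k$ is the gap of the dynamics restricted to magnetization level $k$. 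The latter restricted dynamics is essentially Glauber dynamics for a collection of nearly-independent biased spins, which has gap $\Omega(1/n)$ uniformly in $k$ — comfortably larger than $\temp/n$ — so the bottleneck is indeed the projected chain, completing the argument. The only delicate point is handling the censoring reflection at magnetization $0$, but since $\mu_n$ assigns exponentially small mass near $0$ in the supercritical regime, this contributes negligibly to all the relevant bounds.
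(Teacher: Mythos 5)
Your upper bound is sound and is essentially the paper's route: the paper also bounds $\gap$ via the Dirichlet form with the (centered) magnetization $\cS-\zeta$ as test function, dividing by a stationary variance lower bound $\var_\pi\cS\geq b/(\temp n)$ (which it derives from the birth-and-death conductance estimates of \cite{DLP}, not from a Gaussian approximation -- you do need to supply a proof of that variance bound, but your one-step bound on the Dirichlet form is legitimate and even a little simpler than the paper's expansion of the drift). Likewise, your plan for the projected chain's lower bound -- a Cheeger/bottleneck-ratio computation for the magnetization birth-and-death chain using the stationary weights -- is exactly what the paper does, yielding $\Phi_\star=\Theta(\sqrt{\temp/n})$ and hence $\gap_{\mathrm{proj}}\geq\Phi_\star^2/2=\Omega(\temp/n)$. (Minor point: for $\temp=o(1)$ the stationary mass near magnetization $0$ is only polynomially small in $\temp^2 n$, not exponentially small, but it is still negligible for these estimates.)

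The genuine gap is in step (i), the reduction from the full $2^n$-state censored chain to the one-dimensional chain. The Madras--Randall/Jerrum--Son--Tetali--Vigoda decomposition you invoke requires the \emph{restricted} chains on each block $\Omega_k=\{\sigma:\sum_i\sigma(i)=k\}$ to have a nontrivial gap, but for single-site Glauber dynamics every accepted spin flip changes the magnetization by $\pm 2$, so the restriction of the kernel to a fixed level $\Omega_k$ has no transitions between distinct states: $\min_k \gap_k=0$ and the decomposition bound is vacuous. The ``conditional gap $\Omega(1/n)$'' you appeal to is the gap of some auxiliary equilibrium dynamics on the level set (a swap/Bernoulli--Laplace-type chain on the uniform measure over $\Omega_k$, not ``nearly-independent biased spins''), and relating that auxiliary chain back to the actual Glauber dynamics would require a separate comparison argument that the proposal does not supply. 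The paper circumvents this entirely: by permutation symmetry the kernel preserves the orthogonal decomposition $F_1\oplus F_2$, where $F_1$ consists of functions of the magnetization (whose eigenvalues are those of the projected chain, handled by Cheeger) and $F_2$ of functions with zero mean on each level $\Omega_k$; for eigenfunctions in $F_2$ it uses a coupling of two copies started at the \emph{same} magnetization which keeps their magnetizations equal forever and contracts Hamming distance at rate $1-c/n$ (Lemma \ref{lem-contraction-same-mag}), whence Chen's Lipschitz-contraction bound gives $1-\lambda\geq c/n$ on $F_2$. Note this works precisely because the two copies move together through the levels rather than being confined to one level, which is the mechanism your restricted-chain formulation cannot capture. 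Without this (or an equivalent comparison/decomposition adapted to the fact that within-level moves do not exist), the lower bound for the full dynamics is not established.
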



The rest of the paper is organized as follows. Section
\ref{sec:outline} outlines the main ideas of the proofs for the
main theorems. Several preliminary facts on the Curie-Weiss model
are introduced in Section \ref{sec:prelim}. Section \ref{sec:mag}
contains a delicate analysis of the behavior of the censored magnetization chain
for the case  $\temp=o(1)$. Based on the results of this section, we establish the
cutoff of the dynamics (Theorem \ref{thm-low-temp}) in Section
\ref{sec:fullmixing}, and determine the spectral gap (Theorem
\ref{thm-low-temp-spectral}) in Section \ref{sec:spectral}.
Section \ref{sec:temp-fixed} contains the
modifications required to prove the (simpler) case where
 $\temp$ is fixed. The final section, Section \ref{sec:conclusion},
 is devoted to concluding remarks and some open problems.


\section{Outline of proofs and mains ideas}\label{sec:outline}
In this section, we outline the proofs of the main theorems and
highlight the main ideas and techniques required to prove the case
where $\temp =o(1)$ (the proofs for the $\temp$ fixed case follow
the same line of arguments).

\begin{figure}
\centering \fbox{\includegraphics[width=4.5in]{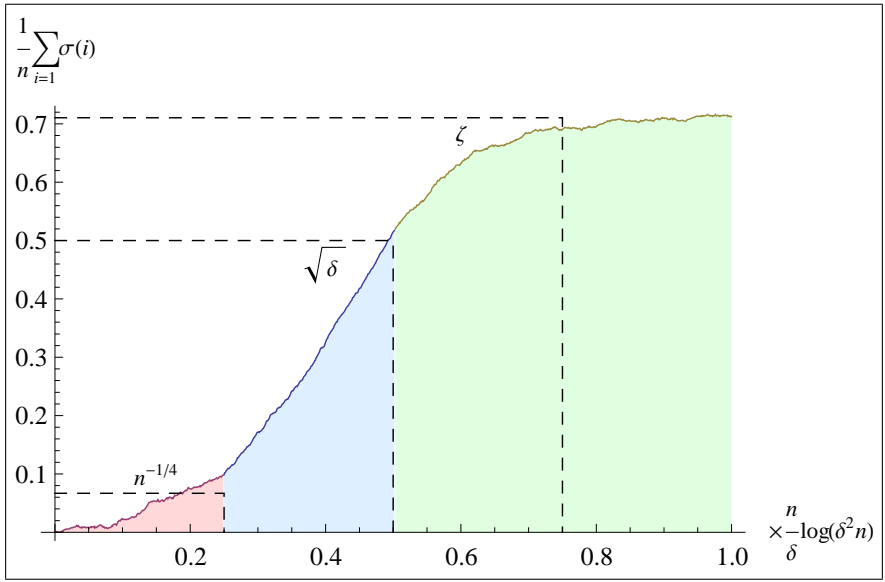}}
\caption{The magnetization chain in a single simulation of the censored dynamics on $n=50,000$ vertices
at temperature $\beta=1.25$. Results show the three segments of the magnetization started at $s_0=0$, until
it reaches equilibrium near $\zeta$ (the unique positive root of $g(x)=\tanh(\beta x)-x$).}
\label{fig:mag-sim}
\end{figure}

\subsection{Cutoff of the magnetization chain}
Clearly, in order to obtain the mixing of the entire Glauber dynamics,
it is necessary to achieve the mixing of its magnetization. Hence,
 we first study the normalized censored magnetization
chain, $\cS_t \deq \frac{1}{n}\sum_i \cX_t(i)$, where $\cX_t$ denotes
the configuration of the censored dynamics at time $t$. It turns out that the stationary distribution of $\cS_t$
 concentrates around $\zeta$ at low temperatures. Therefore, we need to show that, for any starting
position, the magnetization will hit near $\zeta$ around the
cutoff point. To show this, we consider the two extreme cases: starting
from $\cS_0=0$ and starting from $\cS_0=1$.

The case $\cS_0=1$ is significantly simpler,
and follows basically from the contraction properties of the magnetization chain. However,
the case $\cS_0=0$ requires a delicate analysis. As mentioned in the introduction, in order to obtain the concentration of the hitting time from $0$ to $\zeta$, we partition the region $[0,\zeta]$ into three segments: $[0,n^{-1/4}]$,
 $[n^{-1/4},\sqrt{\temp}]$ and $[\sqrt{\temp},\zeta]$ (up to constants). Figure \ref{fig:mag-sim} shows the
transition of the magnetization chain between these three segments, as it occurred in a sample run of the censored dynamics.

 In each of the three segments, we exploit different properties of the magnetization chain to track its position along time. As we later show, the properties of the Hyperbolic tangent function dominate the behavior
of the magnetization chain. Around $0$, the function $\tanh (\beta x)$ is well approximated
by a linear function, which in turn leads to an exponential growth in the expected value of the
magnetization near
$0$. Around $\zeta$, the Taylor expansion of $\tanh(\beta x)$
implies that the magnetization is contracting towards $\zeta$.

In order to achieve the concentration of $\cS_t$, we introduce the times $T_i^+$ and $T_i^-$ for $i=1,2,3,4$,
where the difference between $T_i^+$ and $T_i^-$ is $O(n/\temp)$, hence can be absorbed in the cutoff window.
These times correspond to the above three segments together with the segment $[\zeta,1]$ (which treats the case
$\cS_0=1$), and we study the position of $\cS_t$ in each of them.

\subsubsection*{Hitting $n^{-1/4}$ from $0$}


This segment begins with a ``burn-in'' period of $O(n/\temp)$ steps,
which is in fact the only regime where we benefit from the censoring of the
dynamics. By the end of this burn-in period, $\cS_t$ will have hit position $1/\sqrt{\temp n}$
with probability arbitrarily close to $1$.
Once the magnetization reaches order $1/\sqrt{\temp n}$, we may
analyze the effect of the exponential growth of its expected value
(dictated by the above mentioned properties of the Hyperbolic tangent function).

Two elements are needed in order to complete the analysis of this segment.
First, we we establish an upper bound on $\var \cS_t$.
Second, we carefully bound the difference
between the $\E \cS_t^3$ and $(\E \cS_t)^3$, which allows us to switch these two when
tracking down the slight changes in $\E \cS_t$ along time (via the Taylor expansion of $\tanh(\beta x)$ in this regime).

Altogether, we show that with probability arbitrarily close to $1$, we have $\cS_{T_1^-} \leq n^{-1/4}$ and yet $\cS_{T_1^+} \geq n^{-1/4}$.

\subsubsection*{Hitting $\sqrt{\temp}$ from $n^{-1/4}$}
Given a starting position of $n^{-1/4}$, an analogous argument which
tracks $\E\cS_t$ (using the exponential growth given by the Taylor expansion of the Hyperbolic tangent around $0$)
implies that with high probability, $\cS_{T_2^+} \geq  \frac43\sqrt{\temp} $
and yet $\cS_{T_2^-} \leq \sqrt{\temp}$. Crucially, though the above argument is similar to the one used for the previous segment, resetting the starting position to $n^{-1/4}$ (by separating the treatment of the first two segments)
provides the required control over the variability of $\cS_t$.

\subsubsection*{Hitting $\zeta$ from $\sqrt{\temp}$}
Given a starting position of, say, $\frac{4}{3}\sqrt{\temp}$, with high probability $\cS_t$ will remain
above, say, $\frac{7}{6}\sqrt{\temp}$ for at least $T_3^+$ steps.
In this region, the magnetization is attracted towards $\zeta$,
and combing this with correlation inequalities (e.g., the FKG inequality) one can
obtain the bound $\var \cS_t = O(n/\temp)$. Altogether, we show that with high probability
$|\cS_{T_3^+} - \zeta |$ is at most $O(1/\sqrt{\temp n})$ whereas
$\cS_{T_3^-}$ is further below $\zeta$.

\medskip
The results for the above three segments establish cutoff of the magnetization chain started at $\cS_0=0$. To complete the analysis, we treat the case $\cS_0=1$ in the fourth segment described next.

\subsubsection*{Hitting $\zeta$ from $1$}
Starting from $\cS_0=1$, the magnetization is strongly attracted
towards $\zeta$. In fact, its behavior throughout this segment is roughly
equivalent to that in the segment $[\sqrt{\temp},\zeta]$, and as a result, the expected hitting
time from $1$ to $\zeta$ is asymptotically the same as that from $\sqrt{\temp}$ to $\zeta$
(explaining the relation between the two cutoff constants in Theorem \ref{thm-low-temp}).
To show this, we obtain a variance bound, analogous to the one derived in
the segment $[\sqrt{\temp},\zeta]$, and deduce that with high probability
$|\cS_{T_4^+} - \zeta |$ is at most $O(1/\sqrt{\temp n})$ whereas
$\cS_{T_4^-}$ is further above $\zeta$.

\subsubsection*{Coalescence of the censored magnetization chains} To establish an upper bound on the mixing-time of the censored magnetization, we construct a coupling of two instances of the censored
Glauber dynamics, which ensures a fast collision. There are
three key points in accomplishing this coupling.
\begin{enumerate}[(i)]
\item Around the cutoff point, with probability
arbitrarily close to 1, the magnetization is concentrated around $\zeta$
within distance $O(1/\sqrt{\temp n})$.
\item Starting from
somewhere near $\zeta$, with high probability the magnetization chain will stay ``sufficiently close'' to
$\zeta$ for a reasonably long period of time: Within this distance from $\zeta$, the magnetization demonstrates
certain contraction properties, and we can use correlation inequalities (such as the FKG inequality) to control its higher moments.
\item In the above mentioned contracting region, $|\cS_t - \zeta|$
behaves as a supermartingale with a non-negligible variance at each
step. Altogether, we can deduce that within $O (n/\temp)$ steps
beyond the cutoff point, the two censored magnetization chains will collide
with probability close to 1.
\end{enumerate}
Combining the above coupling argument with the behavior of the stationary distribution of the
 censored magnetization (which concentrates around $\zeta$), as well as the lower bounds we obtained for hitting $\zeta$ from $\cS_0=0$ or $\cS_0=1$, completes the proof of the magnetization cutoff.

\subsection{Full mixing of the dynamics}
The cutoff point of the censored magnetization chain
clearly gives a lower bound on the mixing-time of the entire dynamics.
Furthermore, note that in the special case where the dynamics starts from the
all-plus configuration, by symmetry it has a cutoff precisely whenever
the magnetization chain exhibits cutoff. It remains to generalize
this result to an arbitrary starting configuration.
To boost the mixing of the censored magnetization (from an arbitrary starting
position) to the mixing
of the full dynamics, we use a Two Coordinate Chain
analysis, following the approach of \cite{LLP}. In order to apply this
method, one needs to establish a series of
delicate conditions on the censored magnetization chain.

First, we combine an expectation analysis with
concentration arguments to show that after $n/\temp$ steps,  with high probability
the censored magnetization starting from $\cS_0=1$ will stay at some ``good state" -- roughly, not too biased towards
plus or minus. As a corollary
(since the all-plus initial position can be used to sandwich the remaining initial positions),
this holds for any starting position $\cS_0$.

Two additional conditions are required to complete the Two Coordinate Chain analysis.
First, we show that the censored magnetization almost surely stays around $\zeta$ for a sufficiently
long period beyond its cutoff point. Second, we show that with high probability,
the average value of a spin over the set of initially positive spins (i.e., $\{i: \sigma_0(i) =1\}$)
also concentrates around $\zeta$ for a reasonably long period.

These properties imply that the magnetization restricted to the set of initially positive spins
mixes at the cutoff point, and the same holds for the magnetization over the set of initially negative
spins. By symmetry, these two statements imply the entire mixing of the dynamics.

\subsection{Spectral gap analysis}
We first study the spectral gap of the censored magnetization chain,
which provides an immediate upper bound on the spectral gap of the
entire dynamics.
To determine this gap, we analyze the conductance of the
chain (a birth-and-death chain) following the approach of \cite{DLP}*{Section 6},
and establish the order of the bottleneck ratio, yielding an
effective lower bound. To obtain a matching upper bound, we use
the Dirichlet representation for the spectral gap, combined with
an appropriate bound on the \emph{fourth} central moment of the
censored magnetization in stationarity.

To infer the spectral gap of the full dynamics from that of the
censored magnetization, additional arguments are needed to obtain a
lower bound on the gap. We separate the eigenfunctions into two
orthogonal spaces, one of which exactly corresponds to the censored
magnetization chain. We then use the contraction properties of the dynamics
to prove that on the other space, the corresponding eigenvalues are
uniformly bounded from above. This implies the desired lower bound
for the spectral gap.

\section{Preliminaries} \label{sec:prelim}

\subsection{Magnetization chain and censored magnetization chain}\label{subsec:magnetization}
In our efforts to analyze the censored Glauber
dynamics, in many cases it is useful to study the original dynamics
and relate it to the censored one. Throughout the paper,
we let $X_t$, $S_t$ denote the original Glauber dynamics and
its corresponding magnetization chain, and let $\cX_t$ and
$\cS_t$ denote the censored dynamics and its magnetization
chain.

Recall that the normalized magnetization of a configuration $\sigma$
is defined as $S(\sigma) \deq \frac{1}{n} \sum_{j=1}^{n} \sigma(j)$
(we define $\cS(\sigma)$ analogously for the censored dynamics).
  In the \emph{original} Glauber dynamics, given that the current state
of the dynamics is $\sigma$ and a site $i$ has been selected for
updating, the probability of updating $i$ to a positive spin is
$p^{+}(S (\sigma)- n^{-1}\sigma(i))$, where $p^{+}$ is the
function given by
\begin{equation*}
p^{+}(s)\deq\frac{e^{\beta s}} {e^{\beta s} + e^{-\beta s}}=\frac{1 + \tanh(\beta s)}{2}~.
\end{equation*}
 Similarly, with probability $p^{-}(S (\sigma)- n^{-1}\sigma(i))$ site $i$ is updated to a negative spin,
where $p^{-}$ is the function given by
\begin{equation*}
 p^{-}(s)\deq \frac{e^{-\beta s}}{e^{\beta s} + e^{-\beta s}}= \frac{1-\tanh(\beta s)}{2}~.
\end{equation*}
We can then obtain the transition probabilities of the original
magnetization chain:
\begin{align}\label{eq-magnet-transit}
P_M(s, s')=
\begin{cases}
\frac{1-s}{2}p^{-}(s-n^{-1}) & \hbox{ if } s' = s+\frac{2}{n}, \\
\frac{1+s}{2}p^{+}(s+ n^{-1}) & \hbox{ if } s'= s- \frac{2}{n},\\
1-\frac{1-s}{2}p^{-}(s-n^{-1})- \frac{1+s}{2}p^{+}(s+ n^{-1}) & \hbox{ if } s'=s~.
\end{cases}
\end{align}
It is easy to verify that, by definition, the \emph{censored} magnetization chain $\cS_t$ has the same distribution
law as $|S_t|$, and hence has the following transition matrix
$\cP_\textsf{M}$:
\begin{equation}\label{eq-magnet-cens-transit}
\cP_\textsf{M}(s, s')= P_M(s, s') + P_M (s,-s')~.
\end{equation}
The next lemma will prove to be useful in the analysis of the censored
magnetization chain.
\begin{lemma}[\cite{LPW}*{Chapter 17}]\label{lem-supermatingale-positive}
  Let $(W_t)_{t \geq 0}$ denote a non-negative
  supermartingale and $\tau$ be a stopping time such
  \begin{enumeratei}
    \item $W_0 = k$,
    \item $W_{t+1} - W_t \leq B$,
    \item $\var(W_{t+1} \mid \F_t) > \sigma^2 > 0$
      on the event $\tau > t$ .
  \end{enumeratei}
  If $u > 4 B^2/(3 \sigma^2)$, then $\P_k( \tau > u) \leq \frac{ 4 k }{\sigma \sqrt{u}}$.
\end{lemma}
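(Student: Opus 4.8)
The plan is to run a ``corrected second moment'' argument on the stopped process, combined with a maximal inequality to handle the excursions on which $W$ becomes large. Write $d_t \deq W_t-\E[W_{t+1}\mid\F_t]\ge 0$ for the (nonnegative) one-step drift of the supermartingale. The naive attempt --- showing that $W_t^2-\sigma^2 t$ is a submartingale --- fails, because expanding the square produces the term $-2W_td_t$ coming from the downward drift. The fix is to cancel it explicitly: set
\[
  Z_t \deq W_t^2 + 2\sum_{s=0}^{t-1} W_s d_s,
\]
and a one-line computation gives, on $\{\tau>t\}$, that $\E[Z_{t+1}-Z_t\mid\F_t]=\var(W_{t+1}\mid\F_t)+d_t^2>\sigma^2$. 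Consequently $Z_{t\wedge\tau}-\sigma^2(t\wedge\tau)$ is a submartingale with initial value $k^2$, and it is a.s.\ bounded for each fixed time horizon, so optional stopping applies freely.

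Next I would fix the threshold $L\deq\tfrac{\sqrt3}{2}\,\sigma\sqrt u$. If $k>L$ there is nothing to prove, since then $4k/(\sigma\sqrt u)>2\sqrt3>1$; so assume $k\le L$. The hypothesis $u>4B^2/(3\sigma^2)$ is precisely the statement that $L>B$, and this is where it enters. Now split
\[
  \P_k(\tau>u)\;\le\;\P\Big(\max_{0\le t\le u}W_{t\wedge\tau}\ge L\Big)\;+\;\P\Big(\tau>u,\ \max_{0\le t\le u}W_{t\wedge\tau}<L\Big).
\]
The first term is at most $k/L$ by the maximal inequality for the nonnegative supermartingale $(W_{t\wedge\tau})$. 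For the second, introduce $\sigma_L\deq\min\{t:W_t\ge L\}$ and $\rho\deq\tau\wedge\sigma_L$; the event in question is contained in $\{\rho>u\}$.

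To estimate $\P(\rho>u)$, stop the submartingale above at $\rho$: optional stopping at time $u$ gives $\E[Z_{u\wedge\rho}]\ge k^2+\sigma^2\E[u\wedge\rho]\ge\sigma^2u\,\P(\rho>u)$. For the reverse bound, note that because $W$ has increments at most $B$ and stays below $L$ up to time $\rho$, we have $W_{u\wedge\rho}<L+B$ a.s., hence $W_{u\wedge\rho}^2\le(L+B)W_{u\wedge\rho}$; and since $W_s<L$ for $s<\rho$, the correction term obeys $2\sum_{s<u\wedge\rho}W_sd_s\le 2L\tilde A$, where $\tilde A\deq\sum_{s<u\wedge\rho}d_s$ is the value at time $u$ of the compensator of the supermartingale $(W_{t\wedge\rho})$, so that $\E[\tilde A]=k-\E[W_{u\wedge\rho}]$. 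Using $L\ge B$ and $W\ge 0$,
\[
  \E[Z_{u\wedge\rho}]\;\le\;(L+B)\E[W_{u\wedge\rho}]+2L\big(k-\E[W_{u\wedge\rho}]\big)=2Lk+(B-L)\E[W_{u\wedge\rho}]\;\le\;2Lk.
\]
Therefore $\P(\rho>u)\le 2Lk/(\sigma^2u)$, and combining the two pieces,
\[
  \P_k(\tau>u)\;\le\;\frac{k}{L}+\frac{2Lk}{\sigma^2u}\;=\;\Big(\tfrac{2}{\sqrt3}+\sqrt3\Big)\frac{k}{\sigma\sqrt u}\;=\;\frac{5}{\sqrt3}\cdot\frac{k}{\sigma\sqrt u}\;<\;\frac{4k}{\sigma\sqrt u}.
\]

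The one genuinely delicate point is the calibration of $L$: it must be large enough that the compensator correction $2L\tilde A$ --- which we can only control through the bound $W<L$ --- does not overwhelm the $\sigma^2u$ growth of $Z$, yet small enough that the maximal-inequality term $k/L$ remains useful; and it must exceed $B$ so that the term $(B-L)\E[W_{u\wedge\rho}]$ can be discarded, which is exactly why the hypothesis reads $u>4B^2/(3\sigma^2)$. Everything else is routine: checking that stopping preserves the relevant (super/sub)martingale properties, and that hypothesis (iii) is available on $\{\rho>t\}\subseteq\{\tau>t\}$.
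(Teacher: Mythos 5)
Your proof is correct; I checked the key identities and they hold: indeed $\E[Z_{t+1}-Z_t\mid\F_t]=\var(W_{t+1}\mid\F_t)+d_t^2$, the Doob-decomposition step $\E[\tilde A]=k-\E[W_{u\wedge\rho}]$ is legitimate (all quantities are bounded over a finite horizon because $W_t\le k+tB$, so optional stopping at the bounded time $u\wedge\rho$ applies), the hypothesis $u>4B^2/(3\sigma^2)$ is used exactly where you say, to guarantee $L>B$ so that $(B-L)\E[W_{u\wedge\rho}]\le 0$, and the arithmetic gives $\frac{5}{\sqrt3}<4$. Note, however, that the paper does not prove this lemma at all -- it is quoted from Levin--Peres--Wilmer (Chapter 17, the hitting-time estimate for supermartingales with bounded upward increments and conditional variance bounded below), so there is no in-paper argument to compare against. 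Your route is essentially the standard textbook one underlying that citation: control the event that the nonnegative supermartingale ever climbs to a level $L$ of order $\sigma\sqrt u$ by Doob's maximal inequality, and on the complement run a second-moment/optional-stopping argument in which the compensator term (your $2\sum_s W_s d_s$) repairs the failure of $W_t^2-\sigma^2 t$ to be a submartingale; the only cosmetic difference is your explicit calibration $L=\tfrac{\sqrt3}{2}\sigma\sqrt u$, which in fact yields the slightly sharper constant $5/\sqrt3$ in place of $4$. So the proposal can stand as a self-contained proof of the quoted lemma.
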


\subsection{Monotone coupling}\label{sec:monotone coupling} A useful tool throughout our arguments is the \emph{monotone coupling} of two instances of the Glauber dynamics $(X_t)$ and $(\tX_t)$, which maintains a coordinate-wise inequality between the corresponding configurations. That is, given two configurations $\sigma \geq \tilde{\sigma}$ (i.e., $\sigma(i)\geq \tilde{\sigma}(i)$ for all $i$), it is possible to generate the next two states $\sigma'$ and $\tilde{\sigma}'$ by updating the same site in both, in a manner that ensures that $\sigma' \geq \tilde{\sigma}'$. More precisely, we draw a random variable $I$ uniformly over
$\{1, 2, \ldots, n\}$ and independently draw another random variable $U$ uniformly over $[0,1]$. To generate
$\sigma'$ from $\sigma$, we update site $I$ to $+1$ if $U \leq p^{+}\left(S(\sigma)-\frac{\sigma(I)}{n}\right)$,
 otherwise $\sigma'(I)=-1$. We perform an analogous process in order to generate $\tilde{\sigma}'$ from
$\tilde{\sigma}$, using the same $I$ and $U$ as before. The monotonicity of the function $p^{+}$ guarantees that
$\sigma' \geq \tilde{\sigma}'$, and by repeating this process, we obtain a coupling of the two instances of the
 Glauber dynamics that always maintains monotonicity.
Clearly, this coupling induces a monotone coupling for the two corresponding magnetization chains.

We say that a birth-and-death chain with a transition kernel $P$ and a state-space $\magspace=\{0,1,\ldots,n\}$ is
\emph{monotone} if $P(i,i+1) + P(i+1,i) \leq 1$ for every $i < n$. It is easy to verify that this condition is
equivalent to the existence of a monotone coupling between two instances of the chain. Hence, by the above discussion, the magnetization chain $S_t$ is indeed a monotone birth-and-death chain.

In addition, we will also need a monotone
coupling for the censored magnetization chain $\cS_t$. The only
questionable point is the state nearest to 0. Assuming that $n$ is even
 (the case where $n$ is odd follows from the same argument), this
question is reduced to the following: taking $\cS_0 = \frac{2}{n}$
and $\tilde{\cS}_0= 0$, can we construct a coupling such that
$\cS_1 \geq \tilde{\cS}_1$. This is indeed guaranteed by the fact that $
\cP_\textsf{M}(0, \frac{2}{n}) + \cP_\textsf{M} (\frac{2}{n}, 0) \leq
1$, hence the censored magnetization chain $\cS_t$ is monotone as well.

Note that there does \emph{not exist} a monotone coupling for the censored
Glauber dynamics. To see this, consider the case of $n$ even. Let $\sigma$ be
a configuration with $\cS(\sigma)=0$, and let $\tilde{\sigma}$ be a configuration
which differs from $\sigma$ in precisely one coordinate $i$ where $\sigma(i)=-1$.
Next, consider two instances of the censored Glauber
dynamics $\cX_t$ and $\tilde{\cX}_t$ starting from $\sigma$ and $\tilde{\sigma}$ resp.
By definition of the censored dynamics, with positive probability $\cX_1$
will flip $n-1$ spins, including all $n/2$ spins that were negative in $\sigma$.
Thus, in order to maintain monotonicity, $\tilde{\cX}_1$ must in this case update $\frac{n}{2}-1$
sites from minus to plus. However, the 1-step censored Glauber dynamics started from
$\tilde{\sigma}$ is exactly the same as the original Glauber
dynamics, where only one spin can be updated. We conclude that no monotone
coupling exists.

\section{Cutoff for the magnetization chain}\label{sec:mag}
The goal of this section is to establish cutoff for the censored magnetization
chain $\cS_t$, as stated in the following theorem:
\begin{theorem}\label{thm-cutoff-mag}
Let $\beta = 1+ \temp$, where $\temp > 0 $ satisfies $\temp^2 n
\to \infty$. Then the corresponding censored magnetization chain
$(\cS_t)$ exhibits cutoff at time $$t_n =
\left(\frac{1}{2}+\frac{1}{2(\zeta^2 \beta/\temp -
1)}\right)\frac{n}{\temp}\log(\temp^2 n)$$
 with a window of order $n/\temp$. In the special case $\cS_0=1$ (starting from the all-plus configuration), the cutoff has the same order of mixing-time and window, yet its constant is $[2(\zeta^2 \beta/\temp - 1)]^{-1}$.
\end{theorem}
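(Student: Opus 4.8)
\emph{The plan} is to prove matching lower and upper bounds on $\tmix$ for $(\cS_t)$ whose difference is $O(n/\temp)$, the asserted window. Since $(\cS_t)$ is a monotone birth-and-death chain (Section~\ref{sec:prelim}) and its stationary law $\tilde\mu$ is concentrated within $O(1/\sqrt{\temp n})$ of $\zeta$, it suffices to treat the two extreme starts $\cS_0=0$ and $\cS_0=1$: any intermediate start is squeezed between these by the monotone coupling, and the longer of the two hitting times of a fixed neighborhood of $\zeta$ governs the mixing. I will get the lower bound by exhibiting, from the worst start $\cS_0=0$, a time $t_n-C_\epsilon n/\temp$ at which $\P_0(\cS_t<\zeta-\delta)\ge 1-\epsilon$ --- whence $d_n\ge 1-\epsilon$ there, as $\tilde\mu$ sits near $\zeta$ --- and the upper bound by showing that from either extreme start, by time $t_n+C_\epsilon n/\temp$ (and by a comparably earlier time when $\cS_0=1$) the chain lies within $O(1/\sqrt{\temp n})$ of $\zeta$, after which it coalesces with a stationary copy within a further $O(n/\temp)$ steps.

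\emph{Hitting $\zeta$ from $\cS_0=0$} is the core estimate. The one-step drift is $\E[\cS_{t+1}-\cS_t\mid\cS_t=s]=\tfrac1n g_0(s)+O(n^{-2})$ with $g_0(s)=\tanh(\beta s)-s$; its small-$s$ expansion $g_0(s)=\temp s-\tfrac13 s^3+O(\temp s^3)$ gives geometric growth of $\E\cS_t$ at rate $\asymp\temp/n$ while $\cS_t\ll\sqrt\temp$, and near $\zeta$ one has $g_0(s)=-\temp(\zeta^2\beta/\temp-1)(s-\zeta)+O((s-\zeta)^2)$, i.e. geometric contraction to $\zeta$ at rate $\rho\asymp\tfrac\temp n(\zeta^2\beta/\temp-1)$ --- the hypothesis $\zeta^2\beta/\temp>1$ being exactly what makes $\rho>0$. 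I will partition $[0,\zeta]$ into $[0,n^{-1/4}]$, $[n^{-1/4},c\sqrt\temp]$ and $[c\sqrt\temp,\zeta]$ and produce times $T_i^\pm$ ($i=1,2,3$) with $T_i^+-T_i^-=O(n/\temp)$ such that with probability $\to 1$ the chain is below the $i$-th threshold at $T_i^-$ and above it at $T_i^+$ (the $T_i^-$ estimates feed the lower bound, the $T_i^+$ ones the progress needed for the upper bound). In the first segment a burn-in of $O(n/\temp)$ steps --- the only place the censoring is used, an uncensored walk near $0$ not being repelled from it --- drives $\cS_t$ to order $1/\sqrt{\temp n}$, the scale above which the drift beats the diffusion; from there (and again in the second segment, now freshly restarted at $n^{-1/4}$) I run the recursion $m_{t+1}=m_t+\tfrac1n\E g_0(\cS_t)$ for $m_t:=\E\cS_t$, using an a priori bound on $\var\cS_t$ and the resulting bound on $|\E\cS_t^3-m_t^3|=|3m_t\var\cS_t+\E(\cS_t-m_t)^3|$ to close it into $m_{t+1}\approx(1+\temp/n)m_t$ --- here $\temp^2 n\to\infty$ is precisely what renders $\var\cS_t$ negligible against the leading term --- followed by a concentration step about $m_t$ (Chebyshev, or Azuma for the martingale part). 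The $\tfrac n\temp\log(\temp\sqrt n)=\tfrac n{2\temp}\log(\temp^2 n)$ steps needed to climb from $1/\sqrt{\temp n}$ to $c\sqrt\temp$ contribute the cutoff constant $\tfrac12$ (matching the high-temperature regime of \cite{DLP}). In the third segment, starting $\asymp\sqrt\temp$ below $\zeta$ the chain stays in the contracting region for $\ge T_3^+$ steps (a supermartingale exit estimate), where correlation inequalities (FKG, together with the monotone coupling from a monotone configuration) give $\var\cS_t=O(1/(\temp n))$; hence $m_t$ contracts at rate $\rho$ towards $\zeta$, $\cS_{T_3^+}$ ends within $O(1/\sqrt{\temp n})$ of $\zeta$, and the $\tfrac1\rho\log(\temp\sqrt n)=\tfrac n{2\temp(\zeta^2\beta/\temp-1)}\log(\temp^2 n)$ steps this takes add the second constant $\tfrac1{2(\zeta^2\beta/\temp-1)}$, for a total of $t_n$.

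\emph{The case $\cS_0=1$ and the coalescence/upper bound.} From $\cS_0=1$ the same contraction analysis runs over $[\,\textrm{const},1]$: a first phase of $O(n/\temp)$ steps --- absorbable in the window, since $|g_0|=\Theta(1)$ away from $0$ and $\zeta$ --- brings $\cS_t$ down to $\asymp\sqrt\temp$ above $\zeta$, after which the third-segment contraction takes over verbatim; thus the hitting time of the $\zeta$-neighborhood from $\cS_0=1$ has cutoff constant $[2(\zeta^2\beta/\temp-1)]^{-1}$, and writing $t_n^{(1)}:=[2(\zeta^2\beta/\temp-1)]^{-1}(n/\temp)\log(\temp^2 n)$ for it, the matching lower bound (from $\cS_0=1$ the chain is above $\zeta+\delta$ at $t_n^{(1)}-C_\epsilon n/\temp$) gives the second assertion of the theorem. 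For the upper bound on mixing I combine the one-sided hitting estimates above --- which also place the chain from either start within $O(1/\sqrt{\temp n})$ of $\zeta$ by the stated times --- with a coalescence argument on $(\cS_t)$ itself: once $\cS_t$ is inside the contracting region, $W_t:=|\cS_t-\zeta|$ is a non-negative supermartingale with increments $B=O(1/n)$ and per-step variance $\sigma^2=\Theta(1/n^2)$ bounded below, so Lemma~\ref{lem-supermatingale-positive} --- run until $W_t$ hits the lattice point nearest $\zeta$, the low-probability exit of $\cS_t$ from the contracting region being handled separately --- gives hitting time $O(W_0^2/\sigma^2)=O(n/\temp)$ for that point; running two copies until each reaches it and then coupling them to move in lockstep yields coalescence within $O(n/\temp)$ of the cutoff point with probability $\to1$, hence $d_n(t_n+C_\epsilon n/\temp)\le\epsilon$.

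\emph{Main obstacle.} I expect the technical heart to be the variance and third-moment control in the exponential-growth regime $[1/\sqrt{\temp n},c\sqrt\temp]$: there the noise injected at each step is amplified by the same factor $\exp(\Theta(\temp t/n))\asymp\temp\sqrt n$ as the mean, so a crude bound fails to yield concentration, and one must bound $\var\cS_t$ and $\E\cS_t^3-m_t^3$ sharply and re-initialise at $n^{-1/4}$ to keep the amplification in check. A secondary difficulty is that no monotone coupling exists for the censored \emph{dynamics} (only for the censored magnetization chain), so the coalescence must be carried out at the level of $(\cS_t)$, with the supermartingale $W_t=|\cS_t-\zeta|$ confined to its contracting region throughout.
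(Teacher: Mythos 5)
Your outline follows the paper's own route almost exactly (the same three segments $[0,n^{-1/4}]$, $[n^{-1/4},c\sqrt{\temp}]$, $[c\sqrt{\temp},\zeta]$ plus the descent from $1$, the same $T_i^\pm$ sandwiching, the burn-in via censoring, the drift recursion with variance/third-moment control, FKG in the contracting region, and the two constants $\tfrac12$ and $[2(\zeta^2\beta/\temp-1)]^{-1}$), but the coalescence step -- which is what actually turns the one-sided hitting estimates into an upper bound on mixing -- has a genuine gap. You propose to run each copy until it hits ``the lattice point nearest $\zeta$'' and then ``couple them to move in lockstep.'' The two copies will in general hit that site at \emph{different} times, and neither copy can be frozen while the other catches up (each must take a step of its own law at every time), so hitting a common site at possibly different times gives no coalescence. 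Your single-chain supermartingale $W_t=|\cS_t-\zeta|$ does not repair this. The paper instead works with the \emph{difference} of the two chains: run them independently, show that $D_t=\cS_t-\tilde\cS_t$ is a supermartingale with increments $O(1/n)$ and per-step conditional variance $\Theta(1/n^2)$ as long as both chains stay above $\tfrac76\sqrt{\temp}$ (Lemma \ref{lem-D-t-supermartingale}, via the Mean Value Theorem applied to the drift function), apply Lemma \ref{lem-supermatingale-positive} to $\tfrac n2 D_t$ to bring the difference down to $2/n$ within $O(n/\temp)$ steps, and only then switch to the monotone coupling and finish by optional stopping using the strict negative drift $\E[D_{t+1}-D_t\mid\F_t]\le-\tfrac{\temp}{6n}D_t$. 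Note that one cannot simply run the whole argument under the monotone coupling, since there the conditional variance of $D_t$ degenerates as $D_t\to0$; the two-stage structure (independent until the difference is one lattice step, then monotone) is essential.

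Two secondary soft spots. First, in the growth regime your identity $\E\cS_t^3-m_t^3=3m_t\var\cS_t+\E(\cS_t-m_t)^3$ still leaves the centered third moment to be bounded, and the crude estimate $|\E(\cS_t-m_t)^3|\le 2\var\cS_t$ is too weak (at the top of the first segment it is of order $n^{-1/2}$ where one needs $n^{-3/4}$); the paper avoids this by running a dedicated recursion for $\E S_t^3-(\E S_t)^3$ itself, closed with FKG (Lemma \ref{lem-St-3rd-moment-exp-cube}). Second, for the all-plus start your justification ``$|g_0|=\Theta(1)$ away from $0$ and $\zeta$'' fails when $\temp=o(1)$: for $s-\zeta\in[\sqrt{\temp},O(1)]$ the drift is only $\Theta(\sqrt{\temp}\,(s-\zeta)^2)$, and pure linear contraction would cost an extra $\Theta\big(\frac n\temp\log\frac1\temp\big)$, which is not always absorbable in the window; the correct $O(n/\temp)$ bound for this descent comes from the quadratic term of the expansion around $\zeta$, as in the computation leading to \eqref{eq-side-result-T4}.
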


The next simple lemma, which appeared in \cite{DLP} and is a special case of a lemma of \cite{LLP}, illustrates the importance of the magnetization chain. We include its proof for completeness.
\begin{lemma}[\cite{DLP}*{Lemma 3.2}]\label{lem-all-plus-mag-full}
Let $(\cX_t)$ be an instance of the censored Glauber dynamics for
the mean field Ising model starting from the all-plus configuration,
namely, $\sigma_0 = \mathbf{1}$, and let $\cS_t = S(\cX_t)$ be its
magnetization chain. Then
\begin{equation}
  \label{eq-allplus-xt-st-equiv}
  \| \P_{\mathbf{1}}(\cX_t \in \cdot)- \mu_n\|_\mathrm{TV} = \| \P_{\mathbf{1}}(\cS_t \in \cdot)- \pi_n\|_\mathrm{TV}~,
\end{equation}
where $\pi_n$ is the stationary distribution of the censored
magnetization chain.
\end{lemma}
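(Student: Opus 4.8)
The plan is to exploit the fact that, when the censored chain is started from the fully symmetric configuration $\mathbf{1}$, the normalized magnetization is a \emph{sufficient statistic} for telling $\cX_t$ apart from equilibrium. The first step is the elementary observation that if $\phi$ and $\psi$ are probability measures on $\Omega=\{1,-1\}^{n}$ that are both \emph{exchangeable} (invariant under permuting the $n$ coordinates), then
\begin{equation*}
  \|\phi-\psi\|_{\mathrm{TV}}\;=\;\bigl\|\phi\circ S^{-1}-\psi\circ S^{-1}\bigr\|_{\mathrm{TV}},
\end{equation*}
where $\phi\circ S^{-1}$ denotes the law of the normalized magnetization $S$ under $\phi$. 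Indeed, for every attainable value $s$ the slice $\Omega_s=\{\sigma:S(\sigma)=s\}$ has some cardinality $N_s$, and exchangeability forces $\phi(\sigma)=\phi(S=s)/N_s$ and $\psi(\sigma)=\psi(S=s)/N_s$ for every $\sigma\in\Omega_s$; summing $|\phi(\sigma)-\psi(\sigma)|$ over $\sigma\in\Omega_s$ gives $|\phi(S=s)-\psi(S=s)|$, and summing over $s$ and dividing by $2$ yields the displayed identity.

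Granting this, the lemma follows once we check two facts, applied with $\phi=\P_{\mathbf{1}}(\cX_t\in\cdot)$ and $\psi$ the stationary distribution of the censored chain $(\cX_t)$ --- which is what the symbol $\mu_n$ on the left of \eqref{eq-allplus-xt-st-equiv} should be read to mean. First, both of these measures are exchangeable: the measure $\mu_n$ of \eqref{eq-mu(sigma)} depends on $\sigma$ only through $\sum_{x<y}\sigma(x)\sigma(y)=\tfrac{1}{2}\bigl(n^{2}S(\sigma)^{2}-n\bigr)$, hence is constant on each slice $\Omega_s$, and since the global spin flip $\sigma\mapsto-\sigma$ that appears in the censoring rule commutes with coordinate permutations, the censored stationary distribution inherits this exchangeability; meanwhile $\P_{\mathbf{1}}(\cX_t\in\cdot)$ is exchangeable for every $t$ by induction on $t$, the base case being that the point mass at $\mathbf{1}$ is exchangeable and the inductive step being that the one-step kernel of the censored dynamics is equivariant under coordinate permutations (the choice of a uniform site to update, and the subsequent flip when the magnetization is negative, both commute with relabeling the coordinates), so an exchangeable law at time $t$ stays exchangeable at time $t+1$. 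Second, the pushforward under $S$ of the censored stationary distribution is exactly $\pi_n$: since $\cS_t=S(\cX_t)$ is itself a Markov chain with transition rule \eqref{eq-magnet-cens-transit}, the image under the lumping map $S$ of a stationary measure of $(\cX_t)$ is stationary for $(\cS_t)$, and as $(\cS_t)$ is an irreducible finite chain its stationary measure is unique and equals $\pi_n$.

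Putting these together and applying the displayed total-variation identity with $\phi=\P_{\mathbf{1}}(\cX_t\in\cdot)$, $\psi=\mu_n$ and the map $S$, and using $S(\cX_t)=\cS_t$, gives exactly \eqref{eq-allplus-xt-st-equiv}. I do not anticipate a genuine obstacle: the lemma is essentially bookkeeping, and the only points worth a line of care are verifying that the censoring operation respects coordinate permutations, so that exchangeability is truly preserved along the trajectory of $(\cX_t)$, and observing that the magnetization slice $\Omega_0$, which is non-empty when $n$ is even, is handled on exactly the same footing as the other slices by the sufficient-statistic argument.
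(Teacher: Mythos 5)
Your proof is correct and is essentially the paper's argument: the paper also observes that, by symmetry, both the stationary measure and $\P_{\mathbf{1}}(\cX_t\in\cdot\mid\cS_t=s)$ are uniform on each magnetization slice $\Omega_s$, and then sums the total-variation distance slice by slice, which is exactly your exchangeability/sufficient-statistic computation (and your reading of $\mu_n$ as the censored stationary measure matches the paper's intent). No gaps; the extra care you take (induction for exchangeability of the time-$t$ law, uniqueness of the lumped chain's stationary measure) is just a more explicit rendering of the paper's one-line symmetry appeal.
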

\begin{proof}
For any $s \in \{0, \frac{2}{n},\ldots,1-\frac{2}{n},1\}$, let
$\Omega_s \deq \{\sigma \in \Omega: S(\sigma) = s\}$. Since by
symmetry, both $\mu_n(\cdot \mid \Omega_s)$ and
$\P_{\mathbf{1}}(\cX_t \in \cdot \mid \cS_t = s)$ are uniformly
distributed over $\Omega_s$, the following holds:
\begin{align*}\|\P_{\mathbf{1}}(\cX_t\in\cdot)-\mu_n\|_\mathrm{TV} &= \frac{1}{2}\sum_{s} \sum_{\sigma\in\Omega_s} \left|\P_{\mathbf{1}} (\cX_t = \sigma) - \mu_n(\sigma)\right| \\
&= \frac{1}{2}\sum_s \sum_{\sigma\in\Omega_s}
\Big|\frac{\P_\mathbf{1} (\cS_t = s)}{|\Omega_s|}
 - \frac{\mu_n(\Omega_s)}{|\Omega_s|}\Big| \\
&= \|\P_\mathbf{1}(\cS_t\in\cdot)-\pi_n\|_\mathrm{TV}~.\qedhere
\end{align*}
\end{proof}

Combining Theorem \ref{thm-cutoff-mag} with the above lemma immediately establishes cutoff for the censored dynamics starting from the all-plus configuration.
\begin{corollary}
Let $\temp > 0$ be such that $\temp^2 n \to \infty$, and let
$(\cX_t)$ denote the censored Glauber dynamics for the mean-field
Ising model with parameter $\beta = 1 + \temp$, started from
all-plus configuration. Then $(\cX_t)$ exhibits cutoff at time
$[2(\zeta^2 \beta/\temp - 1)]^{-1}\frac{n}{\temp}\log(\temp^2 n)$
  with a window of order $n/\temp$.
\end{corollary}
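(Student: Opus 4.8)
The plan is to observe that this corollary is an immediate consequence of Theorem \ref{thm-cutoff-mag} together with Lemma \ref{lem-all-plus-mag-full}, with essentially no additional work. Recall that the statement ``the censored dynamics started from $\mathbf{1}$ exhibits cutoff at $t_n$ with window $w_n$'' means precisely that, writing $d_{\mathbf 1}(t) \deq \|\P_{\mathbf 1}(\cX_t \in \cdot) - \mu_n\|_\mathrm{TV}$, one has $w_n = o(t_n)$ and, for every $0 < \epsilon < 1$, the difference $\min\{t : d_{\mathbf 1}(t) \le \epsilon\} - \min\{t : d_{\mathbf 1}(t) \le 1-\epsilon\}$ is $O(w_n)$.

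First I would invoke Lemma \ref{lem-all-plus-mag-full}, which gives the pointwise identity $\|\P_{\mathbf 1}(\cX_t \in \cdot)- \mu_n\|_\mathrm{TV} = \|\P_{\mathbf 1}(\cS_t \in \cdot)- \pi_n\|_\mathrm{TV}$ for every $t \ge 0$; in other words, the total-variation decay profile of the full censored dynamics from the all-plus start coincides, as a function of $t$, with that of the censored magnetization chain started from $\cS_0 = 1$. Consequently the hitting times of any level $\epsilon \in (0,1)$ by the two profiles are equal, and hence $(\cX_t)$ from $\mathbf 1$ has cutoff at a given point with a given window if and only if $(\cS_t)$ from $\cS_0 = 1$ does.

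Next I would apply the $\cS_0 = 1$ case of Theorem \ref{thm-cutoff-mag}, which asserts exactly that $(\cS_t)$ started at $1$ has cutoff at $t_n = [2(\zeta^2 \beta/\temp - 1)]^{-1}\frac{n}{\temp}\log(\temp^2 n)$ with window of order $n/\temp$. Transporting this through the identity above yields the claimed cutoff for $(\cX_t)$ from the all-plus configuration at the same point and with the same window. The only point left to note — and it is trivial — is that the window is genuinely of smaller order than the cutoff point: since $\temp^2 n \to \infty$ we have $\log(\temp^2 n) \to \infty$, so $n/\temp = o\!\left((n/\temp)\log(\temp^2 n)\right) = o(t_n)$, as required by the definition of cutoff.

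Since the argument merely quotes the two earlier results, there is no real obstacle here; all of the difficulty is concentrated in the proof of Theorem \ref{thm-cutoff-mag} itself, in particular in the delicate three-segment hitting-time analysis of the magnetization chain and the coalescence coupling outlined in Section \ref{sec:outline}.
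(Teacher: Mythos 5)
Your proposal is correct and matches the paper's argument exactly: the paper also deduces this corollary immediately by combining the identity of Lemma \ref{lem-all-plus-mag-full} with the $\cS_0=1$ case of Theorem \ref{thm-cutoff-mag}. Your added remark that $n/\temp = o(t_n)$ because $\log(\temp^2 n)\to\infty$ is a fine (if implicit in the paper) sanity check.
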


In order to prove Theorem \ref{thm-cutoff-mag}, we consider 4 phases for the censored magnetization chain. For each phase, we select a pair of
times, $T_i^+$ and $T_i^-$, whose difference can be absorbed into the cutoff window; we then establish
that with probability arbitrarily close to $1$, the magnetization at $T_i^-$ is smaller than some
given target value, whereas at $T_i^+$ it is larger than this value. That is, a given value is typically being
sandwiched by the magnetization at the two time-points $T_i^-$ and $T_i^+$, and this allows us to continue the analysis
with this value serving as the new starting point of the magnetization chain.
For instance, in the first phase, we start from $\cS_0=0$ and the above mentioned target value for the magnetization is $n^{-1/4}$, hence this phase is referred to as ``Getting from $0$ to $n^{-1/4}$'', and studied in Subsection \ref{subsec-0-to-n-1/4}. The remaining 3 phases appear in Subsections \ref{subsec-n-1/4-to-sqrt-temp}, \ref{subsec-sqrt-temp-to-zeta} and \ref{subsec-1-to-zeta}.

For the sake of simplicity, we assume throughout the section that
$\temp=o(1)$, as this case captures most of the difficulties in
establishing the cutoff points. Section \ref{sec:temp-fixed}
contains the changes one needs to make in order for the
proof to hold in the (simpler) case of $\temp$ fixed.

Set $\beta =
1 + \temp$ and $\temp^2 n \to \infty$. Let $\zeta$ denote the
unique positive solution to $\tanh(\beta x)=x$, and notice that
the Taylor expansion
\begin{equation}
  \label{eq-taylor-tanh}
  \tanh(\beta x) = \beta x - \frac{1}{3}(\beta
x)^3 + O((\beta x)^5)
\end{equation} implies that whenever $\temp = o(1)$ we get
$$\zeta = \sqrt{3\temp/\beta^3 - O((\beta \zeta))^5} = \sqrt{3\temp} + O(\temp^{3/2})~.$$


\subsection{Getting from $0$ to $n^{-1/4}$}\label{subsec-0-to-n-1/4}
In this subsection, we address the issue of reaching a magnetization of $n^{-1/4}$ from $\cS_0=0$.
\begin{theorem}\label{thm-reach-0-n^-1/4}
Define
\begin{eqnarray*}
  &T_1 \deq \frac14 (n/\temp)\log (\temp^2 n)~,\\
  &T_1^+ (\gamma) \deq T_1 + \gamma n/\temp\quad,\quad T_1^-(\gamma) \deq T_1 -\gamma n/\temp~.
\end{eqnarray*} The following holds for the censored magnetization chain
$\cS_t$:
\begin{align}
&\lim_{\gamma \to\infty} \liminf_{n\to\infty} \P_0(\cS_{T_1^+ (\gamma)} \geq n^{-1/4}) =1~,\label{eq-upper-bound-lemma-reach-1}\\
&\lim_{\gamma \to\infty} \limsup_{n\to\infty} \P_0(\cS_{T_1^-
(\gamma)} \geq n^{-1/4})=0~. \label{eq-lower-bound-lemma-reach-1}
\end{align}
\end{theorem}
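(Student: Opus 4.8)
The plan is to split the passage from $0$ to $n^{-1/4}$ into two stages: a ``burn-in'' stage of length $O(n/\temp)$ which brings the chain up to scale $1/\sqrt{\temp n}$, and a ``drift'' stage during which $\E\cS_t$ grows essentially like $(1+\Theta(\temp/n))^t$ until it reaches $n^{-1/4}$. For the burn-in, I would use the censoring: near $0$ the chain, when reflected, behaves like a random walk with a small positive push (the expected increment at $s$ is, via \eqref{eq-magnet-transit}–\eqref{eq-magnet-cens-transit} and \eqref{eq-taylor-tanh}, of order $(\temp/n)s$ plus a reflection term at $0$ of order $1/n$), so starting from $\cS_0=0$ the chain is pushed off $0$ within $O(n/\temp)$ steps and, with probability as close to $1$ as we like (uniformly after enough steps), attains a value of order $1/\sqrt{\temp n}$. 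Formally I would bound the hitting time of the level $c/\sqrt{\temp n}$ by comparing with a biased birth-and-death chain and using standard hitting-time estimates (or a second-moment/variance argument), absorbing the $O(n/\temp)$ length into the window parameter $\gamma$.

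For the drift stage, the key computation is that for $\cS_t$ in the range $[n^{-1/2},n^{-1/4}]$ we have, writing $a_t=\E\cS_t$,
\[
\E[\cS_{t+1}\mid\cS_t] = \cS_t\left(1+\tfrac{\temp}{n}\right) - O\!\left(\tfrac{\cS_t^3}{n}\right),
\]
using $p^{\pm}$, $\tanh(\beta s)=\beta s - \tfrac13(\beta s)^3+O((\beta s)^5)$, and $\beta=1+\temp$. Iterating gives $a_{t}\approx a_0\exp(\temp t/n)$ up to multiplicative corrections controlled by the cubic term, which is negligible while $\cS_t\le n^{-1/4}$ since then $\cS_t^3/n \ll (\temp/n)\cS_t$ (this uses $\temp^2 n\to\infty$, so $\temp \gg n^{-1/2} \gg n^{-1/2}$ against $n^{-1/2}=(n^{-1/4})^2$; one checks $n^{-1/2}=o(\temp)$). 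Starting from $a_0\asymp 1/\sqrt{\temp n}$, the time to reach $a_t\asymp n^{-1/4}$ is
\[
\frac{n}{\temp}\log\!\left(\frac{n^{-1/4}}{1/\sqrt{\temp n}}\right) = \frac{n}{\temp}\log\!\left(\sqrt{\temp}\, n^{1/4}\right) = \tfrac14\frac{n}{\temp}\log(\temp^2 n) + O(n/\temp),
\]
which matches $T_1$ up to the window. The two inequalities \eqref{eq-upper-bound-lemma-reach-1} and \eqref{eq-lower-bound-lemma-reach-1} then follow once we upgrade this expectation statement to a concentration statement for $\cS_t$ itself.

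That upgrade is the main obstacle, and it requires exactly the two ingredients flagged in the outline. First, an upper bound $\var\cS_t = O(\text{something} \ll (\E\cS_t)^2)$ throughout this phase — obtained by writing a recursion for $\var\cS_t$ from \eqref{eq-magnet-transit}, noting the per-step variance is $O(1/n^2)$ and the multiplicative drift factor is $1+O(\temp/n)$, so $\var\cS_t = O(\E\cS_t / n)$ roughly, which is $o((\E\cS_t)^2)$ precisely because $\E\cS_t \gg 1/\sqrt{\temp n} \gg 1/n$. Second, when we iterate the drift we must replace $\E[\cS_{t+1}^3 - \cS_t^3]$-type error terms by $(\E\cS_t)^3$; bounding $|\E\cS_t^3 - (\E\cS_t)^3|$ via the variance bound and Cauchy–Schwarz (or a direct third-moment recursion) lets us close the iteration with control on the accumulated multiplicative error, keeping it $1+o(1)$. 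With the variance bound in hand, Chebyshev gives $\cS_{T_1^+(\gamma)}\ge n^{-1/4}$ with probability $\to 1$ as $\gamma\to\infty$; for the lower bound \eqref{eq-lower-bound-lemma-reach-1}, one runs the same drift estimate from above — or uses monotonicity and the fact that, started from a value of order $n^{-1/4}$ exactly, the chain cannot have crossed $n^{-1/4}$ much before $T_1$ — combined with a union bound over the (polynomially many) steps before $T_1^-(\gamma)$ using Chebyshev at each step, to show the chain is still below $n^{-1/4}$ at time $T_1^-(\gamma)$ with probability $\to 1$ as $\gamma\to\infty$. The delicate point throughout is that the drift is only barely larger than the noise, so every error term must be tracked to relative precision $o(1)$ over a time horizon of $\Theta((n/\temp)\log(\temp^2 n))$ steps; this is why the phase is split, and why the starting scale $1/\sqrt{\temp n}$ (rather than, say, $1/n$) is chosen.
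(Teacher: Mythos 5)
Your two-stage skeleton (burn-in to scale $1/\sqrt{\temp n}$, then exponential drift up to $n^{-1/4}$ via the Taylor expansion of $\tanh$, with a variance recursion, control of $\E\cS_t^3-(\E\cS_t)^3$, and a one-shot first/second moment bound at $T_1^-$) is exactly the paper's strategy, but the concentration step has a genuine gap. The variance bound you assert, $\var\cS_t=O(\E\cS_t/n)$, is false: the per-step conditional variance is $\Theta(1/n^2)$ uniformly, so the recursion only yields $\var S_t \le \frac{c}{\temp n}\left(1+\frac{2\temp}{n}\right)^t$ (Lemma \ref{lem-var-bound-0-n-1/4}), which at $t\approx T_1$ is of order $n^{-1/2}$, i.e.\ the \emph{same} order as $(n^{-1/4})^2$, not smaller. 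Consequently ``Chebyshev gives $\cS_{T_1^+(\gamma)}\ge n^{-1/4}$ with probability $\to 1$ as $\gamma\to\infty$'' does not follow; making $\gamma$ larger only inflates the variance bound by $e^{2\gamma}$. What closes the argument in the paper is the free parameter $A$ from the burn-in level $A/\sqrt{\temp n}$: starting the drift phase there, the mean $2n^{-1/4}$ is reached already at the earlier time $T_1^*$ (shortened by $\frac{n}{4\temp}\log(A/2)$), where the variance bound carries a factor decaying in $A$, so Chebyshev gives a failure probability vanishing as $A\to\infty$ (one then takes $\gamma=A^3$); and since the burn-in ends at a random time in $[0,\gamma n/\temp]$, the estimate must hold uniformly in the residual $\ell\le\gamma n/\temp$ steps, which the paper handles by restarting a fresh chain at $\frac32 n^{-1/4}$ at time $T_1^*$ and invoking the monotone coupling (Lemma \ref{lem-hit-A-good-n--1/4}). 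Neither device appears in your sketch.

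Second, your primary route for the cubic correction, bounding $|\E\cS_t^3-(\E\cS_t)^3|$ by the variance via Cauchy--Schwarz, is too weak in part of the admissible regime. It only gives $W_{s_0}(t)=O(\var S_t)=O\bigl(\frac{1}{\temp n}e^{2t\temp/n}\bigr)$, and then the per-step error $W/n$ versus the per-step drift $\frac{\temp}{n}s_t$ has ratio of order $\bigl(A(\temp^4 n)^{1/4}\bigr)^{-1}$ at $t\approx T_1$, which blows up when, e.g., $\temp=\log n/\sqrt n$ (allowed, since only $\temp^2 n\to\infty$ is assumed); the drift iteration then no longer shows $s_t$ increases. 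The paper's Lemma \ref{lem-St-3rd-moment-exp-cube} proves the stronger bound $W_{s_0}(t)\le \frac{c}{\temp n}\bigl(s_0+\frac1n\bigr)e^{3t\temp/n}$ by a direct recursion for $\E S_t^3$ using FKG, crucially exploiting the small starting point $s_0\asymp A/\sqrt{\temp n}$; this is what makes the accumulated error $O(An^{-3/4})$, negligible against the drift. You name a ``direct third-moment recursion'' only as an alternative, but it is in fact necessary, and its strength comes precisely from the $s_0$ factor your sketch does not capture. (The burn-in and the bound at $T_1^-$ are fine: the paper does the former via the non-negative supermartingale hitting estimate, Lemma \ref{lem-supermatingale-positive}, and for the latter a single Chebyshev at time $T_1^-(\gamma)$ suffices, with no union bound over earlier times.)
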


\subsubsection{Proof of \eqref{eq-upper-bound-lemma-reach-1}: Lower bound of $n^{-1/4}$ for $\cS_{T_1^+}$}
To establish the mentioned lower bound on $\cS_{T_1^+}$, we first show that within some negligible burn-in period,
the censored magnetization chain $\cS_t$ will hit near $A/\sqrt{\temp n}$.
\begin{lemma}\label{lem-hit-A-good-state}
There exists some constant $c > 0$ such that the following holds: For any $A,\gamma > 0$, the censored magnetization chain $\cS_t$
started at $\cS_0\in\{0,\frac1n\}$ will hit $A/\sqrt{\temp n}$ within
$\gamma n /\temp$ steps with probability at least $1- c A /\sqrt{\gamma}$.
\end{lemma}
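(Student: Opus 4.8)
The plan is to treat the censored magnetization $\cS_t$, up to the first time $\tau:=\min\{t:\cS_t\ge h\}$ that it reaches the level $h:=A/\sqrt{\temp n}$, as a reflected, nearly symmetric random walk of step size $2/n$, and to apply the supermartingale hitting-time estimate of Lemma~\ref{lem-supermatingale-positive} to the process $W_t:=h+\tfrac{2}{n}-\cS_{t\wedge\tau}$. By the monotone coupling for the censored chain (Section~\ref{sec:monotone coupling}) it suffices to start at the reflecting bottom state, so $W_0=k$ with $k=(1+o(1))A/\sqrt{\temp n}$. Since $h=o(1)$ while $\temp^2 n\to\infty$, the three inputs Lemma~\ref{lem-supermatingale-positive} requires — that $W_t$ be a non-negative supermartingale, a bound on its increments, and a lower bound on its conditional variance before $\tau$ — can all be read off from the transition rules \eqref{eq-magnet-transit}--\eqref{eq-magnet-cens-transit} and the Taylor expansion \eqref{eq-taylor-tanh}. (We take $A,\gamma$ fixed and $n$ large; the claim is vacuous when $cA/\sqrt\gamma\ge 1$.)

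The one substantive point is that $\cS_t$ has non-negative one-step drift at every state of $[0,h)$, so that $W_t$ is genuinely a supermartingale. In the bulk ($2/n\le s<h$) this drift equals $\tfrac{2}{n}\big(\cP_\textsf{M}(s,s+\tfrac{2}{n})-\cP_\textsf{M}(s,s-\tfrac{2}{n})\big)$, and expanding $p^{\pm}$ via $\tanh(\beta x)=\beta x-\tfrac{1}{3}(\beta x)^3+O((\beta x)^5)$ shows it equals $(1+o(1))\,\temp s/n\ge 0$, the error being \emph{uniform} over $s<h$ because the cubic correction is $O(\beta^3 h^2)=O\!\big(A^2/(\temp n)\big)=o(\temp)$ — this is precisely where the hypothesis $\temp^2 n\to\infty$ enters. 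At the reflecting bottom state ($s=0$ for $n$ even, $s=1/n$ for $n$ odd) the censored chain can only move up or hold, so its drift is $\asymp 1/n>0$. The extra $\tfrac{2}{n}$ in $W_t$ absorbs the at-most-$\tfrac{2}{n}$ overshoot of $\cS_{\tau}$ past $h$, keeping $W_t\ge 0$ for all $t$.

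The remaining two inputs are routine. One has $|W_{t+1}-W_t|\le\tfrac{2}{n}=:B$, and on $\{\tau>t\}$ we have $\cS_t<h=o(1)$, so each factor $\tfrac{1\pm s}{2}$ and $p^{\pm}(s\pm\tfrac{1}{n})$ in \eqref{eq-magnet-transit} is within $o(1)$ of $\tfrac{1}{2}$; hence each move by $\pm\tfrac{2}{n}$ has probability $\tfrac{1}{4}+o(1)$ in the bulk (and the up-move has probability $\tfrac{1}{2}+o(1)$ at the bottom state), which gives $\var(\cS_{t+1}\mid\F_t)\ge\tfrac{1}{2n^2}=:\sigma^2$ uniformly over such states for $n$ large.

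Finally, applying Lemma~\ref{lem-supermatingale-positive} with $u=\gamma n/\temp$ — whose hypothesis $u>4B^2/(3\sigma^2)=\Theta(1)$ holds for $n$ large since $n/\temp\to\infty$ — yields
$$\P_{\cS_0}\big(\tau>\gamma n/\temp\big)\;\le\;\frac{4k}{\sigma\sqrt{u}}\;=\;\frac{4\sqrt{2}\,kn}{\sqrt{\gamma n/\temp}}\;=\;(4\sqrt{2}+o(1))\,\frac{A}{\sqrt{\gamma}}\,,$$
using $kn/\sqrt{\gamma n/\temp}=(1+o(1))A/\sqrt{\gamma}$. Since $\cS_t$ moves in steps of $\pm\tfrac{2}{n}$, the event $\{\tau\le\gamma n/\temp\}$ is exactly the event that $\cS_t$ reaches a value $\ge A/\sqrt{\temp n}$ within $\gamma n/\temp$ steps, so this proves the lemma with, say, $c=6$. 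The main obstacle is the drift-sign analysis of the second paragraph and the uniformity (over all of $[0,h)$) of the accompanying error terms; everything else is bookkeeping around Lemma~\ref{lem-supermatingale-positive}.
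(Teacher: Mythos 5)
Your proposal is correct and takes essentially the same route as the paper: show that $\cS_t$ has non-negative drift below the target level, so that the stopped process $A/\sqrt{\temp n}-\cS_{t\wedge\tau}$ is a non-negative supermartingale with increments of size $O(1/n)$ and conditional variance of order $1/n^2$ (from the uniformly bounded holding probabilities), then apply Lemma \ref{lem-supermatingale-positive} with $u=\gamma n/\temp$ and $k\asymp A/\sqrt{\temp n}$ to get the bound $cA/\sqrt{\gamma}$. The additional care you take at the reflecting bottom state and with the uniformity of the Taylor error terms merely spells out details the paper disposes of via the inequality $\tanh(\beta s)\ge s$ on $[0,\zeta]$.
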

\begin{proof}
The transition probabilities of the censored
magnetization chain, as given in  \eqref{eq-magnet-transit} and \eqref{eq-magnet-cens-transit},
together with the fact that $\tanh (\beta s) \geq s$ for $0\leq s\leq
\zeta$, imply that $\cS_t$ is a
non-negative submartingale. Thus, $A/\sqrt{\temp n} -
\cS_t$ is immediately a supermartingale. Recalling that the holding
probability for the magnetization chain is bounded uniformly from
below and above, we infer that the conditional variance at each
step is bounded uniformly from below. Therefore, upon defining $$\tau_{A /\sqrt{\temp
n}} \deq \min \{t: \cS_t \geq A /\sqrt{\temp n}\}$$
we may apply Lemma \ref{lem-supermatingale-positive} and obtain that for some
absolute constant $c> 0$,
\begin{equation*}
\P_{0}\left(\tau_{A /\sqrt{\temp n}} \geq \gamma
\frac{n}{\temp}\right) \leq \frac{c A}{\sqrt{\gamma}}~. \qedhere
\end{equation*}
\end{proof}

Next, we can assume that the chain is started from $A /\sqrt{\temp
n}$. With this assumption, we can simply approach the censored
magnetization chain $\cS_t$ via the original magnetization chain
$S_t$, as shown in the following.

We first establish an upper bound for the variance of the
magnetization.
\begin{lemma}\label{lem-var-bound-0-n-1/4}
Let $(S_t)$ be a magnetization chain with some arbitrary starting position $s_0$. Then for some absolute constant $c > 0$, the following holds:
\begin{equation}
  \label{eq-var-st-bound}
  \var_{s_0} S_t \leq \frac{5}{n^2} \sum_{j=0}^{t-1} \left(1+\frac{2\temp}{n}\right)^j \leq
  \frac{c}{\temp n}\left(1+\frac{2\temp}{n}\right)^t~.
\end{equation}
\end{lemma}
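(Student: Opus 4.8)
The plan is to close a one-step recursion for $v_t \deq \var_{s_0}(S_t)$ using the law of total variance. Write $F(s)\deq \E_s[S_1]$ for the one-step expected magnetization (a smooth function of $s$, obtained from the transition rule \eqref{eq-magnet-transit}). Conditioning on $S_t$,
\[
v_{t+1} \;=\; \E\big[\var(S_{t+1}\mid S_t)\big] + \var\big(F(S_t)\big) \;\le\; \frac{4}{n^2} + \var\big(F(S_t)\big)~,
\]
since $|S_{t+1}-S_t|\le 2/n$ deterministically gives $\var(S_{t+1}\mid S_t)\le\E[(S_{t+1}-S_t)^2\mid S_t]\le 4/n^2$. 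Everything then hinges on bounding $\var(F(S_t))$ in terms of $v_t$.

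The key step is to show that $F$ is $(1+\temp/n)$-Lipschitz. By the monotone coupling of Section \ref{sec:monotone coupling}, two magnetization chains started from $s\le s'$ can be coupled so that $S_1\le S_1'$; hence $F$ is non-decreasing, and it suffices to bound $F'$ from above. The drift $F(s)-s$ is $O(1/n)$, and a short computation from \eqref{eq-magnet-transit} --- using that $x\mapsto\tanh(\beta x)$ is increasing with derivative at most $\beta$ --- shows that this drift has derivative at most $\temp/n$ (and at least $-1/n-O(1/n^2)$); consequently $0<F'(s)\le 1+\temp/n$ for large $n$, so $0\le F(b)-F(a)\le(1+\temp/n)(b-a)$ whenever $a\le b$, and in particular $|F(a)-F(b)|\le(1+\temp/n)|a-b|$ for all $a,b$. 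Taking an independent copy $S_t'$ of $S_t$ then gives
\[
\var\big(F(S_t)\big) = \tfrac12\,\E\big[(F(S_t)-F(S_t'))^2\big] \;\le\; \tfrac12(1+\temp/n)^2\,\E\big[(S_t-S_t')^2\big] = (1+\temp/n)^2\,v_t~.
\]

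Plugging this into the first display,
\[
v_{t+1} \;\le\; \frac{4}{n^2} + (1+\temp/n)^2 v_t \;=\; \Big(1+\frac{2\temp}{n}\Big)v_t + \frac{4}{n^2} + \frac{\temp^2}{n^2}v_t \;\le\; \Big(1+\frac{2\temp}{n}\Big)v_t + \frac{5}{n^2}~,
\]
where in the last step I use $v_t\le 1$ (as $S_t\in[-1,1]$) and $\temp<1$. Since $v_0=0$, iterating this recursion yields $v_t\le \tfrac{5}{n^2}\sum_{j=0}^{t-1}(1+\tfrac{2\temp}{n})^j$, which is the first inequality of the lemma; the second then follows from the geometric-sum bound $\sum_{j=0}^{t-1}(1+\tfrac{2\temp}{n})^j\le \tfrac{n}{2\temp}(1+\tfrac{2\temp}{n})^t$, so any $c\ge 5/2$ works. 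The only part requiring genuine work is the Lipschitz estimate for $F$ --- i.e., controlling by how much the expected-magnetization map can expand distances --- which is exactly where the near-critical choice $\beta=1+\temp$ is used, and where the bound $F'\le 1+\temp/n$ has comfortable slack. The remainder is routine bookkeeping with the law of total variance, with the crude inequality $\var(S_t)\le 1$ doing the job of absorbing the $O(1/n^2)$ correction into the constant $5$.
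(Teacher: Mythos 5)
Your proof is correct, but it takes a genuinely different route from the paper. The paper derives separate one-step recursions for $\E S_t^2$ and $(\E S_t)^2$ from \eqref{eq-magnet-transit}, and the crux there is a correlation (FKG/Chebyshev) inequality: since $f(s)=\tanh(\beta s)-\beta s$ is decreasing, $\E[S_t f(S_t)]\le \E S_t\,\E f(S_t)$, which kills the cross term and yields $\var S_{t+1}\le(1+\tfrac{2\temp}{n})\var S_t+O(n^{-2})$. You instead decompose via the law of total variance and bound $\var(F(S_t))\le \lip(F)^2\,\var(S_t)$ for the one-step drift map $F(s)=\E_s[S_1]$, in the spirit of the contraction arguments used at high temperature; both roads end at the same recursion, and your version is arguably more modular since it needs no covariance sign, only a pointwise derivative bound on $F$. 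One small imprecision: by \eqref{eq-St-1st-moment-tanh} the exact one-step mean contains, besides $s+\tfrac1n(\tanh(\beta s)-s)$, a term of the form $-\tfrac{s}{2n}\bigl[\tanh(\beta(s+n^{-1}))-\tanh(\beta(s-n^{-1}))\bigr]$, whose $s$-derivative is $O(1/n^2)$ but not obviously $\le 0$ for all $s$; so what the quoted computation gives directly is $F'(s)\le 1+\tfrac{\temp}{n}+O(n^{-2})$ rather than exactly $1+\tfrac{\temp}{n}$ (the cleaner bound does hold for large $n$, because near $s=0$, where $\beta\operatorname{sech}^2(\beta s)-1$ is close to $\temp$, that extra term is negative, while away from $0$ the main term is bounded away from $\temp$). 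This is harmless: squaring $1+\tfrac\temp n+O(n^{-2})$ still gives the factor $1+\tfrac{2\temp}{n}$ plus an $O(n^{-2})v_t\le O(n^{-2})$ correction, so only the additive constant in the intermediate display (your $5$) may grow, and the lemma's final bound with a generic absolute $c$ --- which is also all the paper's own proof establishes --- is unaffected.
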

\begin{remark*} Unlike the high temperature regime, where (using the contraction property of the magnetization chain) the variance can be uniformly bounded from above for all
$t$, the above bound on the variance grows with $t$. Although this bound is not sharp, it will suffice for our purposes.
\end{remark*}
\begin{proof}
The censored magnetization chain does not exhibit contraction properties in the low temperature regime, and so our argument will follow from tracking the change in the variance after each additional step. To this end, we
first establish two recursion relations, for $(\E S_t)^2$ and $\E S_t^2$ respectively. By \eqref{eq-magnet-transit}, we get
that
\begin{align}
&\E \left [ S_{t+1} \mid S_t = s \right ] \nonumber\\
&= \left(s+\frac{2}{n}\right)P_M\left(s, s+\frac{2}{n}\right) + sP_M(s,s) + \left(s-\frac{2}{n}\right)P_M\left(s, s-\frac{2}{n}\right) \nonumber\\
&= \left(1+\frac{\temp}{n}\right)s + \frac{1}{n}\left(\tanh(\beta
s) - \beta
s\right)-\left|O\left(\frac{s}{n^2}\right)\right|~.\label{eq-St-1st-moment-tanh}
\end{align}
Taking expectation and squaring, we obtain that
\begin{align}\label{eq-exp-St-square}
(\E S_{t+1})^2 &\geq \left(1 + \frac{2\temp}{n}\right)(\E S_t)^2 +
\frac{2}{n}\E \left(\tanh(\beta S_t) - \beta S_t\right) \E S_t +
\frac{c}{n^2} ~.
\end{align}
Applying an analogous analysis onto the second moment yields
\begin{align}
  \E&\left[S_{t+1}^2 \mid S_t = s\right] = s^2 + \frac{2s}{n}\left(p^+(s-n^{-1}) - p^-(s+n^{-1})\right)\nonumber\\
  &-\frac{2s^2}{n}\left(p^-(s-n^{-1})+p^+(s+n^{-1})\right) \nonumber\\
  &+ \frac{4}{n^2}\left(\frac{1+s}{2}p^-(s-n^{-1})
  + \frac{1-s}{2}p^+(s+n^{-1})\right)~.\nonumber
\end{align}
Since
\begin{align*} p^-(s-\mbox{$\frac1n$})+p^+(s+\mbox{$\frac1n$}) &= \frac{1}{2}\left(2+\tanh\left(\beta(s+\mbox{$\frac1n$})\right)
-\tanh\left(\beta(s-\mbox{$\frac1n$})\right)\right) \\
&= 1 + \frac{1}{n \cosh^2(\xi)}\end{align*} for some $\beta(s-n^{-1}) < \xi <\beta(s+n^{-1})$, and
$$
p^+(s-n^{-1})-p^-(s+n^{-1}) =
\frac{1}{2}\left(\tanh\left(\beta(s+n^{-1})\right)
+\tanh\left(\beta(s-n^{-1})\right)\right)~,$$ the
concavity of the Hyperbolic function gives that
\begin{align}
 \E\left[S_{t+1}^2 \mid S_t = s\right]
&\leq s^2\left(1+\frac{2\temp}{n}\right) +
\frac{2s}{n}\left(\tanh(\beta s)-\beta s\right) +
\frac{4}{n^2}~.\label{eq-St-2nd-moment}
\end{align}
Taking expectation,
\begin{align}
 \E S_{t+1}^2 &\leq \left(1+\frac{2\temp}{n}\right)\E S_t^2 + \frac{2}{n}\E\left[S_t \left(\tanh(\beta
 S_t)-\beta S_t\right)\right] + \frac{4}{n^2}~.\label{eq-St-2nd-moment-1}
\end{align}
Crucially, we claim that the next quantity is
non-positive:
\begin{align*}
 D_t \deq &\E\left[ S_t\left(\tanh(\beta S_t)-\beta S_t \right)\right]- \E\left[\tanh(\beta S_t)-\beta S_t\right](\E S_t)~.
\end{align*}
To see this, once can verify that the function $f(s)=\tanh(\beta
s)-\beta s$ is monotone decreasing in $s$. Thus, the fact that
$D_t \leq 0$ follows from the FKG inequality, and together with \eqref{eq-exp-St-square} and \eqref{eq-St-2nd-moment}, it implies that for large $n$,
$$\var S_{t+1} \leq \left(1+\frac{2\temp}{n}\right)\var S_t + \frac{c}{n^2} ~.$$
Iterating, we obtain that
\begin{equation*}
  \var_{s_0} S_t \leq \frac{c}{n^2} \sum_{j=0}^{t-1} \left(1+\frac{2\temp}{n}\right)^j \leq
  \frac{c}{\temp n}\left(1+\frac{2\temp}{n}\right)^t~.\qedhere
\end{equation*}
\end{proof}

Another ingredient required for tracking the magnetization along time appears in the following lemma,
in the form of a bound on the difference between $(\E S_t)^3$ and $\E S_t^3$.
\begin{lemma}\label{lem-St-3rd-moment-exp-cube}
Let $W_{s_0}(t) \deq \E_{s_0}
S_t^3 - (\E_{s_0} S_t)^3$, where $(S_t)$ is the magnetization chain started
from $s_0 \geq 0$. Then for some absolute constant $c > 0$,
\begin{align*}
  W_{s_0}(t) \leq \frac{c}{\temp n} \left(s_0 + \frac1n\right) \mathrm{e}^{3t \temp / n}~.
\end{align*}
\end{lemma}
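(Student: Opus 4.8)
The plan is to follow the template of the variance bound, Lemma~\ref{lem-var-bound-0-n-1/4}: derive a one-step recursion that upper bounds $\E S_{t+1}^3$, a one-step recursion that lower bounds $(\E S_{t+1})^3$, subtract them, and iterate the resulting inequality for $W_{s_0}(t)$, starting from $W_{s_0}(0)=0$. For the first recursion I would expand $S_{t+1}^3=\left(S_t+\tfrac2n\xi\right)^3$, where, conditioned on $S_t=s$, the increment $\xi\in\{-1,0,1\}$ satisfies $\tfrac2n\,\E[\xi\mid S_t=s]=\tfrac{\temp}{n}s+\tfrac1n\bigl(\tanh(\beta s)-\beta s\bigr)-|O(s/n^2)|$ by \eqref{eq-St-1st-moment-tanh}, $\E[\xi^2\mid S_t=s]\le1$ (the holding probability is bounded uniformly away from $0$ and $1$), and $|\E[\xi^3\mid S_t=s]|\le1$. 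Collecting terms and using $|s|\le1$ yields the analogue of \eqref{eq-St-2nd-moment},
\begin{equation*}
\E\bigl[S_{t+1}^3\mid S_t=s\bigr]\;\le\;\Bigl(1+\tfrac{3\temp}{n}\Bigr)s^3+\frac{3s^2}{n}\bigl(\tanh(\beta s)-\beta s\bigr)+\frac{c\,|s|}{n^2}+\frac{c}{n^3}~.
\end{equation*}
For the second recursion I would cube \eqref{eq-St-1st-moment-tanh} and expand, using $(1+\temp/n)^3\ge1+3\temp/n$, to get $(\E S_{t+1})^3\ge\bigl(1+\tfrac{3\temp}{n}\bigr)(\E S_t)^3-E_t$ with $E_t$ a lower-order error of order $n^{-1}(\E S_t)^2\E|S_t|^3+n^{-2}(\E S_t)^2\E|S_t|$.

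The crucial input — playing the role of the inequality $D_t\le0$ in the proof of Lemma~\ref{lem-var-bound-0-n-1/4} — is that the $\tanh$ correction term is non-positive in expectation:
\begin{equation*}
\E_{s_0}\bigl[S_t^2\bigl(\tanh(\beta S_t)-\beta S_t\bigr)\bigr]\;\le\;0\qquad\text{for all }s_0\ge0~.
\end{equation*}
The wrinkle here is that $s\mapsto s^2$ is not monotone, so FKG does not apply directly; instead one observes that $h(s):=s^2\bigl(\tanh(\beta s)-\beta s\bigr)$ is itself monotone. Indeed $h$ is odd, and $h'(s)=2s\bigl(\tanh(\beta s)-\beta s\bigr)+s^2\bigl(\beta\cosh^{-2}(\beta s)-\beta\bigr)$ is a sum of two non-positive terms on $[-1,1]$, since $s\bigl(\tanh(\beta s)-\beta s\bigr)\le0$ for every $s$ (the two factors have opposite signs) and $\cosh^{-2}\le1$; hence $h$ is non-increasing. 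Because the magnetization chain is a monotone birth-and-death chain (Section~\ref{sec:prelim}), $S_t$ started at $s_0\ge0$ stochastically dominates $S_t$ started at $0$, so $\E_{s_0}h(S_t)\le\E_0 h(S_t)=0$, the last equality by the symmetry of the chain started at $0$ together with the oddness of $h$. The same comparison applied to the (monotone) function $s\mapsto\tanh(\beta s)-\beta s$ gives $\E_{s_0}\bigl(\tanh(\beta S_t)-\beta S_t\bigr)\le0$, and feeding this into \eqref{eq-St-1st-moment-tanh} yields $\E_{s_0}S_t\le(1+\temp/n)^t s_0\le e^{t\temp/n}s_0$.

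Assembling: take expectations in the one-step cube bound, discard the (now known non-positive) $\tanh$ term, and bound the residual $n^{-2}$-term — which conditionally equals $\tfrac{12}{n^2}S_t\,\E[\xi^2\mid S_t]$ and hence reduces to the signed $\E_{s_0}S_t$ up to contributions from the negligible event $\{S_t<0\}$ — to obtain $\E S_{t+1}^3\le\bigl(1+\tfrac{3\temp}{n}\bigr)\E S_t^3+\tfrac{c}{n^2}\E_{s_0}S_t+\tfrac{c}{n^3}$. Subtracting the lower bound for $(\E S_{t+1})^3$, and noting that $E_t$ together with the accompanying multiplicative slack of order $(\E S_t)^2/n$ is, in the range of $t$ in question, of lower order than $\tfrac{3\temp}{n}W_{s_0}(t)$ and so absorbable into $c$, gives
\begin{equation*}
W_{s_0}(t+1)\;\le\;\Bigl(1+\tfrac{3\temp}{n}\Bigr)W_{s_0}(t)+\frac{c}{n^2}\E_{s_0}S_t+\frac{c}{n^3}~.
\end{equation*}
Iterating from $W_{s_0}(0)=0$, inserting $\E_{s_0}S_t\le e^{t\temp/n}s_0$, and summing via $\sum_{j<t}\bigl(1+\tfrac{3\temp}{n}\bigr)^{t-1-j}e^{j\temp/n}\le\tfrac{cn}{\temp}\,e^{3t\temp/n}$ (the $n^{-3}$-term contributing the $\tfrac1n$ inside the parentheses) produces exactly $W_{s_0}(t)\le\tfrac{c}{\temp n}\bigl(s_0+\tfrac1n\bigr)e^{3t\temp/n}$.

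I expect the main obstacle to be the sign statement of the second paragraph: one must be certain that excursions of the chain into $\{s<0\}$ cannot reverse the sign of the drift-correction term. This is exactly why it is essential that the combined quantity $s^2\bigl(\tanh(\beta s)-\beta s\bigr)$ — not $s^2$ by itself — be monotone, and why the comparison-to-the-symmetric-chain device must replace the FKG argument that sufficed for the variance bound. A secondary, purely bookkeeping, difficulty is threading the $O(n^{-2})$ and $O(n^{-3})$ error terms through the geometric summation — in particular keeping the $n^{-2}$-error attached to the signed $\E_{s_0}S_t$ rather than $\E_{s_0}|S_t|$ — so as to land on the stated $(s_0+1/n)$ prefactor rather than a weaker one.
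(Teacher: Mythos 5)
Your skeleton is the same as the paper's (a one-step upper recursion for $\E S_{t+1}^3$, a one-step lower recursion for $(\E S_{t+1})^3$, subtraction, and iteration from $W_{s_0}(0)=0$), and your one-step cube bound and the final summation match the paper's. But the assembling step has a genuine gap, and it sits exactly where the paper's proof does its real work. The paper does \emph{not} bound the two $\tanh$-correction terms separately: it keeps $\frac3n\E\big[S_t^2\big(\tanh(\beta S_t)-\beta S_t\big)\big]$ from the third-moment recursion and $\frac3n(\E S_t)^2\,\E\big[\tanh(\beta S_t)-\beta S_t\big]$ from the cubed first-moment recursion, and shows that their \emph{difference} is non-positive (via FKG-type negative correlation of $S_t^2$ with the decreasing function $f(s)=\tanh(\beta s)-\beta s$, together with $\E f(S_t)\le 0$ and $\E S_t^2\ge(\E S_t)^2$). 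You instead discard $\E[S_t^2 f(S_t)]\le 0$ (your monotone-$h$/symmetric-chain argument for that sign is fine, and is a reasonable substitute for the paper's somewhat cavalier ``$s^2$ is increasing'' FKG step), but then the unpaired term $-\frac3n(\E S_t)^2\E f(S_t)\ge 0$ from the lower recursion is only controlled through the absolute-value bound $E_t\lesssim n^{-1}(\E S_t)^2\E|S_t|^3$, and the claim that this is ``of lower order than $\frac{3\temp}{n}W_{s_0}(t)$'' is both unjustified (you have no lower bound on $W_{s_0}(t)$, and an additive error cannot be absorbed into the multiplicative factor without increasing the exponent beyond $3t\temp/n$) and false in the regime where the lemma is actually invoked.

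Concretely, take $s_0=\frac43\sqrt\temp$ and $t\asymp n/\temp$ (this is the setting of Subsection \ref{subsec-sqrt-temp-to-zeta}, where the lemma is used): then $\E S_t\asymp\sqrt\temp$ over the whole window, so $E_t\asymp \temp^{5/2}/n$ per step, and iterating your recursion accumulates a contribution of order $\frac{n}{\temp}\cdot\frac{\temp^{5/2}}{n}=\temp^{3/2}$, whereas the asserted bound is $\frac{c}{\temp n}\big(s_0+\frac1n\big)e^{3t\temp/n}\asymp\frac{1}{\sqrt\temp\,n}$; the ratio is of order $\temp^2 n\to\infty$. (Likewise $E_t\gg\frac{\temp}{n}W_{s_0}(t)$ there, since $W_{s_0}(t)$ is at most of order $\sqrt\temp\,\var S_t\lesssim 1/(\sqrt\temp\,n)$.) The lemma is also stated for all $t$ and all $s_0\ge0$, so restricting to ``the range of $t$ in question'' is not available. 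The repair is to retain both $\tanh$ terms with their signs and prove $\E\big[S_t^2 f(S_t)\big]\le(\E S_t)^2\,\E\big[f(S_t)\big]$ — e.g.\ via $\E[S_t^2f(S_t)]\le\E[S_t^2]\,\E[f(S_t)]$ (the correlation step, where your monotone-$h$ idea would need to be upgraded to a covariance statement) followed by $\E[S_t^2]\E[f(S_t)]\le(\E S_t)^2\E[f(S_t)]$ using $\E f(S_t)\le0$ — after which only the $c\,\E S_t/n^2+c/n^3$ errors remain and your iteration goes through as written.
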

\begin{proof}
Recalling \eqref{eq-St-1st-moment-tanh}, taking expectation and
rearranging both sides, we obtain the following:
\begin{align}
\left(\E S_{t+1} \right)^3 &\geq \left(1+\frac{3\temp}{n}\right)(\E
S_t)^3 +  \frac{c'}{n^2}(\E
S_t)^2\nonumber\\ &+\frac{3}{n}\E\left[\tanh\left(\beta S_t)-\beta S_t
\right)\right] (\E S_t)^2 ~.\label{eq-1st-moment-cube-lower-bound}
\end{align}
We next establish a recursion relation for $\E S_t^3$. Recalling the transition matrix $P_M$ as given in
\eqref{eq-magnet-transit}, we have
\begin{align*}
\E \left [ S_{t+1}^3 \mid S_t = s \right ]&= \frac{1+s}{2} p^-(s -
\mbox{$\frac1n$}) \left(s
-\mbox{$\frac2n$}\right)^3+ \frac{1-s}{2} p^+(s
+ \mbox{$\frac1n$}) \left(s+\mbox{$\frac2n$}\right)^3\nonumber\\
&\hspace{.3cm}+ \left(1 - \frac{1+s}{2} p^-(s - \mbox{$\frac1n$}) - \frac{1-s}{2} p^+(s + \mbox{$\frac1n$})\right) s^3 \nonumber\\
&= s^3  +c_1\frac{s}{n^2} + \frac{c_2}{n^3}+ \frac{6s^2}{n}\cdot \frac{1}{4}\Big(-2s \\
&\hspace{.3cm}+
\tanh\left(\beta(s-n^{-1})\right)+ \tanh\left(\beta(s+n^{-1})\right)
 + 2\beta s - 2\beta s \\
 &\hspace{.3cm}+ s\left(\tanh\left(\beta(s-n^{-1})\right) - \tanh\left(\beta(s+n^{-1}\right)\right)
\Big)~,\end{align*}
Combined with the concavity of the Hyperbolic tangent, this gives
\begin{align*}\E \left [ S_{t+1}^3 \mid S_t = s \right ]
& \leq \left(1+\frac{3\temp}{n}\right)s^3 +  c_1 \frac{s}{n^2}
+\frac{c_2}{n^3} +\frac{3s^2}{n}\left(\tanh(\beta s)-\beta s
\right) ~.
\end{align*}
Taking expectation, we obtain
\begin{align}
 \E S_{t+1}^3 &\leq \left(1+\frac{3\temp}{n}\right)(\E S_t^3) + c_1 \frac{\E S_t}{n^2} +\frac{c_2}{n^3}+\frac{3}{n}\E\left[ S_t^2\left(\tanh(\beta S_t)- \beta S_t \right)\right] ~.\label{eq-3rd-moment-upper-bound}
\end{align}
Now, another application of the FKG inequality, combined with
\eqref{eq-1st-moment-cube-lower-bound} and
\eqref{eq-3rd-moment-upper-bound}, implies that for every
sufficiently large $n$
\begin{align}\label{eq-W-s0-recursion-bound}
  W_{s_0}(t+1) \leq \left(1+\frac{3\temp}{n}\right)W_{s_0}(t)+ \frac{c}{n^2}\E_{s_0} S_t + \frac{c'}{n^{3}}~.
\end{align}
Iterating, while noting that $W_{s_0}(0) = 0$ by definition, we conclude that
\begin{align*}
  W_{s_0}(t) \leq  \sum_{j=1}^{t} \left(1+\frac{3\temp}{n}\right)^{t-j}\left(\frac{c}{n^2} \E_{s_0}S_j + \frac{c'}{n^{3}}\right)~.
\end{align*}
Note that \eqref{eq-St-1st-moment-tanh} implies the following immediate rough upper bound on $\E S_t$:
\begin{equation}\label{eq-e-st-rough-upper-bound}
  \E_{s_{0}} S_{t+1} \leq \left(1+\frac{\temp}{n}\right) \E_{s_0} S_t~.
\end{equation}
Plugging this estimate into \eqref{eq-W-s0-recursion-bound} now gives
\begin{align*}
  W_{s_0}(t) &\leq  \sum_{j=1}^{t} \left(1+\frac{3\temp}{n}\right)^{t-j} \left(\left(1+\frac{\temp}{n}\right)^{j}\frac{c'}{n^2} s_0  + \frac{c}{n^{3}}\right) \\
&  \leq \frac{c}{n^2} \left(s_0+ \frac1n\right) \sum_{j=1}^t \left(1+\frac{\temp}{n}\right)^{3t - 2j}
\\
&\leq \frac{c}{\temp n} \left(s_0+\frac1n\right) \mathrm{e}^{3t \temp /
n}~,
\end{align*}
as required.
\end{proof}
We can now show that, starting from a magnetization of $A/\sqrt{\temp n}$, we have that $S_{T_1^+}$ is at least $n^{-1/4}$ with
probability arbitrarily close to $1$.
\begin{lemma}\label{lem-hit-A-good-n--1/4}
Let $A,\gamma > 0$, and define
$$T_1^*\deq \frac{n}{4\temp}\left(\log (\temp^2 n)- \log (A/2) + (\temp^2 n)^{-1/5}\right)~.$$
Consider the magnetization chain $S_t$ started at $s_0 = A/\sqrt{\temp n}$. Then for some
absolute constant $c>0$, the following holds for any $0 < \ell  < \gamma n/\temp$:
\begin{equation*}
\P_{s_0}(S_{T_1^*+\ell} \leq {n^{-1/4}} ) \leq \frac{c}{A^2}~.
\end{equation*}
\end{lemma}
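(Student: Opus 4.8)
The plan is to apply Chebyshev's inequality at time $m:=T_1^*+\ell$. The key point is that $\E_{s_0}S_t$ grows exponentially in $t$ at rate $\temp/n$ --- precisely the rate at which $\sqrt{\var_{s_0}S_t}$ grows, by Lemma~\ref{lem-var-bound-0-n-1/4} --- so that these growth factors cancel in the ratio $\var_{s_0}S_m/(\E_{s_0}S_m)^2$, leaving a quantity of order $1/A^2$ that is \emph{uniform} in $\ell<\gamma n/\temp$.

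First I would iterate the one-step identity~\eqref{eq-St-1st-moment-tanh}. Using $\tanh(\beta s)-\beta s\ge-\tfrac13(\beta s)^3$ for $s\ge0$, replacing $\E S_t^3$ by $(\E S_t)^3+W_{s_0}(t)$ via Lemma~\ref{lem-St-3rd-moment-exp-cube}, and bounding $(\E S_t)^3\le(s_0\mathrm e^{\temp t/n})^3$ through the crude estimate~\eqref{eq-e-st-rough-upper-bound}, one gets
\[
\E_{s_0}S_{t+1}\ge \Big(1+\tfrac{\temp}{n}\Big)\E_{s_0}S_t-e_t,\qquad e_t:=\tfrac{\beta^3}{3n}\,\mathrm e^{3\temp t/n}\Big(s_0^3+\tfrac{c}{\temp n}\big(s_0+\tfrac1n\big)\Big)+O(n^{-2}).
\]
Iterating and summing the geometric series, the accumulated error $\sum_{t<m}(1+\temp/n)^{m-1-t}e_t$ is dominated by its last terms; its cubic part is at most $\tfrac{\beta^3}{6\temp}(s_0\mathrm e^{\temp m/n})^3$, and the $W_{s_0}$ and $O(n^{-2})$ parts are of yet smaller order. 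Since $m<T_1^*+\gamma n/\temp$ and $s_0\mathrm e^{\temp T_1^*/n}=(2^{1/4}+o(1))A^{3/4}n^{-1/4}$, we have $s_0\mathrm e^{\temp m/n}=(2^{1/4}+o(1))A^{3/4}n^{-1/4}\mathrm e^{\temp\ell/n}$, which for fixed $A,\gamma$ is $O(n^{-1/4})=o(\sqrt\temp)$ because $\temp^2n\to\infty$; hence $\tfrac{\beta^3}{6\temp}(s_0\mathrm e^{\temp m/n})^3=o(s_0\mathrm e^{\temp m/n})$ (the ratio is $O(A^{3/2}\mathrm e^{2\gamma}/\sqrt{\temp^2n})$), and likewise the remaining error contributions are $o(s_0\mathrm e^{\temp m/n})$ using $\temp n\to\infty$. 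Together with $(1+\temp/n)^m=(1+o(1))\mathrm e^{\temp m/n}$ this gives $\E_{s_0}S_m\ge(1-o(1))\,2^{1/4}A^{3/4}n^{-1/4}\mathrm e^{\temp\ell/n}$.

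For $A$ (and $n$) large enough this yields $\E_{s_0}S_m-n^{-1/4}\ge\tfrac12\E_{s_0}S_m\ge\tfrac12\,2^{1/4}A^{3/4}n^{-1/4}\mathrm e^{\temp\ell/n}(1-o(1))$, so Chebyshev's inequality and Lemma~\ref{lem-var-bound-0-n-1/4} give
\[
\P_{s_0}\big(S_m\le n^{-1/4}\big)\le \frac{\var_{s_0}S_m}{(\E_{s_0}S_m-n^{-1/4})^2}\le \frac{\tfrac{c}{\temp n}\,\mathrm e^{2\temp m/n}}{\tfrac14\,2^{1/2}A^{3/2}n^{-1/2}\,\mathrm e^{2\temp\ell/n}(1-o(1))}.
\]
Writing $\mathrm e^{2\temp m/n}=\mathrm e^{2\temp\ell/n}\,\mathrm e^{2\temp T_1^*/n}$ with $\mathrm e^{2\temp T_1^*/n}=(2^{1/2}+o(1))(\temp^2n)^{1/2}A^{-1/2}$, the factors $\mathrm e^{2\temp\ell/n}$ cancel, and since $(\temp^2n)^{1/2}/(\temp n^{1/2})=1$ the right-hand side is $(1+o(1))\,4c/A^2$ --- at most an absolute constant times $1/A^2$ once $n$ is large (for $A$ below an absolute threshold the statement is vacuous after enlarging the constant). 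The step I expect to be the main obstacle is the bookkeeping in the expectation iteration: one must confirm that the accumulated Taylor correction to $\E_{s_0}S_t$ over the entire interval $[0,m]$ is only $o(s_0\mathrm e^{\temp m/n})$. This is exactly where the hypothesis $\temp^2n\to\infty$ is essential --- it guarantees $s_0\mathrm e^{\temp m/n}=o(\sqrt\temp)$, so the cubic term $-\tfrac13(\beta s)^3$ in $\tanh(\beta s)$ stays dominated by the linear growth throughout, and the exponential-growth factors then cancel cleanly against the variance bound in the Chebyshev ratio.
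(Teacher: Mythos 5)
Your proposal is correct and rests on the same three pillars as the paper's proof --- the one-step expectation identity \eqref{eq-St-1st-moment-tanh} with the cubic Taylor correction, Lemma \ref{lem-St-3rd-moment-exp-cube} to trade $\E S_t^3$ for $(\E S_t)^3$ plus a controllable error, and Lemma \ref{lem-var-bound-0-n-1/4} with Chebyshev --- but it implements the second-moment strategy differently at the two points where the paper does real work. For the mean, the paper runs a multiscale ``ladder'' argument (geometric levels $b_i=Aa^i/\sqrt{\temp n}$ with $a=1+n^{-1}$, hitting times $u_i$, and a level-by-level bound on $u_{i+1}-u_i$ summed to show $\E_{s_0}S_{T_1^*}\ge 2n^{-1/4}$), whereas you iterate the linearized recursion $\E S_{t+1}\ge(1+\temp/n)\E S_t-e_t$ directly and sum the error as a geometric series; your verification that the cubic correction is relatively $O(A^{3/2}\mathrm e^{2\gamma}/\sqrt{\temp^2 n})=o(1)$ is exactly the point that makes this one-shot iteration legitimate in this regime ($n^{-1/4}=o(\sqrt\temp)$), and it is arguably more elementary than the ladder, which only becomes indispensable in the later segments. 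For the uniformity in $\ell<\gamma n/\temp$, the paper applies Chebyshev at $T_1^*$ and then spawns an auxiliary chain at $\tfrac32 n^{-1/4}$ coupled monotonically to cover the remaining $\ell$ steps; you instead apply Chebyshev once at $m=T_1^*+\ell$ and observe that the factor $\mathrm e^{2\temp\ell/n}$ in the variance bound cancels against the same factor in $(\E S_m)^2$, which is a clean shortcut that removes the restart-and-coupling step. One shared caveat: the pointwise bound $\tanh(\beta s)-\beta s\ge-\tfrac13(\beta s)^3$ (and likewise the paper's \eqref{eq-st-grow}) is valid only for $s\ge0$, so strictly speaking the expectation recursion needs a word about the (rare, small) negative excursions of $S_t$; the paper elides this in the same way, so it is not a gap specific to your argument.
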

\begin{proof}
By Lemma \ref{lem-St-3rd-moment-exp-cube}, for every $t \leq T_1^*$ we have
\begin{align*}
W_{s_0}(t) &\leq \frac{c}{\temp n} \left(s_0+\frac1n\right) \left(\temp^2 n\right)^{3/4} \left(A/2\right)^{-3/4} \mathrm{e}^{\frac34 \left(\temp^2 n\right)^{-1/5}} \\
&\leq 2c\cdot \temp\left( s_0 +\frac1n\right) (\temp^2 n)^{-1/4} ~,
\end{align*}
where the last inequality holds for $A \geq 2$ and any sufficiently large $n$.
Thus, substituting the value of $s_0$, for any sufficiently large $n$ we have
\begin{equation}
  \label{eq-W-T1-bound} W_{s_0} (t) \leq \frac{c' A}{n^{3/4}} ~\mbox{ for every }t \leq T_1^*~ .
\end{equation}
We next need a lower bound on $s_t \deq \E_{s_0}S_t$. By \eqref{eq-St-1st-moment-tanh}
and the Taylor expansion of the Hyperbolic tangent \eqref{eq-taylor-tanh}, we have
\begin{equation}\label{eq-st-grow}
\E[S_{t+1}-S_t\mid S_t=s] \geq \frac{1}{n}(\temp s - 2s^3/5 - \tilde{c} s / n)~,
\end{equation}
where the constant $\tilde{c}$ replaced the $O(s/n^2)$ from \eqref{eq-St-1st-moment-tanh}.
Taking expectation and plugging in
\eqref{eq-W-T1-bound}, we obtain that for any $t \leq T_1^*$,
\begin{align}
  s_{t+1} - s_t &\geq \frac{\temp}{n}s_t - \frac{1}{n}\left(\frac25 s_t^3 + c' A n^{-3/4}\right)\nonumber\\
  &\geq \frac{\temp}{n}s_t - \frac{1}{n}(s_t^3 + c' A n^{-3/4})~,\label{eq-s_t-increasing-1}
\end{align}
where in the first inequality the value of $c'$ was increased so that the term $c' A n^{-3/4}$ will absorb the term $\tilde{c}s_t/n^2$. In the second inequality above, we used the fact that $s_t^3 \geq 0$. To see this, first consider the case where $n$ is even. In that case, $\E_0 S_t = 0$ for all $t$ by symmetry, thus the monotone coupling immediately gives that whenever $s_0 \geq 0$ we get $\E_{s_0} S_t \geq 0$ for any $t$. If $n$ is odd, a similar argument achieves this property (coupling with a chain that starts at $\pm\frac1n$ with equal probability).

Observe that \eqref{eq-s_t-increasing-1} implies that $s_t$ increases (it has positive drift) as long as $s_t^2 \leq \temp$. By the assumption that $\temp^2 n\to\infty$, this is guaranteed
whenever $s_t = O(n^{-1/4})$. Now, let
$$b_i=\frac{A a^i}{\sqrt{\temp n}} ~,\quad u_i = \min\{t: s_t > b_i\}~, \mbox{ and } i_1=\min\{i:b_i > 2 n^{-1/4}\}~.$$
Clearly,
$$ i_1 = \bigg\lceil \log_a \frac{2n^{-1/4}}{A/\sqrt{\temp n}}\bigg\rceil \leq \frac{1}{4}\log_a(\temp^2n) - \log_a (A/2)~. $$
The definition of $u_i$, together with the fact that $s_t$ is increasing as long as $s_t = O(n^{-1/4})$, implies that $b_i \leq s_t \leq a b_i$ for any $t\in[u_i,u_{i+1})$. Combined with
\eqref{eq-s_t-increasing-1}, we obtain that
\begin{align*}
 u_{i+1}-u_i &\leq \frac{(a-1)b_i}{n^{-1}\left(\temp b_i - a^3 b_i^3 - c' A n^{-3/4}\right)} =\frac{(a-1)n}{\temp - a^3 b_i^2  -  c'A n^{-3/4} / b_i}\\
 & \leq\frac{n}{\temp} \cdot \frac{a-1}{1 -  a^3(\temp^2 n)^{-1/2} -c'A (\temp^2 n)^{-1/4}}\leq  \frac{n}{\temp} \cdot \frac{a-1}{1 - c'' A(\temp^2 n)^{-1/4}}~,
\end{align*}
where the last inequality holds for any large $n$. Therefore,
\begin{align*}\sum_{i=1}^{i_1} u_{i+1}-u_i &\leq i_1 \frac{n}{\temp} \cdot \frac{a-1}{1 - c''A(\temp^2 n)^{-1/4}} \\
&\leq \frac{n}{\temp} \left(\frac{1}{4} \log(\temp^2 n) - \log (A/2)\right) \cdot \frac{a-1}{\log a}\cdot \frac{1}{1 - c''A (\temp^2 n)^{-1/4}} \\
& \leq \frac{n}{\temp}\left(\frac{1}{4}\log (\temp^2 n) - \log (A/2) + (\temp^2 n)^{-1/5}\right) \quad(~=
T_1^*~) ~,
\end{align*}
where the last inequality follows from a choice of $a=1+n^{-1}$, and holds for a sufficiently large $n$, as the
change in the exponent of $\temp^2 n$ absorbs the logarithmic factor. We conclude that, for a sufficiently large
$n$
\begin{equation}
  \E_{s_0}S_{T_1^*} \geq 2 n^{-1/4}~.
\end{equation}
Now, by Lemma \ref{lem-var-bound-0-n-1/4} we have
\begin{equation}\label{eq-var-st-T_1^*}
\var_{s_0} S_{T_1^*}\leq \frac{c}{A^2 \sqrt{n}}~,
\end{equation}
and hence Chebyshev's inequality gives
\begin{equation*}
\P_{s_0}\left(S_{T_{1}^{*}} \leq \frac32 n^{-1/4}\right) \leq \frac{c}{A^2}~.
\end{equation*}
In order to extend this to $T_1^*+\ell$ for $\ell \in \{0,\ldots,\gamma n/\temp\}$, consider
a second chain $\tS_t$ which we spawn at time $T_1^*$ with an initial value of $\tilde{s}_0 = \frac32 n^{-1/4}$.
By monotone coupling the two chains, it suffices to show that
\begin{equation}\label{eq-tS-lower-bound-n-1/4}\P_{\tilde{s}_0}\left(\tS_\ell \leq n^{-1/4}\right) \leq \frac{c}{A^2}~.
\end{equation}
Recalling the above observation that the series $s_t$ is increasing as long as $s_t = O(n^{-1/4})$, we deduce that
$$ \E_{\tilde{s}_0} \tS_\ell \geq s_0 = \frac32 n^{-1/4}~.$$
On the other hand, by the assumption that $\ell < \gamma n/\temp$, Lemma \ref{lem-var-bound-0-n-1/4} gives that
\begin{equation*}\var_{\tilde{s}_0} \tS_{\ell}\leq \frac{c'(\gamma)}{\temp n}~.
\end{equation*}
Thus, the fact that $\temp^2 n \to \infty$ implies that $\var{\tilde{s}_0} \tS_{\ell} =
o((\E_{\tilde{s}_0} \tS_\ell)^2)$,
hence
$\P_{\tilde{s}_0}(\tS_\ell \leq n^{-1/4}) = o(1)$,
and in particular \eqref{eq-tS-lower-bound-n-1/4} holds. This completes the proof.
\end{proof}
To deduce the lower bound on $\cS_{T_1^+}$ as given in
\eqref{eq-upper-bound-lemma-reach-1}, first observe the following: Given that the magnetization chain $S_t$ and the
censored magnetization chain $\cS_t$ are both started from
the same $s_{0}\geq 0$, the chain $S_t$ is stochastically dominated by $\cS_t$.
Thus, it suffices to prove the given lower bound for $S_{T_1^+}$.

Next, consider $A,\gamma > 0$, and recall that according to Lemma \ref{lem-hit-A-good-state}, the hitting time
to $s_0 = A/\sqrt{\temp n}$ is at most $L = \lfloor \gamma n/\temp \rfloor$ with probability at least $1- c A /\sqrt{\gamma}$.
Lemma \ref{lem-hit-A-good-n--1/4} states that for any $0 \leq \ell \leq L$, the probability that
$S_{T_1^* + L - \ell} \leq n^{-1/4}$ given that $S_0 = s_0$ is at most $c/A^2$. Combining these facts
and summing this probability over $\ell$ gives
$$ \P_0 (S_{T_1^* + L} \leq n^{-1/4}) \leq \frac{c}{A^2} + \frac{cA}{\sqrt{\gamma}}~,$$
and choosing $\gamma = A^3$ and $A$ large implies the required inequality \eqref{eq-upper-bound-lemma-reach-1}.

\subsubsection{Proof of \eqref{eq-lower-bound-lemma-reach-1}: Upper bound of $n^{-1/4}$ for $\cS_{T_1^-}$}
Consider the original magnetization chain $S_t$. Let $s_0 \in \{0,\frac1n\}$.
Recalling the rough upper bound \eqref{eq-e-st-rough-upper-bound} for $\E S_t$:
$$\E_{s_0} S_{T_1^-(\gamma)} \leq s_0 (1 + \frac{\temp}{n})^{T_1^-} = o (n^{-1/4})~.$$
By Lemma  \ref{lem-var-bound-0-n-1/4}, we have
$$\var_{s_0} S_{T_1^-} \leq \frac{ce^{-\gamma}}{\sqrt{n}}~.$$
Therefore, Chebyshev's inequality gives
$$\P_{s_0} (|S_{T_1^-(\gamma)}| \geq n^{-1/4}) \leq c e^{-\gamma}~.$$
Since the distribution of $|S_t|$ and $\cS_t$ are precisely the same, this
completes the proof of \eqref{eq-lower-bound-lemma-reach-1}, and hence
concludes the proof of Theorem \ref{thm-reach-0-n^-1/4}. \qed

\subsection{Getting from $n^{-1/4}$ to $\sqrt{\temp}$}\label{subsec-n-1/4-to-sqrt-temp}
This subsection determines the amount of time it takes $\cS_t$ to reach order  $\sqrt{\temp}$ starting from an initial value of $n^{-1/4}$.
\begin{theorem}\label{thm-reach-n^-1/4-sqrt-temp}
Define
\begin{eqnarray*}
  &T_2 \deq \frac14 (n/\temp)\log (\temp^2 n)~,\\
  &T_2^+ (\gamma) \deq T_2 + \gamma n/\temp\quad,\quad T_2^-(\gamma) \deq T_2 -\gamma n/\temp~.
\end{eqnarray*} The following holds for the censored magnetization chain
$\cS_t$:
\begin{align}
&\lim_{\gamma \to\infty} \liminf_{n\to\infty} \P_{n^{-1/4}}(\cS_{T_2^+ (\gamma)} \geq \mbox{$\frac43$}\sqrt{\temp}) =1~,\label{eq-upper-bound-lemma-reach-2}\\
&\lim_{\gamma \to\infty} \limsup_{n\to\infty}
\P_{n^{-1/4}}(\cS_{T_2^- (\gamma)} \geq \sqrt{\temp})=0~.
\label{eq-lower-bound-lemma-reach-2}
\end{align}
\end{theorem}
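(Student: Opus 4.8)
The plan is to mirror the proof of Theorem~\ref{thm-reach-0-n^-1/4}, with $n^{-1/4}$ now in the role of the starting position and $\frac43\sqrt\temp$ (resp.\ $\sqrt\temp$) the value to be sandwiched; the two displays are handled separately. For the easy direction \eqref{eq-lower-bound-lemma-reach-2} I would argue exactly as for \eqref{eq-lower-bound-lemma-reach-1}: since $\cS_t$ started at $n^{-1/4}$ has the same law as $|S_t|$ with $S_0=n^{-1/4}$, it suffices to bound $\P(|S_{T_2^-}|\ge\sqrt\temp)$; the rough one-step estimate \eqref{eq-e-st-rough-upper-bound} gives $\E_{n^{-1/4}}S_{T_2^-}\le n^{-1/4}(1+\temp/n)^{T_2^-}=(1+o(1))e^{-\gamma}\sqrt\temp$ (and $\E_{n^{-1/4}}S_{T_2^-}\ge0$ by monotone coupling with the chain from $0$), while Lemma~\ref{lem-var-bound-0-n-1/4} gives $\var_{n^{-1/4}}S_{T_2^-}\le\frac{c}{\temp n}(1+2\temp/n)^{T_2^-}=O(e^{-2\gamma}/\sqrt n)$, which is $o(\temp)$ because $\temp^2 n\to\infty$; Chebyshev's inequality then bounds $\P(|S_{T_2^-}|\ge\sqrt\temp)$ by $\P(|S_{T_2^-}-\E S_{T_2^-}|\ge(1-o(1))\sqrt\temp)\le o(\temp)/((1-o(1))^2\temp)\to0$.

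For the main estimate \eqref{eq-upper-bound-lemma-reach-2}, I would first note that coupling $\cS_t=|S_t|$ (with $S_0=n^{-1/4}$) shows $\cS_t$ stochastically dominates $S_t$, so it is enough to prove $\P_{n^{-1/4}}(S_{T_2^+}\ge\frac43\sqrt\temp)\to1$; unlike the first phase, no burn-in step is needed since $n^{-1/4}$ is already well clear of the origin. Writing $s_t:=\E_{n^{-1/4}}S_t$, the idea is to track $s_t$ as in Lemma~\ref{lem-hit-A-good-n--1/4}: take expectations in the drift bound \eqref{eq-st-grow} and substitute $\E S_t^3=s_t^3+W_{s_0}(t)$. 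The crucial observation is that with $s_0=n^{-1/4}$ the factor $e^{3t\temp/n}$ appearing in Lemma~\ref{lem-St-3rd-moment-exp-cube} is only $O((\temp^2 n)^{3/4})$ for the relevant times $t\le T_2^*$, so $W_{s_0}(t)=o(\temp s_t)$ uniformly and the cube-for-expectation switch costs nothing. While $s_t\ll\sqrt\temp$ the geometric-grid argument from Lemma~\ref{lem-hit-A-good-n--1/4} gives exponential growth $s_t\approx n^{-1/4}e^{\temp t/n}$; once $s_t\asymp\sqrt\temp$ the cubic term of \eqref{eq-st-grow} becomes relevant, but since $\frac32\sqrt\temp<\zeta=\sqrt{3\temp}+O(\temp^{3/2})$ the drift stays $\Omega(\temp^{3/2}/n)$ on $[\epsilon\sqrt\temp,\frac32\sqrt\temp]$, so $s_t$ continues to increase and reaches $\frac32\sqrt\temp$ after only $O(n/\temp)$ further steps. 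Hence for a suitable constant $C_0$ and $T_2^*:=T_2+C_0 n/\temp$ one gets $s_{T_2^*}\ge\frac32\sqrt\temp$; since $\var_{n^{-1/4}}S_{T_2^*}=O(1/\sqrt n)=o(\temp)$ by Lemma~\ref{lem-var-bound-0-n-1/4}, Chebyshev gives $S_{T_2^*}\ge\frac{17}{12}\sqrt\temp$ with probability $1-o(1)$. To push this past $T_2^*$, I would spawn at time $T_2^*$ a chain $\tS$ with $\tS_0=\frac{17}{12}\sqrt\temp$ and monotone-couple so $S_{T_2^*+\ell}\ge\tS_\ell$; since the drift $\frac1n(\tanh(\beta s)-s)$ is nonnegative on $(0,\zeta)$ and negative above $\zeta$, $\tS$ is attracted toward $\zeta>\frac43\sqrt\temp$, so $\E\tS_\ell$ stays bounded away from $\frac43\sqrt\temp$ for $\ell\le\gamma n/\temp$, while $\var\tS_\ell\le\frac{ce^{2\gamma}}{\temp n}=o(\temp)$ by Lemma~\ref{lem-var-bound-0-n-1/4}; Chebyshev then gives $\tS_\ell\ge\frac43\sqrt\temp$ with probability $1-o(1)$ for each fixed $\gamma$. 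Finally, since $T_2^*=T_2+O(n/\temp)$, for all $\gamma$ large enough $T_2^+(\gamma)=T_2^*+\ell$ with $0\le\ell\le\gamma n/\temp$, which gives \eqref{eq-upper-bound-lemma-reach-2}.

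The hard part is exactly the point flagged in the outline: keeping $W_{s_0}(t)=\E S_t^3-(\E S_t)^3$ negligible relative to $\temp s_t$ throughout the phase. Had I instead run from $0$ (or from $\Theta(1/\sqrt{\temp n})$) all the way to $\sqrt\temp$ in a single phase, the relevant $t$ would be $\approx\frac{n}{2\temp}\log(\temp^2 n)$, making $e^{3t\temp/n}$ of order $(\temp^2 n)^{3/2}$ and $W_{s_0}(t)$ comparable to $\E S_t^3$ --- so the Taylor-expansion tracking of $s_t$ would break down. Restarting at $n^{-1/4}$ (by separating the first two phases) caps this factor at $O((\temp^2 n)^{3/4})$, which is precisely what makes the argument go through. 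A secondary technical issue will be the passage from the exponential-growth regime to the near-$\sqrt\temp$ regime, where the cubic term of $\tanh$ is active but the drift is still positive since $\frac43\sqrt\temp<\zeta$, together with the estimate ruling out an excursion of $\tS$ below $\frac43\sqrt\temp$; both rest on $\zeta$ being a globally attracting fixed point of the magnetization drift, a feature used throughout the paper.
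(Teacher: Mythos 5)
Your proposal is correct and follows essentially the same route as the paper's proof: the same key ingredients (the third-moment bound of Lemma \ref{lem-St-3rd-moment-exp-cube} with $s_0=n^{-1/4}$ capping $\mathrm{e}^{3t\temp/n}$ at $(\temp^2n)^{3/4}$ so that $W_{s_0}(t)$ is negligible, the drift recursion \eqref{eq-st-grow}, the geometric grid, and Lemma \ref{lem-var-bound-0-n-1/4} plus Chebyshev), with only cosmetic packaging differences --- you stop the grid near $\epsilon\sqrt\temp$, apply Chebyshev at an intermediate time and spawn a monotone-coupled chain from $\frac{17}{12}\sqrt\temp$, whereas the paper runs the grid to $\sqrt{2\temp}$ with the $W$-bound taken valid up to $T_2^+(\gamma)$, and in the easy direction you use $\E_{n^{-1/4}}S_t\ge0$ where the paper bounds $\E|S|$ via Cauchy--Schwarz. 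The one step stated too quickly is "drift nonnegative on $(0,\zeta)$ so $\E\tS_\ell$ stays bounded away from $\frac43\sqrt\temp$'': since $\tanh$ is concave, pointwise drift positivity does not transfer to the mean by Jensen, and you must again run \eqref{eq-st-grow} together with Lemma \ref{lem-St-3rd-moment-exp-cube} for the spawned chain (where $W=O_\gamma(\sqrt\temp/(\temp n))=o(\temp^{3/2})$), which is immediate with the tools you already cite.
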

\subsubsection{Proof of \eqref{eq-upper-bound-lemma-reach-2}: Lower bound of $\frac43\sqrt{\temp}$ for $\cS_{T_2^+}$}
We will show that for any $\gamma \geq 22$, the magnetization
at time $T_2^+(\gamma)$ will be at least $\frac43\sqrt{\temp}$ with high probability.
Fix some $\gamma \geq 22$ throughout this subsection. By Lemma \ref{lem-St-3rd-moment-exp-cube}, for any $t \leq T_2^+(\gamma)$
\begin{align}
  W_{s_0}(t) &\leq c \mathrm{e}^{3\gamma} \cdot\temp \Big(s_0+\frac1n\Big) \left(\temp^2 n\right)^{-1/4}\nonumber\\
  &\leq c' \cdot \temp s_0 \cdot (\temp^2 n)^{-1/4}~,\label{eq-W-T2}
\end{align}
where $c'=c'(\gamma)$. Defining
$$b_i=\frac{2 a^i}{n^{1/4}} ~,\quad u_i= \min\{t: s_t \geq b_i\} \mbox{ and } i_2=\min\{i:b_i > \sqrt{2\temp}\}~,$$
we get that $i_2 \leq \frac{1}{4}\log_a (\temp^2 n) + \log_a 4$. Combining \eqref{eq-st-grow} and
\eqref{eq-W-T2}, we obtain that
\begin{equation}\label{eq-St+1-increase-up-to-zeta}
s_{t+1} - s_t\geq \frac{\temp}{n}s_t -\frac25 \cdot\frac{s_t^3}{n}- c' \frac{\temp}{n} s_0 (\temp^2 n)^{-1/4}~,\end{equation} where the term $\tilde{c}s/n^2$ from \eqref{eq-st-grow} was absorbed in the last term by increasing $c'$ (and noting that $\temp n^{4/3} \to \infty$, with room to spare).

Again notice that, started from $s_{0} = n^{-1/4}$, by \eqref{eq-St+1-increase-up-to-zeta} we have that $s_t$ is increasing as long as $s_t \leq \sqrt{2\temp}$. We deduce that
\begin{align*}
 u_{i+1}-u_i &\leq \frac{(a-1)b_i}{n^{-1}\left(\temp b_i - 2a^3 b_i^3/5 - c' \temp s_0 (\temp^2 n)^{-1/4}\right)} \\
 &=\frac{(a-1)n}{\temp - 2a^3 b_i^2/5  -  c'' \temp(\temp^2 n)^{-1/4} } \leq
 \frac{n}{\temp} \cdot \frac{a-1}{1 -  2a^3b_i^2/(5\temp) - c' (\temp^2 n)^{-1/4} }\\
 &\leq  \frac{n}{\temp} (a-1)\left(1 + 9 a^3b_i^2/\temp + c'' (\temp^2 n)^{-1/4} \right)~,
\end{align*}
where the last inequality requires that $a < (10/9)^{1/3}$. Since $b_i$ is a geometric series,
\begin{align*}
  \sum_{i=1}^{i_2} u_{i+1}-u_i &\leq i_2 (a-1) \frac{n}{\temp} +
  \frac{9}{a+1}a^3\frac{n}{\temp^2}b_{i_2}^2 + i_2 (a-1) \frac{n}{\temp} c'' (\temp^2 n)^{-1/4} \\
  &\leq \left(\frac{1}{4} \cdot\frac{n}{\temp}\log (\temp^2 n) + 2\frac{n}{\temp}\right) + 19\frac{n}{\temp}+
  \frac{n}{\temp}(\temp^2 n)^{-1/5} \\
  &\leq \frac{1}{4} \cdot\frac{n}{\temp}\log (\temp^2 n) + 22\frac{n}{\temp}\quad(~= T_2^+(22) \leq T_2^+(\gamma)~)~.
\end{align*}
where again we chose $a=1+n^{-1}$, and this holds for large $n$. We conclude that $\E_{n^{-1/4}} S_{T_2^+(\gamma)}
\geq \sqrt{2\temp}$. In order to show concentration, we return to Lemma \ref{lem-var-bound-0-n-1/4}, and get
$$\var_{n^{-1/4}} S_{T_2^+(\gamma)} \leq \frac{c}{\temp n} \left(\sqrt{\temp^2 n} \mathrm{e}^{2\gamma}\right) =
\frac{c\mathrm{e}^{2\gamma}}{\sqrt{n}} = o(\temp)~.$$
Hence, Chebyshev's inequality implies that $S_{T_2^+(\gamma)} \geq \frac43\sqrt{\temp}$ with high probability,
completing the proof of \eqref{eq-upper-bound-lemma-reach-2}.

\subsubsection{Proof of \eqref{eq-lower-bound-lemma-reach-2}: Upper bound of $\sqrt{\temp}$ for $\cS_{T_2^-}$}
This bound will again follow from analyzing the original (non-censored) magnetization chain. We will in fact prove a stronger version of \eqref{eq-lower-bound-lemma-reach-2},
namely that
\begin{equation}\label{eq-stronger-lower-bound-lemma-reach-2} \lim_{n\to\infty}
\P_{n^{-1/4}}(\cS_{T_2^- (\gamma)} \geq \sqrt{\temp})=0 ~\mbox{ for any fixed $\gamma>4$}~.\end{equation}
Fix $\gamma > 4$, and note that the simple bound \eqref{eq-e-st-rough-upper-bound} gives
$$\E_{n^{-1/4}} S_{T_2^-(\gamma)} \leq n^{-1/4} \left(1+ \frac{\temp}{n}\right)^{T_2^-(\gamma)} \leq \mathrm{e}^{-3} \sqrt{\temp}~.$$
Lemma \ref{lem-var-bound-0-n-1/4} gives the following variance
bound
$$\var_{n^{-1/4}} S_{T_2^-(\gamma)} = o(\sqrt{\temp})~.$$
Combining the above bounds on the expectation and the variance, we get
\begin{align*}
\E_{n^{-1/4}} \big|S_{T_2^-(\gamma)}\big| &\leq \sqrt{\E_{n^{-1/4}} \big|S_{T_2^-(\gamma)}\big|^2} \\
&\leq \sqrt{\left(\E_{n^{-1/4}} S_{T_2^-(\gamma)}\right)^2 +
\var_{n^{-1/4}}S_{T_2^-(\gamma)}}\leq \mathrm{e}^{-2} \sqrt{\temp}~.
\end{align*}
Recalling that $\var |X| \leq \var X$ for any random
variable $X$, we immediately get a bound on
$\var_{n^{-1/4}}|S_{T_2^-(\gamma)}|$. From another application of
Chebyshev's inequality, it follows that
$$\P_{n^{-1/4}}(|S_{T_2^-(\gamma)}|\geq \sqrt{\temp}) = o(1)~,$$
completing the proof of \eqref{eq-stronger-lower-bound-lemma-reach-2} and of Theorem
\ref{thm-reach-n^-1/4-sqrt-temp}. \qed

\subsection{Getting from $\sqrt{\temp}$ to $\zeta$}\label{subsec-sqrt-temp-to-zeta}
This subsection, the most delicate one out of the first three subsections, deals with the issue of reaching $\zeta$ from
$\sqrt{\temp}$. Our goal is to establish the following theorem.
\begin{theorem}\label{thm-reach-zeta-sqrt-temp}
Define
\begin{eqnarray*}
  &T_3 \deq \frac{1}{2\left(\zeta^2\frac{\beta}{\temp} - 1\right)} \cdot
\frac{n}{\temp}\log (\temp^2 n)~,\\
  &T_3^+ (\gamma) \deq T_3 +
\gamma n/\temp\quad,\quad T_3^-(\gamma) \deq T_3 -\gamma n/\temp~.
\end{eqnarray*} The following holds for the censored magnetization chain
$\cS_t$ and any $B^* > 0$:
\begin{align}
&\lim_{B\to\infty}\lim_{\gamma \to\infty} \liminf_{n\to\infty} \P_{4\sqrt{\temp}/3}(\cS_{T_3^+ (\gamma)} \geq
\zeta - B/\sqrt{\temp n})=1~, \label{eq-upper-bound-lemma-reach-3}\\
 &\lim_{\gamma \to\infty} \limsup_{n\to\infty}
\P_{4\sqrt{\temp}/3}(\cS_{T_3^- (\gamma)} \geq \zeta -
B^\star / \sqrt{\temp n})=0~.\label{eq-lower-bound-lemma-reach-3}
\end{align}
\end{theorem}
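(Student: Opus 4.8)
The plan is to derive both \eqref{eq-upper-bound-lemma-reach-3} and \eqref{eq-lower-bound-lemma-reach-3} for the chain started at $\frac43\sqrt{\temp}$ from two ingredients: a sharp tracking of $\E\cS_t$ as it approaches $\zeta$, and a uniform variance bound $\var\cS_t=O(1/(\temp n))$ valid for all $t\le T_3^+(\gamma)$. The key input is \eqref{eq-St-1st-moment-tanh}: away from the origin the one-step drift of $\cS_t$ at a state $s$ equals $\frac1n g(s)+O(s/n^2)$ with $g(s)=\tanh(\beta s)-s$. The function $g$ is strictly concave on $(0,\infty)$, vanishes only at $\zeta$, satisfies $g'(s)<0$ for $s>s^\ast$ with $s^\ast=(1+o(1))\sqrt{\temp}/\beta^{3/2}<\frac76\sqrt{\temp}$, and $g'(\zeta)=\temp-\beta\zeta^2=-\temp(\zeta^2\beta/\temp-1)$, so $-g'(\zeta)=\Theta(\temp)$ is bounded away from $0$ (recall $\zeta^2\beta/\temp\to3$). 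Concavity yields the tangent-line bounds $g(s)\le -g'(\zeta)(\zeta-s)$ for all $s>0$, $g(s)\ge -g'(s)(\zeta-s)\ge -g'(\frac76\sqrt{\temp})(\zeta-s)$ for $s\in[\frac76\sqrt{\temp},\zeta]$, and near $\zeta$ the Taylor expansion gives $g(\zeta-u)=-g'(\zeta)\,u\,(1+O(u/\sqrt{\temp}))$ since $g''(\zeta)=\Theta(\sqrt{\temp})$.

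\emph{Step 1: the chain stays in a contracting region.} I would first show that, started at $\frac43\sqrt{\temp}$, with probability $1-o(1)$ one has $\cS_t\in[\frac76\sqrt{\temp},2\zeta]$ for all $t\le T_3^+(\gamma)$. The lower barrier is the relevant one: on $[\frac76\sqrt{\temp},\zeta]$ the drift is positive and at least $\frac1n g(\frac76\sqrt{\temp})=\Theta(\temp^{3/2}/n)$, so writing $\cS_t-\cS_0$ as a nonnegative-drift part plus a martingale with increments $O(1/n)$, an Azuma (or supermartingale-maximal, cf.\ Lemma~\ref{lem-supermatingale-positive}) estimate shows the martingale part stays below $\frac16\sqrt{\temp}$ throughout $T_3^+(\gamma)=O(\frac n\temp\log(\temp^2 n))$ steps with probability $1-o(1)$, because $(\frac16\sqrt{\temp})^2 n^2/T_3^+(\gamma)=\Theta(\temp^2 n/\log(\temp^2 n))\to\infty$; the upper barrier at $2\zeta$ is symmetric, using $g<0$ above $\zeta$. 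On this event the censoring never acts, so $\cS_t=S_t$ and \eqref{eq-magnet-transit} applies. \emph{Step 2: the variance bound.} On $[\frac76\sqrt{\temp},1]$ we have $g'(s)\le g'(\frac76\sqrt{\temp})=-\Theta(\temp)<0$, so $\phi(s):=\E[S_{t+1}\mid S_t=s]=s+\frac1n g(s)+O(s/n^2)$ is Lipschitz there with constant $\le 1-c\temp/n$. Combining $\var(\phi(S_t))\le(1-c\temp/n)^2\var S_t$ (valid while $S_t$ is supported in the region) with $\E[\var(S_{t+1}\mid S_t)]=O(1/n^2)$ (one spin flips) gives $\var S_{t+1}\le(1-c\temp/n)^2\var S_t+O(1/n^2)$; since $\var S_0=0$ the geometric damping yields $\var S_t=O(1/(\temp n))$ \emph{uniformly} in $t$. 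The confinement issue is handled by running the stopped chain up to the exit time $\tau$ from $[\frac76\sqrt{\temp},1]$ and using correlation inequalities (the FKG argument of Lemma~\ref{lem-var-bound-0-n-1/4}) to kill the cross-term $\E[S_tg(S_t)]-\E S_t\,\E g(S_t)$; the super-polynomially small bound on $\P(\tau\le T_3^+(\gamma))$ from Step~1 makes the post-exit contribution negligible for the variance.

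\emph{Step 3: expectation tracking and conclusion.} Let $z_t\deq\zeta-\E\cS_t$. Since $\E g(\cS_t)=g(\E\cS_t)+O(\|g''\|_\infty\var\cS_t)$ and near $\zeta$ one has $\|g''\|_\infty\var\cS_t=O(\sqrt{\temp}/(\temp n))=o(\temp\,z_t)$ as long as $z_t\gtrsim1/\sqrt{\temp n}$, the tangent-line bounds give, once $z_t=O(\sqrt{\temp})$, the recursion $z_{t+1}\le z_t\big(1-(-g'(\zeta)-O(z_t/\sqrt{\temp}))/n\big)+O(\sqrt{\temp}/n^2)$, together with $z_{t+1}\le z_t(1-c\temp/n)+O(\sqrt{\temp}/n^2)$ throughout $[\frac76\sqrt{\temp},\zeta]$, and the reverse $z_{t+1}\ge z_t(1-(-g'(\zeta))/n)-O(\sqrt{\temp}/n^2)$ from $g(s)\le -g'(\zeta)(\zeta-s)$. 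Partitioning $[\frac76\sqrt{\temp},\zeta-B/\sqrt{\temp n}]$ into blocks on which $\zeta-s$ shrinks by a fixed geometric factor and summing the per-block contraction times, exactly as in Subsections~\ref{subsec-0-to-n-1/4}--\ref{subsec-n-1/4-to-sqrt-temp} but now with a contracting drift, the time to reach $z_t\le B/\sqrt{\temp n}$ is $\frac{n}{-g'(\zeta)}\cdot\frac12\log(\temp^2 n)+O(n/\temp)=T_3+O(n/\temp)$, the $O(z_t/\sqrt{\temp})$ linearization correction summing to $O(n/\temp)$ since $\sum_i(\zeta-s_i)$ is a convergent geometric series. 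Hence $z_{T_3^+(\gamma)}=O(e^{-\Theta(\gamma)}/\sqrt{\temp n})$ and $z_{T_3^-(\gamma)}\ge\Theta(e^{\Theta(\gamma)}/\sqrt{\temp n})$, and Chebyshev's inequality with $\var\cS_{T_3^\pm(\gamma)}=O(1/(\temp n))$ gives $\P(\cS_{T_3^+(\gamma)}<\zeta-B/\sqrt{\temp n})=O(1/B^2)$ and $\P(\cS_{T_3^-(\gamma)}\ge\zeta-B^\star/\sqrt{\temp n})=O(e^{-\Theta(\gamma)})$; sending $n\to\infty$, then $\gamma\to\infty$, then $B\to\infty$ proves both \eqref{eq-upper-bound-lemma-reach-3} and \eqref{eq-lower-bound-lemma-reach-3}.

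The main obstacle is the uniform-in-$t$ variance bound of Step~2 over a window of length $\Theta(\frac n\temp\log(\temp^2 n))$: unlike the high-temperature regime there is no clean global contraction, so one must glue the local contraction estimate (valid only while $\cS_t\in[\frac76\sqrt{\temp},1]$) to the barrier estimate of Step~1 and to FKG control of the error terms, and verify that the exit probability is small enough to be irrelevant for a second moment. A second delicate point is that the exact constant $\frac{1}{2(\zeta^2\beta/\temp-1)}=\frac12\cdot\frac{-g'(\zeta)}{\temp}$ appears only if the geometric-block bookkeeping in Step~3 is arranged so that the quadratic part of $g$ at $\zeta$ contributes just $O(n/\temp)$, inside the window.
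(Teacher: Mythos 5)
Your overall route coincides with the paper's (confinement above $\tfrac76\sqrt{\temp}$, a variance bound $\var\cS_t=O(1/(\temp n))$ via local contraction plus FKG, expectation tracking by geometric blocks with contraction rate $\zeta^2\beta-\temp=-g'(\zeta)$, and Chebyshev at $T_3^{\pm}$; your Step 3 is essentially the paper's computation). The genuine gap is in Step 1, and it propagates into Step 2. First, your stated drift bound is false: $g(s)=\tanh(\beta s)-s$ is \emph{decreasing} on $[\tfrac76\sqrt{\temp},\zeta]$ and $g(\zeta)=0$, so the drift is not bounded below by $\tfrac1n g(\tfrac76\sqrt{\temp})$ on that interval. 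More seriously, the decomposition ``$\cS_t-\cS_0=$ nonnegative-drift part $+$ martingale'' is invalid over the whole window: the chain spends most of $[0,T_3^+(\gamma)]$ at or above $\zeta$, where the drift is negative, so a single maximal/Azuma estimate on the martingale part does not control the event $\{\exists t\le T_3^+(\gamma):\cS_t<\tfrac76\sqrt{\temp}\}$. The natural repair (downward band-crossing attempts of $[\tfrac76\sqrt{\temp},\tfrac43\sqrt{\temp}]$, each with gambler's-ruin probability $\exp(-c\temp^2 n)$, union-bounded over attempts) gives at best bounds of the form $\frac n\temp\log(\temp^2 n)\,e^{-c\temp^2 n}$, which is \emph{not} $o(1)$ in the regime the theorem must cover, where $\temp^2 n\to\infty$ arbitrarily slowly (e.g.\ $\temp^2 n=\log\log n$). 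So neither your claimed ``$1-o(1)$'' nor, a fortiori, the ``super-polynomially small'' exit probability you invoke in Step 2 is established.

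This is exactly why the paper replaces the barrier/Azuma argument by the occupation-time argument of Lemma \ref{lem-tau-3-bound}: it bounds $\P(\tau_3\le T_3^+(\gamma))$ via $\P(A)\le \E Y/\E(Y\mid A)$, with $\E Y$ controlled by Chebyshev plus the growing variance bound of Lemma \ref{lem-var-bound-0-n-1/4}, and $\E(Y\mid A)\ge cn/\temp$ from optional stopping; iterating over the three intermediate levels $\tfrac54,\tfrac65,\tfrac76$ yields only the \emph{polynomial} bound $(\temp^2 n)^{-3/2+O(\temp)}\le 1/(\temp^2 n)$. The variance proof (Lemma \ref{lem-var-bound-cens}) is then arranged so that this weak bound suffices: FKG is applied \emph{conditionally} on $\{\cS_t\ge\tfrac76\sqrt{\temp}\}$, and the complementary event enters multiplied by factors of order $\temp^2$, giving a per-step error $O(1/n)$ inside a recursion already damped by $(1-\temp/(5n))$. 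Your Step 2, by contrast, handles the confinement issue by stopping at the exit time and declaring the post-exit contribution negligible; with only a polynomial (or unproven) bound on the exit probability, a crude comparison of the stopped and unstopped chains does not recover $\var\cS_t=O(1/(\temp n))$. To make your proposal complete you would need either a valid proof of a sufficiently strong confinement estimate in the full range $\temp^2 n\to\infty$, or the paper's conditional-FKG bookkeeping that tolerates a $1/(\temp^2 n)$ escape probability.
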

\begin{remark*}
Note that, as $\zeta = \sqrt{\temp} - O(\temp^{3/2})$ we have $$T_3 =
\Big(\frac{1}{4} +
O(\temp)\Big)\frac{n}{\temp}\log(\temp^2n)~.$$
\end{remark*}
\subsubsection{Proof of \eqref{eq-upper-bound-lemma-reach-3}: Lower bound of $\zeta$ for $\cS_{T_3^+}$}\label{sec-T3-upper-bound}
First, we will show that with high probability, the original magnetization chain starting from position $s_0 = \frac43\sqrt{\temp}$ will remain in a certain ``nice''
interval up to time $T_3^+$.
\begin{lemma}\label{lem-tau-3-bound}
Consider the original magnetization chain $S_t$ started from
$s_0 = \frac43\sqrt{\temp}$. Let $\tau_3 = \min\{t : S_t <
\frac{7}{6}\sqrt{\temp}\}$. The
following holds for any fixed $\gamma>0$ and sufficiently large $n$:
\begin{equation}\label{eq-tau3-bound}
  \P_{s_0}(\tau_3 \leq T_3^+(\gamma)) \leq \frac{1}{\temp^2 n}~.
  \end{equation}
\end{lemma}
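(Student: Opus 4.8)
The plan is to compare the magnetization chain $(S_t)$ started at $s_0=\frac43\sqrt{\temp}$ against a suitable supermartingale and apply a maximal-type inequality, exploiting the fact that in the interval $[\frac76\sqrt{\temp},\sqrt{2\temp}]$ (say) the chain has a strictly positive drift toward $\zeta$. The first step is to establish the one-step drift estimate: using \eqref{eq-St-1st-moment-tanh} together with the Taylor expansion \eqref{eq-taylor-tanh}, for $s$ in the relevant range one gets
\begin{equation*}
\E[S_{t+1}-S_t\mid S_t=s]\geq \frac{1}{n}\Big(\temp s-\tfrac13\beta^3 s^3 + O(\beta^5 s^5)+O(s/n)\Big)\geq \frac{c\,\temp^{3/2}}{n}
\end{equation*}
for some constant $c>0$, since $s=\Theta(\sqrt{\temp})$ and $\temp s^2=\Theta(\temp^2)=o(\temp)$ forces $\temp s-\frac13\beta^3 s^3\geq c'\temp s\geq c''\temp^{3/2}$ (below $\zeta$ the drift is positive, and away from $\zeta$ by a constant multiple of $\sqrt\temp$ it is bounded below by a constant times $\temp^{3/2}$). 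The point is that over $T_3^+(\gamma)=O((n/\temp)\log(\temp^2 n))$ steps the cumulative positive drift is of order $\sqrt{\temp}\log(\temp^2 n)$, which dwarfs the $O(\sqrt{\temp})$ gap between $s_0=\frac43\sqrt\temp$ and the barrier $\frac76\sqrt\temp$, so the chain should essentially never drop to the barrier.

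The second step is to turn this into a tail bound. Let $\tau_3=\min\{t:S_t<\frac76\sqrt\temp\}$ and consider the stopped process $Y_t\deq S_{t\wedge\tau_3}$ together with the running minimum. Because the drift is positive on $\{S_t\geq \frac76\sqrt\temp\}$, the process $-S_{t\wedge\tau_3}$ is a supermartingale up to a negligible correction, or — cleaner — one applies the optional-stopping / Azuma–Hoeffding approach: write $M_t=S_{t\wedge\tau_3}-S_0-\sum_{j<t\wedge\tau_3}\E[S_{j+1}-S_j\mid\F_j]$, which is a martingale with bounded increments $|M_{t+1}-M_t|\leq 3/n$. On the event $\{\tau_3\leq T_3^+(\gamma)\}$ the chain must have dropped by at least $\frac16\sqrt\temp$ while accumulating nonnegative drift, so $-M_{\tau_3}\geq \frac16\sqrt\temp$ (roughly). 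By the Azuma–Hoeffding inequality applied at the stopping time $\tau_3\wedge T_3^+(\gamma)$,
\begin{equation*}
\P_{s_0}(\tau_3\leq T_3^+(\gamma))\leq \P\Big(\max_{t\leq T_3^+(\gamma)}(-M_t)\geq \tfrac{1}{6}\sqrt\temp\Big)\leq 2\exp\!\Big(-\frac{(\sqrt\temp/6)^2}{2 T_3^+(\gamma)\cdot(3/n)^2}\Big)~.
\end{equation*}
Since $T_3^+(\gamma)=\Theta((n/\temp)\log(\temp^2 n))$, the exponent is of order $-\temp\cdot n^2 / (n/\temp)\log(\temp^2 n)\cdot n^{-2}\cdot n = -\temp^2 n/\log(\temp^2 n)$, which tends to $-\infty$; in particular it is eventually smaller than $-2\log(\temp^2 n)$, giving the claimed bound $1/(\temp^2 n)$ for large $n$.

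The main obstacle — and the point demanding care — is the bookkeeping that ensures the drift term really is nonnegative (indeed bounded below) throughout the excursion, so that the drop of $\frac16\sqrt\temp$ is genuinely absorbed by the martingale part rather than partly by negative drift. This requires knowing a priori that the chain, when it first dips below $\frac43\sqrt\temp$, is still comfortably above $\zeta$'s lower reach and hence still in the positively-drifting regime; one handles this by choosing the barrier $\frac76\sqrt\temp$ and the target $\frac43\sqrt\temp$ both bounded away from $\zeta\approx\sqrt{3\temp}$ with constants that make the Taylor-expansion error terms $O(\temp^{5/2})$ and the $O(s/n^2)$ term both negligible against the main drift $\Theta(\temp^{3/2}/n)$ — which is exactly where the hypothesis $\temp^2 n\to\infty$ enters. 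A secondary subtlety is that one should phrase the maximal inequality for the stopped martingale correctly (Doob's inequality for the submartingale $(-M_{t\wedge\tau_3})^+$, or equivalently a union bound combined with the optional stopping theorem at $\tau_3\wedge u$), rather than naively at a fixed time; this is routine but must be stated precisely.
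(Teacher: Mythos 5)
Your overall strategy (martingale decomposition plus Azuma--Hoeffding over the whole horizon) has a genuine gap at its central step: the claim that on $\{\tau_3\le T_3^+(\gamma)\}$ the accumulated drift up to $\tau_3$ is nonnegative, so that $-M_{\tau_3}\ge\frac16\sqrt\temp$. Before $\tau_3$ you only know $S_t\ge\frac76\sqrt\temp$; there is no upper barrier, and the horizon $T_3^+(\gamma)$ is exactly the time scale on which the chain climbs to $\zeta\approx\sqrt{3\temp}$ and begins fluctuating around it. Whenever $S_t>\zeta$ the one-step drift $\frac1n\left(\tanh(\beta S_t)-S_t\right)$ is strictly negative, so the drift sum in your decomposition can be negative, and the drop of $\frac16\sqrt\temp$ need not be charged to the martingale part. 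Your ``bookkeeping'' paragraph only addresses the Taylor error terms for $s$ of order $\sqrt\temp$ below $\zeta$; it does not address the excursions above $\zeta$, and controlling their cumulative negative drift is itself a concentration statement of difficulty comparable to the lemma being proved.

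The natural repairs do not give the stated bound in the full range of the hypothesis. If you stop additionally at an upper barrier (say at $\zeta$) and then union bound over the downcrossing excursions of $[\frac76\sqrt\temp,\frac43\sqrt\temp]$, each excursion fails with probability only $\exp(-c\,\temp^2 n)$ (drift $\Theta(\temp^{3/2}/n)$, step size $\Theta(1/n)$, required drop $\Theta(\sqrt\temp)$), while the number of possible excursion starts is of order $T_3^+(\gamma)=\Theta((n/\temp)\log(\temp^2 n))$, i.e.\ polynomial in $n$; when $\temp^2 n\to\infty$ arbitrarily slowly (e.g.\ $\temp^2 n=\log\log n$) the product is far larger than $(\temp^2 n)^{-1}$. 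This is precisely why the paper argues differently: it uses the elementary inequality $\P(A)\le \E Y/\E(Y\mid A)$ with $Y$ an occupation time below an intermediate level, bounding $\E Y$ from above by Chebyshev via the variance bound of Lemma \ref{lem-var-bound-0-n-1/4}, and bounding $\E(Y\mid A)$ from below by $c\,n/\temp$ via optional stopping; this yields a factor $(\temp^2 n)^{-1/2+O(\temp)}$, and iterating over the three nested levels $\frac54\sqrt\temp$, $\frac65\sqrt\temp$, $\frac76\sqrt\temp$ gives $(\temp^2 n)^{-3/2+O(\temp)}\le(\temp^2 n)^{-1}$. That argument is polynomial in $(\temp^2 n)^{-1}$ by design and therefore survives the slowly-diverging regime in which per-excursion exponential estimates are too weak.
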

\begin{proof}
The essence of the argument we use to prove \eqref{eq-tau3-bound} lies in the
following inequality: For any event $A$ and non-negative random variable $Y$,
\begin{equation*}
\P(A) \leq \frac{\E Y}{\E (Y \mid A)}~.
\end{equation*}
The role of $Y$ in the above inequality will be played by the following:
 \begin{align*}
   Y_{\alpha} &\deq \sum_{t< T_3^+(\gamma)} \one\{ S_t < \alpha
   \sqrt{\temp}\}~, \quad
   \hat{Y}_{\alpha} \deq \sum_{t< T_3^+(2\gamma)} \one\{ S_t < \alpha \sqrt{\temp}\}~.
 \end{align*}
 We now wish to bound the probability that $\tau_3 \leq T_3^+(\gamma)$. First, we claim that started at $s_0 = \frac{4}{3}\sqrt{\temp}$,
we have
$$s_t \geq s_0~\mbox{ for any $t \leq T_3^+(\gamma)$}~.$$ By induction, it suffices
to establish that for every such $t$ we have $s_{t+1}\geq s_0$ provided that
$s_t \geq s_0$. To see this, notice that $T_2$ and $T_3$ have nearly the same order,
and hence as long as $t \leq
T_3^+(\gamma)$, by \eqref{eq-St+1-increase-up-to-zeta} we have
$$W_{s_0}(t) = O(\temp s_0 (\temp^2 n)^{-1/4+O(\temp)})~,$$ that is, a bound similar to the one given in \eqref{eq-W-T2}. Hence, $W_{s_0}(t)$ can easily be absorbed into the leading order term of \eqref{eq-st-grow}, giving
$$ s_{t+1} - s_t \geq \frac{1}{n}\left(\frac{3}{4}\temp s_t - \frac{2}{5} (s_t)^3\right)~.$$
Therefore, either $s_t \geq \sqrt{15\temp/8}$ or $s_{t+1} \geq s_t$,
and in any case we get $ s_{t+1} \geq s_0$, as required.

Next, we will bound the probability that $S_t < \frac54 \sqrt{\temp}$ for some $t < T_3$. To this end, we
introduce an intermediate point $\xi$ into our analysis; any arbitrary
$\frac{5}{4} < \xi < \frac{4}{3}$ will do. Plugging
\eqref{eq-var-st-bound} in Chebyshev's inequality gives
\begin{align*}
  \P_{s_0} (S_t \leq \xi \sqrt{\temp}) \leq \frac{c}{\temp^2 n}\left(1+\frac{2\temp}{n}\right)^t~,
\end{align*}
where $c=c(\xi)$, and summing over $t$ gives
\begin{equation}\label{eq-hat-Y-xi-1}
  \E ( \hat{Y}_{\xi}) \leq \frac{c}{\temp^2 n}\cdot \frac{\sqrt{\temp^2 n}
  \mathrm{e}^{\gamma}}{2\temp /n}(\temp^2 n)^{O(\temp)} = \frac{n}{\temp} \cdot \frac{c' \mathrm{e}^{\gamma}}
  {(\temp^2 n)^{1/2 - O(\temp)}}~.
\end{equation}
Furthermore, recall the rough bound \eqref{eq-e-st-rough-upper-bound} which we inferred
from \eqref{eq-St-1st-moment-tanh}. In fact, \eqref{eq-St-1st-moment-tanh} gives that
for $s \geq 2/n$,
$$\E[S_{t+1} \mid S_t =s ] \leq \left(1+\frac{\temp}{n}\right)s ~,$$
that is, before hitting $\xi$, the magnetization chain $S_t$ is a submartingale with a drift bounded from above by $2\temp^{3/2}/n$. Thus,
optional stopping implies
\begin{equation}\label{eq-hat-Y-xi-2}\E\left( \hat{Y}_{\xi} \mid Y_{5/4} > 0 \right) \geq \E_{\frac{5}{4}\sqrt{\temp}} \left[ \min\{ t : S_t \geq \xi\sqrt{\temp}\}\right] \geq cn/\temp~.
\end{equation}
Combining \eqref{eq-hat-Y-xi-1} and \eqref{eq-hat-Y-xi-2}, we deduce that
\begin{equation*}
\P_{\frac{4}{3}\sqrt{\temp}}( Y_{5/4} > 0 ) \leq \frac{c_1 \mathrm{e}^{\gamma}}{(\temp^2 n)^{1/2 - O(\temp)}}~,
\end{equation*}
where $c_1 = c_1(\xi)$. The exact same argument shows that, for some other constants $c_2,c_3$, we have:
\begin{align*}
\P_{\frac{5}{4}\sqrt{\temp}}( Y_{6/5} > 0 ) &\leq \frac{c_2 \mathrm{e}^{\gamma}}{(\temp^2 n)^{1/2 - O(\temp)}}~,\\
\P_{\frac{6}{5}\sqrt{\temp}}( Y_{7/6} > 0 ) &\leq \frac{c_3 \mathrm{e}^{\gamma}}{(\temp^2 n)^{1/2 - O(\temp)}}~.
\end{align*}
Combining the three bounds on $Y_{5/4}$,$Y_{6/5}$ and $Y_{7/6}$, and writing the event $\tau_3 < T_3^+(\gamma)$
conditioned on
 the first time $S_t$ hits below $\frac{5}{4}\sqrt{\temp}$, and similarly below $\frac{6}{5}\sqrt{\temp}$, we conclude that
\begin{equation*}
  \P(\tau_3 < T_3^+(\gamma)) \leq c_1 c_2 c_3 \frac{\mathrm{e}^{3\gamma}}{(\temp^2 n)^{3/2 - O(\temp)}} \leq \frac{1}{\temp^2 n}~,
  \end{equation*}
  where the last inequality holds for any sufficiently large $n$.
  \end{proof}
\begin{remark*} The above method in fact shows that for any constant $m > 0$, we have $\P(\tau_3 < T_3) \leq (\temp^2 n)^m$
  for large enough values of $n$ (one simply has to add extra intermediate points playing similar roles as $5/4$ and $6/5$).
\end{remark*}

Next, we will shift to the censored magnetization chain for a
while. Since $S_t$ will stay within the interval
$(\frac{7}{6}\sqrt{\temp},1)$ with high probability, so will
$\cS_t$. Define $\cZ_t \deq \zeta - \cS_t$, for
the convenience when performing Taylor expansion around $\zeta$. First let us
consider the case where $\cZ_t
> 0$. Recalling that
$$\E[\cS_{t+1} \mid \cS_t=s] \geq s + \frac{1}{n}(\tanh(\beta s)-s) - \frac{c}{n^2}~,$$ consider the Taylor expansion of
the Hyperbolic tangent around $\zeta$,
\begin{align}
\tanh(\beta s) &= \zeta + \beta(1-\zeta^2)(s-\zeta) + \beta^2(-1+\zeta^2) \zeta (s-\zeta)^2\nonumber\\
&+ \frac{\beta^3}{3} (-1+4\zeta^2-\zeta^4)(s-\zeta)^3 + \frac{\tanh^{(4)}(\xi)}{4!}(s-\zeta)^4 ~,\label{eq-tanh-taylor-at-zeta}\end{align}
where $\xi$ is between $\zeta$ and $\beta s$. Adding the fact that $\zeta = \sqrt{\temp} + O(\temp^{3/2})$, we get
\begin{align*}&\E[\cZ_t - \cZ_{t+1} \mid \cZ_t] \geq \frac{1}{n}(\tanh(\beta (\zeta - \cZ_t))-(\zeta - \cZ_t))+\frac{c}{n^2} \\
&= \frac{1}{n}\left(\left(\zeta^2\beta-\temp\right)\cZ_t - \sqrt{3\temp}\cZ_t^2 + \frac{1}{3}\cZ_t^3 +
o\left(\cZ_t^4\right) + O(\temp^{3/2}\cZ_t^2+\temp \cZ_t^3) \right) + \frac{c}{n^2},
\end{align*}
 where the $o(\cZ_t^4)$ term originates from the fact that $\cZ_t = \zeta - \cS_t$, hence the leading order (constant) term in the coefficient of each $\cZ_t$ equals the coefficient of $x^t$ in the Taylor expansion of $\tanh(x)$. This further implies that the term $\frac{1}{3}\cZ_t^3$ can easily absorb all the remaining terms, and we obtain that
\begin{align}\E&[\cZ_t - \cZ_{t+1} \mid \cZ_t] \geq \frac{1}{n}\left(\left(\zeta^2\beta-\temp\right)\cZ_t
- \sqrt{3\temp}\cZ_t^2 + O(\temp \cZ_t^2)\right) +
\frac{c}{n^2}~,\label{eq-Zt-progression-1}
\end{align}
(with room to spare, having increased the error terms for the sake of simplicity). Whenever $\cZ_t < 0$, we need
$\cZ_t$ to approach $0$, hence again the terms $|\cZ_t|$ and $|\cZ_t|^3$ are in our favor, giving
\begin{align}\E&[|\cZ_t| - |\cZ_{t+1}| \mid \cZ_t] \geq \frac{1}{n}\left(\left(\zeta^2\beta-\temp\right)|\cZ_t|
+ \sqrt{3\temp}\cZ_t^2 + O(\temp \cZ_t^2)\right) + \frac{c}{n^2}~.\label{eq-Zt-progression-2}
\end{align}
It is evident from \eqref{eq-Zt-progression-1} and \eqref{eq-Zt-progression-2} that we require a bound on the
second moment of $\cZ_t$. We therefore move on to calculate the variance of the $\cZ_t$-s, which is precisely
the variance of the $\cS_t$-s.
\begin{lemma}\label{lem-var-bound-cens}
Let $\cS_t$ be the censored magnetization chain starting from $s_{0}
= \frac43\sqrt{\temp}$. There exists some constant $c > 0$ so that for any fixed $\gamma > 0$, the following holds provided that $n$ is sufficiently large:
 \begin{align}\label{eq-supercritical-var-st-bound}
\var_{s_0}\cS_t \leq \frac{c}{\temp n}~\mbox{ for any $t \leq T_3^+(\gamma)$}~.
\end{align}
Moreover, the above also holds if the chain is started at any $s_0'\geq s_0$.
\end{lemma}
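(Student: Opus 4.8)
\smallskip
The plan is to prove a one-step contraction estimate for the variance and then iterate. Write $v_t := \var_{s_0}\cS_t$ and let $\phi(s) := \E[\cS_{t+1}\mid\cS_t = s]$, so that conditioning on $\cS_t$ gives $v_{t+1} = \E[\var(\cS_{t+1}\mid\cS_t)] + \var(\phi(\cS_t))$; the first term is at most $4/n^2$ since a step moves $\cS_t$ by at most $2/n$. For the second term the input is the Taylor expansion \eqref{eq-tanh-taylor-at-zeta}: away from $0$ one has $\phi(s) = s + \tfrac1n(\tanh(\beta s)-s) + O(1/n^2)$, so $\phi$ is non-decreasing (monotone coupling) and $\phi'(s) \le 1 - c_1\temp/n$ for every $s$ in the window $I := [\tfrac76\sqrt\temp,1]$, where $c_1>0$ is absolute — using $\tanh(\beta\zeta)=\zeta$, $\beta\zeta^2-\temp=(2+o(1))\temp$, and the fact that $\tfrac76\sqrt\temp$ lies above the ``neutral point'' where $\beta(1-\tanh^2(\beta s))=1$ — while globally $\phi$ is $(1+\temp/n)$-Lipschitz on $[0,1]$, since $\beta(1-\tanh^2(\beta s))-1 \le \beta-1 = \temp$.

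\smallskip
Comparing $\cS_t$ with its mean $\mu_t := \E_{s_0}\cS_t$ and splitting according to whether $\cS_t\in I$, the two Lipschitz bounds give
\[
\var(\phi(\cS_t)) \le \E\big[(\phi(\cS_t)-\phi(\mu_t))^2\big] \le (1-c_1\temp/n)^2 v_t + \frac{C_1\temp}{n}\,\E\big[(\cS_t-\mu_t)^2\,\one_{\cS_t < \frac76\sqrt\temp}\big].
\]
Two companion facts feed this. First, $\mu_t \ge \tfrac43\sqrt\temp$ for all $t\le T_3^+(\gamma)$: this follows from the claim proved inside the proof of Lemma~\ref{lem-tau-3-bound} that $\E_{s_0}S_t \ge \tfrac43\sqrt\temp$, together with the stochastic domination $\cS_t\succeq S_t$ (and it holds equally when started from any $s_0'\ge s_0$, by the monotone coupling). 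Hence $\mu_t - \tfrac76\sqrt\temp \ge \tfrac16\sqrt\temp$, and on $\{\cS_t<\tfrac76\sqrt\temp\}$ we have $(\cS_t-\mu_t)^2 \le \mu_t^2$. Second — and this is the crux — I would use a tail bound on the excursions of $\cS_t$ below $I$ of \emph{fourth-moment quality}, namely $\E[(\cS_t-\mu_t)^4] \le C\,v_t^2$, which I expect to follow from correlation (FKG-type) inequalities: in the contracting region the law of the configuration stays suitably ordered and the fourth central moment of the magnetization is governed by the square of its variance (the same mechanism invoked later for the coalescence coupling). Granting this, since $\{\cS_t<\tfrac76\sqrt\temp\}\subseteq\{|\cS_t-\mu_t|>\mu_t-\tfrac76\sqrt\temp\}$, Markov's inequality gives $\P(\cS_t<\tfrac76\sqrt\temp)\le C\,v_t^2/(\mu_t-\tfrac76\sqrt\temp)^4$, so the excursion term is at most $\mu_t^2\,C\,v_t^2/(\mu_t-\tfrac76\sqrt\temp)^4$; writing $m:=\mu_t/\sqrt\temp\ge\tfrac43$, the factor $\mu_t^2/(\mu_t-\tfrac76\sqrt\temp)^4 = m^2/\big((m-\tfrac76)^4\,\temp\big)$ is $O(1/\temp)$ uniformly in $m\ge\tfrac43$, so the excursion term is $O(v_t^2/\temp)$ and contributes only $O(v_t^2/n)$ to $v_{t+1}$.

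\smallskip
Altogether $v_{t+1} \le (1-c_1\temp/n)\,v_t + O(v_t^2/n) + 4/n^2$, and I would finish by induction on $t$. Since $v_0 = 0$, it suffices to check that $v_t \le c/(\temp n)$ forces $v_{t+1}\le c/(\temp n)$: the quadratic term is then $O\!\big(c^2/(\temp^2 n^3)\big) = o(1/n^2)$ because $\temp^2 n\to\infty$, so for $c$ a sufficiently large absolute constant and all large $n$ one gets $v_{t+1} \le c/(\temp n) - c_1 c/n^2 + o(1/n^2) \le c/(\temp n)$. Only the inequality $\mu_t \ge \tfrac43\sqrt\temp$ was used about the starting point, so the same argument yields the assertion for every $s_0'\ge s_0$.

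\smallskip
The step I expect to be the main obstacle is precisely the fourth-moment estimate $\E[(\cS_t-\mu_t)^4] = O(v_t^2)$ via correlation inequalities. The contraction available here is only of order $\temp/n$, far too weak to absorb an excursion correction that is merely \emph{linear} in $v_t$ (as the crude Chebyshev bound $\P(\cS_t<\tfrac76\sqrt\temp) = O(v_t/\temp)$ would produce); one genuinely needs the excursion contribution to be \emph{quadratic} in $v_t$, hence a tail estimate one order stronger than what the variance bound itself supplies.
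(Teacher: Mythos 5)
Your architecture — a one-step variance recursion with contraction of order $\temp/n$ valid above $\tfrac76\sqrt\temp$, plus a correction term from excursions below that level — is in the same spirit as the paper's proof (which extracts the contraction through the auxiliary decreasing function $\Gamma(s)=\tanh(\beta s)-(1-\tfrac{\temp}{10})s$ and a correlation/FKG inequality applied conditionally on $\{\cS_t\ge\tfrac76\sqrt\temp\}$). The genuine gap is exactly the step you flag yourself: the kurtosis-type bound $\E[(\cS_t-\mu_t)^4]\le C\,v_t^2$ at intermediate times is nowhere proved, and it does not follow from FKG-type inequalities, which give sign information on covariances of monotone functions of the chain but say nothing about fourth central moments being controlled by the variance squared. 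The paper never needs, states, or proves such an estimate, and as written your induction cannot close without it, so the proposal is incomplete at its crux.

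The gap is also unnecessary, because the tail estimate you need is precisely the \emph{conclusion} of Lemma \ref{lem-tau-3-bound}, which you already cite (but only for its internal claim that $\E_{s_0}S_t\ge\tfrac43\sqrt\temp$). Since $\cS_t$ has the law of $|S_t|$ and $\{|S_t|<\tfrac76\sqrt\temp\}\subseteq\{\tau_3\le t\}$, that lemma gives $\P(\cS_t<\tfrac76\sqrt\temp)\le 1/(\temp^2 n)$ for all $t\le T_3^+(\gamma)$ — a pathwise escape-probability bound far stronger than anything a moment inequality supplies. Combined with $\mu_t^2\le \E\cS_t^2=\E S_t^2=(\E S_t)^2+\var_{s_0}S_t=O(\temp)$ (using $|\E S_t|\le\zeta+\sqrt\temp$, as noted after \eqref{eq-bound-expectation-censor}, together with Lemma \ref{lem-var-bound-0-n-1/4} and $\temp^2n\to\infty$), your excursion term becomes $\frac{C_1\temp}{n}\cdot O(\temp)\cdot\frac{1}{\temp^2 n}=O(1/n^2)$, absorbed into the additive $4/n^2$, and the recursion $v_{t+1}\le(1-c_1\temp/n)v_t+C/n^2$ iterates directly to $v_t\le c/(\temp n)$ with no quadratic-in-$v_t$ term at all. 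This is exactly how the paper controls the bad event — it plugs $\P(\tau_3\le t)\le 1/(\temp^2 n)$ into the covariance estimate \eqref{eq-bound-expectation-censor} — so until you either prove your fourth-moment claim or substitute this escape-probability bound, the argument does not stand on its own.
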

\begin{proof}
Rearranging \eqref{eq-St-2nd-moment} and combining it with the fact that $|S_t|$ and $\cS_t$ have the same
distribution, we have
\begin{align*}
\E\left[\cS_{t+1}^2 \mid \cS_t = s\right]&\leq s^2 - \frac{\temp
s^2}{5n} - \frac{2}{n}s^2 +
\frac{4}{n^2}+\frac{2s}{n}\left(\tanh(\beta s)
+\frac{\temp}{10}s\right)\\
 &\leq s^2\left(1-\frac{\temp}{5n}\right)
+\frac{2s}{n}\left(\tanh(\beta s)-\left(1-\frac{\temp}{10}\right)
s\right) + \frac{4}{n^2}~.
\end{align*}
Taking expectation, we get another recursion relation for the second moment:
\begin{align*}
 \E &\left[\cS_{t+1}^2\right] \leq \left(1-\frac{\temp}{5n}\right)\E \left[\cS_t^2 \right] + \frac{2}{n}\E\left[\cS_t \Big(\tanh(\beta \cS_t)
-\Big(1-\frac{\temp}{10}\Big) \cS_t\Big)\right] +
\frac{4}{n^2}~.
\end{align*}
Modifying \eqref{eq-St-1st-moment-tanh} in the same spirit, we then obtain another similar recursion relation for the
expectation squared:
\begin{align}\label{eq-lower-second-moment}
(\E &\left[\cS_{t+1}\right])^2 \geq \left(1 -
\frac{\temp}{5n}\right)(\E \left[\cS_t\right])^2 + \frac{c}{n^2} \nonumber\\
&+\frac{1}{n}\E \left[\cS_t\right]\cdot \E
\left[\left(\tanh(\beta\cS_t) - \left(1-\frac{\temp}{10}\right)
\cS_t\right)\right]  ~.
\end{align}
Define $ \Gamma \deq
\tanh\left(\beta\cS_t\right)-\left(1-\frac{\temp}{10}\right)
\cS_t~, $ according to which we can then rewrite
\eqref{eq-lower-second-moment} as
\begin{equation}\var_{s_0} \left(\cS_{t+1}\right) \leq
\left(1-\frac{\temp}{5n}\right)\var_{s_0} \left(\cS_t\right)
+\frac{\E[\cS_t\Gamma] -\E \cS_t \E\Gamma}{n}+ \frac{c}{n^2}~.
\end{equation}
Crucially, for any $s > \frac{7}{6}\sqrt{\temp}$, the function $
f(s) = \tanh\left(\beta s\right) -\left(1-\frac{\temp}{10}\right)
s $ is decreasing. Hence, conditioning on $\cS_t \geq
\frac{7}{6}\sqrt{\temp}$ we can apply the FKG inequality and
 obtain that
\begin{align*}
  \E[\cS_t\Gamma] &=\P\left(\cS_t \geq \mbox{$\frac{7}{6}\sqrt{\temp}$}\right) \cdot \E\left[\cS_t \Gamma \mid \cS_t \geq \mbox{$\frac{7}{6}\sqrt{\temp}$}\right]  \\
&+ \P\left(\cS_t \leq \mbox{$\frac{7}{6}\sqrt{\temp}$}\right) \cdot \E\left[\cS_t \Gamma \mid \cS_t \leq \mbox{$\frac{7}{6}\sqrt{\temp}$}\right]  \\
&\leq \P\left(\cS_t \geq \mbox{$\frac{7}{6}\sqrt{\temp}$}\right) \cdot \E\left[\cS_t \mid \cS_t \geq
\mbox{$\frac{7}{6}\sqrt{\temp}$} \right] \cdot
  \E \left[\Gamma \mid \cS_t \geq \mbox{$\frac{7}{6}\sqrt{\temp}$}\right] \\
&+ \P(\tau_3\leq t) \cdot \mbox{$\frac{7}{6}\sqrt{\temp}$}\cdot \left(3\temp\cdot
\mbox{$\frac{7}{6}\sqrt{\temp}$}\right)~,
\end{align*}
where in the last inequality we used the fact that the $\Gamma
\leq 3\temp \cS_t$, combined with the condition $\cS_t \leq
\frac{7}{6}\sqrt{\temp}$. Notice that since $\cS_t$ is
non-negative, $$\E[\cS_t \mid \cS_t \geq
\mbox{$\frac{7}{6}\sqrt{\temp}$}] \leq \frac{\E \cS_t}{ \P(\cS_t
\geq \mbox{$\frac{7}{6}\sqrt{\temp}$})}~,$$ and furthermore, $\E
\left[\Gamma \mid \cS_t < \mbox{$\frac{7}{6}\sqrt{\temp}$}\right]
\geq 0~.$ We therefore conclude that
\begin{align*}
\E \cS_t \Gamma &\leq \frac{\E \cS_t \E \Gamma}{\P(\cS_t \geq \mbox{$\frac{7}{6}\sqrt{\temp}$})} + \P(\tau_3\leq t)  \cdot 6\temp^2\\
&= \E \cS_t \E \Gamma\left(1+ \frac{\P(\cS_t < \frac{7}{6}\sqrt{\temp})}{1-\P(\cS_t <
\frac{7}{6}\sqrt{\temp})}\right) + 6\temp^2 \P(\tau_3\leq t)~,
\end{align*}
hence for any $t \leq T_3^+(\gamma)$
\begin{align}
&\E_{s_0} \cS_t \Gamma - \E_{s_0} \cS_t \E_{s_0} \Gamma\leq \P_{s_0}(\tau_3\leq t)\cdot \left(\frac{\E_{s_0}
\cS_t \E_{s_0} \Gamma}{1-\P_{s_0}(\tau_3\leq t)} + 6\temp^2 \right)\nonumber\\
&\leq\frac{1}{\temp^2 n}(4 \temp(\E_{s_0} \cS_t)^2 + 6\temp^2 )\leq\frac{1}{\temp^2 n}(4 (\E_{s_0} \cS_t)^2 +
6\temp^2 )\nonumber \\
&\leq\frac{1}{\temp^2 n}(4 \temp (\E_{s_0} S_t)^2 + 4 \temp \var_{s_0} S_t + 6\temp^2 )\leq\frac{16
\temp^2}{\temp^2 n} = \frac{16}{n}~, \label{eq-bound-expectation-censor}
\end{align}
where in the above inequalities we applied Lemmas \ref{lem-tau-3-bound} and
\ref{lem-var-bound-0-n-1/4}, combined with the facts that the
$\Gamma$ is bounded from above by $3\temp \cS_t$ as well as that $|\E
S_t |\leq \zeta+\sqrt{\temp}$ (since, whenever $|\E S_t| \geq
\zeta$, \eqref{eq-St-1st-moment-tanh} and Jensen's inequality
imply that $|\E S_{t+1}| \leq |\E S_t|$). Altogether,
\begin{align*}
\var_{s_0} \cS_{t+1} &\leq \left(1-\frac{\temp}{5n}\right)\var \cS_t+ \frac{c'}{n^2} ~,
 \end{align*}
and by iterating we get that for any $t\leq T_3^+(\gamma)$
 \begin{align*}
\var_{s_0}\cS_t &\leq \frac{c'}{n^2}\cdot \frac{1}{\temp/5n} =
\frac{c}{\temp n}~.
\end{align*}
To extend the above to any starting position $s_0'\geq s_0$, notice that the only difference is the bound we get on $\tau_3$, which is inferred immediately from
monotone coupling. This completes the proof of the lemma.
\end{proof}

We are now ready to establish the lower bound on $\cS_{T_3^+}$.
\begin{proof}[\textbf{\emph{Proof of \eqref{eq-upper-bound-lemma-reach-3}}}]
At this point, equipped with the variance bound on $\cZ_t$ (the same bound we have for $\cS_t$), we can return to
\eqref{eq-Zt-progression-1} and \eqref{eq-Zt-progression-2}:
\begin{align*}\E\left[|\cZ_t| - |\cZ_{t+1}|\right] &\geq \frac{1}{n}\left(\left(\zeta^2\beta-\temp\right)\E|\cZ_t|
- \sqrt{3\temp}\E \cZ_t^2 +O(\temp \E \cZ_t^2) \right) - \frac{c}{n^2} \\
&\geq \frac{1}{n}\left(\left(\zeta^2\beta-\temp\right)\E|\cZ_t| - \sqrt{3\temp}(\E \cZ_t)^2 +O(\temp (\E
\cZ_t)^2) \right) - \frac{c'}{\sqrt{\temp}n^2}.
\end{align*}
Setting: $$b_i = a^{-i}\left(\zeta-\mbox{$\frac{4}{3}\sqrt{\temp}$}\right)~,i_3 = \min\{i:b_i < 1/\sqrt{\temp
n}\}~,u_i = \min\{t: \E |\cZ_t| < b_i\}~,$$ we get
$$b_i/a \leq \E |\cZ_t| \leq b_i~\mbox{for any $t \in
[u_i,u_{i+1})$}~.$$ since $\E_{s_0}|\cZ_t|$ is decreasing as long as $\E_{s_0}\cZ_t \geq 1/\sqrt{\temp n}$. For sufficiently large $n$, we have
$$i_3 \leq \log_a \frac{\sqrt{3\temp} - \frac{4}{3}\sqrt{\temp} + O(\temp^{3/2})}{1/\sqrt{\temp n}}
\leq \frac{1}{2}\log_a (\temp^2 n)~.$$ Our estimates on $|\cZ_t|-|\cZ_{t+1}|$ yield the following:
\begin{align*}
  u_{i+1}-u_i &\leq \left(\frac{(a-1) b_i}{a}\right)/\Big(
   \frac{\temp}{n}
  \Big( \big(\zeta^2\frac{\beta}{\temp} - 1\big)\frac{b_i}{a} - \sqrt{\frac{3}{\temp}}b_i^2 + c_1 b_i^2\Big)-\frac{c_2}{\sqrt{\temp}n^2}  \Big)\\
  &\leq \frac{n}{\temp\left(\zeta^2\frac{\beta}{\temp} - 1\right)} \cdot \frac{(a-1)} {1 - \frac{\sqrt{3}a}{\zeta^2 \beta/\sqrt{\temp} - \sqrt{\temp}} b_i+c_1 b_i-\frac{c'_2}{\sqrt{\temp^2 n}} }
\end{align*}
where in the last inequality we wrote $$\frac{c_2 a}{\sqrt{\temp}n^2} = \frac{\temp}{n} \cdot \frac{c_2'}{\sqrt{\temp^2 n}} \cdot
\frac{1}{\sqrt{\temp n}} \leq \frac{\temp}{n} \cdot \frac{c_2'}{\sqrt{\temp^2 n}} b_i~.$$
Therefore, as $b_i \leq \zeta - \frac{4}{3}\sqrt{\temp} = (\sqrt{3}-\frac{4}{3})\sqrt{\temp} + O(\temp^{3/2})$, we get
\begin{align*}
 u_{i+1}-u_i &\leq \frac{n}{\temp\left(\zeta^2\frac{\beta}{\temp} - 1\right)} \cdot \frac{a-1} {1 - 2\frac{b_i}{\sqrt{\temp}} -\frac{c'_2}{\sqrt{\temp^2 n}} }~,
\end{align*}
where the last inequality requires that $a < 4/3$. As $2\frac{b_i}{\sqrt{\temp}} + \frac{c'_2}{\sqrt{\temp^2 n}} < \frac{4}{5}$ for a sufficiently large $n$, we conclude that
\begin{align}
  \sum_{i=1}^{i_3} u_{i+1}-u_i &\leq
  \frac{n}{\temp\left(\zeta^2\frac{\beta}{\temp} - 1\right)} (a-1)\sum_{i=1}^{i_3}
   \left(1 + 5\frac{b_i}{\sqrt{\temp}} +5\frac{c'_2}{\sqrt{\temp^2 n}} \right)\nonumber\\
&\leq    (a-1) \frac{n}{\temp} \frac{1}{\left(\zeta^2\frac{\beta}{\temp} - 1\right)} \left(i_3 + \frac{5}{a-1}+\frac{5 c'_2 i_3}{\sqrt{\temp^2 n}}\right)\nonumber\\
&\leq  \frac{1}{2\left(\zeta^2\frac{\beta}{\temp} - 1\right)} \cdot \frac{n}{\temp} \log(\temp^2 n) +
6\frac{n}{\temp}\quad(~= T_3^+(6)~)~.\label{eq-expectation-T3}
\end{align}
where again we chose $a=1+n^{-1}$, and this holds for large $n$.

\begin{remark*}
Since $|\cZ_t|$ is again a supermartingale
with holding probabilities bounded from above, an application of Lemma \ref{lem-supermatingale-positive}
implies that
$$\P_{s_0} \left( \tau_\zeta > T_3^+(\gamma+6)\right) \leq c / \sqrt{\gamma}~,$$
where $\tau_\zeta$ is the hitting time of
$\zeta$, i.e., $\tau_{\zeta} \deq \min\{t: \cS_t \geq \zeta\}$. We thus have:
$$ \lim_{\gamma \to\infty} \limsup_{n\to\infty} \P_{4\sqrt{\temp}/3}(\tau_{\zeta}\geq T_3^+ (\gamma))  =0~.$$
\end{remark*}

Combined with the decreasing property of $\E_{s_0} \cZ_t$ up to $1/\sqrt{\temp n}$, inequality \eqref{eq-expectation-T3}
implies that for any $\gamma \geq 6$ and sufficiently large $n$,
\begin{equation}\label{eq-Z-T-3-bound}
\E_{s_0} |\cZ_{T_3^+(\gamma)}| \leq 1/\sqrt{\temp n}~.
\end{equation}
Together with Lemma
\ref{lem-var-bound-cens} and Chebyshev's inequality, we deduce that
\begin{equation*}
\P_{s_0} (\cZ_{T_3^+(\gamma)} \geq B/ \sqrt{\temp n} ) \leq \frac{c}{(B-1)^2},~
\end{equation*}
for some constant $c$ and hence implies \eqref{eq-upper-bound-lemma-reach-3}.
\end{proof}
\subsubsection{Proof of \eqref{eq-lower-bound-lemma-reach-3}: Upper bound of $\zeta$ for $\cS_{T_3^-}$}
Similar to our definition of $\cZ_t$ for the censored magnetization chain,
 define $Z_t \deq \zeta - S_t$ for the non-censored chain. We first show that $\E Z_t$ is suitably large at $T_3^-$, and then proceed to translate this result to its censored
 analogue $\E\cZ_t$. Since $\tanh^{(2)}(x) \leq 0$, the
Taylor expansion \eqref{eq-tanh-taylor-at-zeta} of $\tanh$ around
$\zeta$ implies that
$$ \tanh(\beta s) \leq \zeta + \beta(1-\zeta^2)(s-\zeta)~.$$
We deduce that
\begin{align*}\E&[Z_t - Z_{t+1} \mid Z_t] \leq \frac{1}{n}(\tanh(\beta (\zeta - Z_t))-(\zeta - Z_t)) \leq \frac{\left(\zeta^2\beta-\temp\right)Z_t}{n} ~,\end{align*}
and therefore
\begin{align}\label{eq-Zt-lower-bound}\E&Z_{t+1} \geq \left(1-\frac{\zeta^2\beta-\temp}{n}\right)\E Z_t ~.\end{align}
Iterating the above inequality and choosing $s_0 = \frac43\sqrt{\temp}$ gives
\begin{equation*}
  \E_{s_0} Z_{T_3^-(\gamma)} \geq \left(1-\frac{\zeta^2\beta-\temp}{n}\right)^{T_3^-(\gamma)} s_0 \geq \frac{\mathrm{e}^\gamma}{\sqrt{\temp^2 n}} \left(\sqrt{3}-\frac{4}{3}\right)\sqrt{\temp} =
  \frac{c' \mathrm{e}^{\gamma}}{\sqrt{\temp n}}~.
\end{equation*}
Note that $Z_{T_3^-(\gamma)} \neq \cZ_{T_3^-(\gamma)}$ only if $S_t = 0$ for some $t < T_3^-(\gamma)$. Noting that, clearly, $\tau_0 > \tau_3$ for the above choice of $s_0$, we thus obtain that
\begin{align*}
\E_{s_0} \cZ_{T_3^-(\gamma)} &\geq
\E_{s_0} Z_{T_3^-(\gamma)} - \sum_{t < T_3^-(\gamma)} \P_{s_0}(\tau_{0} = t) \E_0
\cS_{T_3^-(\gamma) - t}\\
&\geq
\E_{s_0} Z_{T_3^-(\gamma)} - \P_{s_0}(\tau_0 < T_3^-(\gamma))
\max_{t < T_3^-(\gamma)}
\E_0
\cS_{T_3^-(\gamma)-t}\\
&\geq
\E_{s_0} Z_{T_3^-(\gamma)} - \P_{s_0}(\tau_3 < T_3^-(\gamma))
\max_{t < T_3^-(\gamma)}
\sqrt{\var_0 S_{T_3^-(\gamma)-t}}\\
&\geq \E_{s_0} Z_{T_3^-(\gamma)} - \frac{1}{\temp^2 n}\left(\frac{c}{\temp n}\left(1+\frac{2\temp}n\right)^{T_3^-(\gamma)}\right)^{1/2}\\
&\geq \frac{c' \mathrm{e}^{\gamma}}{\sqrt{\temp n}} - o \left(\frac{1}{\sqrt{\temp n}}\right) \geq \frac{c''
\mathrm{e}^{\gamma}}{\sqrt{\temp n}}~,
\end{align*}
where the bound on the variance is by Lemma \ref{lem-var-bound-0-n-1/4}.
Finally, combining Lemma \ref{lem-var-bound-cens} and Chebyshev's inequality, we infer that for some constant
$c_{B^\star}$ depending on $B^\star$,
$$\P_{s_0} (\cZ_{T_3^-(\gamma)} \leq B^\star/\sqrt{\temp n}) \leq c_{B^\star} \mathrm{e}^{-\gamma}~,$$
which then implies \eqref{eq-lower-bound-lemma-reach-3}, and concludes
the proof of Theorem \ref{thm-reach-zeta-sqrt-temp}. \qed

\subsection{Getting from $1$ to $\zeta$}\label{subsec-1-to-zeta}
In this subsection, we consider the problem of reaching $\zeta$ from the other endpoint of the censored
magnetization chain, namely, from 1. The result stated by the following theorem is analogous to Theorem \ref{thm-reach-zeta-sqrt-temp} from the previous subsection.
\begin{theorem}\label{thm-reach-zeta-4}
Define
\begin{eqnarray*}
  &T_4 \deq \frac{1}{2\left(\zeta^2\frac{\beta}{\temp} - 1\right)} \cdot \frac{n}{\temp}\log (\temp^2 n)~,\\
  &T_4^+ (\gamma) \deq T_4 +
\gamma n/\temp\quad,\quad T_4^-(\gamma) \deq T_4 -\gamma n/\temp~.
\end{eqnarray*}
The following holds for the censored magnetization chain
$\cS_t$ and any $B^* > 0$:
\begin{align}
&\lim_{B\to\infty}\lim_{\gamma \to\infty} \liminf_{n\to\infty} \P_{1}(\cS_{T_4^+ (\gamma)} \leq
\zeta + B/ \sqrt{\temp n})=1~, \label{eq-upper-bound-lemma-reach-4}\\
&\lim_{\gamma \to\infty} \limsup_{n\to\infty} \P_{1}(\cS_{T_4^- (\gamma)} \leq \zeta +
B^\star / \sqrt{\temp n})=0~. \label{eq-lower-bound-lemma-reach-4}
\end{align}
\end{theorem}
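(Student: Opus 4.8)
The plan is to follow the proof of Theorem~\ref{thm-reach-zeta-sqrt-temp} as closely as possible. The point is that on all of $[\zeta,1]$ the magnetization drift $\tanh(\beta s)-s$ is negative, so $\cS_t$ started from $1$ is attracted to $\zeta$ from above, and --- exactly as in the segment $[\sqrt\temp,\zeta]$ --- the time is dominated by the final approach to $\zeta$ at the linearized rate $\zeta^2\beta-\temp$, the same rate producing $T_3$; this is why $T_4=T_3$ with the same window. The one genuinely new step is a \emph{fast-descent} estimate: since $|\tanh(\beta s)-s|$ is of order $1/n$ per step while $\cS_t$ is bounded away from $\zeta$, of order $\cS_t^3/n$ for $\sqrt\temp\ll\cS_t\ll1$, and still of order $\temp^{3/2}/n$ near $s=2\sqrt\temp$, an elementary drift argument (optional stopping applied to $\cS_t$ on $[\epsilon_0,1]$ and to a potential such as $\cS_t^{-2}$ on $[2\sqrt\temp,\epsilon_0]$) shows that with probability tending to $1$ the chain hits $2\sqrt\temp$ within $\gamma_0 n/\temp$ steps for a suitable fixed $\gamma_0$ --- a time absorbed into the window. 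The lower confinement needs nothing new: as $1\ge\tfrac43\sqrt\temp$, the monotone coupling together with Lemma~\ref{lem-tau-3-bound} gives that the chain from $1$ stays above $\tfrac76\sqrt\temp$ for all $t\le T_4^+(\gamma)$ with probability at least $1-1/(\temp^2 n)$.

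For the variance I would restart the chain at the first hitting time $\sigma$ of $2\sqrt\temp$. Since $2\sqrt\temp>\zeta$ this hitting time is reached, and since $2\sqrt\temp\ge\tfrac43\sqrt\temp$, Lemma~\ref{lem-var-bound-cens} (its ``any $s_0'\ge\tfrac43\sqrt\temp$'' clause) applies verbatim to the post-$\sigma$ chain: the confinement above $\tfrac76\sqrt\temp$ makes $f(s)=\tanh(\beta s)-(1-\temp/10)s$ decreasing where needed for the FKG step, and $\E\cS_{\sigma+t}=O(\sqrt\temp)$ controls the error terms as there. This yields $\var_1\cS_u\le c/(\temp n)$ for all $\gamma_0 n/\temp\le u\le T_4^+(\gamma)$, the same bound available in Theorem~\ref{thm-reach-zeta-sqrt-temp} and the one the final Chebyshev steps require. (Lemma~\ref{lem-var-bound-cens} cannot be applied directly to the chain from $1$, because its proof uses $(\E\cS_t)^2=O(\temp)$, which fails before the fast descent.)

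For \eqref{eq-upper-bound-lemma-reach-4}, write $\cZ_t=\cS_t-\zeta$ and track $\E_1|\cZ_t|$: the Taylor expansion \eqref{eq-tanh-taylor-at-zeta} gives, in the contracting region, the estimates \eqref{eq-Zt-progression-1}--\eqref{eq-Zt-progression-2} for $\E[|\cZ_t|-|\cZ_{t+1}|\mid\cZ_t]$, together with $\E[|\cZ_t|-|\cZ_{t+1}|]\ge c_0/n$ while $\cS_t$ is bounded away from $\zeta$. Running the geometric-ladder argument used to prove \eqref{eq-upper-bound-lemma-reach-3} with levels $b_i=a^{-i}$ times the initial gap and $a=1+1/n$, the portion of the ladder with $b_i\gg\sqrt\temp$ (where the quadratic and constant terms dominate) is traversed in $O(n/\temp)$ total time, while the portion with $b_i$ from $\asymp\sqrt\temp$ down to $1/\sqrt{\temp n}$ contributes $T_4+O(n/\temp)$ exactly as in Theorem~\ref{thm-reach-zeta-sqrt-temp}; hence $\E_1|\cZ_{T_4^+(\gamma)}|\le1/\sqrt{\temp n}$ for $\gamma$ large, and Chebyshev with the variance bound gives \eqref{eq-upper-bound-lemma-reach-4}. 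For \eqref{eq-lower-bound-lemma-reach-4} one needs the gap $g_t:=\E_1\cS_t-\zeta$ (equivalently $\E_1S_t-\zeta$, since $\cS_t$ agrees with $S_t$ while $S_t>0$, which holds throughout $T_4^-(\gamma)$ steps with high probability, or one unionizes over $\tau_0$ as in the argument for \eqref{eq-lower-bound-lemma-reach-3}) to be bounded below at $T_4^-(\gamma)$. The global chord bound $\tanh(\beta s)-s\ge\frac{\tanh\beta-1}{1-\zeta}(s-\zeta)$ is too weak near $\zeta$ (only a constant rate), so instead I would use that $\cS_t$ leaves the regime $\cS_t\gtrsim\sqrt\temp$ within $O(n/\temp)$ steps and the pointwise Taylor bound $\tanh(\beta s)-s\ge-(\zeta^2\beta-\temp)(s-\zeta)\bigl(1+O((s-\zeta)/\sqrt\temp)\bigr)$ valid near $\zeta$; iterating from the post-fast-descent position over the remaining $\le T_4^-(\gamma)$ steps, with the $O(\cZ_t/\sqrt\temp)$ and variance corrections shown to sum to $O(1)$ in the exponent, gives $g_{T_4^-(\gamma)}\ge c\,\mathrm{e}^{\gamma(\zeta^2\beta/\temp-1)}/\sqrt{\temp n}$, which dominates $B^\star/\sqrt{\temp n}$ for $\gamma$ large; Chebyshev with the variance bound concludes.

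The main obstacle is pinning down the constant in $T_4$ from both directions even though the descent rate varies by a factor $\asymp1/\temp$ along the trajectory --- from $\asymp1/n$ per step when $\cS_t$ is of order $1$ down to $\asymp(\temp/n)(\cS_t-\zeta)$ near $\zeta$. One must verify that the fast portion, from $1$ down to a constant multiple of $\sqrt\temp$, genuinely costs only $O(n/\temp)$ steps, so that it disappears into the window and the leading term $\tfrac12\log(\temp^2 n)/(\zeta^2\beta/\temp-1)\cdot(n/\temp)$ comes purely from the near-$\zeta$ geometric descent, which is governed by the same linearized recursion as in Theorem~\ref{thm-reach-zeta-sqrt-temp}; this is precisely why $T_4=T_3$. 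The secondary technical point, already noted, is that Lemma~\ref{lem-var-bound-cens} must be restarted after the fast descent, at a hitting time of a constant multiple of $\sqrt\temp$ in $(\zeta,2\sqrt\temp]$, to reinstate both its starting-point hypothesis and $(\E\cS_t)^2=O(\temp)$.
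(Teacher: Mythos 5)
Your overall architecture is the paper's: reduce the problem to the machinery of Theorem~\ref{thm-reach-zeta-sqrt-temp}, show the descent from $1$ to an $O(\sqrt\temp)$-neighborhood of $\zeta$ costs only $O(n/\temp)$ steps (absorbed in the window), run the geometric ladder for the expectation so that the near-$\zeta$ linearized contraction at rate $(\zeta^2\beta-\temp)/n$ produces exactly $T_4$, and finish both directions with the variance bound and Chebyshev. Two of your deviations, however, leave genuine holes as written.

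First, the variance detour is both unnecessary and incomplete. Lemma~\ref{lem-var-bound-cens} is stated (and used by the paper in this very proof) for any starting point $s_0'\geq\frac43\sqrt\temp$, in particular $s_0'=1$; your objection that its proof needs $(\E\cS_t)^2=O(\temp)$ can be answered without restarting: by Jensen and the concavity of $\tanh(\beta\,\cdot)$ on $[0,\infty)$, $\E\Gamma\leq f(\E\cS_t)<0$ whenever $\E\cS_t$ exceeds, say, $\zeta+\sqrt\temp$, so the product $\E\cS_t\,\E\Gamma$ is nonpositive precisely in the regime where the $O(\temp)$ bound fails, and the recursion closes as before. Your replacement — restarting at the hitting time $\sigma$ of $2\sqrt\temp$ and invoking the lemma "verbatim" — only yields a bound on $\E\bigl[\var(\cS_u\mid\F_\sigma)\bigr]$; the asserted unconditional bound $\var_1\cS_u\leq c/(\temp n)$ at the deterministic times $u\leq T_4^+(\gamma)$ also requires controlling $\var\bigl(\E[\cS_u\mid\F_\sigma]\bigr)$, i.e.\ the spread of the conditional means over the random restart time, which you do not address.

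Second, and more substantively, your proof of \eqref{eq-lower-bound-lemma-reach-4} is missing the confinement-from-above device that makes the near-$\zeta$ iteration legitimate. To iterate the linearization with only an $O(\cZ_t/\sqrt\temp)$ relative correction, you must keep the chain within $O(\sqrt\temp)$ of $\zeta$ throughout the remaining $\leq T_4^-(\gamma)$ steps; the paper achieves this by reducing, via the monotone coupling, to the deterministic start $\zeta+\sqrt\temp$ (which also removes your awkward random "post-fast-descent position"), working with the truncated variable $\bar Z_t\one\{\tau_4\geq t\}$, and proving the mirror image of Lemma~\ref{lem-tau-3-bound}, namely Lemma~\ref{lem-tau-4-bound}: $\P_{\zeta+\sqrt\temp}(\tau_4\leq T_4^+(\gamma))\leq 1/(\temp^2 n)$ where $\tau_4$ is the hitting time of $\zeta+2\sqrt\temp$. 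This truncation is what lets the cubic term be absorbed into the quadratic one and, combined with the decay bound \eqref{eq-Zt-lower-bound} for $\E\bar Z_t$ and the variance bound, is what makes the quadratic corrections "sum to $O(1)$ in the exponent" — the claim you state but for which you supply no mechanism. This is the one genuinely new lemma of the subsection beyond Theorem~\ref{thm-reach-zeta-sqrt-temp}, and your sketch needs it (or an equivalent) to go through; the rest of your outline, including the $O(n/\temp)$ fast-descent estimate and the confinement below $\frac76\sqrt\temp$ via monotone coupling and Lemma~\ref{lem-tau-3-bound}, matches the paper's argument.
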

\subsubsection{Proof of \eqref{eq-upper-bound-lemma-reach-4}: Upper bound of $\zeta$ for $\cS_{T_4^+}$}
Our argument here again hinges on the contraction of the magnetization towards $\zeta$. For convenience, define $\bar{Z}_t \deq (S_t - \zeta)$ to obtain a positive sequence until hitting $\zeta$.
Recalling that by \eqref{eq-St-1st-moment-tanh}, $$\E[S_{t+1} \mid S_t=s] \leq s + \frac{1}{n}(\tanh(\beta s)-s)~\mbox{for $s \geq 0$}~,$$ we can combine Jensen's
inequality with the concavity of the Hyperbolic tangent and get that
\begin{align}\E&[\bar{Z}_{t+1}-\bar{Z}_t] = \E(\E[ S_{t+1}-S_t\mid S_t])
\leq \frac{1}{n}\left(\E \tanh(\beta S_t)- \E S_t\right)\nonumber\\
&\leq \frac{1}{n}\left(\tanh(\beta \E S_t) - \E S_t\right) = \frac{1}{n}\left(\tanh(\beta \E S_t) - \E \bar{Z}_t - \zeta\right)~.
\label{eq-Zt-from-St=1-bound}
\end{align}
Consider the Taylor expansion of $\tanh$ around $\zeta$ as given in \eqref{eq-tanh-taylor-at-zeta}. Since $\tanh^{(4)}(x) < 5$ for any $x \geq 0$, it follows that for a sufficiently large $n$ the term $-\frac{1}{3}(s-\zeta)^3$ absorbs the last term in this Taylor expansion, and hence
\begin{align*}
\tanh(\beta s) \leq \zeta + \beta(1-\zeta^2)(s-\zeta) + \beta^2(-1+\zeta^2) \zeta (s-\zeta)^2.\end{align*}
Therefore, \eqref{eq-Zt-from-St=1-bound} translates into the following
\begin{align}\label{eq-bar-Z-contract}
\E[\bar{Z}_{t+1}-\bar{Z}_t] &\leq \frac{1}{n}\left(-\left(\zeta^2\beta-\temp\right)\E \bar{Z}_t -
\sqrt{3\temp}(\E \bar{Z}_t)^2 + O(\temp (\E \bar{Z}_t)^2)\right)~.
\end{align}
As before, set $$b_i = a^{-i}~,i_4 = \min\{i:b_i < 1/\sqrt{\temp n}\} \mbox{ and }u_i = \min\{t: \E \bar{Z}_t <
b_i\}~.$$ As a bi-product, our analysis of $\E\bar{Z}_t$ will also yield a result on its behavior for the first $n/\temp$ steps, as we will later formulate in Lemma \ref{lem-exp-drop-all-plus} and use in Section \ref{sec:fullmixing}. To this end, we also
define $$i_4'\deq \min\{i:b_i < \sqrt{\temp}\}~.$$ By \eqref{eq-bar-Z-contract} we have that $\E_1
\bar{Z}_t$ is decreasing as long as it is larger than $0$, giving
$$b_i/a \leq \E \bar{Z}_t \leq b_i~\mbox{for any $t \in [u_i,u_{i+1})$}~.$$ It
follows that
\begin{align*}
  u_{i+1}-u_i &\leq \frac{(a-1) b_i / a}
  {n^{-1}
  \left(\left(\zeta^2\beta-\temp\right)\frac{b_i}{a} + \sqrt{3\temp}(\frac{b_i}{a})^2
  - c \temp b_i^2\right)} \\
  &= \frac{(a-1)a^2 n}{\left(\zeta^2\beta-\temp\right)a  + \sqrt{3\temp}b_i - c a^2 \temp b_i}~.
\end{align*}
Hence, absorbing the term $ca^2\temp b_i$ in the term $\sqrt{3\temp}b_i$ gives
\begin{align}\sum_{i=1}^{i_4'} u_{i+1}-u_i &\leq \mathop{\sum_{1\leq i \leq i_0}}_{i: b_i^2 > \temp/a^2} \frac{a^2(a-1)n}{\sqrt{2\temp}b_i}\leq \frac{a^2 n}{\sqrt{2}(1+a)\temp} \leq \frac{n}{\temp}~, \label{eq-side-result-T4}
\end{align}
and
\begin{align*}
\sum_{i=i_4'+1}^{i_4} u_{i+1}-u_i&\leq \mathop{\sum_{1\leq i \leq i_4}}_{i: b_i^2 < \temp} \frac{(a-1)a
n}{\zeta^2\beta-\temp}\\
&\leq \frac{(a-1)n}{\zeta^2\beta-\temp}\left|\left\{ 1\leq i\leq i_0: (\temp n)^{-1} \leq
b_i^2 < \temp\right\}\right|\\
& \leq \frac{1}{2(\zeta^2\frac{\beta}{\temp}-1)}\frac{n}{\temp}\log(\temp^2n)~,
\end{align*}
where we used the fact that the $\{b_i^{-2}\}$ is a geometric series with ratio $a^2$, and in the last
inequality we chose $a=1+n^{-1}$. Adding the last two inequalities together, we get that
\begin{equation}
\sum_{i=1}^{i_4} u_{i+1}-u_i \leq \frac{n}{\temp} +
\frac{1}{2(\zeta^2\frac{\beta}{\temp}-1)}\frac{n}{\temp}\log(\temp^2n) \quad(= T_4^+(1))~.
\label{eq-expectation-T4}
\end{equation}
Thus, for a sufficiently large $n$ and any $\gamma\geq 1$ we have $\E_1 \bar{Z}_{T_4^+(\gamma)}
\leq 1/\sqrt{\temp n}$ (recall the decreasing property of $\E_1 \bar{Z}_t$).
Furthermore, by Lemma \ref{lem-var-bound-cens},
$$\var_{1} \bar{Z}_{T_4^+(\gamma)} = \var_{1} S_{T_4^+(\gamma)} \leq c'/(\temp n)~.$$
Applying Chebyshev's inequality, we therefore deduce that
\begin{equation*}
\P_{s_0} (\bar{Z}_{T_4^+(\gamma)} \geq B/ \sqrt{\temp n} ) \leq \frac{c}{(B-1)^2}
\end{equation*}
for any $\gamma > 1$ and some constant $c$. This completes the proof of \eqref{eq-upper-bound-lemma-reach-4}.

Notice that, in addition, for any $\gamma \geq 1$ and sufficiently large $n$, Cauchy-Schwartz gives the following
\begin{equation}\label{eq-Z-T-4-bound}
\E_1 |\bar{Z}_{T_4^+(\gamma)}| \leq c/\sqrt{\temp n}~.
 \end{equation}
 This bound will be used later on in Section \ref{sec:fullmixing}.

\begin{remark*}
 Recalling that $|\bar{Z}_t|$ is a
supermartingale with holding probabilities bounded uniformly from above, we apply Lemma
\ref{lem-supermatingale-positive} and obtain that for some constant $c$,
$$\P_{1}\left( \tau_\zeta > T_4^+(\gamma+1)\right) \leq c / \sqrt{\gamma}~, $$
where $\tau_\zeta\deq\min\{t: \cS_t -\zeta < n^{-1}\}$ denotes the hitting time to $\zeta$. This immediately implies that
$$\lim_{\gamma \to\infty} \limsup_{n\to\infty} \P_{1}(\tau_{\zeta}\geq T_4^+ (\gamma))  =0~.$$
\end{remark*}

As mentioned above, a bi-product of the above analysis is the following lemma that addresses the behavior of $\bar{Z}_t$ after $n/\temp$ steps.
\begin{lemma}\label{lem-exp-drop-all-plus}
Starting from all-plus configuration, the expected magnetization drops quickly in the first $n/\temp$ steps.
Namely, for sufficiently large $n$ we have $\E_1 \cS_{n/\temp} \leq \zeta + 2 \sqrt{\temp}$.
\end{lemma}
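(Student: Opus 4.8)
The plan is to extract the bound directly from the analysis of $\E_1\bar{Z}_t$ carried out in Subsection~\ref{subsec-1-to-zeta}, and then transfer it from the original magnetization chain $S_t$ to the censored chain $\cS_t$.

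First I would recall the notation of that subsection: $\bar{Z}_t = S_t - \zeta$ for the non-censored chain started from the all-plus configuration (so $\bar{Z}_0 = 1-\zeta$), with $a = 1+n^{-1}$, $b_i = a^{-i}$, $i_4' = \min\{i : b_i < \sqrt{\temp}\}$, and $u_i = \min\{t : \E_1\bar{Z}_t < b_i\}$. The key input is the estimate \eqref{eq-side-result-T4}, which states that $\sum_{i=1}^{i_4'}(u_{i+1}-u_i) = u_{i_4'+1} - u_1 \le n/\temp$. Since the hypothesis $\temp^2 n\to\infty$ forces $\zeta = \sqrt{3\temp}+O(\temp^{3/2}) \gg n^{-1}$, we have $\bar{Z}_0 = 1-\zeta < a^{-1} = b_1$, hence $u_1 = 0$ and therefore $u_{i_4'+1}\le n/\temp$. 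By definition $\E_1\bar{Z}_{u_{i_4'+1}} < b_{i_4'+1} \le \sqrt{\temp}$, and since \eqref{eq-bar-Z-contract} shows that $\E_1\bar{Z}_t$ is non-increasing while it is positive and stays below $\sqrt{\temp}$ once it drops there (the per-step change being of order $\sqrt{\temp}/n$, as in Subsection~\ref{subsec-1-to-zeta}), we conclude
\[
  \E_1\bar{Z}_{n/\temp}\ \le\ \sqrt{\temp},\qquad\text{equivalently}\qquad 0\ \le\ \E_1 S_{n/\temp}\ \le\ \zeta+\sqrt{\temp},
\]
where the lower bound $\E_1 S_{n/\temp}\ge 0$ follows as in the proof of Lemma~\ref{lem-hit-A-good-n--1/4} ($\E_0 S_t = 0$ by symmetry, and monotone coupling gives $\E_{s_0}S_t\ge 0$ for $s_0\ge 0$).

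Next I would pass to the censored chain. Since $\cS_t$ has the same law as $|S_t|$, Cauchy--Schwarz yields
\[
  \E_1\cS_{n/\temp}\ \le\ \sqrt{\E_1 \cS_{n/\temp}^2}\ =\ \sqrt{\E_1 S_{n/\temp}^2}\ =\ \sqrt{(\E_1 S_{n/\temp})^2 + \var_1 S_{n/\temp}}.
\]
By Lemma~\ref{lem-var-bound-0-n-1/4}, $\var_1 S_{n/\temp}\le \frac{c}{\temp n}(1+2\temp/n)^{n/\temp}\le \frac{c\mathrm{e}^2}{\temp n} = o(\temp)$, using $\temp^2 n\to\infty$. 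Combining this with the bound of the previous paragraph (so that $(\E_1 S_{n/\temp})^2\le(\zeta+\sqrt{\temp})^2$) gives
\[
  \E_1\cS_{n/\temp}\ \le\ \sqrt{(\zeta+\sqrt{\temp})^2 + o(\temp)}\ =\ (\zeta+\sqrt{\temp})\bigl(1+o(1)\bigr)\ =\ \zeta+\sqrt{\temp} + o(\sqrt{\temp}),
\]
where in the middle equality we used $(\zeta+\sqrt{\temp})^2\ge\temp$ so that the additive $o(\temp)$ becomes a multiplicative $(1+o(1))$ factor, and in the last equality $\zeta = O(\sqrt{\temp})$. For $n$ large enough the right-hand side is at most $\zeta+2\sqrt{\temp}$, as claimed.

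The computations above are entirely routine; the only points worth checking are that $t = n/\temp$ lies well inside the time window where \eqref{eq-side-result-T4} and the variance bound of Lemma~\ref{lem-var-bound-0-n-1/4} remain valid, and that replacing $S_{n/\temp}$ by $\cS_{n/\temp} = |S_{n/\temp}|$ costs only the harmless variance term $o(\temp)$ --- both of which are immediate consequences of the standing assumption $\temp^2 n\to\infty$.
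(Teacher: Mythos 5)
Your proof is correct and follows essentially the same route as the paper's: extract $\E_1\bar{Z}_{n/\temp}\le\sqrt{\temp}$ from \eqref{eq-side-result-T4} and the definition of $i_4'$, then combine with a variance bound of order $(\temp n)^{-1}$ via Cauchy--Schwarz. The only (harmless) deviation is that you invoke Lemma \ref{lem-var-bound-0-n-1/4} for the variance at time $n/\temp$, whereas the paper cites Lemma \ref{lem-var-bound-cens}; you also spell out the $u_1=0$ and monotonicity details that the paper leaves implicit.
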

\begin{proof}
To prove the lemma, recall that by \eqref{eq-side-result-T4} and the definition of $i_4'$ we have $$ \E_1 \bar{Z}_{n/\temp} \leq \sqrt{\temp}~.$$
Lemma \ref{lem-var-bound-cens} gives
$$\var_{1} \bar{Z}_{n/\temp} = \var_{1} S_{n/\temp} \leq \frac{c'}{\temp n} \leq \temp~,$$
where the last inequality holds for any sufficiently large $n$. The proof is concluded by Cauchy-Schwartz.
\end{proof}

\subsubsection{Proof of \eqref{eq-upper-bound-lemma-reach-4}: Lower bound of $\zeta$ for $\cS_{T_4^-}$}
We wish to show that a censored magnetization chain started at 1
will satisfy $\cS_{T_4^-} > \zeta + B^\star/\sqrt{\temp n}$ with high probability for any fixed $B^\star > 0$. By the monotone
coupling, it suffices to prove the above statement given any other
starting point. With this in mind, it is convenient to set $s_0 = \zeta
+ \sqrt{\temp}$, and similarly, $z_0 = \sqrt{\temp}$ (continuing the
notation $Z_t = S_t - \zeta$, given in the previous subsection).

As we will show, the magnetization chain has a roughly symmetric behavior in the interval of order $\sqrt{\temp}$ around $\zeta$. In particular, recall that in order to prove Theorem \ref{thm-reach-zeta-sqrt-temp} (that addresses the time it takes $\cS$ to hit $\zeta$ starting from $\frac43 \sqrt{\temp}$, that is, order $\sqrt{\temp}$ away from $\zeta$), we established Lemma \ref{lem-tau-3-bound}, stating that the magnetization stays above $\sqrt{\temp}$ with high probability all along the relevant time-frame.
By following the exact same argument of Lemma \ref{lem-tau-3-bound} it is possible to obtain an analogous symmetric statement: Here the magnetization will always stay below $\zeta + 2 \sqrt{\temp}$ with high probability.
This is formulated in the following lemma. The proof is omitted, as it is essentially identical to that of Lemma \ref{lem-tau-3-bound}.
\begin{lemma}\label{lem-tau-4-bound}
Consider the original magnetization chain $S_t$ started from
$s_0 = \zeta+\sqrt{\temp}$. Let $\tau_4 = \min\{t : S_t >  \zeta + 2\sqrt{\temp}\}$. The
following holds for any fixed $\gamma>0$ and sufficiently large $n$:
\begin{equation*}
  \P_{s_0}(\tau_4 \leq T_4^+(\gamma)) \leq \frac{1}{\temp^2 n}~.
  \end{equation*}
\end{lemma}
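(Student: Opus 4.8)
The plan is to follow the proof of Lemma~\ref{lem-tau-3-bound} almost verbatim, with every threshold reflected about $\zeta$. Fix the nested levels $\zeta+\sqrt{\temp}<\zeta+\tfrac{4}{3}\sqrt{\temp}<\zeta+\tfrac{5}{3}\sqrt{\temp}<\zeta+2\sqrt{\temp}$ and, for $\alpha>1$, set
\[
Y_\alpha \deq \sum_{t<T_4^+(\gamma)}\oneb{S_t>\zeta+\alpha\sqrt{\temp}}\,,\qquad
\hat Y_\alpha \deq \sum_{t<T_4^+(2\gamma)}\oneb{S_t>\zeta+\alpha\sqrt{\temp}}\,.
\]
As in Lemma~\ref{lem-tau-3-bound}, everything hinges on the elementary bound $\P(A)\le \E Y/\E(Y\mid A)$ for a non-negative $Y$. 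Throughout we work with the uncensored chain $S_t$ started from $s_0=\zeta+\sqrt{\temp}$, which is legitimate since all the levels in question are bounded away from $0$, so $\cS_t$ and $S_t$ agree on the relevant events.

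The first step is to check that the expectation stays put: by the contraction estimate~\eqref{eq-bar-Z-contract} for $\bar Z_t=S_t-\zeta$ --- whose derivation uses only~\eqref{eq-St-1st-moment-tanh}, Jensen's inequality and the concavity of $\tanh$, hence is valid from any starting position --- the quantity $\E_{s_0}(S_t-\zeta)$ is non-increasing in $t$ while it is positive, and since $s_0-\zeta=\sqrt{\temp}$ this gives $\E_{s_0}S_t\le\zeta+\sqrt{\temp}$ for every $t$. Hence, for any $\alpha>1$,
\[
\P_{s_0}(S_t>\zeta+\alpha\sqrt{\temp})\le \P_{s_0}\big(|S_t-\E_{s_0}S_t|>(\alpha-1)\sqrt{\temp}\big)\le \frac{\var_{s_0}S_t}{(\alpha-1)^2\,\temp}\,,
\]
and plugging in the variance bound~\eqref{eq-var-st-bound} and summing the (essentially geometric) series over $t<T_4^+(2\gamma)$ yields, exactly as in~\eqref{eq-hat-Y-xi-1},
\[
\E_{s_0}\hat Y_\alpha \;\le\; \frac{n}{\temp}\cdot\frac{c_\alpha\,e^{O(\gamma)}}{(\temp^2 n)^{1/2-O(\temp)}}\,.
\]

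The second step is the matching lower bound on the occupation time given a high excursion. Fix consecutive levels $\alpha'<\alpha$ from our list. Restarting the chain at the first time $S_t$ exceeds $\zeta+\alpha\sqrt{\temp}$ (by the strong Markov property, using that the remaining window still has length $\ge\gamma n/\temp$), the chain must descend the $\Theta(\sqrt{\temp})$-wide band down to $\zeta+\alpha'\sqrt{\temp}$; during this descent the conditional drift of $S_t$ equals $\tfrac{1}{n}(\tanh(\beta S_t)-S_t)+O(S_t/n^2)$, which by the Taylor expansion of $\tanh$ around $\zeta$ (cf.~\eqref{eq-bar-Z-contract}) has magnitude only $O(\temp^{3/2}/n)$ --- and, unlike in Lemma~\ref{lem-tau-3-bound}, here there is no reflection at $0$ to contend with, since the chain stays well above $\zeta$. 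An optional-stopping argument applied to the submartingale $S_t+O(\temp^{3/2}/n)\,t$, exactly as in~\eqref{eq-hat-Y-xi-2}, then shows the expected time to traverse the band is $\ge c\,n/\temp$, so $\E_{\zeta+\alpha'\sqrt{\temp}}(\hat Y_{\alpha'}\mid Y_\alpha>0)\ge c\,n/\temp$. Combining the two steps via $\P(A)\le\E Y/\E(Y\mid A)$ gives $\P_{\zeta+\alpha'\sqrt{\temp}}(Y_\alpha>0)\le c_\alpha\,e^{O(\gamma)}(\temp^2 n)^{-1/2+O(\temp)}$ for each consecutive pair. Finally, since $S_0=\zeta+\sqrt{\temp}$ and the chain moves in steps of size $2/n\ll\sqrt{\temp}$, the event $\{\tau_4\le T_4^+(\gamma)\}$ forces $S_t$ to cross above $\zeta+\tfrac{4}{3}\sqrt{\temp}$, then above $\zeta+\tfrac{5}{3}\sqrt{\temp}$, then above $\zeta+2\sqrt{\temp}$, all within $T_4^+(\gamma)$ steps; conditioning successively on the first times these levels are reached and using the Markov property, its probability is at most the product of the three crossing probabilities, namely $\le c\,e^{O(\gamma)}(\temp^2 n)^{-3/2+O(\temp)}\le(\temp^2 n)^{-1}$ for all sufficiently large $n$ (recall $\temp=o(1)$), which is the claim.

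The only genuinely delicate point --- and it is precisely the delicate point already present in Lemma~\ref{lem-tau-3-bound} --- is the lower bound of the second step: one must verify that conditioning on the high excursion does not corrupt the drift bound fed into the optional-stopping argument (handled by restarting at the crossing time), and, more essentially, that the downward drift of $S_t$ in the band just above $\zeta$ is genuinely of order $\temp^{3/2}/n$ rather than larger, so that crossing a band of width $\Theta(\sqrt{\temp})$ really costs $\Omega(n/\temp)$ expected steps. This is where near-criticality enters, through $\zeta=\sqrt{3\temp}+O(\temp^{3/2})$ and $\zeta^2\beta-\temp=\Theta(\temp)$; everything else is a routine mirror image of the argument for $\tau_3$.
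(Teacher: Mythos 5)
Your plan is precisely the paper's: the paper omits the proof of this lemma, stating it is essentially identical to that of Lemma~\ref{lem-tau-3-bound}, and your mirrored argument (nested levels reflected about $\zeta$, the inequality $\P(A)\le \E Y/\E(Y\mid A)$, Chebyshev together with the variance bound \eqref{eq-var-st-bound} for the numerator, an optional-stopping bound of order $n/\temp$ for the occupation time after an upcrossing, and chaining three crossings to reach the exponent $3/2$) is exactly that intended proof; your first step, showing $\E_{s_0}S_t\le \zeta+\sqrt\temp$ via the contraction toward $\zeta$, is the correct mirror of the claim $s_t\ge s_0$ in Lemma~\ref{lem-tau-3-bound}.

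One concrete slip needs repair. For a consecutive pair $\alpha'<\alpha$ you take the occupation variable at the level $\alpha'$ itself, i.e.\ you bound $\P_{\zeta+\alpha'\sqrt\temp}(Y_\alpha>0)$ by $\E_{\zeta+\alpha'\sqrt\temp}\hat Y_{\alpha'}\big/\E(\hat Y_{\alpha'}\mid Y_\alpha>0)$. The denominator is fine, but the numerator is then vacuous: the chain is started exactly at $\zeta+\alpha'\sqrt\temp$ and its mean stays near that level, so the Chebyshev deviation degenerates to zero and $\E\hat Y_{\alpha'}$ is of the order of the whole time window, not $(n/\temp)(\temp^2n)^{-1/2+O(\temp)}$. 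You must insert an intermediate level $\xi$ with $\alpha'<\xi<\alpha$, exactly as Lemma~\ref{lem-tau-3-bound} takes $\tfrac54<\xi<\tfrac43$: then the numerator $\E_{\zeta+\alpha'\sqrt\temp}\hat Y_{\xi}$ is controlled by Chebyshev with deviation $(\xi-\alpha')\sqrt\temp$ (using the restarted expectation bound $\E S_t\le\zeta+\alpha'\sqrt\temp+2/n$, proved as in your first step), while the denominator uses the slow traverse from $\zeta+\alpha\sqrt\temp$ down to $\zeta+\xi\sqrt\temp$. With this repair the argument is complete at the same level of rigor as the paper's own: in particular, your use of the drift bound $O(\temp^{3/2}/n)$ only inside an $O(\sqrt\temp)$-band above $\zeta$ (excursions far above having negligible probability) is the exact mirror of the paper's implicit restriction to non-negative magnetization in the optional-stopping step \eqref{eq-hat-Y-xi-2}, a point you correctly flag.
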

Given the above lemma, we can define
 $$\bar{Z}_t^\star \deq \bar{Z}_t \one\{\tau_{4} \geq t\}~,$$
and obtain that the indictor in the
definition of $\bar{Z}^\star$ does not make a real difference.

Using the Taylor expansion of $\tanh(\cdot)$ around $\zeta$ as
given in \eqref{eq-tanh-taylor-at-zeta}, we get
\begin{align*}&\E[\bar{Z}_{t+1} - \bar{Z}_{t} \mid \bar{Z}_t] \geq \frac{1}{n}(\tanh(\beta (\bar{Z}_t + \zeta))-(\bar{Z}_t + \zeta)) - \frac{c}{n^2} \\
&= \frac{1}{n}\left(-\left(\zeta^2\beta-\temp\right)\bar{Z}_t
- \sqrt{3\temp}\bar{Z}_t^2 + \frac{\tanh^{(3)}(\xi)}{6}\bar{Z}_t^3 + O(\temp^{3/2}\bar{Z}_t^2) \right) - \frac{c}{n^2}
\end{align*}
for some $\xi$ between $S_t$ and $\zeta$, and switching to $\bar{Z}_t^\star$ gives
\begin{align*}&\E[\bar{Z}^\star_{t+1} - \bar{Z}^\star_{t} \mid \bar{Z}_t] \geq -\frac{c}{n^2} -2\sqrt{\temp}\P(\tau_4 = t+1 \mid \bar{Z}_t)\\
&+\frac{1}{n}\left(-\left(\zeta^2\beta-\temp\right)\bar{Z}_t^\star
- \sqrt{3\temp}(\bar{Z}_t^\star)^2 - \frac{\tanh^{(3)}(\xi)}{6}(\bar{Z}_t^\star)^3 + O(\temp^{3/2}(\bar{Z}_t^\star)^2)
\right)~.
\end{align*}
Since $\tanh^{(3)}(x) \geq -2$ for any $0 \leq x \leq 2$, and crucially, since $\bar{Z}_t^\star \leq
2\sqrt{\temp}$, changing the coefficient of the term $(\bar{Z}_t^\star)^2$ from $\sqrt{3\temp}$ to
$-3\sqrt{\temp}$ absorbs the entire term $(\bar{Z}_t^\star)^3$ (as well as the error term) for a sufficiently
large $n$. Therefore, using
\eqref{eq-Zt-lower-bound} (notice that $Z_t = -\bar{Z}_t$) as well as
Lemma \ref{lem-var-bound-cens}, we have that
\begin{align*}
&  \E \bar{Z}_{t+1}^\star \geq \left(1-\frac{\zeta^2\beta-\temp}{n}\right)\E \bar{Z}_t^\star
- \frac{3\sqrt{\temp}}{n}\E (\bar{Z}_t^\star)^2 -\frac{c}{n^2} -2\sqrt{\temp}\P(\tau_4 = t+1)\\
&\geq \left(1-\frac{\zeta^2\beta-\temp}{n}\right)\E \bar{Z}_t^\star
- \frac{3\sqrt{\temp}}{n} \left(\var \bar{Z}_t + (\E \bar{Z}_t)^2\right) -2\sqrt{\temp}\P(\tau_4 = t+1)~,\\
\end{align*}
and also
$$ \var \bar{Z}_t + (\E \bar{Z}_t)^2 \leq \frac{c}{\temp n} + \left(1-\frac{\zeta^2\beta-\temp}{n}\right)^{2t}z_0^2~.$$
Combining the above two inequalities and iterating, and finally applying Lemma \ref{lem-tau-4-bound}, we obtain the following bound on $\E_{s_0}
\bar{Z}_{T_4^-(\gamma)}^\star$ for sufficiently large $n$:
\begin{align*}\E_{s_0} \bar{Z}_{T_4^-(\gamma)}^\star &\geq \left(1-\frac{\zeta^2\beta-\temp}{n}\right)^{T_4^-(\gamma)}z_0
-\frac{3c}{\sqrt{\temp}n^2}\sum_{t=0}^{T_4^-(\gamma) -1} \left(1-\frac{\zeta^2\beta-\temp}{n}\right)^t\\
&- \frac{3\sqrt{\temp}}{n}\sum_{t=0}^{T_4^-(\gamma) -1}
\left(1-\frac{\zeta^2\beta-\temp}{n}\right)^{2T_4^-(\gamma) -t}z_0^2
-2\sqrt{\temp}\P_{s_0}(\tau_4 \leq T_4^-(\gamma)) \\
&\geq  \frac{\mathrm{e}^{\gamma/2}}{\sqrt{\temp n}} - \frac{c}{\sqrt{\temp}n^2}\cdot
\frac{n}{\zeta^2\beta-\temp} - \frac{3\sqrt{\temp}}{n}\cdot \frac{1}{\sqrt{\temp^2n}}\cdot \frac{n}{\zeta^2\beta-\temp} \cdot \temp -o(\frac{1}{\sqrt{\temp n}}) \\
&\geq \frac{\mathrm{e}^{\gamma/3}}{\sqrt{\temp n}}~,
\end{align*}
where in the last inequality the second and the third term are absorbed in the first term, due to the assumption
$\temp ^2 n\to\infty$. This then implies that
\begin{equation*}
\E_{s_0} \bar{Z}_{T_4^-(\gamma)} \geq \frac{\mathrm{e}^{\gamma/3}}{\sqrt{\temp n}}~.
\end{equation*}
Together with the variance bound of Lemma \ref{lem-var-bound-cens} and Chebyshev's inequality, we obtain that
for any constant $B^{\star} > 0$ there is some $c_{B^\star}$ such that
$$ \P_{s_0} (\bar{Z}_{T_4^-(\gamma)} \leq B^\star/\sqrt{\temp n}) \leq c_{B^\star} \mathrm{e}^{-\gamma/3}~.$$
This concludes the proofs of
\eqref{eq-lower-bound-lemma-reach-4} and Theorem \ref{thm-reach-zeta-4}. \qed

\subsection{Proof of Theorem \ref{thm-cutoff-mag}: magnetization chain cutoff}
Based on the above analysis, we are now ready to establish cutoff for the censored magnetization chain. Define
$$T \deq T_1 +T_2 +T_3~, \quad T^+(\gamma) \deq
T+\gamma n/\temp~ \mbox{ and }\quad T^-(\gamma) \deq T- \gamma n/\temp~.$$
\subsubsection*{Upper bound given worst starting position}
Our goal in this subsection is to prove an upper bound on the cutoff location for $\cS_t$, as specified in Theorem \ref{thm-cutoff-mag}. This bound is an immediate corollary of the following lemma:
\begin{lemma}\label{lem-tau-mag-bound}
Let $\cS_t$ and $\tilde{\cS}_t$ be censored magnetization chains starting from two arbitrary positions $s_0$ and $\tilde{s}_0$, and denote
their coalescence time by $\taumag \deq  \min\{t :
\cS_t = \tilde{\cS}_t\}$. Then there exists a
coupling so that
$$\lim_{\gamma\to\infty}\limsup_{n\to\infty}\P_{s_0, \tilde{s}_0} (\taumag \geq T^+(\gamma))=0~.$$
\end{lemma}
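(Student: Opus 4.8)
The plan is to construct an explicit coupling of the two censored magnetization chains which, by the cutoff-point time $T^+(\gamma)$ with $\gamma$ large, has brought both chains to within $O(1/\sqrt{\temp n})$ of $\zeta$ and then forces a collision. I would organize the coupling into two stages. In the first (``concentration'') stage, run both chains up to time $T^+(\gamma_0)$ for a large fixed $\gamma_0$; using monotone coupling, I would sandwich each chain between the chain started from $\cS_0=0$ and the chain started from $\cS_0=1$. Theorems \ref{thm-reach-0-n^-1/4}, \ref{thm-reach-n^-1/4-sqrt-temp} and \ref{thm-reach-zeta-sqrt-temp} (chained together through the ``sandwiching'' target values $n^{-1/4}$, $\frac43\sqrt{\temp}$, and finally $\zeta - B/\sqrt{\temp n}$) handle the chain from $0$, while Theorem \ref{thm-reach-zeta-4} handles the chain from $1$; note $T_1+T_2+T_3 = T$ and $T_4$ has the same order as $T_3$, so up to the window $\gamma n/\temp$ both extremal chains are within distance $B/\sqrt{\temp n}$ of $\zeta$ with probability $\to 1$ as $\gamma_0,B\to\infty$. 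By monotonicity this traps \emph{every} starting position, so with probability arbitrarily close to $1$ we may assume at time $T^+(\gamma_0)$ that $|\cS_t-\zeta|, |\tilde\cS_t-\zeta| \le B/\sqrt{\temp n}$.

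In the second (``coalescence'') stage, I would couple the two chains so that they move toward each other whenever possible and collide quickly. The natural device is to run a monotone coupling of $\cS_t$ and $\tilde\cS_t$ (say $\cS_t \ge \tilde\cS_t$ after reordering) and track the gap $G_t \deq \cS_t - \tilde\cS_t \ge 0$. Under the monotone coupling $G_t$ is itself a supermartingale: the contraction of the magnetization towards $\zeta$ in the region $|s-\zeta| = O(\sqrt{\temp})$, which was already exploited in Lemma \ref{lem-var-bound-cens} and in the drift bound \eqref{eq-Zt-progression-1}/\eqref{eq-bar-Z-contract}, makes the drift of $G_t$ non-positive. Moreover, since at least one of the two chains has a step of size $2/n$ with probability bounded below (the holding probabilities are bounded away from $1$), while the other may hold, the conditional variance $\var(G_{t+1}\mid \F_t)$ is bounded below by $c/n^2$ on the event $\{G_t > 0\}$, provided both chains remain in the good window around $\zeta$ — which they do with high probability for $O(n/\temp)$ further steps, again by the FKG/contraction estimates of Lemma \ref{lem-var-bound-cens} (extended as in Lemma \ref{lem-tau-3-bound} and Lemma \ref{lem-tau-4-bound} to show the chains stay in $[\frac76\sqrt\temp, \zeta+2\sqrt\temp]$). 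Starting from $G_{T^+(\gamma_0)} = O(1/\sqrt{\temp n})$, I would then apply Lemma \ref{lem-supermatingale-positive} with $W_t = G_{T^+(\gamma_0)+t}$, $k = O(1/\sqrt{\temp n})$, $B = 2/n$, $\sigma^2 = c/n^2$: it gives $\P(G$ survives past $u) \le 4k/(\sigma\sqrt u) = O(\sqrt{n/\temp}\cdot n^{-1}/\sqrt{u}\cdot n) = O(\sqrt{n/(\temp u)})$, which is $o(1)$ for $u$ a large multiple of $n/\temp$. Intersecting with the event that both chains stay in the good window for those $u$ steps, and using that the coupling collides exactly when $G_t$ hits $0$, yields $\P_{s_0,\tilde s_0}(\taumag \ge T^+(\gamma_0) + u) = o(1)$; taking $\gamma = \gamma_0 + u\temp/n$ large gives the claim.

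The main obstacle, I expect, is the coalescence estimate near $0$ — specifically, ensuring the variance lower bound $\var(G_{t+1}\mid\F_t) \gtrsim 1/n^2$ and the supermartingale property hold \emph{uniformly} along the whole run, including the possibility that one of the chains drifts down towards the reflecting-type behavior at magnetization $0$. This is why the first stage is essential: it guarantees both chains are well inside the contracting window $|s-\zeta| = O(\sqrt\temp)$ before coalescence begins, where the drift of $G_t$ is genuinely negative (from $\tanh(\beta s) - s$ being strictly decreasing there) and the chains cannot escape to the problematic region near $0$ within $O(n/\temp)$ steps. A secondary technical point is verifying the monotone coupling of the \emph{censored} magnetization chains extends appropriately — but this is already established in Section \ref{sec:monotone coupling}, where it is shown that $\cP_{\textsf M}(0,\frac2n) + \cP_{\textsf M}(\frac2n,0)\le 1$ makes $\cS_t$ monotone — so the gap process $G_t$ is well-defined under that coupling. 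Once these ingredients are in place, assembling them into the stated $\gamma\to\infty$, $n\to\infty$ limit is routine.
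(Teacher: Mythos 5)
Your first stage is exactly the paper's route (it is Lemma \ref{lem-cens-mag-close-to-zeta}: sandwich an arbitrary start between the chains from $0$ and $1$ and invoke Theorems \ref{thm-reach-0-n^-1/4}, \ref{thm-reach-n^-1/4-sqrt-temp}, \ref{thm-reach-zeta-sqrt-temp} and \ref{thm-reach-zeta-4}), and the supermartingale property of the gap is coupling-independent, matching Lemma \ref{lem-D-t-supermartingale}. The genuine gap is in your coalescence stage: the claimed bound $\var(G_{t+1}\mid\F_t)\geq c/n^2$ on $\{G_t>0\}$ is false for the monotone coupling. Under the monotone coupling of Section \ref{sec:monotone coupling} (same site $I$, same uniform $U$, or its one-dimensional analogue), two chains that are both within $O(1/\sqrt{\temp n})$ of $\zeta$ move in near-lockstep: the gap changes in a given step only if the selected site is a disagreement site or $U$ falls between the two (nearly equal) update probabilities, an event of probability $O(G_t)$, so the conditional variance of the gap increment is $O(G_t/n^2)$, not $\Omega(1/n^2)$. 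Your justification ("one chain steps while the other may hold") ignores that under this coupling the second chain almost always steps together with the first. With the true variance, the gap essentially only contracts at rate $\temp/n$ per step, so reaching $0$ from $B/\sqrt{\temp n}$ costs an extra factor of order $\log(n/\temp)$ beyond $n/\temp$ --- too long to be absorbed in the cutoff window, so the lemma as stated would not follow.

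The paper's fix (Lemma \ref{lem-taumag-start-from-zeta}) is to run the two chains \emph{independently} until the stopping time $\tau_D$ at which their distance is at most $2/n$ (or one exits the good region); independence is precisely what supplies the $\Omega(1/n^2)$ per-step variance needed for Lemma \ref{lem-supermatingale-positive}, and monotonicity is not needed there because chains at distance $\geq 4/n$ cannot cross in one step. Only after $\tau_D$ does one switch to a monotone coupling, and the last gap of $2/n$ is closed using the strictly negative drift $\E[D_{t+1}-D_t\mid\F_t]\leq -\frac{\temp}{6n}D_t$ from \eqref{eq-D-t-decrease} together with optional stopping, again within $O(n/\temp)$ steps; Lemma \ref{lem-tau-3-bound} keeps both chains in the contracting region throughout. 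Your proposal, by insisting on a single monotone coupling and asserting the variance bound for it, misses exactly this two-phase construction (and also never addresses the adjacent-chains-swapping issue that forces the phase change), so as written the coalescence estimate does not go through.
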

To prove the above lemma, we must first establish that starting from any $s_0$, the censored magnetization
chain at time $T^+(\gamma)$ is fairly close to $\zeta$.
\begin{lemma}\label{lem-cens-mag-close-to-zeta}
Let $\cS_t$ be a censored magnetization chain started from
$s_0\geq 0$. Then $\cS_{T^+(\gamma)}$ will be in an $O\big(\frac{1}{\sqrt{\temp n}}\big)$ interval around $\zeta$ in the following sense:
\begin{equation*}
\lim_{B\to\infty}\lim_{\gamma\to\infty}\limsup_{n\to\infty}\P_{s_0}
(|\cS_{T^+(\gamma)} - \zeta |\geq B/\sqrt{\temp n} )=0 ~.
\end{equation*}
\end{lemma}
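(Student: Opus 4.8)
The plan is to use the monotone coupling of the censored magnetization chain established in Section~\ref{sec:monotone coupling} to reduce to the two extremal starting states. Coupling a chain $\cS^{(s_0)}$ started from an arbitrary $s_0$ with chains $\cS^{(0)}$ and $\cS^{(1)}$ started from $0$ and from $1$, one has $\cS^{(0)}_t\le\cS^{(s_0)}_t\le\cS^{(1)}_t$ for every $t$, so it suffices to prove $\cS^{(0)}_{T^+(\gamma)}\ge\zeta-B/\sqrt{\temp n}$ and $\cS^{(1)}_{T^+(\gamma)}\le\zeta+B/\sqrt{\temp n}$, each with high probability in the iterated-limit sense of the statement ($n\to\infty$, then $\gamma\to\infty$, then $B\to\infty$).

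For the lower bound I would concatenate the first three phases. Splitting $\gamma=\gamma_1+\gamma_2+\gamma_3$, Theorem~\ref{thm-reach-0-n^-1/4} gives $\cS^{(0)}_{T_1^+(\gamma_1)}\ge n^{-1/4}$ with probability close to $1$; spawning at that time a fresh censored chain from $n^{-1/4}$, which by monotone coupling remains $\le\cS^{(0)}$, Theorem~\ref{thm-reach-n^-1/4-sqrt-temp} shows it exceeds $\tfrac{4}{3}\sqrt{\temp}$ after a further $T_2^+(\gamma_2)$ steps; spawning once more from $\tfrac{4}{3}\sqrt{\temp}$, Theorem~\ref{thm-reach-zeta-sqrt-temp} shows it is at least $\zeta-B/\sqrt{\temp n}$ after another $T_3^+(\gamma_3)$ steps. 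Since $T_1+T_2+T_3=T$ and the accumulated slack is exactly $\gamma n/\temp$, this gives the lower bound for $\cS^{(0)}$, and hence, by monotone domination, for every $s_0$.

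The upper bound is the substantive part. By Theorem~\ref{thm-reach-zeta-4} together with \eqref{eq-Z-T-4-bound} and Lemma~\ref{lem-var-bound-cens}, at time $T_4^+(\gamma_0)=T_3+\gamma_0 n/\temp$ the chain from $1$ satisfies $\cS^{(1)}_{T_4^+(\gamma_0)}\le\zeta+C/\sqrt{\temp n}$ with high probability and $\E_1[(\cS^{(1)}_{T_4^+(\gamma_0)}-\zeta)^2]=O(1/(\temp n))$. Since $T^+(\gamma)$ exceeds $T_4^+(\gamma_0)$ by roughly $T_1+T_2\asymp(n/\temp)\log(\temp^2 n)$, the remaining task --- and the main obstacle --- is to show the chain \emph{stays} at distance $O(1/\sqrt{\temp n})$ from $\zeta$ over this long stretch. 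I would do this by a finite number of re-anchorings: whenever the chain is known to be at most $\zeta+O(1/\sqrt{\temp n})$, restart a fresh censored chain from $\zeta+O(1/\sqrt{\temp n})$, which dominates $\cS^{(1)}$ from then on and, crucially, starts above $\tfrac{4}{3}\sqrt{\temp}$; then Lemma~\ref{lem-var-bound-cens} bounds its variance by $O(1/(\temp n))$ and the expectation contraction of Subsection~\ref{subsec-1-to-zeta} keeps $|\E(\cS_t-\zeta)|=O(1/\sqrt{\temp n})$ over the next $\asymp T_3$ steps, so that $\E[(\cS_t-\zeta)^2]=O(1/(\temp n))$ and, by Chebyshev, $|\cS_t-\zeta|\le B/\sqrt{\temp n}$ with probability $1-O(1/B^2)$ throughout that block. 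Because $T^+(\gamma)/T_3\to 3$ as $\temp\to0$, at most three such blocks reach time $T^+(\gamma)$, the $O(n/\temp)$ slack between consecutive blocks is absorbed into the window, and the finitely many $O(1/B^2)$ failure probabilities sum to $O(1/B^2)$. Feeding this into the sandwich $\cS^{(0)}_{T^+(\gamma)}\le\cS^{(s_0)}_{T^+(\gamma)}\le\cS^{(1)}_{T^+(\gamma)}$ yields $\P_{s_0}(|\cS_{T^+(\gamma)}-\zeta|\ge B/\sqrt{\temp n})\le o_n(1)+O(1/B^2)$, which vanishes in the prescribed order of limits.

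The two points I expect to require the most care are both inside the ``stays near $\zeta$'' step: first, checking that the hypotheses of Lemma~\ref{lem-var-bound-cens} --- in particular that the rare downward excursions below $\tfrac{7}{6}\sqrt{\temp}$ remain negligible --- stay in force through the re-anchored blocks, for which one should use the refinement noted in the remark after Lemma~\ref{lem-tau-3-bound} to push the escape bound up to time $T^+(\gamma)$; and second, controlling the error terms contributed by the successive re-anchorings so that the running bound on $\E[(\cS_t-\zeta)^2]$ does not drift above $O(1/(\temp n))$ as the blocks are chained together.
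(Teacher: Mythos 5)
Your argument is correct in outline, and the lower-bound half (concatenating Theorems \ref{thm-reach-0-n^-1/4}, \ref{thm-reach-n^-1/4-sqrt-temp} and \ref{thm-reach-zeta-sqrt-temp} along a monotone coupling with the chain started from $0$) is exactly the paper's argument. Where you genuinely diverge is the upper bound: you start the all-plus chain at time $0$, so it equilibrates near $\zeta$ already by time $T_4^+(\gamma_0)$, and you are then forced to prove a ``stays near $\zeta$'' statement over the remaining stretch of length roughly $T_1+T_2\asymp(n/\temp)\log(\temp^2 n)$, which you handle by re-anchored blocks of length $\asymp T_3$ using Lemma \ref{lem-var-bound-cens} and the contraction of Subsection \ref{subsec-1-to-zeta}. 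The paper avoids this issue entirely by a small but decisive trick: in the three-chain monotone coupling, the top chain $\cS^1_t$ is only \emph{spawned at time $T_1+T_2$} (from the all-plus state, which dominates anything), and since $T_4=T_3$ it has run exactly $T_4^+(\gamma)$ steps at the target time $T^+(\gamma)$, so equation \eqref{eq-upper-bound-lemma-reach-4} of Theorem \ref{thm-reach-zeta-4} applies verbatim and no persistence-near-$\zeta$ argument is needed. Your route can be made to work, but it imports precisely the delicate points you flag: the escape bound of Lemma \ref{lem-tau-3-bound} (via its remark) must be pushed through every block up to $T^+(\gamma)$; the contraction estimate \eqref{eq-bar-Z-contract} concerns the non-censored chain and the Jensen step, so one must add the (small, but $\temp$-dependent) corrections for excursions toward $0$ when transferring it to $\E\,\cS_t$; and each re-anchoring only dominates $\cS^1$ on a high-probability event, so the finitely many failure probabilities must be tracked. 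In short, both proofs are sound, but the paper's delayed-start coupling buys the upper bound for free from Theorem \ref{thm-reach-zeta-4}, whereas your version trades that observation for an extra maintenance argument of nontrivial (though manageable) length.
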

\begin{proof}
The proof will follow from the monotone coupling, combined with our results from
the previous subsections. We construct the following couplings of
three chains $\cS^0_t$, $\cS_t$ and $\cS^1_t$, which start from
$0$, $s_0$ and 1 respectively.
\begin{enumerate}
\item At time 0, we start the chains $\cS^0_t$ and $\cS_t$. We construct a monotone coupling of $\cS^0_t$ and $\cS_t$, and run
these two chains up to time $T_1 + T_2$.

\item At time $T_1 + T_2$, the top chain $\cS^1_t$ starting from 1 joins in (for better consistency, we index its time starting from $T_1+T_2$, to match it with the other two chains). Now, we construct a monotone
coupling for these three chains and run them for another $T_3 +
\gamma n/\temp$ steps, with $\gamma$ sufficiently large (note that
$T_3 = T_4$).
\end{enumerate}
By our construction, $\cS^0_t \leq \cS_t \leq \cS^1_t$ holds for
all $t\geq T_1 + T_2$, and in particular at time $t = T_1 + T_2 + T_3
+ \gamma n/\temp$.

 Combining Theorems \ref{thm-reach-0-n^-1/4}, \ref{thm-reach-n^-1/4-sqrt-temp} and \ref{thm-reach-zeta-sqrt-temp} (namely, equations
\eqref{eq-upper-bound-lemma-reach-1},
\eqref{eq-upper-bound-lemma-reach-2} and
\eqref{eq-upper-bound-lemma-reach-3} respectively), we obtain a lower bound for $\cS^0_t$,
and hence for $\cS_t$. On the other hand, Theorem \ref{thm-reach-zeta-4} (namely, equation \eqref{eq-upper-bound-lemma-reach-4}) provides an upper bound for $\cS^1_t$ and hence for $\cS_t$. This concludes the proof of the lemma.
\end{proof}

The above lemma has following immediate corollary, which establishes the concentration of the stationary censored magnetization. To obtain the corollary, simply choose $s_0$ randomly according to the stationary distribution of $\cS_t$ and apply Lemma \ref{lem-cens-mag-close-to-zeta}.
\begin{corollary}\label{cor-cens-mag-stationary}
Denote by $\pi$ the stationary distribution of the censored
magnetization. Then the following holds:
\begin{equation*}
\lim_{B\to\infty}\lim_{n\to\infty}\pi([\zeta - B/ \sqrt{\temp
n}, \zeta + B /\sqrt{\temp n}~]) =1~.
\end{equation*}
\end{corollary}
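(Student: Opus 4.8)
The plan is to run the censored magnetization chain from its own stationary distribution and then invoke Lemma~\ref{lem-cens-mag-close-to-zeta}. Concretely, let $\cS_0$ be distributed according to $\pi$; since $\pi$ is stationary for $\cP_\textsf{M}$, the random variable $\cS_{T^+(\gamma)}$ is again distributed as $\pi$, for every choice of $\gamma$ and $n$. Hence for any $B>0$ we have the identity
$$\pi\big(\R \setminus [\zeta - B/\sqrt{\temp n},\, \zeta + B/\sqrt{\temp n}]\big) = \P_\pi\big(|\cS_{T^+(\gamma)} - \zeta| \geq B/\sqrt{\temp n}\big) = \int \P_{s_0}\big(|\cS_{T^+(\gamma)} - \zeta| \geq B/\sqrt{\temp n}\big)\, d\pi(s_0)~.$$

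The second step is to bound the integrand uniformly in $s_0\in[0,1]$. Lemma~\ref{lem-cens-mag-close-to-zeta} is phrased for a fixed $s_0$, but its proof proceeds by sandwiching the chain started at $s_0$ between the two extremal chains started at $0$ and at $1$ via the monotone coupling of Section~\ref{sec:monotone coupling}; consequently the estimate it provides is in fact uniform over $0\leq s_0\leq 1$. Thus, given $\epsilon>0$, I would first fix $B$ large, then $\gamma$ large, so that $\P_{s_0}(|\cS_{T^+(\gamma)} - \zeta| \geq B/\sqrt{\temp n}) < \epsilon$ for all sufficiently large $n$ and all $s_0\in[0,1]$. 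Plugging this into the displayed identity yields $\pi([\zeta - B/\sqrt{\temp n}, \zeta + B/\sqrt{\temp n}]) \geq 1-\epsilon$ for all large $n$, and since $\epsilon$ is arbitrary the corollary follows upon letting $n\to\infty$ and then $B\to\infty$.

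There is essentially no obstacle here beyond the bookkeeping of the nested limits: the only point requiring a moment's care is the justification that Lemma~\ref{lem-cens-mag-close-to-zeta} holds uniformly in the starting position, which is what allows the estimate to pass through the average $\int\cdot\,d\pi(s_0)$. As noted above, this uniformity is already built into the monotone-coupling argument underlying that lemma, so no new input is needed.
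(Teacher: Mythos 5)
Your proposal is correct and is essentially the paper's own argument: the paper proves the corollary by choosing $s_0$ according to $\pi$, using stationarity of $\cS_{T^+(\gamma)}$, and applying Lemma~\ref{lem-cens-mag-close-to-zeta}, whose bound is uniform in $s_0$ precisely because its proof sandwiches the chain between the extremal chains started at $0$ and $1$ via the monotone coupling. Your extra remarks on uniformity and the order of limits just make explicit what the paper leaves implicit.
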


To continue the proof of Lemma \ref{lem-tau-mag-bound}, we next study the coalescence time of two censored magnetization chains, each starting
from somewhere close to $\zeta$. Recalling that the magnetization is contracting around $\zeta$, we will show that in fact the difference of the above mentioned two magnetization chains behaves essentially like a supermartingale. To be
precise, define
\begin{eqnarray*}&\tau_D \deq \min\Big\{t : |\cS_t- \tilde{\cS}_t| \leq \frac{2}{n}\; \mbox{ or }\;\cS_t\leq \frac76\sqrt{\temp}\; \mbox{ or }\; \tilde{\cS}_t\leq \frac76\sqrt{\temp}\Big\}~,\\
&D_t\deq (\cS_t-\tilde{\cS}_t)\one\{\tau_D \geq t\}~.\end{eqnarray*}
Under these definitions, the following holds:
\begin{lemma}\label{lem-D-t-supermartingale}
Let $\cS_t$ and $\tilde{\cS}_t$ be two censored magnetization
chains started at $s_0$ and $\tilde{s}_0$ resp., with $s_0 \geq \tilde{s}_0 \geq \frac76\sqrt{\temp}$. Then $D_t$ is a supermartingale.
\end{lemma}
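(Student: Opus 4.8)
I would work under the monotone coupling of the two censored magnetization chains (constructed in Section~\ref{sec:monotone coupling}, where $\cS_t$ was shown to be a monotone birth-and-death chain), and reduce the supermartingale property of $D_t$ to a single analytic fact: on the interval $[\frac76\sqrt{\temp},1]$ the one-step drift of the magnetization chain is a strictly decreasing function of the current state, so the gap between the two chains has non-positive expected increment.

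\textbf{Case split and the drift identity.} Since $s_0\ge\tilde s_0$, the monotone coupling gives $\cS_t\ge\tilde{\cS}_t$ for all $t$, hence $D_t\ge 0$. Fix $t$; I check $\E[D_{t+1}\mid\F_t]\le D_t$ on the two $\F_t$-measurable events. On $\{\tau_D\le t\}$ one has $\one\{\tau_D\ge t+1\}=0$, so $D_{t+1}=0$ surely while $D_t\ge 0$, and there is nothing to prove. On $\{\tau_D>t\}$ one has $\one\{\tau_D\ge t+1\}=1$, so $D_{t+1}=\cS_{t+1}-\tilde{\cS}_{t+1}$ exactly, while $D_t=\cS_t-\tilde{\cS}_t=:s-\tilde s$ with $s-\tilde s>2/n$ and $s,\tilde s>\frac76\sqrt{\temp}$. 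Since $\temp^2 n\to\infty$ forces $\frac76\sqrt{\temp}\gg 2/n$, from a state $\ge\frac76\sqrt{\temp}$ the censoring in \eqref{eq-magnet-cens-transit} has no effect over one step, so $\cS$ and $\tilde{\cS}$ have the same one-step transitions as the non-censored chain, and \eqref{eq-St-1st-moment-tanh} together with the (Markovian) structure of the coupling gives
\[
\E\big[\cS_{t+1}-\tilde{\cS}_{t+1}\mid\F_t\big]=(s-\tilde s)+\tfrac1n\big(g_0(s)-g_0(\tilde s)\big)-\big(\mathrm{err}(s)-\mathrm{err}(\tilde s)\big),
\]
where $g_0(x):=\tanh(\beta x)-x$ and $\mathrm{err}(x)=\lvert O(x/n^2)\rvert$ is the remainder of \eqref{eq-St-1st-moment-tanh}; expanding $\tanh(\beta(s\pm n^{-1}))$ to one more order shows $\lvert\mathrm{err}(s)-\mathrm{err}(\tilde s)\rvert=O\big((s-\tilde s)/n^2\big)+O(n^{-3})$.

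\textbf{The key monotonicity and conclusion.} The heart of the argument is that $g_0$ is decreasing on $[\frac76\sqrt{\temp},1]$. Its derivative $g_0'(x)=\beta\big(1-\tanh^2(\beta x)\big)-1$ is itself non-increasing in $x$ on $[0,\infty)$, and at the left endpoint the Taylor expansion \eqref{eq-taylor-tanh} gives $g_0'(\frac76\sqrt{\temp})=-\frac{13}{36}\temp+O(\temp^2)\le-\tfrac14\temp$ for $n$ large; hence $g_0'\le-\tfrac14\temp$ on all of $[\frac76\sqrt{\temp},1]$, so $g_0(s)-g_0(\tilde s)\le-\tfrac14\temp(s-\tilde s)$. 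Because $\temp^2 n\to\infty$ makes $\temp n\to\infty$, the gain $-\frac{\temp}{4n}(s-\tilde s)$ dominates $O\big((s-\tilde s)/n^2\big)$, and using $s-\tilde s>2/n$ it also absorbs the stray $O(n^{-3})$ term; therefore $\E[\cS_{t+1}-\tilde{\cS}_{t+1}\mid\F_t]\le s-\tilde s=D_t$ on $\{\tau_D>t\}$. Combining the two cases shows that $(D_t)$ is a supermartingale.

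\textbf{Main obstacle.} I expect the delicate point to be exactly this last piece of second-order bookkeeping: the genuine contraction coming from $g_0'\le-c\temp$ is only of order $\temp/n$ per step, so one must keep careful track that it is not overwhelmed by the $O(1/n^2)$ and $O(1/n^3)$ corrections in \eqref{eq-St-1st-moment-tanh}. This is where both hypotheses are used: $s,\tilde s\ge\frac76\sqrt{\temp}$ places the chains strictly past the near-critical slope of $\tanh$ (the coefficient $\frac76>1$ being what makes $g_0'$ uniformly negative), while $\temp^2 n\to\infty$ guarantees this negative slope beats the lower-order corrections and keeps $s-\tilde s>2/n$ meaningful. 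Everything else is a routine case split exploiting that $\tau_D$ is a stopping time and that the monotone coupling preserves $\cS_t\ge\tilde{\cS}_t$.
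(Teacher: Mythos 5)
Your proposal is correct and is essentially the paper's own proof: the same case split on whether $\tau_D$ has occurred, the same observation that censoring is irrelevant above $\frac76\sqrt{\temp}$, the same computation of $\E[D_{t+1}-D_t\mid\F_t]$ as a difference of one-step drifts, and the same key analytic fact — that the drift function $\tanh(\beta s)-s$ decreases with slope at most $-c\temp$ on $[\frac76\sqrt{\temp},1]$ — used to beat the $O(n^{-2})$ and $O(n^{-3})$ corrections via $D_t\geq 4/n$ and $\temp n\to\infty$. The paper organizes the second-order bookkeeping by splitting the exact drift into $f_n$ and $\theta_n$ and applying the Mean Value Theorem to each (obtaining the quantitative bound \eqref{eq-D-t-decrease}), while you difference the remainder of \eqref{eq-St-1st-moment-tanh} directly; these are the same estimate.

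One caveat you should remove: you frame the whole argument "under the monotone coupling," but the lemma is later invoked (in the proof of Lemma \ref{lem-taumag-start-from-zeta}) for two chains run \emph{independently} until $\tau_D$ — independence is exactly what provides the nontrivial conditional variance needed for Lemma \ref{lem-supermatingale-positive} — so a proof valid only for the monotone coupling would not cover the intended application. Fortunately your argument barely uses monotonicity: the drift computation is linear and coupling-agnostic, and the only place you lean on the monotone coupling is to assert $D_t\geq 0$ (needed when $\tau_D=t$, so that $D_{t+1}=0\leq D_t$). That nonnegativity holds under any coupling, since each chain moves by at most $2/n$ per step, so the gap cannot pass from $>2/n$ to a negative value without first triggering the condition $|\cS_t-\tilde{\cS}_t|\leq 2/n$ defining $\tau_D$; this is precisely the paper's remark that the ordering cannot reverse before time $t+1$ when $D_t>0$. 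With that replacement your proof matches the paper's.
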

\begin{proof}
Noting that there is no difference between censored and non-censored magnetization
for any $t < \tau_D$, the proof below will treat non-censored chains for simplicity.

Note that $D_t =0$ implies that $\tau_D \leq t$ and in particular $D_{t+1} = 0$, therefore the supermartingale condition holds in this case.
It remains to treat the case $D_t > 0$. In this case, by definition we in fact have $D_t \geq 4/n$, which implies that $S_t \geq \frac76\sqrt{\temp}$, that $\tilde{S}_t \geq \frac76\sqrt{\temp}$ and finally that we cannot have $S_{t'} < \tilde{S_{t'}}$
for any $t' \leq t+1$. We now track the slight change in $D_t$ after a single step.
Here and in what follows, let $\mathcal{F}_t$ be the $\sigma$-field generated by these two chains up to time $t$. By definition \eqref{eq-magnet-transit},
\begin{align*}\E[ D_{t+1}-D_t \mid \mathcal{F}_t ] &= \E[ S_{t+1}-S_{t} + \tS_t - \tS_{t+1} \mid| \F_t] \\
&=\frac{1}{n}\left[ f_n(S_t) - f_n(\tS_t)\right]
+\frac{\tS_t-S_t}{n} + \frac{1}{n}\left[ \theta_n(S_t) -
\theta_n(\tS_t)\right]~,
\end{align*}
where
\begin{align*}
  f_n(s)      & \deq \frac{1}{2}\left\{ \tanh[\beta(s+n^{-1})]
                       + \tanh[\beta(s-n^{-1})] \right\} \\
  \theta_n(s) & \deq \frac{-s}{2}\left\{ \tanh[\beta(s+n^{-1})]
                       - \tanh[\beta(s-n^{-1})] \right\}.
\end{align*}
As argued above, $S_t > \tS_t$, hence the Mean Value Theorem implies that
\begin{align*}f_n(S_t) - f_n(\tS_t) = (S_t - \tS_t)\frac{\beta}{\cosh^2(\beta\xi)}~\mbox{ for some $ S_t-n^{-1}\leq\xi \leq S_t+n^{-1}$}~,
\end{align*}
and by the assumption $D_t > 0$ we deduce that $\xi \geq
\frac{7}{6}\sqrt{\temp}$. Recalling that $\cosh(x) \geq
1+\frac{1}{2}x^2$, we get
\begin{align*}
  f_n(S_t) - f_n(\tS_t) &\leq (S_t - \tS_t) \frac{\beta}{(1+\frac{1}{2}(\beta \xi)^2)^2} \leq
  (S_t - \tS_t) \frac{\beta}{1+(\frac{7}{6})^2 \temp}\\
  &\leq \left(1-\frac{\temp}{3}\right)(S_t - \tS_t)~,
\end{align*}
where the last inequality holds for any sufficiently large $n$ (as
$\temp=o(1)$). Applying Taylor expansions on $\tanh$ around $\beta
S_t$ and $\beta \tS_t$, we deduce that
\begin{align*}\theta_n(S_t) - \theta_n(\tS_t) =
 \frac{-S_t}{n\cosh^2(\beta S_t)}  + \frac{\tS_t}{n\cosh^2(\beta \tS_t)} + O(n^{-3})~,
\end{align*}
and since the derivative of the function $x / \cosh^2(\beta x)$ is
bounded by $1$, another application of the Mean Value Theorem
gives
\begin{align*}\left|\theta_n(S_t) - \theta_n(\tS_t) \right| \leq
\frac{S_t-\tS_t}{n} + O(n^{-3})~.
\end{align*}
Altogether,
\begin{align*}\E[ D_{t+1}-D_t \mid \mathcal{F}_t ] &\leq - \frac{\temp}{3n}(S_t - \tS_t)
+ \frac{S_t - \tS_t}{n^2} + O(n^{-4})~,
\end{align*}
hence for a sufficiently large $n$ we obtain that for all $t < \tau_D$,
\begin{equation}\label{eq-D-t-decrease}
\E[D_{t+1}-D_t \mid\F_t]\leq-\frac{\temp}{6n}D_t \leq 0~.
\end{equation}
Altogether, we conclude that $D_t$ is indeed a supermartingale.
\end{proof}

We are now ready to provide an upper bound on the coalescence time of two
chains, each starting from somewhere close to $\zeta$.

\begin{lemma}\label{lem-taumag-start-from-zeta}
There exists some constant $c>0$ so the following holds. Let $B > 0$ and let $\cS_t,\tilde{\cS}_t$ be two censored magnetization chains starting from $s_0, \tilde{s}_0 \in [\zeta - \frac{B}{ \sqrt{\temp n}}, \zeta+ \frac{B}{\sqrt{\temp
n}}]$ resp. Then there exists a coupling of $\cS_t, \tilde{\cS}_t$ with
$$\P_{s_0, \tilde{s}_0} (\taumag \geq B^3 n/\temp) \leq \frac{c}{\sqrt{B}}~.$$
\end{lemma}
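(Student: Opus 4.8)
The plan is to build the coupling in two stages and estimate the coalescence time via the supermartingale hitting-time bound of Lemma \ref{lem-supermatingale-positive}, exploiting the fact that the gap between the chains behaves \emph{diffusively} only when the two chains are updated (near-)independently, whereas it decays far too slowly under the monotone coupling. I may assume $s_0\ge\tilde s_0$; since $\zeta=\sqrt{3\temp}\,(1+O(\temp))$ while $B/\sqrt{\temp n}=o(\sqrt\temp)$ (using $\temp^2n\to\infty$), for large $n$ both chains start in $[2\sqrt\temp,1]$ and $D_0=s_0-\tilde s_0\le 2B/\sqrt{\temp n}$. I would keep the quantities $\tau_D,D_t$ of Lemma \ref{lem-D-t-supermartingale}, but with lower cutoff $2\sqrt\temp$ replacing $\tfrac76\sqrt\temp$ (the proof of that lemma is unaffected, as $2\sqrt\temp>\tfrac76\sqrt\temp$), and record two points: the drift computation in Lemma \ref{lem-D-t-supermartingale} uses only the one-step marginals, so $D_t$ is a supermartingale under \emph{any} coupling; and, since all gap values are even multiples of $1/n$, one has $\cS_t-\tilde\cS_t\ge 4/n$ for $t<\tau_D$, whence $D_t\ge 0$ throughout.

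\emph{Stage 1 (diffusive shrinking).} Run the two chains with \emph{independent} updates up to time $u_1:=\tfrac12 B^3 n/\temp$. By \eqref{eq-magnet-transit}, at any magnetization in $[0,1]$ the downward step has probability in $[\tfrac14,1-c_2]$, so each chain has one-step variance $\ge c_0/n^2$, and by independence $\var(D_{t+1}\mid\F_t)\ge 2c_0/n^2=:\sigma^2$ on $\{\tau_D>t\}$, while $|D_{t+1}-D_t|\le 4/n$ up to $\tau_D$. Lemma \ref{lem-supermatingale-positive} (valid once $u_1$ exceeds the absolute constant $4(4/n)^2/(3\sigma^2)$, i.e. for large $n$) then gives $\P(\tau_D>u_1)\le 4D_0/(\sigma\sqrt{u_1})=O(1/\sqrt B)$. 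Separately, each chain is marginally a censored magnetization chain started above $\tfrac43\sqrt\temp$, so by Lemma \ref{lem-tau-3-bound} (and its remark) over the window $u_1\le T_3^+(\gamma)$ the probability that $\tau_D\le u_1$ is attained because a chain drops below $2\sqrt\temp$, rather than because the gap reaches $\le 2/n$, is $\le 2/(\temp^2n)=o(1)$.

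\emph{Stage 2 (completing the collision).} On the favourable event, at time $\tau_D\le u_1$ the chains differ by $0$ (done) or by $2/n$, with both at value $\ge 2\sqrt\temp-2/n\ge\tfrac32\sqrt\temp$. In the latter case I would switch to the \emph{monotone} coupling and run a further $u_2:=\tfrac12 B^3 n/\temp$ steps. Under the monotone coupling the gap is nonincreasing and stays in $\{0,2/n\}$, and a short calculation with \eqref{eq-magnet-transit} shows that from a gap of $2/n$ the one-step coalescence probability is $\tfrac2n\,\psi'(\xi)+O(n^{-2})$ for some $\xi$ between the two positions, where $\psi(x)=\tfrac12(x-\tanh(\beta x))$ (up to $O(x/n)$) is the excess of the downward over the upward step probability; since $\psi'(x)\approx\tfrac12(1-\beta\,\mathrm{sech}^2(\beta x))$ is increasing and is $\gtrsim\temp$ for $x\ge\tfrac43\sqrt\temp$, this probability is $\ge c_1\temp/n$ as long as both chains stay above $\tfrac43\sqrt\temp$ — which, again by Lemma \ref{lem-tau-3-bound}, holds throughout Stage 2 with probability $\ge 1-2/(\temp^2n)$ (the chains enter Stage 2 above $\tfrac32\sqrt\temp$). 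Hence the probability of not coalescing within $u_2$ steps is at most $(1-c_1\temp/n)^{u_2}+2/(\temp^2n)\le e^{-c_1 B^3/2}+o(1)$, and since $\sup_{B>0}\sqrt B\,e^{-c_1B^3/2}<\infty$ this is $\le c/\sqrt B$. Adding the error terms of both stages, and using $u_1+u_2=B^3n/\temp$, yields $\P_{s_0,\tilde s_0}(\taumag\ge B^3n/\temp)\le c/\sqrt B$.

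\emph{Main obstacle.} The delicate point is the conflicting demands on the coupling. The monotone coupling makes $D_t$ automatically a nonnegative supermartingale, but its one-step variance is of order (gap)$/n^2$, so the gap contracts only at rate $\temp/n$ — far too slow to reach the $B^3n/\temp$ scale through Lemma \ref{lem-supermatingale-positive}, which moreover requires a \emph{uniform} variance lower bound. The independent coupling supplies the needed $\Theta(1/n^2)$ step variance but could let the chains cross. Running independently while the gap exceeds $2/n$ (where crossing is impossible by parity and $D_t$ remains a nonnegative supermartingale), then passing to the monotone coupling for the final $2/n\to0$ transition — and using Lemma \ref{lem-tau-3-bound} to keep either chain from wandering below $\tfrac43\sqrt\temp$, where the coalescence rate would degrade to $O(\temp^2/n)$ — is the technical heart of the argument.
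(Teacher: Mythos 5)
Your Stage 1 is essentially the paper's argument (run the chains independently, note that the drift bound of Lemma \ref{lem-D-t-supermartingale} is coupling-free, get a uniform $\Theta(1/n^2)$ conditional variance, and apply Lemma \ref{lem-supermatingale-positive} to get $O(1/\sqrt B)$), and that part is fine. But two things go wrong afterwards. First, a numerical slip: $\zeta=\sqrt{3\temp}+O(\temp^{3/2})<2\sqrt{\temp}$, so the chains start \emph{below} $2\sqrt\temp$, not in $[2\sqrt\temp,1]$; with your raised cutoff the stopping time $\tau_D$ fires at time $0$, and the event ``both chains stay above $2\sqrt\temp$'' in fact fails with probability close to $1$ (the chains hover around $\zeta<2\sqrt\temp$). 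The cutoff in the definition of $\tau_D$ must be kept strictly below $\zeta$, as in the paper's $\frac76\sqrt\temp$ (which is already enough to make $\frac{d}{dx}\bigl(q_x-p_x\bigr)\gtrsim\temp$, so nothing is gained by raising it).

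The more serious gap is in Stage 2. The claim that under the monotone coupling the gap from $2/n$ ``is nonincreasing and stays in $\{0,2/n\}$'' is false, and in fact no coupling with this property exists near $\zeta$: the downward transition probability $q_x=\cP_\textsf{M}(x,x-\frac2n)$ is strictly \emph{decreasing} in $x$ in this region (its derivative at $\zeta$ is $\frac{1-\zeta}{4}\bigl(1-\beta(1+\zeta)^2\bigr)<0$), so for adjacent states $x<x+\frac2n$ any coupling must place mass at least $q_x-q_{x+2/n}\asymp\zeta/n\asymp\sqrt\temp/n$ on the event ``lower chain steps down, upper chain does not'', which increases the gap to $4/n$. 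This gap-opening rate actually exceeds your claimed per-step coalescence rate $c_1\temp/n$, so the geometric-trials bound $(1-c_1\temp/n)^{u_2}$ is not justified: the gap process from $2/n$ is a nearly symmetric $\pm\frac2n$ walk with only an $O(\temp/n^2)$ net downward drift. The correct completion (and the paper's) uses exactly this drift: by \eqref{eq-D-t-decrease}, $D_t$ is a non-negative supermartingale with $\E[D_{t+1}-D_t\mid\F_t]\le-\frac{\temp}{6n^2}$ before coalescence, so optional stopping gives $\E[\taumag-\tau_D\mid D_{\tau_D}=\frac2n]=O(n/\temp)$, and Markov's inequality bounds the probability of not coalescing within a further $B^3n/(2\temp)$ steps by $O(1/B^3)$ — weaker than your $\mathrm{e}^{-c_1B^3/2}$ but amply sufficient for the stated $c/\sqrt B$. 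With these two repairs (cutoff $\frac76\sqrt\temp$, drift/optional-stopping instead of geometric trials) your two-stage scheme coincides with the paper's proof.
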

\begin{proof}
We run the censored magnetization chains $\cS_t$ and
$\tilde{\cS}_t$ independently until $\tau_D$. Without loss of generality, suppose that $D_0> 0$, and let $W_t \deq \frac{n}{2} D_{t}$. By Lemma
\ref{lem-D-t-supermartingale}, $D_{t} $ is a supermartingale and hence so is $W_t$.

It is easy to verify that $W_t$ satisfies the conditions of
Lemma \ref{lem-supermatingale-positive} with the stopping time
$\tau_D$, by the uniform upper bound for the holding probability of
the magnetization chain and since at most one spin is updated in each step prior to
$\tau_D$ (no censoring comes into effect). Hence, by Lemma
\ref{lem-supermatingale-positive}, together with the bound on $W_0$
due to the assumption $s_0,\tilde{s}_0 \in [\zeta - \frac{B}{ \sqrt{\temp n}}, \zeta+ \frac{B}{\sqrt{\temp n}}]$, we obtain that the following holds for some
constant $c>0$:
\begin{align}\label{eq-tau-0-bound}
\P\left(\tau_D > \frac{B^3 n}{2\temp}\given D_0\right) \leq
\frac{c}{\sqrt{B}}~.
\end{align}

On the event $D_{\tau_D}= 2/n$, we construct a simple monotone coupling
of $\cS_t$ and $\tilde{\cS}_t$, which turns $D_t$ into a
non-negative supermartingale. By
\eqref{eq-D-t-decrease},
$$\E (D_{t+1}- D_t\mid
\mathcal{F}_t) \leq -\frac{\temp}{6 n^{2}}~\mbox{ for $t < \taumag$}~.$$ Therefore, an application of the Optional
Stopping Theorem for non-negative supermartingales gives that for
some constant $c'>0$,
\begin{align}\label{eq-bound-taumag-tau1}
\P\Big(\taumag - \tau_D \geq \frac{B^3 n}{2 \temp}\given D_{\tau_D} =
\frac{2}{n}\Big) \leq \frac{\E (\taumag - \tau_D)}{B^{3}n/2\temp} \leq
\frac{c'}{B}~.
\end{align}
Finally, Lemma \ref{lem-tau-3-bound} implies that for any $t = O (n/\temp)$ we have $D_t = \cS_t
- \tilde{\cS}_t$ with high probability. Altogether, we deduce that there exists a
coupling with the required upper bound on $\taumag$.
\end{proof}
Lemmas \ref{lem-cens-mag-close-to-zeta} and \ref{lem-taumag-start-from-zeta}
immediately complete the proof of Lemma \ref{lem-tau-mag-bound},
which establishes the upper bound for the cutoff in Theorem \ref{thm-cutoff-mag}. \qed

\subsubsection*{Lower bound given worst starting position} In order to establish the lower bound for the cutoff as specified in Theorem \ref{thm-cutoff-mag}, we show that for any fixed $B > 0$,
the censored magnetization starting from 0
satisfies $\cS_{T^-(\gamma)} < \zeta - B/\sqrt{\temp n}$, unlike its stationary distribution.

To see this, we combine Theorems \ref{thm-reach-0-n^-1/4}, \ref{thm-reach-n^-1/4-sqrt-temp} and \ref{thm-reach-zeta-sqrt-temp} (namely, equations
\eqref{eq-lower-bound-lemma-reach-1},
\eqref{eq-lower-bound-lemma-reach-2} and
\eqref{eq-lower-bound-lemma-reach-3}), and deduce that for
any constant $B > 0$
\begin{equation*}
\lim_{\gamma\to\infty}\limsup_{n\to\infty}\P_{0}
(\cS_{T^-(\gamma)} \geq \zeta - B/\sqrt{\temp n})=0~.
\end{equation*}
Together with Corollary \ref{cor-cens-mag-stationary}, it then follows
\begin{equation*}
\lim_{\gamma\to\infty}\liminf_{n\to\infty}
\|P^{T^-(\gamma)}(0,\cdot) - \pi(\cdot)\|_{\mathrm{TV}} =1~,
\end{equation*}
providing the desired lower bound.

\subsubsection*{Cutoff from all-plus starting position}
 The cutoff for the censored magnetization starting from $\cS_0 =1$ will follow from the results we had already proved in order to establish cutoff from the worst starting position.

 Indeed, for the upper bound, we first claim that the following statement holds, analogous to Lemma \ref{lem-cens-mag-close-to-zeta}:
 \begin{equation*}
\lim_{B\to\infty}\lim_{\gamma\to\infty}\limsup_{n\to\infty}\P_1
(|\cS_{T_4^+(\gamma)} - \zeta |\geq B/\sqrt{\temp n} )=0 ~.
\end{equation*}
To see this, construct a monotone coupling of two chains, $\cS_t$ and $\tilde{\cS}_t$, starting from $1$ and $\frac43\sqrt{\temp}$ resp. The above statement then follows from equation \eqref{eq-upper-bound-lemma-reach-3} of Theorem \ref{thm-reach-zeta-sqrt-temp}
and equation \eqref{eq-upper-bound-lemma-reach-4} of Theorem \ref{thm-reach-zeta-4},
together with the fact that $\cS_t \geq \tilde{\cS}_t$ for all $t$.

Therefore, Corollary \ref{cor-cens-mag-stationary} and Lemma \ref{lem-taumag-start-from-zeta} imply that $\cS_t$ will coalesce with the stationary chain at some $t < T_4^+(\gamma)$ with probability arbitrarily close to $1$ (as $\gamma$ increases).

 The lower bound follows from  equation \eqref{eq-lower-bound-lemma-reach-4} of Theorem \ref{thm-reach-zeta-4} combined with Corollary \ref{cor-cens-mag-stationary}, in a manner similar to the proof of the lower bound for the worst starting position.

 This concludes the proof of Theorem \ref{thm-cutoff-mag}. \qed

\section{Cutoff for the entire dynamics} \label{sec:fullmixing}
In this section we prove Theorem \ref{thm-low-temp}. Recalling the definition of $T$,
$T^+(\gamma)$ and $T^+(\gamma)$, we need to show the following:
\begin{align}
&\lim_{\gamma\to\infty}\limsup_{n\to\infty} d_n (T^+(\gamma)) = 0~,\label{eq-upper-bound-full-mixing}\\
&\lim_{\gamma\to\infty}\liminf_{n\to\infty} d_n (T^-(\gamma)) =
1~.\label{eq-lower-bound-full-mixing}
\end{align}

Note that the lower bound for the mixing time of the censored magnetization chain,
as given in Theorem \ref{thm-cutoff-mag}, immediately gives the
desired lower bound \eqref{eq-lower-bound-full-mixing} for the entire dynamics, and it remains to prove \eqref{eq-upper-bound-full-mixing}.

We wish to extend the upper bound we had for the magnetization chain onto the entire dynamics. To this end, we need the following Two Coordinate Chain
Theorem, which was implicitly proved in \cite{LLP}*{Sections 3.3, 3.4} using two-coordinate chain analysis.
Although the authors of \cite{LLP} were considering the case of the original (non-censored)
Glauber dynamics with $0 < \beta < 1$ fixed, one can follow the same arguments and extend that result to censored Glauber dynamics with $\beta = 1+\temp$ where
$\temp = o(1)$. Later on, when we discuss the case of $\temp$ fixed, we shall describe how this argument should be (slightly) modified so that it would hold for any
constant $\beta$.
\begin{theorem}[\cite{LLP}]\label{thm-two-coord-chain}
Let $(\cX_t)$ be an instance of the censored dynamics, $\mu$ the stationary distribution of the dynamics, and suppose $\cX_0$ is supported by
\begin{equation*}\label{eq-Omega-0-def}\Omega_0 \deq \{ \sigma \in \Omega : |S(\sigma)| \leq \mbox{$\frac12$}\}~.\end{equation*}
For any $\sigma_0 \in \Omega_0$ and $\tilde{\sigma} \in \Omega$, we consider the dynamics $(\cX_t)$ starting from $\sigma_0$
and an additional censored dynamics $(\tilde{\cX}_t)$ starting from $\tilde{\sigma}$, and define:
\begin{align*}
  \taumag &\deq  \min\{t : S(\cX_t) = S(\tilde{\cX}_t)\}~,\\
 U(\sigma) &\deq \left|\{i : \sigma(i) = \sigma_0(i) = 1\}\right|~,\quad V(\sigma) \deq \left|\{i : \sigma(i) = \sigma_0(i) = -1\}\right|~,\\
  \Xi &\deq \left\{ \sigma: \min\{ U(\sigma), U(\sigma_0) - U(\sigma), V(\sigma), V(\sigma_0) - V(\sigma))  \} \geq \mbox{$\frac{n}{20}$} \right\}~,\\
  R(t)  &\deq \left| U(\cX_t) - U(\tilde{\cX}_t) \right|~,\\
  H_1(t) &\deq \{ \taumag \leq t \}~,\quad H_2(t_1,t_2) \deq \cap_{i=t_1}^{t_2} \{ \cX_i \in \Xi \;\wedge\; \tilde{\cX}_i \in \Xi \}~.
\end{align*}
For any possible coupling of $\cX_t$ and $\tilde{\cX}_t$, the following
holds for large $n$:
\begin{align}
\max_{\sigma_0 \in \Omega_0} &\| \P_{\sigma_0}(\cX_{r_2} \in \cdot) - \mu \|_{TV}
\leq\mathop{\max_{\sigma_0 \in \Omega_0}}_{\tilde{\sigma}\in\Omega} \Big[\P_{\sigma_0,\tilde{\sigma}}\Big(R(r_1) > \alpha \sqrt{\frac{n}{\temp}}\Big) \nonumber\\
&+ \P_{\sigma_0,\tilde{\sigma}} (\overline{H_1(r_1)}) + \P_{\sigma_0,\tilde{\sigma}} (\overline{H_2(r_1,r_2)}) +
\frac{\alpha c_1 }{\sqrt{r_2-r_1}}\cdot\sqrt{\frac{n}{\temp}} \Big]~,\label{eq-two-coordinate-chain}
\end{align}
and any $r_1 < r_2$ and $\alpha > 0$.
\end{theorem}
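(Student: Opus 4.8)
The plan is to deduce \eqref{eq-two-coordinate-chain} from a coupling of the two censored chains together with the elementary coupling inequality. Since $\mu$ is stationary, $\mu=\sum_{\tilde\sigma\in\Omega}\mu(\tilde\sigma)\,\P_{\tilde\sigma}(\tilde\cX_{r_2}\in\cdot)$, so convexity of total variation combined with the coupling inequality gives, for \emph{any} coupling of the two chains, $\|\P_{\sigma_0}(\cX_{r_2}\in\cdot)-\mu\|_{TV}\le\max_{\tilde\sigma\in\Omega}\P_{\sigma_0,\tilde\sigma}(\cX_{r_2}\ne\tilde\cX_{r_2})$. It therefore suffices to exhibit, for each $\sigma_0\in\Omega_0$ and $\tilde\sigma\in\Omega$, a coupling under which $\{\cX_{r_2}\ne\tilde\cX_{r_2}\}$ is contained in the union of the four events whose probabilities appear on the right-hand side of \eqref{eq-two-coordinate-chain}; the freedom in how the magnetizations are coupled is the sense in which the statement holds for any coupling.

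For this I would adapt the two-coordinate coupling of \cite{LLP}*{Sections 3.3, 3.4}. Because $\cX_0=\sigma_0$ has no disagreement with $\sigma_0$, the coupling can be run so as to preserve the inclusion $\{i:\cX_t(i)\ne\sigma_0(i)\}\subseteq\{i:\tilde\cX_t(i)\ne\sigma_0(i)\}$ for all $t$; in particular $U(\cX_t)\ge U(\tilde\cX_t)$, $V(\cX_t)\ge V(\tilde\cX_t)$, and the Hamming distance between $\cX_t$ and $\tilde\cX_t$ equals $\bigl(U(\cX_t)-U(\tilde\cX_t)\bigr)+\bigl(V(\cX_t)-V(\tilde\cX_t)\bigr)$. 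On $[0,\taumag]$ the magnetizations are coupled so as to coalesce, so that $\P(\overline{H_1(r_1)})=\P(\taumag>r_1)$; from $\taumag$ on, the coupling additionally preserves $S(\cX_t)=S(\tilde\cX_t)$, and since $nS(\sigma)=2\bigl(U(\sigma)-V(\sigma)\bigr)+\bigl(V(\sigma_0)-U(\sigma_0)\bigr)$ this forces $U(\cX_t)-U(\tilde\cX_t)=V(\cX_t)-V(\tilde\cX_t)=R(t)$, so the Hamming distance between the two configurations equals $2R(t)$ for every $t\ge\taumag$. Finally, the coupling is arranged so that on this interval $R(t)$ is a supermartingale whose one-step conditional variance is at least an absolute constant $\sigma^2>0$ whenever $\cX_t,\tilde\cX_t\in\Xi$; consequently, the first time $R$ hits $0$ the two configurations coincide, and from then on they are made to move in lockstep, so $\cX_t=\tilde\cX_t$ for all later $t$.

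Under this coupling, $\{\cX_{r_2}\ne\tilde\cX_{r_2}\}$ forces at least one of: $\taumag>r_1$ (the event $\overline{H_1(r_1)}$); $R(r_1)>\alpha\sqrt{n/\temp}$; some $\cX_t$ or $\tilde\cX_t$ leaving $\Xi$ during $[r_1,r_2]$ (the event $\overline{H_2(r_1,r_2)}$); or, on the complement of these three events, the supermartingale $R$ failing to reach $0$ on $[r_1,r_2]$. For the last event I would apply Lemma \ref{lem-supermatingale-positive} to $W_t\deq R(r_1+t)$ with the stopping time ``$R=0$ or a chain exits $\Xi$'': the increments of $W$ are bounded by an absolute constant $B$ and its conditional variance exceeds $\sigma^2$ up to that time, so for $r_2-r_1$ above the fixed threshold $4B^2/(3\sigma^2)$ the probability is at most $4R(r_1)/(\sigma\sqrt{r_2-r_1})\le c_1\alpha\sqrt{n/\temp}/\sqrt{r_2-r_1}$ with $c_1\deq 4/\sigma$, while for $r_2-r_1$ below that threshold the fourth term already exceeds $1$ and the inequality is trivial. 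A union bound over the four events and the maximum over $\sigma_0\in\Omega_0$, $\tilde\sigma\in\Omega$ then yields \eqref{eq-two-coordinate-chain} for large $n$.

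The main obstacle is constructing the post-$\taumag$ coupling with the required supermartingale and variance properties in the present regime $\beta=1+\temp$, $\temp=o(1)$: one has to write down explicit balancing moves that update the two chains so as to keep the magnetizations equal and the inclusion above intact while driving $R$ down in expectation with a non-degenerate step variance. This is precisely where $\Xi$ is used — it furnishes at least $n/20$ sites in each of the four categories (agreeing or disagreeing with $\sigma_0$, inside $P=\{i:\sigma_0(i)=1\}$ and inside $M=\{i:\sigma_0(i)=-1\}$), which is the room needed to re-randomize within a block while correcting the magnetization. The one genuinely new feature relative to \cite{LLP} is the rescaling: because $\sigma_0\in\Omega_0$ and, by Lemmas \ref{lem-exp-drop-all-plus} and \ref{lem-var-bound-cens}, the magnetizations quickly settle near $\zeta$ (which is of order $\sqrt{\temp}$, hence well inside $(-1/2,1/2)$), the update probabilities $p^{\pm}$ stay bounded away from $0$ and $1$ just as in the subcritical analysis, but the natural fluctuation scale of $R$ is now $\sqrt{n/\temp}$ rather than $\sqrt{n}$, which is exactly why $\alpha\sqrt{n/\temp}$ appears in the first and fourth terms of \eqref{eq-two-coordinate-chain}.
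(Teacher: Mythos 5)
The paper itself does not prove this theorem: it is quoted from \cite{LLP}, with the remark that the two-coordinate-chain argument there extends to the censored dynamics. Your architecture — magnetization coalescence by time $r_1$, a balancing coupling on $[r_1,r_2]$ under which $R(t)$ is a supermartingale with conditional variance bounded below while both chains are in $\Xi$, and Lemma \ref{lem-supermatingale-positive} with $k=R(r_1)\le\alpha\sqrt{n/\temp}$ producing the term $\alpha c_1\sqrt{n/\temp}/\sqrt{r_2-r_1}$ — is indeed that argument, and your accounting of where each term of \eqref{eq-two-coordinate-chain} comes from is right. But your final step contains a genuine flaw: you ask the coupling simultaneously to preserve the inclusion $\{i:\cX_t(i)\ne\sigma_0(i)\}\subseteq\{i:\tilde\cX_t(i)\ne\sigma_0(i)\}$ (so that $R(t)=0$ forces $\cX_t=\tilde\cX_t$) and to give $R$ a per-step conditional variance bounded below by an absolute constant. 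These two requirements are incompatible. If at some step one chain flips a site at which the two chains currently agree — and there are order $n$ such sites, flipped at constant rate under the heat-bath marginal — then the other chain must copy that flip at the same site in the same step, or the inclusion is destroyed (e.g.\ $\cX$ flips a site of $P$ from $+$ to $-$ where $\tilde\cX$ still agrees with $\sigma_0$). Hence an inclusion-preserving coupling forces the chains to mirror each other at all agreement sites, and mirrored moves do not change $U(\cX_t)-U(\tilde\cX_t)$; the only fluctuations of $R$ come from the $O(R(t))$ disagreement sites, so the conditional variance is $O(R(t)/n)$, not $\sigma^2>0$. With that variance the coalescence time is of order $n\log n$ rather than $R(r_1)^2\asymp n/\temp$, and Lemma \ref{lem-supermatingale-positive} cannot produce the fourth term for $r_2-r_1$ of order $n/\temp$. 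Conversely, the couplings that do achieve constant variance (cross-block pairings, re-randomizing updates at agreement sites — exactly the moves for which $\Xi$ provides room) break the inclusion, so $R=0$ then no longer implies $\cX_t=\tilde\cX_t$.

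The way out, and the way \cite{LLP} actually argues, is to drop literal coalescence of configurations altogether: one couples only the projected two-coordinate chains $(U,V)$ and $(\tilde U,\tilde V)$, keeping the magnetizations equal after $\taumag$ and driving $U-\tilde U$ to $0$ diffusively, and then converts equality of the projections into a total-variation bound using exchangeability within $\{i:\sigma_0(i)=1\}$ and $\{i:\sigma_0(i)=-1\}$ of both the chain started from the deterministic $\sigma_0$ and the stationary measure (this symmetry step is also what makes the phrasing ``for any possible coupling'' tenable, since the passage from matched projections to total variation is not itself a coupling event). So your proposal needs two repairs: replace the inclusion/Hamming-distance/literal-coalescence step by this projection-plus-symmetry step, and actually carry out the construction of the balancing coupling with non-positive drift and constant variance, which you explicitly left as the ``main obstacle'' — that construction, together with the verification that it survives the change from $\beta<1$ to the censored dynamics at $\beta=1+\temp$ with the magnetization pinned near $\zeta$, is the substance of the theorem rather than a detail.
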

We begin with establishing the fact that any instance of the censored Glauber dynamics concentrates on
$\Omega_0$ once it performs an initial burn-in period of $n/\temp$ steps, as incorporated in the following lemma.
\begin{lemma}\label{lem-hit-good-state}
Let $\cX_t$ be the censored Glauber dynamics starting from some starting configuration $\sigma_0$. Then $\cX_{n/\temp} \in \Omega_0$ with high probability.
\end{lemma}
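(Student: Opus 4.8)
The plan is to show that after $n/\temp$ steps, the censored magnetization $\cS_t = |S(\cX_t)|$ is, with high probability, bounded away from $1$ by a constant, which places $\cX_{n/\temp}$ in $\Omega_0 = \{\sigma: |S(\sigma)| \le \tfrac12\}$. Since the configuration $\cX_{n/\temp} \in \Omega_0$ if and only if $\cS_{n/\temp} \le \tfrac12$, it suffices to control the one-dimensional censored magnetization chain. By the monotone coupling for $\cS_t$ established in Section~\ref{sec:monotone coupling}, the worst starting position (stochastically largest) is $\cS_0 = 1$, i.e.\ the all-plus configuration; so we may assume $\cX_0 = \mathbf{1}$, and it is enough to prove $\P_1(\cS_{n/\temp} \le \tfrac12) \to 1$.

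First I would invoke Lemma~\ref{lem-exp-drop-all-plus}, which already states precisely that, started from the all-plus configuration, $\E_1 \cS_{n/\temp} \le \zeta + 2\sqrt{\temp}$ for sufficiently large $n$. Since $\temp = o(1)$ and $\zeta = \sqrt{3\temp} + O(\temp^{3/2})$, the right-hand side is $o(1)$, so in particular $\E_1 \cS_{n/\temp} \le \tfrac14$ for large $n$. To upgrade this expectation bound to a high-probability statement, I would apply the variance bound from Lemma~\ref{lem-var-bound-cens} with $\gamma$ a fixed constant (noting $n/\temp \le T_3^+(\gamma)$ for any fixed $\gamma > 0$ once $n$ is large, since $T_3$ has order $(n/\temp)\log(\temp^2 n)$), which gives $\var_1 \cS_{n/\temp} = O(1/(\temp n)) = o(1)$. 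Chebyshev's inequality then yields
\begin{equation*}
\P_1\bigl(\cS_{n/\temp} > \tfrac12\bigr) \le \P_1\bigl(\cS_{n/\temp} - \E_1\cS_{n/\temp} > \tfrac14\bigr) \le 16\,\var_1 \cS_{n/\temp} = o(1)~,
\end{equation*}
as desired. Finally, for an arbitrary starting configuration $\sigma_0$, the monotone coupling of the censored magnetization chains started at $\cS_0 = \cS(\sigma_0) \in [0,1]$ and at $\cS_0 = 1$ shows $\cS_{n/\temp}(\sigma_0) \preceq \cS_{n/\temp}(\mathbf{1})$ stochastically, so $\P_{\sigma_0}(\cS_{n/\temp} > \tfrac12) \le \P_1(\cS_{n/\temp} > \tfrac12) = o(1)$, completing the proof.

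The only subtle point — and the place where one must be a little careful rather than the genuine obstacle — is making sure the hypotheses of Lemma~\ref{lem-var-bound-cens} and Lemma~\ref{lem-exp-drop-all-plus} are legitimately available at time $n/\temp$: both are stated for the chain started from $\tfrac43\sqrt\temp$ or from all-plus, and both require $t \le T_3^+(\gamma)$, which indeed contains $n/\temp$ for large $n$. Since Lemma~\ref{lem-exp-drop-all-plus} is stated directly for the all-plus start and Lemma~\ref{lem-var-bound-cens} explicitly extends to any starting position $s_0' \ge \tfrac43\sqrt\temp$ (and the all-plus chain is coupled above this once the burn-in has no effect), the chain of implications goes through. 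No new estimate is needed; the lemma is essentially a corollary of the work already done for the fourth phase.
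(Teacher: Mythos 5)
Your proposal is correct and follows essentially the same route as the paper: reduce to the all-plus start via the monotone coupling of the censored magnetization chain, bound $\E_1\cS_{n/\temp}$ by Lemma~\ref{lem-exp-drop-all-plus}, bound the variance by Lemma~\ref{lem-var-bound-cens} (valid since $n/\temp\leq T_3^+(\gamma)$ and the all-plus start dominates $\frac43\sqrt\temp$), and conclude by Chebyshev. The paper's proof is the same argument, noted there with the remark that the second-moment step is kept only so the argument survives the fixed-$\temp$ case.
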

\begin{proof}
By the monotone-coupling of the censored magnetization chain, it suffices to bound $ \P_{1} (|\cS_{n/\temp}|
\geq \frac12)$, i.e., to treat the worst starting state $\sigma_0=1$. Lemma \ref{lem-exp-drop-all-plus} gives that $$\E_1
\cS_{n/\temp} \leq \zeta + 2\sqrt{\temp} = O (\sqrt{\temp})~.$$ Combining with the variance bound given in Lemma
\ref{lem-var-bound-cens} and Cheybeshev's inequality, it follows that
$$\P_{1} (\cS_{n/\temp} \geq \frac{1}{2}) = O \left(\frac{1}{\temp n}\right) = o(1)~,$$
completing the proof.
\end{proof}
\begin{remark*}
  The statement of the above lemma in fact follows directly from the upper bound on $\E_1\cS_{n/\temp}$, without requiring a second moment argument. Nevertheless, we included the above proof as it also holds when $\temp$ is fixed (a case that will be treated in Section \ref{sec:temp-fixed}).
\end{remark*}

 It remains to bound $R(r_1)$ and $H_2(r_1,r_2)$, where the parameters $r_1$ and $r_2$ will be specified later. To do so, we must first
 extend the variance bound given in Lemma \ref{lem-var-bound-cens}
 to the original magnetization chain.
\begin{lemma}\label{lem-var-bound-usual}
Let $S_t$ be a magnetization chain starting from $s_{0} \geq
\frac43\sqrt{\temp}$. Then there exists some constant $c>0$ so that the following holds:
 \begin{align}\label{eq-supercritical-var-st-bound-usual}
\var_{s_0} S_t \leq \frac{c}{\temp n}~,
\end{align}
for any $T_3^+(6) \leq t \leq T_3^+(\gamma)$, any fixed $\gamma$ and any sufficiently large $n$.
\end{lemma}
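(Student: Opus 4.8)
The plan is to reduce to the censored estimate of Lemma~\ref{lem-var-bound-cens} by controlling the discrepancy between $S_t$ and $\cS_t$. Since $\cS_t$ and $|S_t|$ have the same law (both chains started from $s_0$), one has $\E_{s_0}S_t^2=\E_{s_0}\cS_t^2$ \emph{exactly}, whence, writing $S_t^{\pm}\deq\max(\pm S_t,0)$ so that $|S_t|-S_t=2S_t^-$ and $|S_t|+S_t=2S_t^+$,
\begin{equation*}
\var_{s_0}S_t=\var_{s_0}\cS_t+\bigl[(\E_{s_0}|S_t|)^2-(\E_{s_0}S_t)^2\bigr]=\var_{s_0}\cS_t+4\,\E_{s_0}[S_t^-]\,\E_{s_0}[S_t^+]~.
\end{equation*}
Lemma~\ref{lem-var-bound-cens} gives $\var_{s_0}\cS_t\le c/(\temp n)$ for all $s_0\ge\frac43\sqrt\temp$ and $t\le T_3^+(\gamma)$, so it remains to prove that $\E_{s_0}[S_t^-]\,\E_{s_0}[S_t^+]=O(1/(\temp n))$ throughout $T_3^+(6)\le t\le T_3^+(\gamma)$.

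For $\E_{s_0}[S_t^+]$ I would simply use $\E_{s_0}[S_t^+]\le\E_{s_0}|S_t|=\E_{s_0}\cS_t$. The hypothesis $t\ge T_3^+(6)$ is exactly what makes this small: by the estimates of Subsections~\ref{subsec-sqrt-temp-to-zeta} and \ref{subsec-1-to-zeta} --- in particular \eqref{eq-Z-T-3-bound} for the chain from $\frac43\sqrt\temp$ and \eqref{eq-Z-T-4-bound} for the chain from $1$, both of which persist for all $t\ge T_3^+(6)$ by the monotonicity of the relevant expectations --- together with the monotone coupling that sandwiches any chain started in $[\frac43\sqrt\temp,1]$ between these two, one obtains $\E_{s_0}|\cS_t-\zeta|=O(1/\sqrt{\temp n})$. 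Since $\zeta=O(\sqrt\temp)$ and $\temp^2 n\to\infty$ (so $1/\sqrt{\temp n}=o(\sqrt\temp)$), this yields $\E_{s_0}\cS_t=O(\sqrt\temp)$ and hence $\E_{s_0}\cS_t^2=\var_{s_0}\cS_t+(\E_{s_0}\cS_t)^2=O(\temp)$; in particular $\E_{s_0}[S_t^+]=O(\sqrt\temp)$ and $\|S_t\|_2=\|\cS_t\|_2=O(\sqrt\temp)$.

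For $\E_{s_0}[S_t^-]$ I would first note that, by the monotone coupling, $\E_{s_0}[S_t^-]$ is maximal when $s_0=\frac43\sqrt\temp$, so assume this. Since $S_t^-=|S_t|\,\oneb{S_t<0}$ vanishes unless the chain has reached $0$, Cauchy--Schwarz gives
\begin{equation*}
\E_{s_0}[S_t^-]\le\|S_t\|_2\,\sqrt{\P_{s_0}(\tau_0\le t)}~,\qquad\tau_0\deq\min\{t:S_t\le 0\}~.
\end{equation*}
Reaching $0$ from $\frac43\sqrt\temp$ forces the chain first to fall below $\frac76\sqrt\temp$, so $\tau_0\ge\tau_3$ and therefore $\P_{s_0}(\tau_0\le t)\le\P_{s_0}(\tau_3\le T_3^+(\gamma))\le(\temp^2 n)^{-2}$ by the Remark following Lemma~\ref{lem-tau-3-bound} (whose argument applies verbatim at $T_3^+(\gamma)$). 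Combined with $\|S_t\|_2=O(\sqrt\temp)$ this gives $\E_{s_0}[S_t^-]=O\bigl(\sqrt\temp\,(\temp^2 n)^{-1}\bigr)$.

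Multiplying the two bounds, $\E_{s_0}[S_t^-]\,\E_{s_0}[S_t^+]=O\bigl(\temp\,(\temp^2 n)^{-1}\bigr)=O(1/(\temp n))$, which is exactly what was needed, and combining this with $\var_{s_0}\cS_t\le c/(\temp n)$ closes the argument. The one delicate point is ensuring that the discrepancy term $4\,\E[S_t^-]\,\E[S_t^+]$ is of the same order as $\var\cS_t$ rather than larger: this is precisely why one must invoke the \emph{super-polynomial} decay $\P(\tau_3\le t)\le(\temp^2 n)^{-2}$ from the Remark --- the bound $\P(\tau_3\le t)\le(\temp^2 n)^{-1}$ of Lemma~\ref{lem-tau-3-bound} itself would only give $\E[S_t^-]\E[S_t^+]=O(1/\sqrt n)$, which is $\gg 1/(\temp n)$ --- and why one needs $\cS_t$ to have already concentrated near $\zeta$ (hence the restriction $t\ge T_3^+(6)$) to keep $\|S_t\|_2$ and $\E[S_t^+]$ of order $\sqrt\temp$.
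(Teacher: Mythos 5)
Your proposal is correct, and its skeleton is in fact the same as the paper's: the paper also writes $\var_{s_0}S_t=\var_{s_0}\cS_t+(\E_{s_0}\cS_t)^2-(\E_{s_0}S_t)^2$ (using that $|S_t|$ and $\cS_t$ agree in law), bounds the factor $\E_{s_0}\cS_t+\E_{s_0}S_t$ by $O(\zeta)=O(\sqrt\temp)$ via \eqref{eq-Z-T-4-bound}, and controls the factor $\E_{s_0}\cS_t-\E_{s_0}S_t=2\E_{s_0}[S_t^-]$ through the event $\{\tau_0\le t\}\subseteq\{\tau_3\le t\}$. The genuine difference is how that last factor is handled. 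The paper conditions on $\tau_0=k$ and uses the symmetry $\E_0 S_{t-k}=0$ together with Lemma \ref{lem-var-bound-0-n-1/4}, so that $\E_{s_0}\cS_t-\E_{s_0}S_t\le\P_{s_0}(\tau_0\le t)\max_k\sqrt{\var_0 S_{t-k}}$; the probability enters to the first power, and the plain bound $\P(\tau_3\le t)\le(\temp^2 n)^{-1}$ of Lemma \ref{lem-tau-3-bound} already suffices (the growing factor $(1+2\temp/n)^t$ is harmless because $T_3=(\frac14+o(1))\frac n\temp\log(\temp^2n)$). Your Cauchy--Schwarz step loses a square root on the probability, which is exactly why you must lean on the Remark following Lemma \ref{lem-tau-3-bound} (more intermediate points, giving $\P(\tau_3\le T_3^+(\gamma))\le(\temp^2n)^{-2}$); you correctly diagnose this, and the extension of the Remark from $T_3$ to $T_3^+(\gamma)$ is indeed routine since $\gamma$ is fixed. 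So both arguments close: the paper's version buys economy (only the stated form of Lemma \ref{lem-tau-3-bound} plus Lemma \ref{lem-var-bound-0-n-1/4} are invoked), while yours buys a slightly more self-contained bound on $\E[S_t^-]$ at the price of a sharper, only sketched, tail estimate. Your remaining ingredients (monotone coupling to reduce $\E[S_t^-]$ to $s_0=\frac43\sqrt\temp$ and to sandwich $\E_{s_0}\cS_t$ between the chains from $\frac43\sqrt\temp$ and from $1$, and the use of \eqref{eq-Z-T-3-bound}, \eqref{eq-Z-T-4-bound} for $t\ge T_3^+(6)$) are all legitimate and mirror how Lemma \ref{lem-var-bound-cens} is extended to arbitrary starting points.
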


\begin{proof}
Define $\tau_0 = \min\{t : |S_t| \leq \frac{1}{n}\}$. Recalling
the fact that $|S_t|$ and $\cS_t$ have the same distribution, we
obtain that for any $T_3^+(6) \leq t \leq T_3^+(\gamma)$
\begin{align*}
\var_{s_0}(S_t) &= \var_{s_0} (\cS_t) + (\E_{s_0} \cS_t)^2 -
(\E_{s_0} S_t)^2 \\
& \leq \frac{c}{\temp n} + (\E_{s_0} \cS_t + \E_{s_0} S_t)
(\E_{s_0} \cS_t - \E_{s_0} S_t)\\
& \leq \frac{c}{\temp n} + 2\E_{s_0}\cS_t \cdot \sum_{k=1}^{t}
\P_{s_0} (\tau_{0} =
k) \sqrt{\var_{0}S_{t-k}}\\
&\leq \frac{c_1}{\temp n} + 4 \zeta \cdot \frac{1}{\temp^2 n}
\sqrt{\frac{c_2}{\temp n}\left(1+\frac{2\temp}{n}\right)^t}~,
\end{align*}
where the last inequality follows from \eqref{eq-Z-T-4-bound},
Lemma \ref{lem-tau-3-bound} and Lemma \ref{lem-var-bound-0-n-1/4}.
Note that, as $\temp = o(1)$, we have
$$T_3 = \Big(\frac{1}{4} + o(1)\Big)
\frac{n}{\temp}\log (\temp^2 n) ~\mbox{ and }~ \zeta \leq 4 \sqrt{\temp}~.$$
Altogether, there exists some $c>0$ so that for sufficiently large $n$,
$$\var_{s_0} (S_t) \leq \frac{c}{\temp n}~\mbox{ for any $T_3^+(6) \leq t \leq
T_3^+(\gamma)$}~,$$
as required.
\end{proof}

Now, we are ready to establish an upper bound for the sum of the spins over a prescribed set, as stated by the next lemma.

\begin{lemma}\label{lem-fixed-set-magnetization}
Let $\cX_t$ be the censored Glauber dynamics starting from $\sigma_0$ with corresponding
magnetization $s_0\geq \frac43\sqrt{\temp}$. Then there exists some $c>0$ so the
following holds for any fixed subset $F \subset [n]$, any $\gamma$ and sufficiently large $n$:
\begin{equation}\label{eq-fixed-set-R}
\E_{\sigma_0} \Big|\sum_{i \in F} (X_{t}(i)-\zeta)\Big| \leq
c\sqrt{\frac{n}{\temp}}~\mbox{for all $T_3^+(6) \leq t\leq T_3^+(\gamma)$}~.
\end{equation}
\end{lemma}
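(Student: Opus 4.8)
The plan is to reduce the bound to a second–moment estimate for the ``direction orthogonal to the magnetization''. Writing $f\deq|F|$ and $Y_t\deq\sum_{i\in F}X_t(i)-fS_t$, I would first split
\[
\sum_{i\in F}\bigl(X_t(i)-\zeta\bigr)=Y_t+f\bigl(S_t-\zeta\bigr),
\]
and dispose of the second term: by the expectation estimates \eqref{eq-Z-T-3-bound} and \eqref{eq-Z-T-4-bound}, together with the monotone coupling that sandwiches a magnetization chain started from $s_0\in[\tfrac43\sqrt{\temp},1]$ between those started from $\tfrac43\sqrt{\temp}$ and from $1$, one gets $\E_{\sigma_0}|S_t-\zeta|=O(1/\sqrt{\temp n})$ for every $t\ge T_3^+(6)$, and hence $f\,\E_{\sigma_0}|S_t-\zeta|=O(\sqrt{n/\temp})$ since $f\le n$. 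It therefore suffices to prove $\E_{\sigma_0}Y_t^2=O(n/\temp)$ on the window $T_3^+(6)\le t\le T_3^+(\gamma)$ and then invoke Cauchy--Schwarz.

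The crucial observation is that $Y_t=\sum_i a_iX_t(i)$ with $a_i=\oneb{i\in F}-f/n$, so that $\sum_i a_i=0$ and $\sum_i|a_i|\le 2f$. When a uniformly chosen site $I$ is resampled, $\E[X_{t+1}(i)\mid\F_t,\,I=i]=\tanh(\beta(S_t-X_t(i)/n))=\tanh(\beta S_t)+O(1/n)$, and summing the leading piece against the $a_i$'s kills the $\tanh(\beta S_t)$ term because $\sum_i a_i=0$. This yields
\[
\E[Y_{t+1}-Y_t\mid\F_t]=-\frac{Y_t}{n}+O\!\Bigl(\frac{f}{n^2}\Bigr),\qquad
\E[(Y_{t+1}-Y_t)^2\mid\F_t]=O\!\Bigl(\frac{f}{n}\Bigr),
\]
the second bound since at most one spin, lying in $F$ with probability $f/n$, is resampled per step. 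Expanding $(Y_t+\Delta_t)^2$ and using $|Y_t|\le2f\le2n$ to absorb the cross term from the $O(f/n^2)$ drift error gives the recursion $\E[Y_{t+1}^2\mid\F_t]\le(1-1/n)Y_t^2+O(f/n)$, and iterating from $t=0$ with $|Y_0|\le2f$ produces $\E_{\sigma_0}Y_t^2\le(1-1/n)^t\,4f^2+O(f)$.

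It then remains to check that the geometric prefactor is harmless for $t\ge T_3^+(6)$. Since $\zeta^2\beta/\temp-1=2+O(\temp)$ we have $T_3^+(6)\ge T_3\ge c_0(n/\temp)\log(\temp^2n)$ for an absolute $c_0>0$, so $(1-1/n)^t\le e^{-T_3^+(6)/n}\le(\temp^2n)^{-c_0/\temp}$; a short computation (using $\temp=o(1)$ and $\temp^2n\to\infty$) shows $(\temp^2n)^{-c_0/\temp}\,n^2=O(n/\temp)$, whence $(1-1/n)^t4f^2=O(n/\temp)$ and also $O(f)=O(n)=O(n/\temp)$. Thus $\E_{\sigma_0}Y_t^2=O(n/\temp)$ on the window, and Cauchy--Schwarz combined with the first paragraph gives \eqref{eq-fixed-set-R} for the non-censored chain; for $s_0\ge\tfrac43\sqrt{\temp}$ the censored and non-censored evolutions coincide throughout $[0,T_3^+(\gamma)]$ except on an event of probability $(\temp^2n)^{-m}$ for any fixed $m$ (Lemma \ref{lem-tau-3-bound} and the remark following it), which is the only case relevant downstream. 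The single genuine idea here is the drift cancellation $\sum_i a_i\tanh(\beta S_t)=0$ — the reason $\sum_{i\in F}X_t(i)$ tracks $f\zeta$ despite its naive one–step drift featuring $\tanh(\beta S_t)$; I expect the rest — making the $O(\cdot)$'s uniform over the admissible range of $\temp$, in particular the decay estimate for $(1-1/n)^{T_3^+(6)}$, and the bookkeeping around the exceptional censoring event — to be routine rather than a real obstacle.
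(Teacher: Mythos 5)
Your core computation is correct and genuinely different from the paper's route: writing $Y_t=\sum_i a_iX_t(i)$ with $a_i=\oneb{i\in F}-|F|/n$, the cancellation $\sum_i a_i=0$ kills the $\tanh(\beta S_t)$ term and yields $\E[Y_{t+1}\mid\F_t]=(1-\tfrac1n)Y_t+O(|F|/n^2)$ and $\E[(Y_{t+1}-Y_t)^2\mid\F_t]=O(|F|/n)$, hence $\E Y_t^2\le(1-\tfrac1n)^t\,4|F|^2+O(|F|)$, and your estimate that the geometric prefactor is negligible by time $T_3^+(6)$ checks out. The paper instead never isolates an ``orthogonal'' coordinate: it writes $\cX_t$ as $X_t\cdot\sign(\sum_iX_t(i))$, sandwiches it via monotone coupling between non-censored chains started from all-plus and all-minus, and bounds the partial-sum variance by a positive/negative correlation dichotomy together with Lemma \ref{lem-var-bound-usual} and \eqref{eq-Z-T-4-bound}. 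Your decomposition is arguably cleaner for the non-censored chain.

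The gap is in your last step, the transfer from the non-censored to the censored dynamics. The lemma is about $\cX_t$ (this is how it is used in Lemmas \ref{lem-bound-R-t} and \ref{lem-H-2-bound}), and you handle the discrepancy by bounding the quantity by $2n$ on the event that censoring occurs before $T_3^+(\gamma)$, of probability at most $(\temp^2n)^{-m}$. But $2n(\temp^2n)^{-m}=O(\sqrt{n/\temp})$ requires $(\temp^2n)^{m-1/2}\gtrsim\temp^{-1/2}$, which fails for every fixed $m$ when $\temp^2n\to\infty$ slower than any power of $n$ (e.g.\ $\temp^2n=\log n$, $\temp\approx\sqrt{\log n/n}$), a regime explicitly allowed by the hypotheses; and the constant $m$ in the remark after Lemma \ref{lem-tau-3-bound} cannot be taken growing with $n$. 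So ``except on an event of probability $(\temp^2n)^{-m}$'' does not close the proof of the lemma as stated. (The analogous issue in your first paragraph is harmless, since there the bad-event contribution carries a factor $\zeta=O(\sqrt\temp)$ rather than $n$.) The gap is repairable within your framework: run the second-moment recursion directly on the censored chain, observing that a censoring step is a global spin flip, which changes the sign of $Y$ but not $Y^2$, while the pre-flip update obeys the original transition rule, so $\E[\cY_{t+1}^2\mid\F_t]\le(1-\tfrac1n)\cY_t^2+O(|F|/n)$ holds verbatim; and bound $|F|\,\E|\cS_t-\zeta|$ using \eqref{eq-Z-T-3-bound}, \eqref{eq-Z-T-4-bound} and Lemma \ref{lem-var-bound-cens} for the censored magnetization. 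As written, however, the final step does not prove the claimed bound.
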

\begin{proof}
Observe that the censored Glauber dynamics $\cX_t$ is
identically distributed as $X_t\cdot \sign(\sum_{i\in [n]}
X_t(i))$. Thus, it is possible to study the censored dynamics via the
original one in the following manner: We construct a monotone coupling
of $X^-_t$, $X_t$ and $X^+_t$, starting from all-minus, $\sigma_0$
and all-plus respectively, such that $X^-_t \leq X_t \leq X^+_t$ for all $t$.
At the same time, we couple $X_t$ and $\cX_t$ so that $\cX_t
=  X_t\cdot \sign(\sum_{i\in [n]} X_t(i))$. Altogether,
\begin{align*}
\sum_{i\in F} \left(\cX_t(i)-\zeta \right) &\leq \max \Big\{\sum_{i\in
F} \left(X^+_t(i) - \zeta \right)\;,\; \sum_{i\in F} \left(- X^-_t(i)
-
\zeta \right)\Big\}\\
&\leq \Big|\sum_{i\in F} (X^+_t (i) - \zeta)\Big| +
\Big|\sum_{i\in F} (X^-_t (i) + \zeta)\Big|~.
\end{align*}

Replacing $F$ with $F^c$ in the above inequality, we obtain

\begin{align*}
\sum_{i\in F^c} \left(\cX_t(i)-\zeta \right)\leq \Big|\sum_{i\in
F^c} (X^+_t (i) - \zeta)\Big| + \Big|\sum_{i\in F^c} (X^-_t (i) +
\zeta)\Big|~,
\end{align*}
which implies that
\begin{align*}
\sum_{i\in F} \left(\cX_t(i)-\zeta \right)\geq n (\cS_t - \zeta) -
\Big(\Big|\sum_{i\in F^c} (X^+_t (i) - \zeta)\Big| +
\Big|\sum_{i\in F^c} (X^-_t (i) + \zeta) \Big|\Big)~.
\end{align*}
Altogether, we have
\begin{align*}
\Big|\sum_{i\in F} \left(\cX_t(i)-\zeta
\right)\Big| &\leq \Big|\sum_{i\in F} (X^+_t (i) -
\zeta)\Big| + \Big|\sum_{i\in F} (X^-_t (i) +
\zeta)\Big| \\
&+\Big|\sum_{i\in F^c} (X^+_t (i) - \zeta)\Big| +
\Big|\sum_{i\in F^c} (X^-_t (i) + \zeta)\Big|
+ n(\cS_t - \zeta)~.
\end{align*}
Squaring and taking expectation, it follows that
\begin{align}\label{eq-partial-sum-pre}
&\frac{1}{5}\E_{\sigma_0}\Big[\sum_{i\in F} \left(\cX_t(i)-\zeta
\right)\Big]^2 \leq \E_{+}\big|\sum_{i\in F} (X^+_t (i) -
\zeta)\big|^2 + \E_{-}\big|\sum_{i\in F} (X^-_t (i) +
\zeta)\big|^2 \nonumber\\
&+ n^2\E_{\sigma_{0}}(\cS_t - \zeta)^2
+\E_{+}\big|\sum_{i\in F^c} (X^+_t (i) - \zeta)\big|^2 +
\E_{-}\big|\sum_{i\in F^c} (X^-_t (i) + \zeta)\big|^2~,
\end{align}
where we absorbed the mixed terms, generated when squaring the former expression, using the multiplying factor of $\frac15$. We now move on to estimating each of the expressions in the right-hand-side of \eqref{eq-partial-sum-pre}.

Combing \eqref{eq-Z-T-4-bound} and Lemma
\ref{lem-var-bound-usual}, we get
$$\E_{\sigma_{0}}(\cS_t
- \zeta)^2 = O \Big(\frac{1}{\temp n}\Big)~.$$ Next, we need to
estimate $\E_+\big|\sum_{i\in F} (X^+_t (i) -
\zeta)\big|^2$. Again by \eqref{eq-Z-T-4-bound}, and also by
symmetry, we infer that $$\Big[\E_{+}\sum_{i\in F} (X^+_t
(i) - \zeta)\Big]^2 = O \Big(\frac{n}{\temp}\Big)~.$$ It remains to
bound the variance for the partial sum:
\begin{itemize}
\item If at time $t$ the spins are positively correlated (by symmetry, the covariances of all the pairs of spins are the
same) then Lemma \ref{lem-var-bound-usual} yields $$\var_{+}\sum_{i\in F}
(X^+_t(i) - \zeta) \leq n^2 \var_{+} S_t =
O\left(\frac{n}{\temp }\right)~.$$

\item If at time $t$ the spins are negatively correlated, then it
follows that $$\var_{+}\sum_{i\in F} (X^+_t(i) - \zeta) \leq
\sum_{i\in F} \var_{+} X^+_t(i) = O (n)~.$$
\end{itemize}
In any case, the variance is $O \big(\frac{n}{\temp}\big)$, and hence
$$\E_{+}\big|\sum_{i\in F} (X^+_t (i) - \zeta)\big|^2 = O
\left(\frac{n}{\temp}\right)~.$$
The remaining three terms in \eqref{eq-partial-sum-pre} are treated similarly (the chains starting from all-plus and all-minus are symmetric). Therefore,
we conclude that for some constant $c > 0$ independent of the choice of $F$,
\begin{align}
\E_{\sigma_0} \Big|\sum_{i \in F} (\cX_t(i)-\zeta)\Big|^2 \leq
\frac{cn}{\temp} ~.\label{eq-second-moment-spins-on-B}
\end{align}
The proof now follows from Cauchy-Schwartz.
\end{proof}

The above lemma will next be used in order to produce upper bounds on $R(r_1)$
and $H(r_1,r_2)$ as defined in Theorem \ref{thm-two-coord-chain}. The next lemma will address the bound on $R(r_1)$, for some $r_1$ to be specified later.
\begin{lemma}\label{lem-bound-R-t}
Consider two instances of the censored Glauber dynamics, $(\cX_t)$ and
$\tilde{\cX}_t$, started at some $\sigma_{0} \in\Omega_0$ and some
arbitrary $\tilde{\sigma}_0$ respectively. Define $R(t)$ and $U(\cX_t)$ as in Theorem \ref{thm-two-coord-chain}. Then there exists some $c>0$ such that for any $\alpha > 0$,
\begin{equation*}
\lim_{\gamma\to\infty}\limsup_{n \to \infty}\P_{\sigma_0, \tilde{\sigma}_0} \Big(R(T^+(\gamma)) \geq \alpha
\sqrt{\frac{n}{\temp}}\Big) \leq \frac{c}{\alpha}~.
\end{equation*}
\end{lemma}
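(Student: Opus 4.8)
The plan is to reduce the claim to Lemma~\ref{lem-fixed-set-magnetization} by decomposing the time $T^+(\gamma)=T_1+T_2+T_3+\gamma n/\temp$ into a burn-in phase of $r\deq T_1+T_2+\tfrac{\gamma}{2}\cdot\tfrac{n}{\temp}$ steps, followed by $t'\deq T^+(\gamma)-r=T_3+\tfrac{\gamma}{2}\cdot\tfrac{n}{\temp}=T_3^+(\gamma/2)$ additional steps. The first ingredient is a burn-in estimate: for \emph{any} censored magnetization chain started at $\cS_0\ge0$, one has $\P(\cS_r\ge\tfrac43\sqrt{\temp})\to1$ as $\gamma\to\infty$ and then $n\to\infty$, uniformly in the starting configuration. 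This follows from the monotone coupling of the censored magnetization chain (Section~\ref{sec:monotone coupling}), which lets us reduce to $\cS_0=0$ (the case of $n$ odd being handled by starting at $\pm1/n$), combined with chaining \eqref{eq-upper-bound-lemma-reach-1} of Theorem~\ref{thm-reach-0-n^-1/4} (reaching $n^{-1/4}$ within $T_1+\tfrac{\gamma}{4}\cdot\tfrac{n}{\temp}$ steps) and \eqref{eq-upper-bound-lemma-reach-2} of Theorem~\ref{thm-reach-n^-1/4-sqrt-temp} (reaching $\tfrac43\sqrt{\temp}$ within a further $T_2+\tfrac{\gamma}{4}\cdot\tfrac{n}{\temp}$ steps, via another monotone coupling with a chain issued from $n^{-1/4}$). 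Applying this to both $(\cX_t)$ and $(\tilde{\cX}_t)$, the event $G\deq\{S(\cX_r)\ge\tfrac43\sqrt{\temp}\}\cap\{S(\tilde{\cX}_r)\ge\tfrac43\sqrt{\temp}\}$ satisfies $\limsup_{n\to\infty}\P(G^c)\le\delta(\gamma)$ with $\delta(\gamma)\to0$.

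The second ingredient is an elementary identity relating $R$ to partial spin sums. With $F\deq\{i:\sigma_0(i)=1\}$, we have $U(\sigma)=|\{i\in F:\sigma(i)=1\}|=\sum_{i\in F}\tfrac{1+\sigma(i)}{2}=\tfrac{|F|(1+\zeta)}{2}+\tfrac12\sum_{i\in F}(\sigma(i)-\zeta)$, so the deterministic terms cancel in $U(\cX_t)-U(\tilde{\cX}_t)$, giving the pointwise bound
\[
R(t)=\bigl|U(\cX_t)-U(\tilde{\cX}_t)\bigr|\le\tfrac12\Bigl|\sum_{i\in F}\bigl(\cX_t(i)-\zeta\bigr)\Bigr|+\tfrac12\Bigl|\sum_{i\in F}\bigl(\tilde{\cX}_t(i)-\zeta\bigr)\Bigr|,
\]
valid for any coupling of the two chains. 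I would then use that each of $(\cX_t)$ and $(\tilde{\cX}_t)$ is, on its own, a Markov chain: conditioning on $\cX_r$, and noting that $t'=T_3^+(\gamma/2)\in[T_3^+(6),T_3^+(\gamma)]$ for $\gamma\ge12$, Lemma~\ref{lem-fixed-set-magnetization} applied to the chain restarted from $\cX_r$ with the fixed set $F$ yields, on the event $\{S(\cX_r)\ge\tfrac43\sqrt{\temp}\}\supseteq G$,
\[
\E\Bigl[\,\Bigl|\sum_{i\in F}\bigl(\cX_{T^+(\gamma)}(i)-\zeta\bigr)\Bigr|\ \Bigm|\ \cX_r\Bigr]\le c\sqrt{n/\temp},
\]
where $c$ is the constant of that lemma, uniform over $F$ and over the restarting configuration; the same bound holds for $(\tilde{\cX}_t)$.

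Finally, assembling these: on $\{S(\cX_r)\ge\tfrac43\sqrt{\temp}\}$, Markov's inequality gives $\P(|\sum_{i\in F}(\cX_{T^+(\gamma)}(i)-\zeta)|\ge\alpha\sqrt{n/\temp}\mid\cX_r)\le c/\alpha$, and symmetrically for $\tilde{\cX}$. Since the pointwise bound above forces the event $\{R(T^+(\gamma))\ge\alpha\sqrt{n/\temp}\}$ to entail at least one of the two partial-sum events, a union bound on $G$ yields $\P(\{R(T^+(\gamma))\ge\alpha\sqrt{n/\temp}\}\cap G)\le2c/\alpha$, whence $\P(R(T^+(\gamma))\ge\alpha\sqrt{n/\temp})\le\P(G^c)+2c/\alpha$; taking $n\to\infty$ and then $\gamma\to\infty$ eliminates $\P(G^c)$ and proves the lemma (with constant $2c$). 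The argument is essentially bookkeeping: the substantive input is Lemma~\ref{lem-fixed-set-magnetization}, whose second-moment control of $\sum_{i\in F}(\cX_t(i)-\zeta)$ around $\zeta$ is the real obstacle and is already established. The only points needing care are matching the time windows in $T^+(\gamma)=r+t'$, reducing an arbitrary initial configuration to $\cS_0=0$ through the monotone coupling, and appealing to the uniformity of the constant in Lemma~\ref{lem-fixed-set-magnetization} over the set $F$ and over post-burn-in configurations.
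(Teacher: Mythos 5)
Your proposal is correct and follows essentially the same route as the paper: the same burn-in event at time $T_1+T_2^+(\gamma/2)$ (handled via monotone coupling and chaining \eqref{eq-upper-bound-lemma-reach-1}--\eqref{eq-upper-bound-lemma-reach-2}), the same pointwise bound of $R(t)$ by the two partial sums $\big|\sum_{i\in F}(\cX_t(i)-\zeta)\big|$, and the same application of Lemma \ref{lem-fixed-set-magnetization} followed by Markov's inequality. The only differences are cosmetic (you keep the factor $\tfrac12$ in the identity for $U$ and spell out the restart/conditioning step that the paper leaves implicit).
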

\begin{proof}
Let $F = \{i: \sigma_{0} (i) =1\}$ and $E$ be the event
$$E \deq \Big\{\cS_{T_1 +
T_2^+(\gamma /2)} \geq
\mbox{$\frac43$}\sqrt{\temp} ~\wedge~ \tilde{\cS}_{T_1 + T_2^+(\gamma /2)} \geq
\mbox{$\frac43$}\sqrt{\temp}\Big\}~.$$
By definition,
\begin{align*}
|R(t)| &= |U(\cX_t) - U(\tilde{\cX}_t)| = \Big|\sum_{i \in F} \cX_t(i) - \sum_{i\in F} \tilde{\cX}_t(i)\Big|\\
&=\Big|\sum_{i \in F} (\cX_t(i) - \zeta)- \sum_{i\in F}
(\tilde{\cX}_t(i) -\zeta)\Big|\\
&\leq \Big|\sum_{i \in F} (\cX_t(i)
- \zeta)\Big|+ \Big|\sum_{i\in F} (\tilde{\cX}_t(i) -\zeta)\Big|~.
\end{align*}
Together with Lemma \ref{lem-fixed-set-magnetization}, this gives
that
\begin{equation}\label{eq-expectation-bound-R}
\E_{\sigma_0, \tilde{\sigma_0}}\left[ |R(t)| \given E \right] \leq c\sqrt{\frac{n}{\temp}}
\end{equation}
for any $T^+(6+\gamma/2) \leq t\leq T^+(\gamma)$ and sufficiently large $n$.
  Note that
\begin{align*}
&\P_{\sigma_0, \tilde{\sigma}_0} \Big(R(T^+(\gamma)) \geq \alpha \sqrt{\frac{n}{\temp}}\Big)
 \leq \P_{\sigma_0,
\tilde{\sigma}_0} (E^c)+ \P_{\sigma_0, \tilde{\sigma}_0} \Big(R_{T^+(\gamma)}\geq \alpha \sqrt{\frac{n}{\temp}}
\given E\Big)~.
\end{align*}
The first term in the right-hand-side above vanishes as $\gamma \to \infty$ by \eqref{eq-upper-bound-lemma-reach-1} and
\eqref{eq-upper-bound-lemma-reach-2}, and the second term can be bounded by $c / \alpha$ according to
\eqref{eq-expectation-bound-R} and Markov's inequality. This completes the proof.
\end{proof}
We proceed to bound $H_2(r_1, r_2)$, the final ingredient required for applying Theorem \ref{thm-two-coord-chain}.
\begin{lemma}\label{lem-H-2-bound}
Let $\cX_t$ and
$\tilde{\cX}_t$ be two instances of the censored dynamics,
 started at some $\sigma_{0} \in\Omega_0$ and some
arbitrary $\tilde{\sigma}_0$ respectively. Define $H_2(r_1,r_2)$
as in Theorem \ref{thm-two-coord-chain}. The following then holds:
\begin{equation*}
\lim_{\gamma_1\to\infty}\lim_{\gamma_2 \to\infty}\limsup_{n \to \infty}\P_{\sigma_0, \tilde{\sigma}_0}
(\overline{H_2 (T^+(\gamma_1), T^+(\gamma_2))}) =0~.
\end{equation*}
\end{lemma}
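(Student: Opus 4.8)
The plan is to bound $\overline{H_2(r_1,r_2)}$, with $r_1=T^+(\gamma_1)$ and $r_2=T^+(\gamma_2)$, by a union of ``atypical partial-sum'' events, one for each $t\in[r_1,r_2]$, each controlled by the second-moment estimate \eqref{eq-second-moment-spins-on-B} from the proof of Lemma~\ref{lem-fixed-set-magnetization}. Write $F=\{i:\sigma_0(i)=1\}$, so that $U(\sigma_0)=|F|$, $V(\sigma_0)=|F^c|$, and for every configuration $\sigma$
\[
U(\sigma)-\tfrac{|F|(1+\zeta)}{2}=\tfrac12\sum_{i\in F}(\sigma(i)-\zeta),\qquad
V(\sigma)-\tfrac{|F^c|(1-\zeta)}{2}=-\tfrac12\sum_{i\in F^c}(\sigma(i)-\zeta).
\]
Since $\sigma_0\in\Omega_0$ forces $|F|,|F^c|\in[n/4,3n/4]$ and $\zeta=o(1)$, one checks that for a sufficiently small absolute constant $\epsilon>0$ the two bounds $\bigl|\sum_{i\in F}(\sigma(i)-\zeta)\bigr|\le\epsilon n$ and $\bigl|\sum_{i\in F^c}(\sigma(i)-\zeta)\bigr|\le\epsilon n$ together force $\sigma\in\Xi$. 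Hence $\overline{H_2(r_1,r_2)}$ is contained in the union, over $r_1\le t\le r_2$, over the two chains $\cX_t,\tilde{\cX}_t$, and over $G\in\{F,F^c\}$, of the events $\bigl\{\,\bigl|\sum_{i\in G}(\cX_t(i)-\zeta)\bigr|\ge\epsilon n\,\bigr\}$ (and the analogue for $\tilde{\cX}_t$).

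The first real step is to extend the bound \eqref{eq-second-moment-spins-on-B} from the window $[T_3^+(6),T_3^+(\gamma)]$ to all of $[r_1,r_2]$: namely, for every such $t$, every $G\in\{F,F^c\}$, and each of $\cX_t,\tilde{\cX}_t$,
\[
\E\Bigl|\sum_{i\in G}(\cX_t(i)-\zeta)\Bigr|^2=O(n/\temp).
\]
Because $\cX_t$ and $\tilde{\cX}_t$ start from configurations whose magnetization need not be $\ge\tfrac43\sqrt\temp$, I would first pass to the configurations they occupy at the reference time $\hat r\deq T_1+T_2^+(\gamma_1/2)$, which have magnetization $\ge\tfrac43\sqrt\temp$ with probability tending to $1$ as $\gamma_1\to\infty$ by Theorems~\ref{thm-reach-0-n^-1/4} and \ref{thm-reach-n^-1/4-sqrt-temp} (using monotone coupling to dominate the chain from $0$). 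From $\hat r$ onward, Lemma~\ref{lem-cens-mag-close-to-zeta} together with Lemmas~\ref{lem-tau-3-bound} and \ref{lem-tau-4-bound}, restarting the monotone couplings near $\zeta$ at time $r_1$, show that every magnetization chain appearing in the proof of Lemma~\ref{lem-fixed-set-magnetization} --- $\cS_t$, $\tilde{\cS}_t$, and the all-plus/all-minus chains used there --- stays inside the contracting interval $(\tfrac76\sqrt\temp,\zeta+2\sqrt\temp)$ (or its reflection) for all $t\in[r_1,r_2]$, outside an event of probability $O((\temp^2 n)^{-1})$. On that event the one-step variance recursions in the proofs of Lemmas~\ref{lem-var-bound-cens} and \ref{lem-var-bound-usual} keep contracting, so those variances stay $O(1/\temp n)$ throughout the range, and then the monotone-coupling computation leading to \eqref{eq-second-moment-spins-on-B} applies verbatim, with the reference configuration in place of $\sigma_0$ and with the fixed set $G$.

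Granting the uniform bound, Chebyshev's inequality gives $\P\bigl(\bigl|\sum_{i\in G}(\cX_t(i)-\zeta)\bigr|\ge\epsilon n\bigr)=O\bigl(1/(\epsilon^2\temp n)\bigr)$ for each fixed $t$, and summing over the $O(\gamma_2\,n/\temp)$ values $t\in[r_1,r_2]$ and the $O(1)$ choices of chain and set shows that the union from the first step has probability $O\bigl(\gamma_2/(\epsilon^2\temp^2 n)\bigr)$, which vanishes as $n\to\infty$ for each fixed $\gamma_2$ since $\temp^2 n\to\infty$. Adding the $O((\temp^2 n)^{-1})$ loss from the extension step and the $o_{\gamma_1}(1)$ loss from Lemma~\ref{lem-cens-mag-close-to-zeta} and Theorems~\ref{thm-reach-0-n^-1/4}, \ref{thm-reach-n^-1/4-sqrt-temp}, we obtain $\limsup_{n\to\infty}\P_{\sigma_0,\tilde{\sigma}_0}(\overline{H_2(r_1,r_2)})=o_{\gamma_1}(1)$ uniformly in $\gamma_2$; letting $\gamma_2\to\infty$ and then $\gamma_1\to\infty$ finishes the proof.

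I expect the extension in the second step to be the only delicate point: the second-moment control on the partial spin-sums was proved only inside a bounded window around the cutoff point, and the work is to verify it persists out to time $T^+(\gamma_2)$. This is precisely where Lemmas~\ref{lem-tau-3-bound} and \ref{lem-tau-4-bound} are used --- they pin the magnetization in a neighbourhood of $\zeta$ on which the variance recursion still contracts, so nothing new is needed beyond bookkeeping; the one genuine wrinkle is the second chain $\tilde{\cX}_t$, whose start is uncontrolled, which is handled by inserting the intermediate reference time $\hat r$.
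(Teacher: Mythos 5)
Your reduction of $\overline{H_2}$ to partial-sum deviation events over $F$ and $F^c$ is fine, and your worry about extending \eqref{eq-second-moment-spins-on-B} beyond a bounded window is not really an issue: Lemma \ref{lem-fixed-set-magnetization} holds for every fixed $\gamma$, so conditioning on the event $E$ that both magnetizations exceed $\frac43\sqrt{\temp}$ at the reference time $T_1+T_2^+(\gamma_1/2)$ (which is exactly what the paper does) already gives the second-moment bound $O(n/\temp)$ at every $t\in[T^+(\gamma_1),T^+(\gamma_2)]$; the extra restart machinery you build is unnecessary.

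The genuine gap is in the final union-bound step, and it is caused by an arithmetic slip that hides a real obstruction. Chebyshev gives $\P\bigl(\bigl|\sum_{i\in G}(\cX_t(i)-\zeta)\bigr|\ge\epsilon n\bigr)=O\bigl(1/(\epsilon^2\temp n)\bigr)$ per time $t$, and there are $O(\gamma_2 n/\temp)$ times in the window, so the naive union bound gives $O\bigl(\gamma_2/(\epsilon^2\temp^2)\bigr)$, not the $O\bigl(\gamma_2/(\epsilon^2\temp^2 n)\bigr)$ you wrote. The correct quantity does \emph{not} vanish: it is a constant for fixed $\temp$ and diverges when $\temp\to0$, so your argument as written does not prove the lemma. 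The paper closes exactly this gap with an occupation-time argument rather than a union bound: it defines $Y$ as the number of times $t$ in the window with $\bigl|\sum_{i\in F}(\cX_t(i)-\zeta)\bigr|>n/64$, and uses the fact that this partial sum moves by at most $2$ per step, so if it ever reaches $n/32$ it must stay above $n/64$ for order $n$ consecutive steps; hence the union event forces $Y>n/128$, and Markov's inequality applied to $\E[Y\one_E]=O(\gamma_2/\temp^2)$ yields a probability bound $O\bigl(\gamma_2/(\temp^2 n)\bigr)$, recovering precisely the factor $n$ your computation was missing. Without this (or some comparable exploitation of the temporal persistence of large deviations of the partial sums), the passage from a single-time estimate to the whole window $[T^+(\gamma_1),T^+(\gamma_2)]$ fails.
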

\begin{proof}
Let $F = \{i : \sigma_{0}(i) =1\}$ and note that $\sigma_{0}\in\Omega_0$ implies that $$\frac{n}4\leq |F| \leq \frac{3n}4~.$$
Next, define:
  \begin{align*}
  E &\deq \Big\{\cS_{T_1 +
T_2^+(\gamma /2)} \geq
\mbox{$\frac43$}\sqrt{\temp} ~\wedge~ \tilde{\cS}_{T_1 + T_2^+(\gamma /2)} \geq
\mbox{$\frac43$}\sqrt{\temp}\Big\}~,\\
    Y& \deq \sum_{T^+(\gamma_1) \leq t \leq T^+(\gamma_2)} \one \Big\{ \Big|\sum_{i \in F}
(\cX_t(i) - \zeta)\Big| > \mbox{$\frac{n}{64}$} \Big\}~.
  \end{align*}
Notice that
\begin{align*}
&\P \bigg(\bigcup_{t=T^+(\gamma_1)}^{T^+(\gamma_2)} \Big\{ \Big|\sum_{i \in F} (\cX_t(i) - \zeta)\Big| \geq \frac{n}{32}
\Big\}\cap E \bigg)\\
 &\leq \P\bigg(\Big\{Y > \frac{n}{128}\Big\}\cap E\bigg) \leq \frac{c_0 \E [Y\one_E]}{n} ~.
\end{align*}
Recall that, \eqref{eq-second-moment-spins-on-B} actually gives that for any choice of $12 < \gamma_1 < \gamma_2$, any $T^+(\gamma_1) \leq t\leq T^+(\gamma_2)$ and any sufficiently large $n$, $$\E \big[|\sum_{i \in F} (\cX_t(i) - \zeta)|^2 \mid E\big]
\leq \frac{cn}{\temp}~.$$ Hence, a straightforward second moment argument gives the following:
\begin{equation}
  \P\left(\Big|\sum_{i \in F}(\cX_t(i) - \zeta)\Big|\one_E > \frac{n}{64}\right) = O\left(\frac{1}{\temp n}\right)~,
\end{equation}
and altogether, $\E_{\sigma_0}[Y\one_E] = O(\temp^{-2})$ and
  \begin{equation*}
\P \bigg(\bigcup_{t=T^+(\gamma_1)}^{T^+(\gamma_2)} \left\{ \Big|\sum_{i \in F} (\cX_t(i) - \zeta)\Big| \ge n/32
\right\}\cap E\bigg) = O\left(\frac{1}{\temp^2 n}\right)~.
  \end{equation*}
An analogous argument for the chain $(\tilde{\cX}_t)$ shows that
     \begin{equation*}
\P \bigg(\bigcup_{t=T^+(\gamma_1)}^{T^+(\gamma_2)} \left\{ \Big|\sum_{i \in F} (\tilde{\cX}_t(i) - \zeta)\Big|
\ge n/32 \right\}\cap E\bigg) = O\left(\frac{1}{\temp^2 n}\right)~.
  \end{equation*}
Combining last two inequalities along with
\eqref{eq-upper-bound-lemma-reach-1} and
\eqref{eq-upper-bound-lemma-reach-2} (that establish that $\P(E)\to 0$ as $\gamma_1\to\infty$) implies the required result.
\end{proof}

Finally, we set $$r_1 = T^+(\gamma)~,~r_2 = T^+(2\gamma)~\mbox{ and }~\alpha = \gamma^{1/4}~.$$ Combining Lemmas
\ref{lem-tau-mag-bound}, \ref{lem-hit-good-state}, \ref{lem-bound-R-t} and \ref{lem-H-2-bound}, then applying Theorem \ref{thm-two-coord-chain} with the above specified parameters, we obtain \eqref{eq-upper-bound-full-mixing},
the required upper bound on the mixing time.

\section{Spectral gap analysis}\label{sec:spectral}
In this section, we prove Theorem
\ref{thm-low-temp-spectral}, which establishes that the spectral gap has order
$\temp /n$.

The following proposition of \cite{DLP} relates the spectral gap
of the original (non-censored) Glauber dynamics for the mean-field Ising model to the spectral gap of its
magnetization chain:
\begin{proposition}[\cite{DLP}*{Proposition 3.9}]
   The Glauber dynamics for the mean-field Ising model and its one-dimensional magnetization chain have the same spectral gap. Furthermore, both gaps are attained by the largest
nontrivial eigenvalue.
\end{proposition}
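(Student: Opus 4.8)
The plan is to establish both assertions --- equality of the gaps, and realization by the largest (rather than most negative) nontrivial eigenvalue --- simultaneously, by decomposing $L^2(\mu_n)$ into the ``magnetization part'' and its orthogonal complement. \emph{Step 1 (lumpability).} Since both $\mu_n$ and the Glauber kernel $\Pm$ are invariant under the action of the symmetric group $\Sigma_n$ permuting the $n$ coordinates, and the $\Sigma_n$-orbits in $\X$ are exactly the level sets $\Omega_s=\{\sigma:S(\sigma)=s\}$, the magnetization $S$ is a lumping of $(X_t)$, and the lumped chain is precisely the birth-and-death chain $P_M$ of \eqref{eq-magnet-transit}. It follows by the standard argument that every eigenvalue of $P_M$ is an eigenvalue of $\Pm$ (with eigenfunction $\varphi\circ S$), so the multiset of nontrivial eigenvalues of $P_M$, hence of their absolute values, is contained in that of $\Pm$; in particular $\gap(\Pm)\le\gap(P_M)$.

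\emph{Step 2 (orthogonal decomposition).} Let $V_0\subset L^2(\mu_n)$ be the space of $\Sigma_n$-invariant functions --- equivalently $\{\varphi\circ S\}$ --- and $V_1=V_0^{\perp}$. Since $\Pm$ is self-adjoint on $L^2(\mu_n)$ and commutes with $\Sigma_n$, both $V_0$ and $V_1$ are $\Pm$-invariant, $\Pm|_{V_0}$ is unitarily equivalent to $P_M$, and $\mathrm{Spec}(\Pm)=\mathrm{Spec}(P_M)\sqcup\mathrm{Spec}(\Pm|_{V_1})$. Writing $\lambda_2(\cdot)$ for the largest nontrivial eigenvalue, the proposition reduces to showing \emph{(a)} every eigenvalue of $\Pm$ exceeds $-1+\gap(P_M)$ --- so that the spectral gap of each chain is realized by its largest nontrivial eigenvalue and equals $1-\lambda_2$ --- and \emph{(b)} $\max\mathrm{Spec}(\Pm|_{V_1})\le\lambda_2(P_M)$, which together with Step 1 yields $\gap(\Pm)=\gap(P_M)$.

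\emph{Step 3 (the easy part, (a)).} When a vertex is selected its spin is redrawn to its current value with probability at least $\tfrac12(1-\tanh\beta)=:c_0>0$, so $\Pm(\sigma,\sigma)\ge c_0$; hence $\mathrm{Spec}(\Pm)\subseteq[2c_0-1,1]=[-\tanh\beta,1]$, and likewise for $P_M$. A direct estimate (for instance the test function $s\mapsto s$ for $P_M$, whose Dirichlet energy is $O(1/n^2)$ while $\var_{\pi_n}(S)$ is at least a constant times $1/n$ in every temperature regime) shows $\gap(P_M)=o(1)$, hence $\gap(\Pm)=o(1)$ too by Step 1. Thus $\lambda_2\to1>\tanh\beta$ for large $n$, so any negative eigenvalue $\lambda$ of $\Pm$ (or $P_M$) has $|\lambda|\le\tanh\beta<\lambda_2$, which is (a). The remaining point (b) is the main obstacle, and it is genuinely tight: already at $\beta=0$ the relaxation on $V_1$ is no faster than on $V_0$, so the comparison cannot afford to lose any constant.

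\emph{Step 4 (the crux, (b)).} Here I would use the $\Sigma_n$-isotypic decomposition $L^2(\mu_n)=\bigoplus_{j\ge0}W_j$, where $W_j$ is the isotypic component of the irreducible $S^{(n-j,j)}$ and $W_0=V_0$; by Schur's lemma $\Pm$ acts on $W_j$ as $Q^{(j)}\otimes\mathrm{Id}$ for an $(n-2j+1)\times(n-2j+1)$ matrix $Q^{(j)}$ with $Q^{(0)}\cong P_M$, so that $\mathrm{Spec}(\Pm|_{V_1})=\bigcup_{j\ge1}\mathrm{Spec}(Q^{(j)})$ and it remains to show $\max\mathrm{Spec}(Q^{(j)})\le\lambda_2(P_M)$ for all $j\ge1$. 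One computes $Q^{(j)}$ from \eqref{eq-magnet-transit} in a convenient basis of the multiplicity space: it is tridiagonal, with the same off-diagonal pattern as $P_M$, an additional contraction factor on the ``holding'' part reflecting the decay of a single spin's correlation toward $\tanh(\beta S)$, and supported on the $n-2j+1$ interior states; comparing Rayleigh quotients, and using Cauchy interlacing together with the positivity of the Perron data of $P_M$ to account for the extreme states that were removed, yields the bound. (At $\beta=0$ this is transparent: $Q^{(j)}=\mathrm{diag}(1-k/n:\ j\le k\le n-j)$, so $\max\mathrm{Spec}(Q^{(j)})=1-j/n\le1-1/n=\lambda_2(P_M)$, with equality at $j=1$.) Carrying out this comparison cleanly for all $\beta$ --- where equality at $j=1,\ \beta=0$ shows it must be sharp --- is the hard part; an alternative route to the same estimate is to enlarge $P_M$ to the Markov chain $(S_t,X_t(1))$ that also tracks one distinguished spin, prove that its spectral gap still equals $\gap(P_M)$ via the contraction properties of the dynamics, and bootstrap from there to all of $V_1$.
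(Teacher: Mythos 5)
First, a point of orientation: the paper does not prove this proposition here --- it is imported from the companion paper \cite{DLP}, with the text only recording that the proof there shows the gap is attained by the second largest eigenvalue and that the argument extends to the censored dynamics. The closest argument actually carried out in this paper is Section \ref{sec:spectral}, which uses the same symmetry decomposition you introduce (the paper's $F=F_1\oplus F_2$ is your $V_0\oplus V_1$), but handles the complement by the coupling of Lemma \ref{lem-contraction-same-mag} together with Chen's method (Lemma \ref{lem-lower-spectral-contraction}), yielding only $1-\lambda\geq c/n$ on $F_2$. That is enough for the order statement of Theorem \ref{thm-low-temp-spectral}, and for \emph{exact} equality of the two gaps only when $\temp=o(1)$ (see the remark closing Section \ref{sec:spectral}); the exact equality asserted in the proposition, including at fixed temperature, is precisely what rests on the argument of \cite{DLP}.

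Against that background, your Steps 1--3 are fine: lumpability gives $\gap(P)\leq\gap(P_M)$; equivariance under the symmetric group gives the invariant splitting and reduces the claim to your (a) and (b); and the holding-probability bound $P(\sigma,\sigma)\geq\tfrac12(1-\tanh\beta)$ together with $\gap(P_M)=o(1)$ rules out the negative part of the spectrum. The genuine gap is Step 4, which is the entire nontrivial content of the proposition: the inequality $\max\mathrm{Spec}(P|_{V_1})\leq\lambda_2(P_M)$ is asserted, not proved. You never compute $Q^{(j)}$ for $\beta>0$, and the proposed mechanism (``Cauchy interlacing together with the positivity of the Perron data'') is a hope rather than an argument: as your own $\beta=0$ computation shows, the inequality is an \emph{equality} at $j=1$, so any comparison that loses a constant, or even a lower-order term, cannot work, and nothing in the sketch explains how to make the comparison exact --- you concede that carrying it out ``is the hard part.'' The fallback you offer (tracking $(S_t,X_t(1))$, or equivalently the contraction route the paper itself uses in Section \ref{sec:spectral}) does not close it either: contraction of the Hamming distance within level sets gives only $1-c/n$ on the complement, which is not below $\lambda_2(P_M)=1-\Theta(\temp/n)$ once $\temp$ is bounded away from $0$. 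So as written, the proposal establishes the easy inclusion $\gap(P)\leq\gap(P_M)$ and the statement about the sign of the extremal eigenvalue, but not the equality of the gaps, which is the heart of the proposition.
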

It was shown in the proof of the above proposition that the spectral gap of the Glauber dynamics is achieved by the second largest eigenvalue. This is also true for the censored Glauber dynamics,
and the proof for the original dynamics extends directly to the
censored one (we omit the full details). Therefore, it remains to estimate the second largest eigenvalue of the censored Glauber dynamics. To do so, as in the case of the non-censored dynamics, we begin by studying the spectral gap of the magnetization chain.

\subsection{Spectral gap of the censored magnetization
chain}
We wish to prove the following result:
\begin{theorem}\label{thm-mag-spectral}
The censored magnetization chain $\cS_t$
satisfies $\gap = \Theta(\temp / n)$.
\end{theorem}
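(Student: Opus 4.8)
The plan is to establish the two matching bounds $\gap \ge c\,\temp/n$ and $\gap \le C\,\temp/n$ separately, using the conductance of the chain for the first and the Dirichlet variational principle for the second. Both rest on an explicit description of the stationary measure $\pi$ of the birth-and-death chain $\cS_t$. Summing $\mu_n$ over configurations of a given magnetization, one finds, for $s>0$, $\pi(s)\propto\binom{n}{(1+s)n/2}\exp(\tfrac{\beta}{2}ns^2)$, so that $\pi(s)=e^{\,n\phi(s)+O(\log n)}$ with $\phi(s)=H(\tfrac{1+s}{2})+\tfrac{\beta}{2}s^2$ and $H$ the natural-log binary entropy. Since $\phi'(s)=\beta s-\operatorname{arctanh}(s)$, the critical points of $\phi$ are exactly $0$ and $\pm\zeta$; moreover $\phi''(\zeta)=\beta-(1-\zeta^2)^{-1}=-(2+o(1))\temp$ and $\phi(\zeta)-\phi(0)=(\tfrac34+o(1))\temp^2>0$. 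Hence $\pi$ is concentrated at $\zeta$ with an asymptotically Gaussian profile of width $\asymp 1/\sqrt{\temp n}$: it places at most $e^{-c\,\temp^2 n}$ mass on $[0,\zeta/2]$ (so the near-$0$ state, where censoring acts, is irrelevant in stationarity), it behaves like $\exp(-(1+o(1))\temp n(s-\zeta)^2)$ on the scale $1/\sqrt{\temp n}$ around $\zeta$, and consequently $\E_\pi(\cS-\zeta)^4=O((\temp n)^{-2})$ --- a strengthening of Corollary~\ref{cor-cens-mag-stationary}, which can alternatively be obtained by pushing the recursion of Lemma~\ref{lem-var-bound-cens} to stationarity. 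In particular both $\pi(\cS\le\zeta)$ and $\pi(\cS\ge\zeta+\epsilon_0/\sqrt{\temp n})$ are bounded below by a positive constant for a suitable fixed $\epsilon_0$.

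For the lower bound $\gap\ge c\,\temp/n$ I would follow \cite{DLP}*{Section~6} and estimate the bottleneck ratio. Since $\cS_t$ is a monotone birth-and-death chain, the relevant quantity (up to constants) is $\Phi_\star\asymp\min_{k}\ \pi(k)\cP_\textsf{M}(k,k+\tfrac2n)\big/\min\!\big(\pi[0,k],\pi[k+\tfrac2n,1]\big)$, the minimum of the half-line edge-ratios, and a half-line boundary is a single edge. Using the explicit form of $\pi$, this ratio is minimised at $k\approx\zeta$, where the numerator has order $\pi(\zeta)\cdot\Theta(1)\asymp\sqrt{\temp/n}$ --- the holding probability of the magnetization chain being bounded away from $1$ --- and the denominator has order $1$; away from $\zeta$ the ratio only grows, because $\pi$ is log-concave where its mass sits (there $\phi''<0$, so a one-sided tail shrinks at least as fast as the local mass) and is negligibly small elsewhere. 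Thus $\Phi_\star\asymp\sqrt{\temp/n}$, and Cheeger's inequality $\gap\ge\Phi_\star^2/2$ gives $\gap\ge c\,\temp/n$. Cheeger loses nothing here: the slow mode is the ``which side of $\zeta$'' mode, for which $\gap\asymp\Phi_\star^2$ is the correct order.

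For the upper bound $\gap\le C\,\temp/n$ I would use $\gap=\inf_f \mathcal{E}(f,f)/\var_\pi(f)$ with the truncated linear test function $f(s)=\big((s-\zeta)\wedge\tfrac{K}{\sqrt{\temp n}}\big)\vee\big(-\tfrac{K}{\sqrt{\temp n}}\big)$ for a large fixed constant $K$. As $f$ is a contraction of $s\mapsto s-\zeta$ and $\cS_t$ moves by $\pm\tfrac2n$, one has $\mathcal{E}(f,f)\le\mathcal{E}(s-\zeta,s-\zeta)=\tfrac{2}{n^2}\E_\pi[\cP_\textsf{M}(\cS,\cS+\tfrac2n)+\cP_\textsf{M}(\cS,\cS-\tfrac2n)]\le 2/n^2$. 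For the denominator, the fourth-moment bound gives $\pi(|\cS-\zeta|>K/\sqrt{\temp n})\le C/K^4$, so the truncation is lossless up to a factor depending only on $K$; combined with the lower bound on the spread of $\pi$ from the first paragraph this yields $\var_\pi(f)\ge c'/(\temp n)$. Hence $\gap\le\mathcal{E}(f,f)/\var_\pi(f)\le(2/n^2)/(c'/(\temp n))=C\,\temp/n$. Equivalently one may take $f(s)=s-\zeta$ directly and get $\var_\pi(\cS)\ge c/(\temp n)$ by a Paley--Zygmund argument from the second and fourth central moments, or from the stationarity identity $\E_\pi[(\cS_1-\cS_0)^2]=-2\E_\pi[\cS_0\,\E(\cS_1-\cS_0\mid\cS_0)]$ using that the per-step variance is $\Theta(1/n^2)$ and the drift coefficient near $\zeta$ is $\Theta(\temp/n)$.

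The main obstacle is the sharp two-sided control of $\pi$ near $\zeta$, and above all the lower bound $\var_\pi(\cS)\ge c/(\temp n)$ (equivalently, that $\pi$ genuinely spreads on scale $1/\sqrt{\temp n}$ rather than being more concentrated). The transparent route is the formula $\pi(s)\propto e^{n\phi(s)}$ together with a local-CLT estimate at the maximiser $\zeta$, but one must track the $o(\temp)$ correction in $\phi''(\zeta)$ and the skewness correction to $\E_\pi\cS$, which --- once multiplied by the drift --- enters the stationarity identity at the same order $1/n$ as the main term, so a delicate cancellation has to be controlled; the self-contained variant via the stationarity identity and the fourth-moment bound has to navigate the same point. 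The conductance estimate away from $\zeta$ is the other place needing care, but it is governed by the same function $\phi$ and is essentially the analysis of \cite{DLP}*{Section~6}.
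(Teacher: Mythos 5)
Your proposal is correct in substance and shares the paper's skeleton --- a bottleneck-ratio/Cheeger argument for $\gap\gtrsim\temp/n$ and the Dirichlet variational principle with an (essentially) linear test function for $\gap\lesssim\temp/n$ --- but it implements both halves differently. The paper never writes $\pi$ explicitly: all stationary-measure inputs (the geometric growth/decay of mass away from $\zeta$, $\pi(x)=\Theta(\sqrt{\temp/n})$ on the window of width $1/\sqrt{\temp n}$ around $\zeta$, the fourth central moment $O((\temp n)^{-2})$, the bound $\pi(0)=O(1/\zeta n)$, and the variance lower bound $\var_\pi\cS\geq b/(\temp n)$) are extracted from the edge conductances $c_x=\prod_{y\le x}p_y/q_y$ via the estimates of \cite{DLP}*{Section 6}; you instead derive the same facts from $\pi(s)\propto\binom{n}{(1+s)n/2}e^{\beta ns^2/2}$ by Laplace-type analysis of $\phi(s)=H(\frac{1+s}{2})+\frac{\beta}{2}s^2$, whose derivative computations ($\phi'=\beta s-\operatorname{arctanh}s$, $\phi''(\zeta)=-(2+o(1))\temp$, $\phi(\zeta)-\phi(0)=(\frac34+o(1))\temp^2$) check out. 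The one point you must not gloss over is that ``$\pi(s)=e^{n\phi(s)+O(\log n)}$'' is too lossy here: the relevant exponential gaps are only $\Theta(\temp^2 n)$, which may be $\ll\log n$ since $\temp^2n\to\infty$ arbitrarily slowly, so you need uniform Stirling with the polynomial prefactor $\asymp(n(1-s^2))^{-1/2}$ tracked (your ``local-CLT'' caveat) before the mass comparisons and the fourth-moment bound are legitimate. For the upper bound your route is actually simpler than the paper's: bounding $\mathcal{E}(f,f)\le\frac12\E_\pi(\cS_1-\cS_0)^2\le 2/n^2$ for the (un)truncated linear $f$ replaces the paper's Taylor expansion of the drift around $\zeta$ together with its third/fourth-moment and $\pi(0)$ estimates (the truncation, and hence the fourth moment, is not even needed if you take $f(s)=s-\zeta$ outright), so the whole burden falls on $\var_\pi\cS\gtrsim 1/(\temp n)$; your primary route to this --- constant mass on each side of $\zeta$ at separation $\asymp1/\sqrt{\temp n}$ --- is sound and parallels the paper's Lemma on the window $[\xi_2,\xi_3]$, whereas the Paley--Zygmund and stationarity-identity alternatives you sketch are shakier (they presuppose a second-moment lower bound, resp.\ the cancellation you flag) and should be treated as secondary.
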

Note that the censored magnetization chain is a
birth-and-death chain on the space $$\magspace \deq \Big\{0,
\frac{2}{n}, \cdots, 1-\frac{2}{n}, 1\Big\}$$ with jumps of
size $\frac{2}{n}$ (for the sake of simplicity, assume that $n$ is
even: For $n$ odd, the only difference is that the initial state $0$ is replaced
with $\frac1{n}$ and all of our arguments remain the same).

For the convenience of notation later on, we define
$$\magspace[a, b]\deq \{x\in \magspace: a\leq x \leq b\}$$
(and similarly, $\magspace (a, b)$, etc., are defined accordingly). We also
introduce the notation $p_x, q_x, h_x$ to denote the transition
probabilities of the chain from $x$ to $x+\frac2{n}$, to $x-\frac2{n}$ and to $x$ respectively, as follows:
\begin{align*}& p_x \deq \cP_\textsf{M}(x, x+\mbox{$\frac{2}{n}$})=\big(2\cdot \one\{x=0\} + \one\{x>0\}\big) \frac{1-x}{2}\cdot\frac{1 - \tanh\beta((x-\frac1n))}{2},\\
&q_x \deq \cP_\textsf{M}(x, x-\mbox{$\frac{2}{n}$}) = \one\{x >0\}\frac{1+x}{2}\cdot\frac{1 + \tanh\beta((x+\frac1n ))}{2},\\
&h_x \deq \cP_\textsf{M}(x, x) = 1- p_x - q_x~,
\end{align*}
where the indicators treat the special case of $x = 0$. By
well known results on birth-and-death chains (see, e.g.,
\cite{LPW}), the conductance $c_x$ of the edge $(x, x+2/n)$, and
the conductance $c'_x$ of the self-loop $(x,x)$ for $x\in
\magspace$ are
\begin{equation*}
c_x \deq \prod_{y\in \magspace(0, x]} \frac{p_y}{q_y}~,\quad c'_x = \frac{h_x}{p_x + q_x} (c_{x-2/n} + c_x)~.
\end{equation*}
We define the total conductance as the sum $$c_S\deq \sum_{x\in\magspace}
(c_x + c_x')~.$$ Finally, for the convenience of using the results of \cite{DLP}, we follow the notation there and define $\xi_i$ as:
$$\xi_1 \deq \sqrt{\frac{1}{\temp n}}~,\quad \xi_2\deq \zeta - \sqrt{\frac{1}{\temp n}}~,\quad\xi_3\deq \zeta+\sqrt{\frac{1}{\temp n}}~.$$

\subsubsection{Lower bound on the spectral gap} The lower bound will rely
on a Cheeger inequality involving the conductance of the chain (not to be confused with the above notion of a conductance of an edge), to be defined next. First, the \emph{edge measure} $Q$, corresponding to a transition kernel $P$, is given by
$$Q(x, y) \deq \pi(x)P(x, y)~,\quad Q(A, B) = \sum_{x\in A, y\in B} Q(x, y)~,$$
and has the following interpretation: $Q(A, B)$ is the probability of moving from $A$ to $B$ in one step when starting from the stationary
distribution. The \emph{bottleneck ratio} of the set $S$ is defined as
$$\Phi(S ) \deq \frac{ Q(S, S^c)}{ \pi(S )}~$$
and the bottleneck ratio of the whole chain is
$$\Phi_\star \deq  \min_{S : \pi(S )\leq 1/2} \Phi(S )~.$$
The beautiful relation between $\Phi_\star$ and the second largest eigenvalue of a chain was established by Alon (1986), Jerrum and Sinclair (1989) and Lawler and Sokal (1988), as formulated by
the following lemma.
\begin{lemma}[\cite{A}, \cite{JS}, \cite{LS}]\label{lem-cheegers-spectral-gap}
 Let $\lambda_2$ be the second largest eigenvalue
of a reversible transition matrix $P$, and $\Phi_\star$ be its corresponding bottleneck ratio. Then
$$\frac{\Phi_\star^2}{2} \leq 1-\lambda_2 \leq 2 \Phi_\star~.$$
\end{lemma}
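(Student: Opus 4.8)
The plan is to establish the two inequalities separately, both through the variational description of the spectral gap: since $P$ is reversible with respect to $\pi$, the operator $I-P$ is self-adjoint on $L^2(\pi)$ and
\[
1-\lambda_2 \;=\; \min_{\var_\pi(f)>0}\frac{\mathcal{E}(f,f)}{\var_\pi(f)},\qquad
\mathcal{E}(f,f)\deq\tfrac12\sum_{x,y}\pi(x)P(x,y)\big(f(x)-f(y)\big)^2 .
\]
Throughout I write $Q(x,y)=\pi(x)P(x,y)$ and use the elementary identities $\sum_y P(x,y)=1$ and $\sum_x\pi(x)P(x,y)=\pi(y)$; the case $\lambda_2=1$ is trivial, so assume $\lambda_2<1$.

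For the upper bound $1-\lambda_2\le 2\Phi_\star$ I would feed indicator test functions into the variational formula: for any $S$ with $\pi(S)\le\frac12$, taking $f=\one_S$ gives $\mathcal{E}(f,f)=Q(S,S^c)$ and $\var_\pi(f)=\pi(S)\big(1-\pi(S)\big)\ge\tfrac12\pi(S)$, hence $1-\lambda_2\le Q(S,S^c)\big/\big(\tfrac12\pi(S)\big)=2\Phi(S)$; minimizing over admissible $S$ yields the claim.

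The lower bound $\Phi_\star^2/2\le 1-\lambda_2$ is the substantive half, and I would do it in two steps. \emph{Step 1 (reduction to a one-sided test function).} Take a real $\lambda_2$-eigenfunction $g$, so $\E_\pi g=0$ and $g$ is non-constant, pick $m$ with $\pi(\{g>m\})\le\frac12$ and $\pi(\{g<m\})\le\frac12$ (a median), and set $h=(g-m)_+$ if $m\le0$ and $h=(m-g)_+$ otherwise. Then $h\ge0$ is non-zero and supported on a set $A$ with $\pi(A)\le\frac12$; moreover, using $h(y)\ge g(y)-m$ pointwise and $Pg=\lambda_2 g$, one gets $\big((I-P)h\big)(x)\le(1-\lambda_2)\big(h(x)+m\big)$ on $\{h>0\}$, so multiplying by $\pi(x)h(x)\ge 0$, summing, and discarding the non-positive term $(1-\lambda_2)m\,\E_\pi h$ gives $\mathcal{E}(h,h)\le(1-\lambda_2)\|h\|_\pi^2$. \emph{Step 2 (the Cheeger trick).} With $B_t\deq\{h^2>t\}\subseteq A$ and the co-area identity $|h^2(x)-h^2(y)|=\int_0^\infty|\one_{B_t}(x)-\one_{B_t}(y)|\,dt$, together with $\pi(B_t)\le\pi(A)\le\frac12$,
\[
L\deq\sum_{x,y}Q(x,y)\,|h^2(x)-h^2(y)| \;=\;2\int_0^\infty Q(B_t,B_t^c)\,dt \;\ge\;2\Phi_\star\int_0^\infty\pi(B_t)\,dt \;=\;2\Phi_\star\|h\|_\pi^2 ,
\]
while Cauchy--Schwarz together with $(a+b)^2\le 2a^2+2b^2$ gives
\[
L=\sum_{x,y}Q(x,y)\,|h(x)-h(y)|\,|h(x)+h(y)|\le\big(2\mathcal{E}(h,h)\big)^{1/2}\Big(\sum_{x,y}Q(x,y)\,(h(x)+h(y))^2\Big)^{1/2}\le 2\big(2\mathcal{E}(h,h)\big)^{1/2}\|h\|_\pi .
\]
Comparing the two estimates for $L$ gives $\mathcal{E}(h,h)\ge\tfrac12\Phi_\star^2\|h\|_\pi^2$, and combined with Step 1 this forces $1-\lambda_2\ge\Phi_\star^2/2$.

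The only routine work is verifying the two summation identities and the co-area formula. The genuinely delicate point, and the place I expect to spend most care, is the sign bookkeeping in Step 1 — choosing the truncation so that the support of $h$ falls on the side of $\pi$-mass at most $\frac12$ while the leftover term $(1-\lambda_2)m\,\E_\pi h$ stays non-positive. Conceptually, the crux of the whole argument is the linearization $h^2(x)-h^2(y)=(h(x)-h(y))(h(x)+h(y))$, which is exactly what Cauchy--Schwarz needs in order to turn the linear-in-$\Phi_\star$ isoperimetric estimate into the quadratic bound $\Phi_\star^2/2$.
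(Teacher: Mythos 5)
Your proof is correct. Note that the paper itself does not prove this lemma at all --- it is quoted with citations to Alon, Jerrum--Sinclair and Lawler--Sokal --- so there is no internal argument to compare against; what you have written is essentially the standard proof of the discrete Cheeger inequality from those references (and from \cite{LPW}): indicator test functions in the Dirichlet variational formula for the easy direction, and for the hard direction the truncation of an eigenfunction at a median followed by the co-area/Cauchy--Schwarz linearization of $h^2(x)-h^2(y)$. All the key computations check out: $\mathcal{E}(\one_S,\one_S)=Q(S,S^c)$ and $\var_\pi(\one_S)\ge\tfrac12\pi(S)$; the layer-cake identity $\int_0^\infty\pi(B_t)\,dt=\|h\|_\pi^2$ with $\pi(B_t)\le\tfrac12$; and the bound $\sum_{x,y}Q(x,y)(h(x)+h(y))^2\le 4\|h\|_\pi^2$.

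One small point of sign bookkeeping in Step 1, which you yourself flagged as the delicate spot: the displayed inequality $\bigl((I-P)h\bigr)(x)\le(1-\lambda_2)\bigl(h(x)+m\bigr)$ is what you get in the branch $m\le 0$, $h=(g-m)_+$; in the branch $m>0$, $h=(m-g)_+$, the correct bound is $\bigl((I-P)h\bigr)(x)\le(1-\lambda_2)\bigl(h(x)-m\bigr)$, so the term you discard is $-(1-\lambda_2)m\,\E_\pi h\le 0$ rather than $(1-\lambda_2)m\,\E_\pi h$ (which would have the wrong sign there). Taken literally, your unified formula would leave a non-negative remainder in the second branch. The fix is immediate: either state the two branches separately as above, or reduce to a single case by replacing $g$ with $-g$ (also a $\lambda_2$-eigenfunction), so that without loss of generality the median satisfies $m\le 0$ and only the first branch is ever used. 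With that adjustment the argument is complete.
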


We therefore proceed to determine the order of $\Phi_\star$ for our censored magnetization chain. The following lemma, together with Lemma \ref{lem-cheegers-spectral-gap}, will immediately provide the desired lower bound of order $\frac{\temp}{n}$ on the spectral gap.

\begin{lemma}\label{lem-bottleneck-lower-bound}
The bottleneck ratio of the censored magnetization chain satisfies $\Phi_\star = \Theta (\sqrt{\temp/n})$.
\end{lemma}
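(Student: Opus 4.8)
The plan is to exploit the birth-and-death structure, following the conductance analysis of \cite{DLP}*{Section 6}. The first step is the standard reduction of the bottleneck ratio to \emph{threshold} sets. For a birth-and-death chain (self-loops contribute nothing to the edge measure across a cut), the infimum of $\Phi(S)$ over $S$ with $\pi(S)\le\tfrac12$ is attained by some $\magspace[0,x]$ or $\magspace[x,1]$: writing an arbitrary $S$ as a disjoint union of maximal runs and using $\frac{a_1+a_2}{b_1+b_2}\ge\min(\frac{a_1}{b_1},\frac{a_2}{b_2})$, one passes to the run with smallest bottleneck ratio; an interior run $[a,b]$ is then compared with $\magspace[0,b]$ or $\magspace[b+\tfrac2n,1]$ (whichever has $\pi\le\tfrac12$), which has at most the crossing cost and at least the mass of $[a,b]$, so $\Phi$ can only decrease. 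For threshold sets, $\Phi(\magspace[0,x])=\pi(x)p_x/\pi(\magspace[0,x])$ and $\Phi(\magspace[x,1])=\pi(x)q_x/\pi(\magspace[x,1])$. Since $\pi(y)p_y=c_y/c_S$ and $\pi(\magspace[0,x])\asymp c_S^{-1}\sum_{y\le x}c_y$ for $x\le\zeta$ (and symmetrically on the right), and since $p_x,q_x$ are bounded away from $0$ on the relevant ranges (on $[0,\zeta]$ and $[\tfrac\zeta2,1]$ respectively, as $\zeta=o(1)$), the lemma reduces to showing that $c_x/\sum_{y\le x}c_y$, minimized over thresholds with $\pi\le\tfrac12$, has order $\sqrt{\temp/n}$.

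The heart of the matter is therefore a pair of estimates on the edge conductances $c_x=\prod_{y\in\magspace(0,x]}p_y/q_y$. Since $\tanh(\beta x)\ge x$ on $[0,\zeta]$ (with equality only at $0,\zeta$) and $\tanh(\beta x)<x$ on $(\zeta,1]$, detailed balance shows that $c_x$ is increasing on $[0,\zeta]$ and decreasing on $[\zeta,1]$; and because $c_y/c_{y-2/n}=p_y/q_y=1+\Theta(\tanh(\beta y)-y)$ (with the sign of $\tanh(\beta y)-y$), Taylor expanding $g(x)\deq\tanh(\beta x)-x$ around $0$ — where $g(x)=\temp x-\tfrac13 x^3+\dots\asymp\temp x$ for $x=O(\sqrt\temp)$ — and around $\zeta$ — where $g(\zeta-u)\asymp\temp u$ — shows that $c_x$ grows, then decays, in a Gaussian-like fashion on the scale $|x-\zeta|\asymp 1/\sqrt{\temp n}$. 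Consequently $\pi$ concentrates in a window of $\Theta(\sqrt{n/\temp})$ states around $\zeta$ (consistent with Corollary~\ref{cor-cens-mag-stationary}), $c_S\asymp\sqrt{n/\temp}\,c_\zeta$, and crucially $\sum_{y\le x}c_y=O(\sqrt{n/\temp})\,c_x$ for every $x\le\zeta$ — the number of states piling up just below a cut at $x$ being of order $\min(xn,\,1/g(x))$, which never exceeds $O(\sqrt{n/\temp})$ — with the matching lower bound $\sum_{y\le x}c_y\gtrsim\sqrt{n/\temp}\,c_x$ when $x$ is in the window (and symmetrically for $\sum_{y\ge x}c_y$). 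These are exactly the computations carried out for the non-censored chain in \cite{DLP}*{Section 6}; they transfer up to the doubled transition out of $0$, which only multiplies $c_x$ $(x>0)$ by a bounded factor.

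Granting these, both bounds drop out. For the \emph{upper} bound, let $x^\star$ be the median threshold (the largest $x$ with $\pi(\magspace[0,x])\le\tfrac12$); since $\pi(\magspace[0,x])$ increases from $o(1)$ to $1-o(1)$ across the window, $x^\star$ lies within $O(1/\sqrt{\temp n})$ of $\zeta$, so $c_{x^\star}\asymp c_\zeta$, while $\pi(\magspace[0,x^\star])>\tfrac12-\max_y\pi(y)>\tfrac14$ for large $n$; hence $\sum_{y\le x^\star}c_y\gtrsim c_S\asymp\sqrt{n/\temp}\,c_{x^\star}$ and $\Phi_\star\le\Phi(\magspace[0,x^\star])\asymp c_{x^\star}/\sum_{y\le x^\star}c_y=O(\sqrt{\temp/n})$. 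For the \emph{lower} bound, any threshold $\magspace[0,x]$ with $\pi\le\tfrac12$ has $x\le\zeta+O(1/\sqrt{\temp n})$, so by the partial-sum estimate $\sum_{y\le x}c_y=O(\sqrt{n/\temp})\,c_x$, giving $\Phi(\magspace[0,x])\asymp c_x/\sum_{y\le x}c_y\gtrsim\sqrt{\temp/n}$; the sets $\magspace[x,1]$ are handled identically, and with the reduction of the first paragraph this gives $\Phi_\star=\Omega(\sqrt{\temp/n})$.

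I expect the one genuinely technical point to be the uniform partial-sum bound $\sum_{y\le x}c_y=O(\sqrt{n/\temp})\,c_x$ together with its bulk-matching reverse: it requires handling $g(x)=\tanh(\beta x)-x$ through the two regimes $x=O(\sqrt\temp)$ and $\zeta-x=O(\sqrt\temp)$ and checking that, whether a cut $x$ is shallow (few states below it) or deep (geometric accumulation governed by $g(x)$), the effective count $\min(xn,1/g(x))$ stays $O(\sqrt{n/\temp})$. Since this is precisely the content of \cite{DLP}*{Section 6} for the non-censored magnetization chain, the work here is mostly bookkeeping — verifying that folding the chain at $0$ does not disturb the estimates near $\zeta$, where all the stationary mass lives.
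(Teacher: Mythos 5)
Your proposal is correct and follows essentially the same route as the paper's proof: reduce the bottleneck ratio to one-sided (threshold) cuts and then feed in the conductance estimates of \cite{DLP}*{Section 6} (geometric growth/decay of $c_x$ away from $\zeta$, $c_x=\Theta(c_\zeta)$ on the window of width $1/\sqrt{\temp n}$ around $\zeta$, and $c_S=\Theta(\sqrt{n/\temp}\,c_\zeta)$), which is exactly how the paper obtains both bounds. The only nit is in your run-to-threshold comparison: when the admissible side is $\magspace[b+\frac2n,1]$ its mass need not dominate that of the interior run $[a,b]$, but since the two outer complements have total mass $1-\pi([a,b])\ge\frac12$, one of them has mass at least $\frac14$ and the comparison goes through at the cost of a factor $2$ --- the same quarter-mass trick the paper itself uses --- so the $\Theta(\sqrt{\temp/n})$ conclusion is unaffected.
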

In the following proof and throughout this subsection, we will apply the results from
 the companion paper \cite{DLP} on the conductances of the magnetization chain. Although those results address the original (non-censored) chain, notice that the conductances are the same everywhere except at the origin $0$ (where the corresponding conductances are of the same order).
\begin{proof}
Considering $\zeta$ as the bottleneck, by definition we have
$$\Phi_{\magspace[0,\zeta]} = \frac{\pi(\zeta)p_\zeta}{\pi(\magspace[0,\zeta])} \leq \frac{\pi(\zeta)}{\pi(\magspace[0,\zeta])}\leq \frac{c_{\zeta-\frac2{n}}+c_\zeta+c'_\zeta}{\sum_{x \in \magspace[0, \zeta]}(c_x+c'_x)}~.$$
In the proof of \cite{DLP}*{Lemma 6.2}, it was shown that
\begin{align}
\left\{\begin{array}{ll}
c_x = \Theta(c_\zeta)&\mbox{ uniformly over $x \in \magspace[\xi_2,\xi_3]$}~,\\
c_x = O(c_\zeta)&\mbox{ uniformly over $x \in \magspace$}~,\\
 c'_x = \Theta(c_{x-\frac2{n}} + c_x)~&\mbox{ uniformly over $x \in \magspace$}~.\end{array}\right.\label{eq-cx-cx'-czeta}
 \end{align}
Therefore, we deduce that
$$\Phi_{\magspace[0,\zeta]} \leq \frac{O(c_\zeta)}{\frac{|\zeta-\xi_2|}{2/n}\Theta(c_\zeta)} = O \Big(\sqrt{\frac{\temp}{n}}\Big)~.$$
By symmetry, an analogous argument gives that
$$\Phi_{\magspace[\zeta, 1]} = O
\Big(\sqrt{\frac{\temp}{n}}\Big)~.$$ Altogether, noting that at least one
of $\magspace[0,\zeta]$ and $\magspace[\zeta, 1]$ has stationary
probability no more than $\frac12$, we obtain that
$$\Phi_\star \leq \min \left\{\Phi_{\magspace[0,\zeta]}~,~\Phi_{\magspace[\zeta, 1]}\right\} = O\Big(\sqrt{\frac{\temp}{n}}\Big)~, $$
implying the required upper bound on $\Phi_\star$.

For the lower bound, let $S$ be set minimizing $\Phi_S$ in the definition of $\Phi_\star$. Observe that $S$ is necessarily some interval $\magspace[\xi, \xi']$, by
the structure of the birth-and-death chain. Since we consider
only such sets with $\pi (\magspace[\xi, \xi']) \leq \frac12$, then either
$\magspace[0, \xi]$ or $\magspace[\xi', 1]$ will have stationary
probability at least $\frac14$. Suppose without loss of generality that
$\pi (\magspace[0, \xi]) \geq \pi(\magspace[\xi',1])$. This gives
\begin{equation}\Phi_{S} = \frac{Q(\magspace[\xi, \xi'], \magspace[\xi, \xi']^c)}{\pi(\magspace[\xi, \xi'])} \geq \frac{Q(\magspace[0, \xi],\magspace[0, \xi]^c)}{2\pi(\magspace[0, \xi])} = \frac12 \Phi_{\magspace[0,
\xi]}~,\label{eq-Phi-magspace-lower-bound}\end{equation}
since our assumption implies that $\pi(\magspace[0,\xi])\geq\frac14$.
It therefore remains to show that for some constant $b > 0$ we have $\Phi_{\magspace[0,\xi]} \geq b\sqrt{\temp/n}$.

First, consider the case $\xi \leq \zeta = \xi_2 + \sqrt{1/\temp n}$. In this case we have
\begin{equation}\label{eq-cx-ratio-xi1-xi2}\frac{c_{x+2/n}}{c_x}
\geq 1 + \sqrt{\frac\temp{n}} - O(1/n)~\mbox{ uniformly for $x \in
\magspace[\xi_1, \xi-\sqrt{1/\temp n}]$}~,\end{equation} by \cite{DLP}*{equation
(6.8)}. Therefore, the sum of the $c_x$-s in the above interval is at most the sum
of a geometric series with a quotient of $1/(1+\frac12 \sqrt{\temp/n})$ and initial position $c_\xi$, and it follows that
$$\sum_{x \in \magspace[\xi_1, \xi - \sqrt{1/\temp n}]} c_x \leq 3 \sqrt{\frac{n}{\temp}} \cdot c_\xi~.$$
Furthermore, it follows from \cite{DLP}*{equation (6.4)} that
\begin{equation}
  \label{eq-cx-O(cy)}
   c_x  = O(c_y) \mbox{ uniformly over all $x < y$ in $\magspace[0,\zeta)$}~.
\end{equation}
Altogether, we deduce that
$$\sum_{x \in \magspace[0, \xi]} c_x = O\Big(\sqrt{n/\temp}\Big) c_\xi~.$$
Therefore, noting that $p_x \geq \frac18$ for all $x \leq \zeta$, we conclude that
\begin{equation*}
\Phi_{\magspace[0,\xi]} = \frac{\pi(\xi) p_\xi}{\pi(\magspace[0,\xi])} \geq \frac{c_\xi/8}{\sum_{x\in \magspace[0,\xi]} (c_x+c'_x)}\geq b_1\sqrt{\frac{\temp}{n}}~,
\end{equation*}
where $b_1 > 0$ is some absolute constant.
Together with \eqref{eq-Phi-magspace-lower-bound}, we deduce that in the case $\xi \leq \zeta$ we have
$$ \Phi_S \geq \frac12 \Phi_{\magspace[0,\xi]} \geq \frac12 b_1 \sqrt{\frac{\temp}{n}}~.$$

Second, consider the remaining case where $\xi \geq \zeta$.
By symmetry, a similar argument to the above then shows that in this case, for some
other absolute constant $b_2>0$, we have
$$\Phi_\magspace[\xi, 1] \geq b_2 \sqrt{\frac{\temp}{n}}.$$
Therefore, we immediately have
$$ \Phi_S =
\frac{Q(\magspace[\xi, \xi'], \magspace[\xi, \xi']^c)}{\pi(\magspace[\xi, \xi'])}
\geq
\frac{Q(\magspace[\xi, 1], \magspace[\xi, 1]^c)}{\pi(\magspace[\xi, 1])} \geq b_2 \sqrt{\frac{\temp}{n}}~.$$
Altogether, $\Phi_\star \geq b \sqrt{\temp / n}$ for
$b = \min\{\frac12 b_1,b_2\}$, as required.
\end{proof}


\subsubsection{Upper bound on the spectral gap}
Observing that the censored magnetization chain contracts around $\zeta$, our argument for the upper bound on the
spectral gap will be based on the Dirichlet representation, using the test function $\mathds{1} - \zeta$. To this end, we will need
to estimate the fourth moment of $\cS - \zeta$, where $\cS$ is the censored magnetization chain started from the stationary distribution.
\begin{lemma}\label{lem-4th-moment}
The stationary censored magnetization chain satisfies:
$$\E_\pi \left(|\cS - \zeta|^4\right) = O\left((\temp n)^{-2}\right)~.$$
\end{lemma}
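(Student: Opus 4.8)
The plan is to establish a one-step recursion for $\E_\pi|\cZ_t|^4$ (where $\cZ_t = \zeta - \cS_t$) that contracts geometrically with rate $1 - c\temp/n$ and has an additive error term of order $(\temp n)^{-2}\cdot(\temp/n)$, so that in stationarity the fixed point is $O((\temp n)^{-2})$. The key drift input is already available: by the Taylor expansion of $\tanh$ around $\zeta$ (equation \eqref{eq-tanh-taylor-at-zeta}), in the contracting region $\cS_t$ above $\frac76\sqrt{\temp}$ one has $\E[|\cZ_{t+1}| - |\cZ_t| \mid \F_t] \le -\frac{c\temp}{n}|\cZ_t| + O(n^{-2})$, analogous to the supermartingale estimates \eqref{eq-Zt-progression-1}--\eqref{eq-Zt-progression-2}. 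Raising to the fourth power, expanding $(|\cZ_t| + \Delta)^4$ with $|\Delta| \le 2/n$, and using that the increment has conditional variance $O(1/n^2)$ with higher conditional moments controlled by the $O(2/n)$ bound on $|\Delta|$, I expect to obtain
\begin{equation*}
\E[|\cZ_{t+1}|^4 \mid \F_t] \le \Big(1 - \frac{c\temp}{n}\Big)|\cZ_t|^4 + \frac{C}{n^2}|\cZ_t|^2 + \frac{C}{n^4}
\end{equation*}
on the event that $\cS_t$ lies in the contracting region, plus a correction for the (rare) event that it does not.

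The second step is to handle the region near $0$, i.e.\ where $\cS_t < \frac76\sqrt{\temp}$. Here $|\cZ_t| = \Theta(\sqrt{\temp})$, so $|\cZ_t|^4 = \Theta(\temp^2)$; the danger is that this contributes a term of order $\temp^2 \cdot \pi(\cS \le \frac76\sqrt{\temp})$ to the stationary expectation, which must be shown to be $O((\temp n)^{-2})$. This is exactly where Corollary \ref{cor-cens-mag-stationary} enters: it gives $\pi([\zeta - B/\sqrt{\temp n}, \zeta + B/\sqrt{\temp n}]) \to 1$, and in fact a quantitative version (via the conductance estimates of Lemma \ref{lem-bottleneck-lower-bound} / \cite{DLP}, equations \eqref{eq-cx-cx'-czeta}, \eqref{eq-cx-ratio-xi1-xi2}) shows $\pi(\magspace[0, \frac76\sqrt{\temp}])$ decays like a geometric sum, namely $\pi(\cS \le (1-\epsilon)\zeta)$ is at most $\exp(-c'\sqrt{\temp n}\cdot(\text{something}))$ — superpolynomially small in $\temp^2 n$. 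Thus $\temp^2 \cdot \pi(\cS \le \frac76\sqrt{\temp}) = o((\temp n)^{-2})$ comfortably, and the bad region is negligible.

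The third step assembles the pieces: combine the contraction recursion on the good region with the negligible bad-region contribution, take $\E_\pi$ of both sides, and use stationarity $\E_\pi|\cZ_{t+1}|^4 = \E_\pi|\cZ_t|^4 =: m_4$. Solving $m_4 \le (1 - c\temp/n)m_4 + \frac{C}{n^2}\E_\pi|\cZ|^2 + \frac{C}{n^4} + (\text{bad term})$ gives $m_4 \le \frac{n}{c\temp}\big(\frac{C}{n^2}\E_\pi|\cZ|^2 + O(n^{-4}) + o((\temp n)^{-2}\temp/n)\big)$. Feeding in the second-moment bound $\E_\pi|\cZ|^2 = O(1/(\temp n))$ — which follows from Corollary \ref{cor-cens-mag-stationary} together with the variance bound of Lemma \ref{lem-var-bound-cens} applied in stationarity, or directly from the argument of \cite{DLP} — yields $m_4 = O\big(\frac{n}{\temp}\cdot\frac{1}{n^2}\cdot\frac{1}{\temp n}\big) = O((\temp n)^{-2})$, as claimed.

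\textbf{Main obstacle.} The delicate point is the fourth-power expansion near the boundary of the contracting region, specifically making the drift argument survive the indicator $\one\{\cS_t > \frac76\sqrt{\temp}\}$: one must show that the event of dropping below $\frac76\sqrt{\temp}$ (controlled in stationarity by the sharp conductance/bottleneck estimates rather than by the hitting-time Lemma \ref{lem-tau-3-bound}, which is for a fixed time horizon) contributes only a negligible amount even after multiplication by the $\Theta(\temp^2)$ value of $|\cZ_t|^4$ there. Getting the tail bound $\pi(\cS \le \frac76\sqrt{\temp})$ to be $o(\temp^{-2}\cdot(\temp n)^{-2})$ — which requires it to beat $\temp^{-4}n^{-2}$ — is where the quantitative form of Corollary \ref{cor-cens-mag-stationary} (i.e.\ geometric decay of $\pi$ away from $\zeta$) does the real work.
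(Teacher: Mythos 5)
Your route (a one-step Lyapunov recursion for $\E_\pi|\cZ|^4$ closed by stationarity) is genuinely different from the paper's proof, which works directly with the explicit birth-and-death stationary law: the paper sets $d_x=c_x|x-\zeta|^4$, sums the conductances over the ranges $[0,\xi_1]$, $[\xi_1,\xi_2']$, $[\xi_2',\xi_3']$, $[\xi_3',1]$ via the ratio estimates imported from \cite{DLP}, and divides by $c_S=\Theta(\sqrt{n/\temp}\,c_\zeta)$. Your plan could probably be made to work, but as written it has two genuine gaps. First, the input $\E_\pi|\cS-\zeta|^2=O(1/(\temp n))$ is not supplied by what you cite: Corollary \ref{cor-cens-mag-stationary} is purely qualitative (an iterated limit in $B$ and $n$, with no rate), so it controls no moment of $\pi$, and Lemma \ref{lem-var-bound-cens} is a finite-horizon bound for chains started at $s_0\ge\frac43\sqrt\temp$; ``applying it in stationarity'' needs exactly the quantitative control of $\pi$ near $0$, and of the recovery time from such starting points, that your argument is trying to bypass. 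In the paper this second-moment bound is itself only a by-product of the same conductance computation (see the remark following the lemma), so invoking it here is essentially circular. This is repairable inside your own scheme --- run the identical recursion at the second-moment level first, or use H\"older ($\E_\pi|\cZ|^2\le m_4^{1/2}$, $\E_\pi|\cZ|^3\le m_4^{3/4}$) to obtain a self-bounding inequality $m_4\le \frac{C}{\temp n}\bigl(m_4^{1/2}+m_4^{3/4}\bigr)+O\bigl((\temp n)^{-2}\bigr)$ --- but some such step must be stated.

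Second, and more seriously, your bad-region accounting budgets only for $\E_\pi\bigl[|\cZ|^4\one\{\cS\le\frac76\sqrt\temp\}\bigr]\asymp\temp^2\,\pi(\cS\le\frac76\sqrt\temp)$, i.e.\ you ask for $\pi(\cS\le\frac76\sqrt\temp)=o\bigl((\temp^2n)^{-2}\bigr)$. But when you take $\E_\pi$ of the one-step inequality you also lose the contraction term on that region; if there you bound the one-step growth of $|\cZ|^4$ merely by the step size $2/n$ (a penalty of order $\temp^{3/2}/n$ per step, since $|\cZ|\asymp\sqrt\temp$ there), then after dividing by the contraction rate $\temp/n$ the requirement becomes $\pi(\cS\le\frac76\sqrt\temp)\lesssim \temp^{-1/2}(\temp n)^{-2}$, which is of order $n^{-3/4}$ up to powers of $\temp^2n$. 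No true tail bound --- the cited ratio estimate \eqref{eq-cx-ratio-xi1-xi2} gives decay like $e^{-c\sqrt{\temp^2n}}$, and even the Gibbs-measure rate $e^{-c\temp^2n}$ --- can beat $n^{-3/4}$ when $\temp^2n\to\infty$ arbitrarily slowly, so the naive form of your recursion fails in exactly the regime covered by the theorem. The repair is to note that on all of $[0,\zeta]$ one still has $\tanh(\beta s)\ge s$, so $\cS_t$ drifts upward and $|\cZ_t|$ has non-positive drift below $\frac76\sqrt\temp$ as well; the per-step penalty there is then only $O(|\cZ_t|^2/n^2)=O(\temp/n^2)$, and $\pi(\cS\le\frac76\sqrt\temp)=O\bigl((\temp^2n)^{-2}\bigr)$ suffices and does follow from the conductance ratios (your displayed rate $\exp(-c'\sqrt{\temp n}\cdot(\cdot))$ is not what those estimates give, though the ``superpolynomially small in $\temp^2n$'' conclusion is correct). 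The paper's direct computation avoids both issues because the same conductance sums control the far tail and all the moments in one sweep.
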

\begin{proof}
Using the same notation of Lemma \ref{lem-bottleneck-lower-bound}, let
$$d_x \deq c_x |x - \zeta|^4~,\quad d'_x \deq c'_x |x - \zeta|^4~,$$ and define
$$\xi_2' = \zeta - \frac{32}{\sqrt{\temp n}}~,\quad \xi_3' = \zeta + \frac{32}{
\sqrt{\temp n}}~.$$ We will analyze the decay of $d_x$ as $x$ grows further away from $\magspace[\xi_2', \xi_3']$.
Noting that in \cite{DLP}*{equation (6.10)} it was shown that
$$\frac{c_{x+2/n}}{c_x} \leq 1- \sqrt{\temp / n} +
O(1/n)~\mbox{ for $x \geq \xi_3$}~,$$ we deduce that for $x \geq \xi_3'$ and sufficiently large $n$
$$\frac{d_{x+ 2/n}}{d_x} \leq \frac{c_{x+2/n}}{c_x} \Big(1 + \frac{2/n}{32/ \sqrt{\temp n} }\Big)^4 \leq 1- \frac{1}{2}\sqrt{\frac{\temp}{n}}~,$$
which implies that \begin{equation}
  \label{eq-dx-xi3-1}
  \sum_{x \in\magspace[\xi_3', 1]}d_x \leq 4 \sqrt{\frac{n}{\temp}} \cdot d_{\xi_3'}~.
\end{equation}
Similarly, an analogous argument using \eqref{eq-cx-ratio-xi1-xi2} gives
that \begin{equation}
  \label{eq-dx-xi1-xi2}
\sum_{x\in\magspace[\xi_1, \xi_2']} d_x = O \Big(\sqrt{\frac{n}{\temp}} \cdot d_{\xi_2'}\Big)~,\end{equation}
Now, recall that $\xi_1 = O(1/\sqrt{\temp n}) = o(\zeta)$, which together with \eqref{eq-cx-O(cy)} yields
$$
d_x = O(d_{\xi_1}) ~\mbox{ uniformly over $x \in \magspace[0,\xi_1]$}~,
$$
and since $d_{\xi_1} = O(d_{\xi'_2})$ (again by \eqref{eq-cx-ratio-xi1-xi2}), we get
\begin{equation}
  \label{eq-dx-0-xi1}
\sum_{x\in\magspace[0,\xi_1]} d_x = O \Big(\sqrt{\frac{n}{\temp}} \cdot d_{\xi'_2}\Big)~.
\end{equation}
Finally, in the interval $\magspace[\xi'_2,\xi'_3]$ by \eqref{eq-cx-cx'-czeta} we have
$$ d_x = O\Big(\frac{1}{(\temp n)^2} \cdot c_\zeta \Big)~\mbox{ uniformly over $x \in \magspace[\xi'_2,\xi'_3]$}~,$$
and therefore
\begin{equation}
  \label{eq-dx-xi2-xi3}
\sum_{x\in\magspace[\xi'_2,\xi'_3]} d_x = O \Big(\sqrt{\frac{n}{\temp}} \frac{1}{(\temp n)^2} \cdot c_\zeta \Big)~.
\end{equation}
Combining \eqref{eq-dx-0-xi1}, \eqref{eq-dx-xi1-xi2}, \eqref{eq-dx-xi2-xi3} and \eqref{eq-dx-xi3-1}, we conclude that
$$
\sum_{x\in\magspace} d_x = O \Big(\sqrt{\frac{n}{\temp}} \frac{1}{(\temp n)^2} \cdot c_\zeta \Big)~.
$$
As \eqref{eq-cx-cx'-czeta} gives that $d'_x = O(d_x)$ uniformly over $x\in\magspace$, we further have that
$$
\sum_{x\in\magspace} (d_x + d'_x) = O \Big(\sqrt{\frac{n}{\temp}} \frac{1}{(\temp n)^2} \cdot c_\zeta \Big)~.
$$
Now, by \cite{DLP}*{Lemma 6.2} we have
\begin{equation}
  \label{eq-cs-order}
 c_S = \Theta\left(\sqrt{\frac{n}{\temp}}\cdot c_\zeta\right)~,
\end{equation} and altogether
$$\E_\pi (|\cS - \zeta|^4) = \frac{\sum_{x\in\magspace} (d_x+d'_x)}{c_S} =O\left((\temp n)^{-2}\right)~,$$
as required.
\end{proof}
\begin{remark*}
Using the above method, one can obtain that for
any fixed $k$ we have $\E_\pi |\cS - \zeta|^k = O
\left((\temp n)^{-k/2}\right)$.
\end{remark*}
Another ingredient required for the upper bound on the gap is the next estimate on $\pi(0)$, which is readily obtained from our previous results on the conductances of this chain.
\begin{lemma}\label{lem-pi-0}
The stationary distribution of the censored magnetization chain satisfies
$\pi(0) = O(1/\zeta n)$.
\end{lemma}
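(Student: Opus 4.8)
The plan is to exploit the electrical-network (conductance) representation of the stationary distribution of the birth-and-death chain $\cS_t$. In that representation $\pi(0) = W(0)/\sum_{x\in\magspace}W(x)$, where $W(x) = c_{x-2/n}+c_x+c'_x$ (with the convention $c_{-2/n}=0$); since $\sum_{x}W(x) = 2\sum_x c_x + \sum_x c'_x \ge c_S$, this gives $\pi(0)\le (c_0+c'_0)/c_S$. A one-line boundary computation shows $c_0 = 1$ and, using $q_0=0$, $c'_0 = \frac{h_0}{p_0}= \frac{1-\tanh(\beta/n)}{1+\tanh(\beta/n)} = 1-O(1/n)$, so that $c_0+c'_0 = O(1)$ and it remains to prove that $c_S$ is large. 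By \eqref{eq-cs-order} we have $c_S = \Theta\bigl(\sqrt{n/\temp}\,c_\zeta\bigr)$, so the whole task reduces to a lower bound on the peak conductance $c_\zeta$; concretely it suffices to show $c_\zeta \ge \exp\bigl(\Omega(\temp\sqrt n)\bigr)$.

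To lower bound $c_\zeta$, I would first note that on $(0,\zeta)$ the magnetization chain has strictly positive drift $\E[S_{t+1}-S_t\mid S_t=s] = \tfrac1n(\tanh(\beta s)-s) - O(s/n^2) > 0$ — this is the single place where the hypothesis $\temp n\to\infty$ enters, to dominate the $O(s/n^2)$ correction near the origin — so that $p_x>q_x$ and hence $c_x = \prod_{y\in\magspace(0,x]} p_y/q_y$ is non-decreasing on $\magspace[0,\zeta]$. In particular $c_\zeta \ge c_{\xi_2}$ and $c_{\xi_1}\ge c_0 = 1$. Now \eqref{eq-cx-ratio-xi1-xi2} (applied with $\xi = \zeta$) gives $c_{x+2/n}/c_x \ge 1+\sqrt{\temp/n}-O(1/n) \ge 1+\tfrac12\sqrt{\temp/n}$ uniformly over $x\in\magspace[\xi_1,\xi_2]$, where we again use $\temp n\to\infty$ so that $\sqrt{\temp/n}\gg 1/n$. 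Multiplying over the $(\xi_2-\xi_1)\tfrac n2 = (1-o(1))\tfrac{\zeta n}{2}$ relevant edges (here $\xi_1 = o(\zeta)$ since $\temp^2 n\to\infty$) yields
\[
c_\zeta \ \ge\ c_{\xi_2}\ \ge\ c_{\xi_1}\Bigl(1+\tfrac12\sqrt{\temp/n}\Bigr)^{\zeta n/4}\ \ge\ \exp\!\Bigl(\tfrac{\zeta}{16}\sqrt{\temp n}\Bigr)\ =\ \exp\bigl(\Omega(\temp\sqrt n)\bigr),
\]
since $\zeta = \Theta(\sqrt\temp)$.

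Combining the two steps, $\pi(0)\le (c_0+c'_0)/c_S = O\bigl(\sqrt{\temp/n}\,/\,c_\zeta\bigr) = O\bigl(\sqrt{\temp/n}\,e^{-\Omega(\temp\sqrt n)}\bigr)$. Writing $v \deq \temp\sqrt n = \sqrt{\temp^2 n}\to\infty$ and using $\sqrt{\temp/n}\cdot\zeta n = \Theta(\zeta\sqrt{\temp n}) = \Theta(v)$, we conclude $\pi(0)\cdot\zeta n = O\bigl(v\,e^{-\Omega(v)}\bigr) = o(1)$, which is in fact stronger than the asserted $\pi(0)=O(1/(\zeta n))$. I expect the main (and only genuine) obstacle to be the lower bound on $c_\zeta$: because $\pi(0)$ is exponentially small in $\temp\sqrt n$, a soft argument via a polynomial moment bound (such as Lemma \ref{lem-4th-moment}) cannot work, and one must actually track the multiplicative growth of the conductances from $\xi_1$ all the way up to $\zeta$; once \eqref{eq-cx-ratio-xi1-xi2} and \eqref{eq-cs-order} are in hand this is routine, and the remainder is just the boundary bookkeeping for $c_0$ and $c'_0$.
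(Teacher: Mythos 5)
Your argument is correct, but it takes a genuinely different (and heavier) route than the paper. The paper's proof is soft: it only uses the growth estimate \eqref{eq-cx-ratio-xi1-xi2} to conclude that every $c_x$ with $x\in\magspace[\xi_1,\xi_2]$ is at least $c_{\xi_1}$, hence $c_S\geq \frac14\zeta n\, c_{\xi_1}$ by summing over the $\sim\zeta n/2$ states in that interval, and then bounds the numerator by $c_0+c'_0=O(c_{\xi_1})$ via \eqref{eq-cx-cx'-czeta} and \eqref{eq-cx-O(cy)}; no exponential estimate, no explicit boundary values, and no appeal to \eqref{eq-cs-order} are needed. You instead compute the boundary terms exactly ($c_0=1$, $c'_0=O(1)$), and then make the conductance growth quantitative, obtaining $c_\zeta\geq \exp\bigl(\Omega(\zeta\sqrt{\temp n})\bigr)$ and hence $\pi(0)=O\bigl(\sqrt{\temp/n}\,e^{-\Omega(\temp\sqrt n)}\bigr)$, which is strictly stronger than the stated $O(1/(\zeta n))$ (and is indeed the ``right'' order of $\pi(0)$, in line with the remark after Lemma \ref{lem-4th-moment} that all fixed moments of $|\cS-\zeta|$ are small). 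Two small caveats: first, your blanket claim that $p_x>q_x$ on all of $(0,\zeta)$, deduced from $\tanh(\beta s)>s$, can fail in an $O\bigl(1/(\temp n)\bigr)$ window just below $\zeta$ where the $O(1/n)$ lattice corrections compete with the vanishing drift; this is harmless because you only use monotonicity for $c_{\xi_1}\geq c_0$ (where the drift clearly dominates, since $\temp\gg 1/n$) and for $c_\zeta\gtrsim c_{\xi_2}$, which already follows from \eqref{eq-cx-cx'-czeta} up to constants (and the cumulative deficit over that window is $1-o(1)$ anyway). Second, your closing assertion that a soft argument ``cannot work'' is accurate only for the exponential bound you chose to prove; for the lemma as stated, the paper's one-line comparison $c_S\gtrsim \zeta n\,c_{\xi_1}$, $c_0+c'_0=O(c_{\xi_1})$ suffices, so your extra work buys a sharper estimate rather than being forced by the statement.
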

\begin{proof}
Following the notation of the previous lemmas, recall that (as stated before), \cite{DLP}*{equation
(6.8)} gave that
\begin{equation*}\frac{c_{x+2/n}}{c_x}
\geq 1 + \sqrt{\frac\temp{n}} - O(1/n)~\mbox{ uniformly for $x \in
\magspace[\xi_1, \xi_2]$}~.\end{equation*}
In particular, we have $$c_S \geq \frac{|\xi_2 - \xi_1|}{2/n}c_{\xi_1} = \frac{\zeta - 2/\sqrt{\temp n}}{2/n} c_{\xi_1} \geq \frac14 \zeta \cdot n \cdot c_{\xi_1} ~,$$
where the last inequality holds for large $n$, as $\temp^2 n\to\infty$ with $n$.
In addition, \eqref{eq-cx-cx'-czeta} and \eqref{eq-cx-O(cy)} imply that
 $$ c_0 = O(c_{\xi_1})~,~c'_0 = O(c_{\xi_1})~.$$ Altogether, we have
$$ \pi(0) =  \frac{c_0+c'_0}{c_S} = O\Big(\frac{1}{\zeta n}\Big)~,$$
as required.
\end{proof}
We conclude the proof of the upper bound on the spectral gap of $\cS_t$ with
the following simple lemma, which provides a lower bound on the $\var_\pi \cS_t$.
\begin{lemma}\label{lem-var-lower-bound}
There exists a constant $b > 0$ so that the stationary censored magnetization chain satisfies $\var_{\pi}\cS_t \geq b / (\temp n)$.
\end{lemma}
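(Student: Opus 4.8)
The plan is to establish the lower bound by a short anti-concentration argument. I will show that the conductance estimates of the previous subsection force $\pi$ to be flat, up to constant factors, on a window of width $\Theta\big((\temp n)^{-1/2}\big)$ around $\zeta$; any probability measure which is this spread out must have variance $\Omega\big(1/(\temp n)\big)$, regardless of where its mean lies. In particular this argument will not need Lemma \ref{lem-4th-moment} nor any information on the location of $\E_\pi\cS$.

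First I would convert the conductance bounds into a pointwise lower bound on $\pi$. Using $\pi(x)=(c_x+c_x')/c_S$ (as in the proof of Lemma \ref{lem-4th-moment}), the estimate $c_x=\Theta(c_\zeta)$ on $\magspace[\xi_2,\xi_3]$ from \eqref{eq-cx-cx'-czeta}, together with $c_S=\Theta\big(\sqrt{n/\temp}\,c_\zeta\big)$ from \eqref{eq-cs-order}, produces an absolute constant $b_0>0$ with
\begin{equation*}
\pi(x)\;\geq\;\frac{c_x}{c_S}\;\geq\;b_0\,\sqrt{\frac{\temp}{n}}\qquad\text{for all }x\in\magspace[\xi_2,\xi_3]~.
\end{equation*}
Next I would count lattice points. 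Since $[\xi_2,\xi_3]$ has length $2(\temp n)^{-1/2}$ while $\magspace$ has spacing $2/n$, the set $\magspace[\xi_2,\xi_3]$ contains at least $\sqrt{n/\temp}-1$ states, whereas any interval of length $\tfrac12(\temp n)^{-1/2}$ contains at most $\tfrac14\sqrt{n/\temp}+1$ of them. Writing $m\deq\E_\pi\cS$ and $r\deq\tfrac14(\temp n)^{-1/2}$, it follows that for $n$ large (the hypothesis $\temp^2n\to\infty$ is used only to guarantee $\sqrt{n/\temp}\to\infty$) the set
\begin{equation*}
W\;\deq\;\big\{\,x\in\magspace[\xi_2,\xi_3]:\ |x-m|\geq r\,\big\}
\end{equation*}
has at least $\tfrac12\sqrt{n/\temp}$ elements, and hence $\pi(W)\geq\tfrac12\sqrt{n/\temp}\cdot b_0\sqrt{\temp/n}=\tfrac12 b_0$ by the previous display.

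Finally, since $(x-m)^2\geq r^2=1/(16\,\temp n)$ for every $x\in W$, one gets
\begin{equation*}
\var_\pi\cS\;=\;\E_\pi(\cS-m)^2\;\geq\;\sum_{x\in W}\pi(x)(x-m)^2\;\geq\;\pi(W)\cdot\frac{1}{16\,\temp n}\;\geq\;\frac{b_0}{32\,\temp n}~,
\end{equation*}
which is the assertion with $b=b_0/32$. The only step carrying any content is the first display---reading off a uniform lower bound $\pi(x)\gtrsim\sqrt{\temp/n}$ over the window from the conductance estimates of \cite{DLP}---and I do not expect this to be an obstacle, as it is immediate from \eqref{eq-cx-cx'-czeta} and \eqref{eq-cs-order}; the rest is bookkeeping.
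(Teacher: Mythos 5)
Your proposal is correct and follows essentially the same route as the paper: the paper's proof likewise reads off $\pi(x)\geq b'\sqrt{\temp/n}$ on $\magspace[\xi_2,\xi_3]$ from \eqref{eq-cx-cx'-czeta} and \eqref{eq-cs-order}, notes that this window contains order $\sqrt{n/\temp}$ states, and concludes. The only difference is that you spell out the final anti-concentration/counting step (which the paper dismisses as immediate), and doing so without locating $\E_\pi\cS$ is exactly the intended argument.
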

\begin{proof}
By \eqref{eq-cx-cx'-czeta} and \eqref{eq-cs-order} we have
$$ c_x = \Theta(c_\zeta) = \Theta\Big(\sqrt{\frac{\temp}{n}} \cdot c_S\Big)~
\mbox{ uniformly over $x \in \magspace[\xi_2,\xi_3]$}~.$$
It follows that there exists some constant $b' > 0$ such that $\pi(x) \geq b'\sqrt{\temp/n}$ for all $x \in \magspace[\xi_2,\xi_3]$ and every $n$. As the
interval $\magspace[\xi_2,\xi_3]$ consists of $\sqrt{n/\temp}$ elements, the required
results immediately follows.
\end{proof}

Now, we are ready to establish the upper bound for the spectral gap. Applying Dirichlet's representation of
the spectral gap using the test function $f = \mathds{1} - \zeta$, we obtain that
\begin{equation}
  \label{eq-dirichlet-form}
  \gap \leq \frac{\E_\pi [(\cS_t - \E (\cS_{t+1}\mid \cS_t))(\cS_t - \zeta) ]}{\var_\pi \cS_t}~.
\end{equation}
Recalling \eqref{eq-St-1st-moment-tanh}, we have
\begin{align*}
 \E[\cS_{t+1} \mid \cS_t=s\;,\;s > 0] &= \E[S_{t+1} \mid S_t=s\;,\; s>0]\\
 &= s + \frac{1}{n}(\tanh(\beta s)-s) + O(1/n^2)~,\\
 \E[\cS_{t+1} \mid \cS_t=0] &= O(1/n)~,
\end{align*}
and therefore, using the Taylor
expansion \eqref{eq-tanh-taylor-at-zeta} of $\tanh$ around $\zeta$, we deduce that
\begin{align*}
&\E_\pi \left[(\cS_t - \E (\cS_{t+1}\mid \cS_t))(\cS_t - \zeta) \right]
\leq \pi(0)\zeta \cdot O(1/n) \\
&+ \frac{1}{n}\left[(\beta \zeta^2 - \temp) \E_\pi |\cS_t - \zeta|^2 + \beta^2 \zeta \E_\pi|\cS_t - \zeta|^3
+ O \left(\E_\pi|\cS_t-\zeta|^4 + \frac{1}{n^2}\right)\right]\\
&= O (1/n^2)~,
\end{align*}
where in the last inequality we plugged in Lemmas \ref{lem-4th-moment} (in order to bound the 2nd, 3rd and 4th moments) and \ref{lem-pi-0} (the upper bound on $\pi(0)$). Plugging this in the Dirichlet form \eqref{eq-dirichlet-form}, and using the variance bound given in Lemma \ref{lem-var-lower-bound}, we obtain that $\gap = O(\temp / n)$, as required.

This concludes the proof of Theorem \ref{thm-mag-spectral}. \qed
\subsection{Spectral gap of the censored Glauber dynamics}

It is easy to verify that every eigenvalue of the censored magnetization chain
is also an eigenvalue of the entire dynamics (via the natural projection of each configuration onto its magnetization). Thus, the upper bound for the spectral gap of $\cS_t$, given in the previous subsection, immediately yields the desired
upper bound for the gap of $\cX_t$. It remains to provide a matching lower bound.

Define
\begin{align*}
\Omega_k &\deq \left\{\sigma: \mbox{$\sum_{i}$} \sigma_i =k\right\}~,\\
 F &\deq \left\{f
: \Omega \mapsto \R\right\}~,\\
F_1 &\deq \left\{f \in F :~\mbox{ For all $k$, }
f \mbox{ is constant over } \Omega_k\right\}~,\\
F_2 &\deq \left\{f \in F :~\mbox{ For all $k$, } \mbox{$\sum_{\sigma\in\Omega_k}$} f(k)
= 0\right\}~.
\end{align*}
Clearly, $F
=F_1\oplus F_2$, and the transition kernel $\cP_\textsf{M}$ preserves the two spaces $F_1$ and $F_2$. Moreover, the lower bound for the spectral gap of $\cS_t$, as stated in Theorem \ref{thm-mag-spectral}, implies that there exists
some universal $b > 0$, so that for any
non-constant eigenfunction $f\in F_1$ corresponding to some eigenvalue $\lambda$, $$\lambda \leq 1 -
b\cdot \temp/n~.$$ Next, we need to treat
the eigenfunctions in $F_2$. We need the following straightforward lemma, proved implicitly in \cite{LLP} for the original (non-censored) dynamics. Its proof
extends directly to our setting of the censored Glauber dynamics:
\begin{lemma}[\cite{LLP}*{Section 2.6})]\label{lem-contraction-same-mag}
Let $\dist(\cdot)$ be the Hamming distance on $\Omega$, and consider two instances of the censored Glauber dynamics,
 $\cX_t,\tilde{\cX}_t$ starting from $\sigma,\tilde{\sigma}\in\Omega$ resp.,
 such that $\cS_0 = \tilde{\cS}_0$.
 Then for any $\beta>0$ there exists a coupling of $\cX_t$ and $\tilde{\cX_t}$
 such that $\cS_1 = \tilde{\cS}_1$ and for some
constant $c > 0$,
$$\E_{\sigma, \tilde{\sigma}}~ [\dist(\sigma_1, \tilde{\sigma}_1)] \leq (1-  c/n) \dist(\sigma, \tilde{\sigma})~.$$
\end{lemma}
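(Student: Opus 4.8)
The plan is to construct a coupling that uses the hypothesis $\cS_0=\tilde{\cS}_0$ in an essential way: the two configurations have the same number of $+1$ spins, so their disagreement set decomposes as $A\sqcup B$ with
$$A\deq\{i:\sigma(i)=1,\ \tilde\sigma(i)=-1\},\qquad B\deq\{i:\sigma(i)=-1,\ \tilde\sigma(i)=1\},$$
and $|A|=|B|=\tfrac12\dist(\sigma,\tilde\sigma)=:d$. The naive ``update the same site in both chains with a monotone coupling of the two Bernoulli updates'' fails here, since a disagreeing site has opposite spin in the two chains and hence different local fields: aligning such a site in one chain while leaving it in the other would desynchronize the magnetizations, and the alternative outcome enlarges the disagreement set. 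Instead I would let the two chains update \emph{possibly different} sites, chosen so that (i) the two magnetizations perform the \emph{same} increment, and (ii) any update touching the disagreement set removes one element of $A$ and one element of $B$ at once.

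Concretely: since both chains sit at magnetization $s=\cS_0$, their magnetization kernels $\cP_\textsf{M}(s,\cdot)$ coincide, so couple the increments ($+2/n$, $-2/n$, or $0$) to be identical. On a $+2/n$ increment each chain must turn one of its current $(-1)$-sites into $+1$, and, the local field being the same $\beta(s+n^{-1})$ for every such site, that site is \emph{uniform} among them; couple these two uniform choices so that with probability $d/|\{i:\sigma(i)=-1\}|$ the first chain flips a site of $B$ while the second simultaneously flips a site of $A$ — this aligns one site of $B$ and one of $A$, so $\dist$ drops by $2$ — and otherwise both chains flip the \emph{same} $(-1,-1)$-site, leaving $\dist$ unchanged. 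A $-2/n$ increment is treated symmetrically with $A$ and $B$ interchanged, and a $0$-increment leaves both chains fixed. The routine verifications are: each marginal is a genuine step of the censored dynamics (using that, conditioned on the increment, the flipped site is uniform within a sign class, and that $|\{i:\sigma(i)=-1\}|=|\{i:\tilde\sigma(i)=-1\}|$ because the magnetizations agree; and noting that if a censoring global flip is triggered it is triggered for both chains at once and leaves $\dist$ unchanged); $\cS_1=\tilde{\cS}_1$ by construction; and $\dist$ \emph{never increases}.

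It then remains to quantify the gain. The holding probability of the magnetization chain is bounded away from $1$ uniformly over states (for any fixed $\beta>0$), so a $\pm2/n$ increment occurs with probability bounded below; conditioned on it, the flipped site lies in the disagreement set with probability $\Theta(d/n)$, and then $\dist$ decreases by $2$. Hence $\E_{\sigma,\tilde\sigma}[\dist(\sigma_1,\tilde\sigma_1)]\le \dist(\sigma,\tilde\sigma)-c\,\dist(\sigma,\tilde\sigma)/n$ for some absolute $c>0$, which is the claim. (One may alternatively first reduce to $d=1$ by path coupling along single-swap moves inside the fixed-magnetization class, where the construction above applies verbatim.) I expect the middle step to be the crux: choreographing the site choices so that the magnetizations stay locked together \emph{and} $\dist$ is monotone — in particular forcing the ``non-resolving'' branch to flip a common site in both chains — while keeping each marginal exactly the censored Glauber dynamics; this is precisely where the equal-magnetization hypothesis does its work and where the naive coupling breaks down.
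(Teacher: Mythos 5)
Your proposal is correct and is essentially the same argument the paper invokes from \cite{LLP}*{Section 2.6}: because the two magnetizations agree, one pairs the disagreement classes $A$ and $B$ so that updates in the disagreement region resolve one site of each class simultaneously (keeping $\cS_t=\tilde{\cS}_t$, never increasing the Hamming distance, and contracting it by $2$ with probability of order $\dist/n$), while updates elsewhere are performed identically in both chains; censoring flips both configurations at once and preserves the distance. Your only cosmetic deviation is coupling the magnetization increment first and then choosing the flipped site uniformly within a sign class, rather than choosing the site in one chain and matching it in the other, and the minor imprecision (``increment occurs with probability bounded below'', ``disagreement hit with probability $\Theta(d/n)$ conditionally'') washes out since the relevant product is $p\cdot\frac{m}{n}\cdot\frac{d}{m}=\Theta(d/n)$.
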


In order to translate the above contraction property of the dynamics into an eigenvalue bound, we follow the
ideas of Chen \cite{Chen} (see also \cite{LPW}*{Theorem 13.1}).

\begin{lemma}\label{lem-lower-spectral-contraction}
There exists some constant $c > 0$, so that every eigenvalue $\lambda$ of the censored Glauber dynamics with corresponding eigenfunction $f\in F_2$ satisfies $1- \lambda \geq c/n$.
\end{lemma}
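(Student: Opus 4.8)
The plan is to follow the standard path-coupling/contraction argument of Chen (as in \cite{LPW}*{Theorem 13.1}) but restricted to the subspace $F_2$, where the coupling of Lemma \ref{lem-contraction-same-mag} (which preserves the common magnetization while contracting the Hamming distance) is available. First I would observe that it suffices to work with the Lipschitz seminorm $\lip(f) \deq \max_{\sigma\sim\tilde\sigma} |f(\sigma)-f(\tilde\sigma)|$ taken over pairs $\sigma,\tilde\sigma$ at Hamming distance one \emph{with the same magnetization}; since two configurations in the same $\Omega_k$ differ by swapping a $+1$ and a $-1$, any such pair is joined by a path of length at most $n$ through configurations all lying in $\Omega_k$, so this restricted seminorm is finite and controls $\|f\|$ on each $\Omega_k$ for $f\in F_2$ (as $f$ has mean zero on each $\Omega_k$, it is determined by its increments along such paths).

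Next I would set up the eigenvalue estimate. Let $f\in F_2$ be an eigenfunction, $\cP_\textsf{M} f = \lambda f$. For a pair $\sigma,\tilde\sigma$ in the same $\Omega_k$ at swap-distance one, apply the coupling of Lemma \ref{lem-contraction-same-mag} to get $(\cX_1,\tilde\cX_1)$ with $\cS_1=\tilde\cS_1$ and $\E[\dist(\cX_1,\tilde\cX_1)]\le(1-c/n)\dist(\sigma,\tilde\sigma)=(1-c/n)$. Then
\begin{align*}
|\lambda|\,|f(\sigma)-f(\tilde\sigma)|
 &= |\E f(\cX_1) - \E f(\tilde\cX_1)|
  \le \E\,|f(\cX_1)-f(\tilde\cX_1)|\\
 &\le \lip(f)\,\E[\dist(\cX_1,\tilde\cX_1)]
  \le \lip(f)\,(1-c/n)~,
\end{align*}
where the middle inequality uses that $\cX_1$ and $\tilde\cX_1$ have the same magnetization, so they can be joined by a path of $\dist(\cX_1,\tilde\cX_1)$ swap-moves staying within one $\Omega_{k'}$, and telescoping along that path costs at most $\lip(f)$ per step. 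Taking the maximum over all such pairs $\sigma,\tilde\sigma$ gives $|\lambda|\,\lip(f)\le(1-c/n)\lip(f)$; since $f\in F_2$ is non-constant, $\lip(f)>0$, hence $|\lambda|\le 1-c/n$. As the spectral gap is attained by the largest (not smallest) eigenvalue and in particular $\lambda\ge -1+c/n$ is automatic here, we conclude $1-\lambda\ge c/n$.

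The main obstacle — really the only delicate point — is justifying that the telescoping path between $\cX_1$ and $\tilde\cX_1$ stays inside a single $\Omega_{k'}$ and that the coupling of Lemma \ref{lem-contraction-same-mag} indeed delivers $\cS_1=\tilde\cS_1$ together with the contraction on $\dist$; this is precisely the content of that lemma, so modulo invoking it the argument is clean. One should also double-check that restricting the Lipschitz maximum to same-magnetization swap-pairs genuinely recovers $\|f\|$ up to the path-length factor $n$, which is where a potential extra factor of $n$ could sneak in — but since it only multiplies the exponentially small gap bound by a polynomial it does not affect the conclusion $1-\lambda=\Omega(1/n)$, and a more careful accounting (each swap-pair path has length $\le n$, but the contraction acts at every step) in fact keeps the bound at $c/n$ after adjusting the constant $c$. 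Combined with the bound on eigenfunctions in $F_1$ coming from Theorem \ref{thm-mag-spectral}, and the decomposition $F=F_1\oplus F_2$ with $\cP_\textsf{M}$ preserving each summand, this yields the matching lower bound $\gap(\cX_t)=\Omega(\temp/n)$, completing Theorem \ref{thm-low-temp-spectral}. \qed
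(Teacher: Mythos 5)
Your proposal is correct and follows essentially the same route as the paper: Chen's contraction-to-eigenvalue argument applied to the Lipschitz seminorm restricted to pairs within a single $\Omega_k$, combined with the magnetization-preserving contracting coupling of Lemma \ref{lem-contraction-same-mag}, yielding $|\lambda|\leq 1-c/n$ for non-constant $f\in F_2$ (the paper works with the ``varied Lipschitz constant'' over all same-magnetization pairs normalized by Hamming distance, while you use unit swap-pairs and telescope, which is equivalent up to a harmless factor). The only slip is the phrase ``Hamming distance one with the same magnetization'' (impossible by parity); your subsequent use of swap-distance is what is meant and the argument is unaffected.
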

\begin{proof}
Define the \emph{varied Lipschitz constant} of a function $f$
on the space $(\Omega, \dist)$ as
$$\lip (f) \deq \mathop{\max_{\sigma, \tilde{\sigma}\in \Omega_k}}_{0\leq k \leq n} \frac{|f(\sigma)- f(\tilde{\sigma})|}{\dist(\sigma, \tilde{\sigma})}~.$$
Using the coupling in Lemma \ref{lem-contraction-same-mag}, we
infer that for any $k$ and $\sigma, \tilde{\sigma} \in \Omega_k$,
\begin{align*}
|\cP f (\sigma) - \cP f (\tilde{\sigma})| &=
|\E_{\sigma,\tilde{\sigma}}[ f(\sigma_1)-f(\tilde{\sigma_1})]| \leq
\E_{\sigma,\tilde{\sigma}}|f(\sigma_1)-f(\tilde{\sigma_1})|\\
&\leq \lip(f) \cdot \E_{\sigma, \tilde{\sigma}} [\dist(\sigma_1,
\tilde{\sigma}_1)]\leq (1-c/n)\lip(f) \cdot \dist(\sigma, \tilde{\sigma})~,
\end{align*}
where in the last two inequalities we use the definition of the
varied Lipschitz constant and applied Lemma
\ref{lem-contraction-same-mag}. This proves that
$$\lip (\cP f) \leq (1-c/n) \lip(f)~,$$
which then completes the proof of the lemma, by noting that $\lip(f) > 0$
whenever $0\not\equiv f \in F_2$.
\end{proof}

This establishes the order of the spectral gap of $\cX_t$, thus completing
the proof of Theorem \ref{thm-low-temp-spectral}. \qed

\begin{remark*}
In the special case $\temp = o(1)$, the arguments in the section in fact imply that
the censored magnetization chain $\cS_t$ and the censored Glauber dynamics $\cX_t$
have precisely the same spectral gap (as opposed to simply having the same order).
\end{remark*}

\section{The case of fixed low temperature}\label{sec:temp-fixed}
Thus far, we proved Theorem \ref{thm-low-temp-spectral}
for any $\temp$ with $\temp^2 n \to\infty$, and established
Theorem \ref{thm-low-temp} for the special case
of such $\temp$ with $\temp = o(1)$. In this section, we extend the
statement of Theorem \ref{thm-low-temp} to the case of $\temp > 0$ fixed.

We note that the arguments used in the proof of Theorem \ref{thm-low-temp} hold
almost without change for this case of $\temp > 0$, and our only reason for
distinguishing between these two cases was to simplify some of the statements
and formulas (since whenever $\temp = o(1)$ we have $\zeta = (1+o(1))\sqrt{3\temp}$, rather than
some fixed constant). In fact, several of the complications in the case $\temp = o(1)$
disappear when $\temp$ is fixed, such as our arguments which carefully tracked down the precise power of $\temp$
in various settings.

Therefore, in what follows we list the required modifications that one needs to make
in order to extend the proof in Sections \ref{sec:mag} and \ref{sec:fullmixing}
to the considerably simpler case of $\temp$ fixed.

\subsubsection*{Analysis of hitting a magnetization of $\zeta$ starting from $0$:}
In Section \ref{sec:mag} we introduced the intermediate points $n^{-1/4}$ and $\sqrt{\temp}$
in order to estimate the time it takes $\cS_t$ to hit $\zeta$ starting from $0$
(see Theorems \ref{thm-reach-n^-1/4-sqrt-temp} and \ref{thm-reach-zeta-sqrt-temp}).
Since now we could have $\temp$ large enough so that $\sqrt{\temp} > \zeta$, one
needs to modify the above mentioned second intermediate point, replacing
$\sqrt{\temp}$ by, say, $\zeta / 2$. This includes adjusting the finer level
of intermediate points chosen in Subsection \ref{subsec-sqrt-temp-to-zeta},
i.e., $\frac76\sqrt{\temp}$ should be replaced by $\zeta/3$ and so on.

\subsubsection*{Estimates of Hyperbolic tangent:} Throughout Sections \ref{sec:mag}
and \ref{sec:fullmixing}, we apply a Taylor expansion to analyze the change
in the magnetization (see \eqref{eq-taylor-tanh} and \eqref{eq-tanh-taylor-at-zeta}).
For simplicity, we used the fact that $\temp = o(1)$ when estimating the error terms
in these formulas, and note that a straightforward application of the Mean Value Theorem
gives the required bounds in the case of $\temp$ fixed.

\subsubsection*{Bound on $\tau_3$: ``escaping'' from around $\zeta$:}
In Lemma \ref{lem-tau-3-bound} we study the probability of $\cS_t$ dropping below
$\frac76 \sqrt{\temp}$ (defining $\tau_3$ to be this corresponding hitting time)
given an initial position of $\frac43 \sqrt{\temp}$. Following the above mentioned
modification to the intermediate position $\sqrt{\temp}$, we should now define
$\tau_3$ as the hitting time to $\zeta/3$, and the new statement of Lemma \ref{lem-tau-3-bound}
would be that
$$\P_{\zeta/2} (\tau_3 \leq T^+_3(\gamma))
\leq \frac{1}{n}~.$$
In our original proof of Lemma \ref{lem-tau-3-bound}, we used that the term $(\temp^2 n)^{1/2-O(\temp)}$ is roughly
$(\temp^2 n)^{1/2}$, as $\temp = o(1)$. Whenever $\temp > 0$ is fixed, we simply
reapply the intermediate points analysis with additional points
$$ \zeta/3= \xi_0 < \xi_1 < \ldots < \xi_K = \zeta/2~,$$
where $K=K(\temp)$ is some sufficiently large constant. The rest of the proof
of Lemma \ref{lem-tau-3-bound} holds without requiring any changes.

\subsubsection*{Two coordinate chain analysis:}
The Two Coordinate Chain Theorem formulated in \cite{LLP} was
bdesigned for the $\beta < 1$ case, where the stationary magnetization concentrates around 0. For the
case $\beta = 1 +o(1)$, the stationary magnetization concentrates around $\pm \zeta$ instead,
and having $\zeta = o(1)$ (which is the case when $\temp = o(1)$) rather than $0$ enables us to use the original version of this theorem almost automatically.

However, for the $\temp$ fixed case, we have $0< \zeta < 1$ fixed, yet $\zeta$ can be quite close to 1, and the mentioned theorem needs to be adjusted accordingly. Two definitions need to be modified: $$\mbox{$\Omega_0 = \{ \sigma \in \Omega : |S(\sigma) - \zeta|
\leq \frac{1-\zeta}{2}\}$}$$ and $$ \qquad\quad\mbox{$\Xi = \left\{ \sigma: \min\{ U(\sigma), U(\sigma_0) - U(\sigma), V(\sigma),
V(\sigma_0) - V(\sigma)  \} \geq \frac{(1-\zeta)^2 n}{20} \right\}$}~.$$
The remaining definitions and statement are all left without change, as well as the application of the theorem.
We further note that, with the above two modified definitions, following the same arguments of \cite{LLP} proves the required variant of the theorem.

\subsubsection*{Variance bound on the non-censored magnetization:}
In Lemma \ref{lem-var-bound-usual} we proved an upper bound of $O\left((\temp n)^{-1}\right)$
on $\var_{s_0} S_t$ for any $s_0 \geq \frac43{\sqrt\temp}$ and throughout a certain time interval.
In that proof, we used the fact that $\temp = o(1)$, giving a certain estimate on the required
time point $T_3$, which was then translated into a bound on the variance.

To prove the same statement for the case of $\temp$ fixed,
recall that Lemma \ref{lem-tau-3-bound}, discussed above, gives a bound of $1/n$ for
$\P_{s_0}(\tau_3 \leq t)$ and hence also for $\P_{s_0}(\tau_0 \leq t)$ (hitting $0$ rather than
$\zeta/3$). Plugging this into the proof of Lemma \ref{lem-var-bound-usual}, and using the fact
that $S_t$ is clearly bounded by 1, provides the required upper bound of $O(1/n)$ for the variance.

\section{Concluding remarks and open problems}\label{sec:conclusion}
In this work, we established cutoff for the censored Glauber dynamics on the mean-field Ising model.
It is widely believed that the behavior of the dynamics in the mean-field setting is essentially
the same as that for other underlying geometries, such as high dimensional tori. We therefore formulate
several conjectures following the insight that the mean-field model had recently provided.

Our results, together with those in the companion paper \cite{DLP}, reveal a symmetry around the critical
temperature, where the subcritical regime is analogous to the censored supercritical one. Namely,
the behavior below $\beta_c$ shows order $n^{3/2}$ mixing without cutoff at $\beta = 1-\temp$ for
$\temp = O(1/\sqrt{n})$, and cutoff with mixing order $(n/\temp)\log(\temp^2 n)$ whenever $\temp^2 n\to\infty$.
The same behavior was established for the censored dynamics above $\beta_c$,  only with a different
cutoff-constant in the case of $\temp^2 n \to\infty$.

In light of this, we have the following conjectures:
\begin{conjecture}
Consider the Glauber dynamics for the Ising model on a sequence of transitive graphs $\{G_n\}$. Then for a suitable notion of censoring
and any $|\temp| < \beta_c$,
there is cutoff for the original dynamics at $\beta_1 = \beta_c - \temp$ iff there is cutoff for the censored dynamics at $\beta_2 = \beta_c + \temp$.
\end{conjecture}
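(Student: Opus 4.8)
The plan is to reduce the low-temperature censored dynamics at $\beta_2=\beta_c+\temp$ to a perturbation of the high-temperature dynamics at $\beta_1=\beta_c-\temp$, in the spirit of what the mean-field analysis of this paper achieves via the one-dimensional magnetization chain. First I would isolate the correct notion of censoring: on a transitive graph $G_n$ invariant under the global spin flip, the censored chain is obtained by replacing any update that would take the magnetization $S(\sigma)=\frac{1}{|V_n|}\sum_x\sigma(x)$ strictly negative by its spin-flipped image, exactly as in \eqref{eq-magnet-cens-transit}. The first substantive step is to show that the censored stationary measure is asymptotically the restriction of the Gibbs measure to $\{S\ge 0\}$, that it concentrates on configurations whose magnetization lies within $O(|V_n|^{-1/2})$ of the (finite-volume) spontaneous magnetization $m^\star(\beta_2)$, and --- crucially --- that near that value the magnetization fluctuates like an Ornstein--Uhlenbeck-type chain whose relaxation rate \emph{mirrors} the one governing near-equilibrium fluctuations of the high-temperature chain at $\beta_1$. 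In the mean-field case this mirroring is precisely the quantity $\zeta^2\beta/\temp-1$ of Theorem \ref{thm-low-temp}; in general it should come out of a two-sided expansion of the free energy around its minimiser.

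Second, I would transport the high-temperature cutoff proof through this correspondence. Assuming cutoff for the original dynamics at $\beta_1$, one expects a sharp hitting-time/contraction picture in which, from the worst configuration, the dynamics relaxes towards $m=0$ at a definite rate with a window governed by equilibrium fluctuations. The proposal is to mimic the four-phase decomposition of Section \ref{sec:mag}: a short burn-in during which censoring pushes the magnetization away from $0$; a drift phase in which $\E S_t$ grows towards $m^\star(\beta_2)$ at the same exponential rate at which $\E S_t\to 0$ at $\beta_1$; and a contracting phase near $m^\star(\beta_2)$ where a FKG-type variance bound of order $|V_n|^{-1}$ together with Lemma \ref{lem-supermatingale-positive} yield coalescence inside the window. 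The matching lower bound would again follow from concentration of the censored stationary measure near $m^\star(\beta_2)$ and the hitting-time lower bounds, and the converse implication (censored cutoff at $\beta_2$ forces original cutoff at $\beta_1$) should be the easier direction, the original high-temperature dynamics being --- away from the bottleneck --- governed by the same drift-plus-fluctuation mechanism.

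The main obstacle, and the reason this remains a conjecture, is that essentially none of these ingredients is available in the required generality. Even the input statement --- cutoff at high temperature for Glauber dynamics on general transitive graphs --- is a hard open problem outside of lattices (where it is handled by the information-percolation framework of Lubetzky--Sly), and one would at a minimum need a robust ``near-linear drift with matching fluctuation window'' description of the high-temperature magnetization. On the low-temperature side, replacing the explicit mean-field computations by rigorous estimates requires controlling the surface-tension cost of the $\pm$ bottleneck (to certify that censoring removes the exponential barrier and \emph{nothing else}) together with a local central limit theorem for the magnetization in the pure phase --- both needing cluster-expansion / Pirogov--Sinai input, and both particularly delicate in the near-critical regime $|\temp|=o(1)$, $\temp^2|V_n|\to\infty$ that the conjecture singles out. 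Finally, ``a suitable notion of censoring'' conceals a genuine modelling question for graphs whose low-temperature phase structure is not governed solely by the global spin flip; pinning this down is itself part of the problem.
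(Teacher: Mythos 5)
The statement you were asked about is not a theorem of this paper: it is stated verbatim as a conjecture in the concluding section (Section \ref{sec:conclusion}), and the paper offers no proof of it. The mean-field results here (Theorems \ref{thm-low-temp} and \ref{thm-low-temp-spectral}, together with the companion paper \cite{DLP}) are only presented as evidence for a symmetry around $\beta_c$ that is \emph{conjectured} to persist on general transitive graphs. So there is no argument in the paper against which to measure your proposal, and the correct verdict on the statement itself is that it is open.

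Your submission is, accordingly, not a proof but a program, and you say so explicitly. The outline — transplant the four-phase magnetization analysis of Section \ref{sec:mag} (burn-in away from $0$, exponential drift toward the spontaneous magnetization, FKG-controlled contraction and coalescence within a fluctuation window, concentration of the censored stationary measure) and match rates on the two sides of $\beta_c$ via an expansion of the free energy — is a sensible reading of how the paper's mean-field mechanism might generalize, and your list of obstructions (no high-temperature cutoff results beyond lattices, surface-tension/Pirogov--Sinai control of the low-temperature bottleneck, the near-critical regime $\temp^2|V_n|\to\infty$, and the modelling question of what ``censoring'' should mean when the phase structure is richer than a global flip) is accurate and consistent with the paper leaving the statement as a conjecture. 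One point deserves sharper emphasis than you give it: the entire reduction in your first paragraph presupposes that the magnetization $S(X_t)$ is a tractable one-dimensional chain with an explicit drift, but outside the complete graph this projection is not Markov at all, so the ``mirroring of relaxation rates'' you posit is not merely hard to verify in general — it does not even have a well-defined formulation as stated, and any genuine attack on the conjecture would have to work with the full dynamics (information-percolation, log-Sobolev, or censoring-inequality machinery) rather than a birth-and-death reduction. As a blind assessment of an open problem your write-up is honest and reasonable, but it should not be mistaken for progress toward a proof.
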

\begin{conjecture}
Consider the Glauber dynamics for the Ising model on a sequence of transitive graphs $\{G_n\}$. Then for a suitable notion of censoring
and any $|\temp| < \beta_c$,
the mixing-time $\tmix(\frac14)$ at $\beta_1 = \beta_c - \temp$ has precisely the same order as the mixing-time at $\beta_2 = \beta_c + \temp$.
\end{conjecture}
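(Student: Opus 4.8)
The plan is to split the conjecture into the usual matching upper and lower bounds for the censored dynamics at $\beta_2=\beta_c+\temp$, organized around the principle that makes the argument work in the mean-field case: censoring removes the single exponential bottleneck between the two pure phases, leaving a dynamics that relaxes at the same rate as the unique-phase (subcritical) dynamics at $\beta_1=\beta_c-\temp$. First I would fix the notion of censoring to be the natural reflection: run ordinary single-site Glauber on $\Omega$ but compose any step that would send the magnetization $\sum_v \sigma(v)$ below zero with the global flip $\sigma\mapsto-\sigma$, so the effective state space is $\Omega^+=\{\sigma:\sum_v\sigma(v)\ge 0\}$ with stationary law $\mu^+=\mu(\cdot\mid\Omega^+)$. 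Because the Ising Glauber update is monotone on any graph, this censored chain is still monotone (the only delicate point, as in Section~\ref{sec:monotone coupling}, is the slice of configurations with magnetization $0$), and the all-plus configuration dominates every starting state; combined with the exact analogue of Lemma~\ref{lem-all-plus-mag-full}, this lets me reduce the upper bound to the dynamics started from $\mathbf 1$.

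For the upper bound I would then mimic Section~\ref{sec:fullmixing}: a burn-in drives the all-plus chain into the ``positive phase'' and it stays there (the analogue of Lemmas~\ref{lem-exp-drop-all-plus} and~\ref{lem-hit-good-state}); inside that phase the dynamics is contracting towards the spontaneous magnetization $m^\star(\beta_2)$, so one obtains a variance bound on partial spin sums $\sum_{i\in F}\sigma_t(i)$ over an arbitrary fixed vertex set $F$ (the analogue of Lemma~\ref{lem-fixed-set-magnetization}), and feeds this into a two-coordinate / distinguished-set coupling (Theorem~\ref{thm-two-coord-chain}) to upgrade relaxation of a single coordinate to full mixing. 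For the lower bound I would exhibit a slowly-equilibrating linear statistic --- the reflected magnetization, or, should that relax too fast, a suitable combination of shifted local energy densities --- whose autocorrelation time is controlled by the spectral gap of the dynamics restricted to functions of the ``bulk'' statistics, exactly the role played by Theorem~\ref{thm-mag-spectral} and Lemma~\ref{lem-lower-spectral-contraction} here.

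The conceptual core, and the step I expect to be the main obstacle, is to justify the ``reflection $\temp\mapsto-\temp$'' at the level of relaxation rates without the one-dimensional magnetization chain that carries the argument on $K_n$. On $K_n$ this is transparent: the drift of the censored magnetization near $\zeta$ is $-(\zeta^2\beta_2-\temp)z_t/n$ with $\zeta^2\beta_2-\temp=\Theta(\temp)$, i.e.\ the same order as the restoring coefficient $(1-\beta_1)=\temp$ of the subcritical chain, which is why the mixing orders agree while the constants ($\tfrac12$ versus $[2(\zeta^2\beta/\temp-1)]^{-1}$) differ; see Figure~\ref{fig:mag-stat}. For a general transitive graph with $0<\beta_c<\infty$ one needs the analogous quantitative statement that, after re-centering at $m^\star(\beta_2)$, the one-phase Gibbs measure $\mu^+$ has susceptibility (and the associated dynamics its Dirichlet/spectral-gap constant) of the same order as the unique Gibbs measure at $\beta_1$ --- essentially a quantitative phase-coexistence input. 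On amenable transitive graphs one could try to obtain the upper-bound side of this via a block-dynamics comparison together with the Peres--Winkler censoring inequality, reducing the one-phase dynamics to plus-boundary-conditioned blocks that each relax at the subcritical rate; matching that with a lower bound on the mixing time, however, appears to require genuinely new control of the fluctuations of the censored Ising measure near criticality, and I expect this --- rather than any of the soft monotonicity and coupling steps above --- to be the real difficulty.
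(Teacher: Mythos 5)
The statement you are addressing is one of the two \emph{conjectures} in Section~\ref{sec:conclusion}; the paper offers no proof of it, so your proposal can only be judged on its own terms, and on those terms it is not a proof. You yourself identify the decisive step --- showing that, after re-centering at the spontaneous magnetization, the one-phase measure at $\beta_c+\temp$ and its censored dynamics relax at the same order as the subcritical dynamics at $\beta_c-\temp$ on a general transitive graph --- and then defer it as requiring ``genuinely new control.'' But that step \emph{is} the conjecture; everything else in your sketch (burn-in, contraction in the plus phase, coupling, a matching lower bound via a slow observable) is scaffolding that the paper already supplies in the mean-field case and that cannot be assembled without the deferred input. So the proposal reduces the conjecture to itself rather than proving it.

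Beyond that, several of the ``soft'' steps you treat as automatic do not transfer off the complete graph, and one is false even on $K_n$. First, the paper proves at the end of Section~\ref{sec:monotone coupling} that the censored \emph{Glauber dynamics} admits no monotone coupling (a censoring flip can change $\Theta(n)$ spins while the neighboring chain changes one); only the censored \emph{magnetization} chain is monotone, and that object is a birth-and-death chain precisely because on $K_n$ the update probabilities depend on the configuration only through its magnetization. On a general transitive graph the magnetization is not a Markov chain, so the entire one-dimensional machinery --- Lemma~\ref{lem-supermatingale-positive} applied to $\cS_t$, the conductance computation of Theorem~\ref{thm-mag-spectral}, and the drift comparison $\zeta^2\beta-\temp=\Theta(\temp)$ that you call the conceptual core --- has no analogue to mimic. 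Second, Lemma~\ref{lem-all-plus-mag-full} rests on the full permutation symmetry of $K_n$ (all configurations in a magnetization level set are exchangeable), which fails for, say, tori, so the reduction of the upper bound to the all-plus start and to a one-dimensional statistic is not available. Third, Theorem~\ref{thm-two-coord-chain} is likewise a mean-field statement: the pair $(U(\cX_t),V(\cX_t))$ is a Markov chain only because of exchangeability, so ``feeding a variance bound into the two-coordinate coupling'' is not a step you can carry out in the general setting. In short, the proposal correctly diagnoses where the difficulty lies, but it neither supplies the missing quantitative input nor provides substitutes for the mean-field-specific reductions it borrows, and its monotonicity claim for the censored dynamics contradicts what the paper itself establishes.
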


\begin{bibdiv}
\begin{biblist}
\bib{AH}{article}{
   author={Aizenman, M.},
   author={Holley, R.},
   title={Rapid convergence to equilibrium of stochastic Ising models in the
   Dobrushin Shlosman regime},
   conference={
      title={},
      address={Minneapolis, Minn.},
      date={1984--1985},
   },
   book={
      series={IMA Vol. Math. Appl.},
      volume={8},
      publisher={Springer},
      place={New York},
   },
   date={1987},
   pages={1--11},
}

\bib{A}{article}{
   author={Alon, N.},
   title={Eigenvalues and expanders},
   note={Theory of computing (Singer Island, Fla., 1984)},
   journal={Combinatorica},
   volume={6},
   date={1986},
   number={2},
   pages={83--96},
}

\bib{BLPW}{article}{
   author={Bender, Edward A.},
   author={Lawler, Gregory F.},
   author={Pemantle, Robin},
   author={Wilf, Herbert S.},
   title={Irreducible compositions and the first return to the origin of a
   random walk},
   journal={S\'em. Lothar. Combin.},
   volume={50},
   date={2003/04},
   pages={Art. B50h, 13 pp. (electronic)},
}

\bib{BD}{inproceedings}{
  author           = {Bubley, R.},
  author           = {Dyer, M.},
  title            = {Path coupling: A technique for proving rapid mixing
                      in Markov chains},
  booktitle        = {Proceedings of the 38th Annual Symposium on
                      Foundations of Computer Science (FOCS)},
  publisher        = {I.E.E.E.},
  address          = {Miami, FL},
  pages            = {223--231},
  year             = {1997},
}

\bib{Chen}{article}{
   author={Chen, Mu-Fa},
   title={Trilogy of couplings and general formulas for lower bound of
   spectral gap},
   conference={
      title={Probability towards 2000},
      address={New York},
      date={1995},
   },
   book={
      series={Lecture Notes in Statist.},
      volume={128},
      publisher={Springer},
      place={New York},
   },
   date={1998},
   pages={123--136},
}

%
%
%
%
%
\bib{DLP}{article}{
  title   = {The mixing time evolution of Glauber
dynamics for the Mean-field Ising Model},
  author  = {Ding, Jian},
  author = {Lubetzky, Eyal},
  author = {Peres, Yuval},
  note = {preprint},
}

\bib{Ellis}{book}{
   author={Ellis, Richard S.},
   title={Entropy, large deviations, and statistical mechanics},
   series={Grundlehren der Mathematischen Wissenschaften},
   volume={271},
   publisher={Springer-Verlag},
   place={New York},
   date={1985},
}

\bib{EN}{article}{
 AUTHOR = {Ellis, Richard S.},
 author = {Newman, Charles M.},
     TITLE = {Limit theorems for sums of dependent random variables
              occurring in statistical mechanics},
   JOURNAL = {Z. Wahrsch. Verw. Gebiete},
    VOLUME = {44},
      YEAR = {1978},
    NUMBER = {2},
     PAGES = {117--139},
}

\bib{ENR}{article}{
   author={Ellis, Richard S.},
   author={Newman, Charles M.},
   author={Rosen, Jay S.},
   title={Limit theorems for sums of dependent random variables occurring in
   statistical mechanics. II. Conditioning, multiple phases, and
   metastability},
   journal={Z. Wahrsch. Verw. Gebiete},
   volume={51},
   date={1980},
   number={2},
   pages={153--169},
}
%
%
%
\bib{F:v2}{book}{
  author = {Feller, W.},
  title  = {An Introduction to Probability Theory and its
  Applications},
  volume = {2},
  date   = {1971},
  edition = {second edition},
  publisher = {J. Wiley \& Sons},
  place = {New York},
}

\bib{GWL}{article}{
  journal = {Phys. Rev.},
  volume  = {149},
  pages   = {301 \ndash 305},
  year    = {1966},
  title   = {Relaxation Times for Metastable States
             in the Mean-Field Model of a Ferromagnet},
  author  = {Griffiths, Robert B.},
  author  = {Weng, Chi-Yuan},
  author  = {Langer, James S.},
}

\bib{JS}{article}{
   author={Jerrum, Mark},
   author={Sinclair, Alistair},
   title={Approximating the permanent},
   journal={SIAM J. Comput.},
   volume={18},
   date={1989},
   number={6},
   pages={1149--1178},
}

\bib{LS}{article}{
   author={Lawler, Gregory F.},
   author={Sokal, Alan D.},
   title={Bounds on the $L\sp 2$ spectrum for Markov chains and Markov
   processes: a generalization of Cheeger's inequality},
   journal={Trans. Amer. Math. Soc.},
   volume={309},
   date={1988},
   number={2},
   pages={557--580},
}

\bib{LLP}{article}{
  journal = {Probability Theory and Related Fields},
  volume  = {},
  pages   = {},
  year    = {},
  title   = {Glauber dynamics for the Mean-field Ising Model: cut-off, critical power law, and metastability},
  author  = {Levin, David A.},
  author = {Luczak, Malwina},
  author = {Peres, Yuval},
  status = {to appear},
}

\bib{LPW}{book}{
    author = {Levin, D.},
    author = {Peres, Y.},
    author = {Wilmer, E.},
    title =  {Markov Chains and Mixing Times},
    year  =  {2007},
    note = {In preparation, available at
    \texttt{http://www.uoregon.edu/\~{}dlevin/MARKOV/}},
}
%
%
%

\end{biblist}
\end{bibdiv}

\end{document}